\tikzset{commutative diagrams/.cd,arrow style=tikz,diagrams={>=stealth'}}
\newcommand{\Z}{\mathbb{Z}}
\newcommand{\R}{\mathbb{R}}
\newcommand{\wt}{\widetilde}
\newcommand{\mc}{\mathcal}
\newcommand{\cone}{\mathrm{cone}}
\newcommand{\ol}{\overline}
\newcommand{\wh}{\widehat}
\newcommand{\mr}{\mathring}
\newcommand{\del}{\partial}
\newcommand{\hbs}{\tau^{(2)}}
\newcommand{\onto}{\twoheadrightarrow}
\newcommand*\tet{{\raisebox{-.3ex}{\text{\includegraphics[width=.9em]{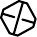}}}}}
\DeclareMathOperator{\fr}{fr}
\DeclareMathOperator{\cl}{cl}
\newtheorem{thm}{Theorem}
\numberwithin{equation}{section}
\newtheorem{theorem}[equation]{Theorem}
\newtheorem{lemma}[equation]{Lemma}
\newtheorem{proposition}[equation]{Proposition}
\newtheorem{corollary}[equation]{Corollary}
\theoremstyle{definition}
\declaretheorem[style=definition,qed=$\lozenge$,sibling=theorem]{remark}
\newcommand{\orb}{\mathcal{O}}
\newcommand{\define}[1]{\textbf{#1}}
\newcommand{\intersect}{\cap}
\newcommand{\boundary}{\partial}
\newcommand\tsim{\kern-.4em\sim}
\newcommand\ssm{\smallsetminus}
\newcommand{\ind}{\mathrm{index}}
\newcommand{\algcusps}{\mathrm{algcusps}}
\newcommand{\cut}{\! \bbslash \!}
\newcommand{\front}{\text{front}}
\newcommand{\back}{\text{back}}
\renewcommand{\phi}{\varphi}
\renewcommand{\epsilon}{\varepsilon}
\DeclareMathOperator{\closure}{cl}
\DeclareMathOperator{\intr}{int}
\DeclareMathOperator{\brloc}{brloc}
\DeclareMathOperator{\cusps}{cusps}
\DeclareMathOperator{\sign}{sign}
\DeclareMathOperator{\core}{core}
\DeclareMathOperator{\prongs}{prongs}
\begin{document}

\title[Transverse surfaces and flows]{Transverse surfaces and pseudo-Anosov flows}
\author[M.P. Landry]{Michael P. Landry}
\address{Department of Mathematics\\
Saint Louis University}
\email{\href{mailto:michael.landry@slu.edu}{michael.landry@slu.edu}}
\author[Y.N. Minsky]{Yair N. Minsky}
\address{Department of Mathematics\\ 
Yale University}
\email{\href{mailto:yair.minsky@yale.edu}{yair.minsky@yale.edu}}
\author[S.J. Taylor]{Samuel J. Taylor}
\address{Department of Mathematics\\ 
Temple University}
\email{\href{mailto:samuel.taylor@temple.edu}{samuel.taylor@temple.edu}}
\thanks{Landry was partially supported by NSF postdoctoral fellowship DMS-2013073.
Minsky was partially supported by DMS-2005328.
Taylor was partially supported by DMS-2102018 and a Sloan Research Fellowship.}

\begin{abstract}
Let $\phi$ be a transitive pseudo-Anosov flow on an oriented, compact $3$-manifold $M$, possibly with toral boundary. We characterize the surfaces in $M$ that are (almost) transverse to $\phi$. When $\phi$ has no perfect fits (e.g. $\phi$ is the suspension flow of a pseudo-Anosov homeomorphism), we prove that any Thurston-norm minimizing surface $S$ that pairs nonnegatively with the closed orbits of $\phi$ is almost transverse to $\phi$, up to isotopy. This answers a question of Cooper--Long--Reid. Our main tool is a correspondence between surfaces that are almost transverse to $\varphi$ and those that are relatively carried by any associated veering triangulation. The correspondence also allows us to investigate the uniqueness of almost transverse position, to extend Mosher's Transverse Surface Theorem to the case with boundary, and more generally to characterize when relative homology classes represent Birkhoff surfaces.
\end{abstract}
\maketitle

\setcounter{tocdepth}{1}
\tableofcontents

\section{Introduction}
\subsection{Overview}
Let $\phi$ be a transitive pseudo-Anosov flow on a compact, oriented $3$-manifold $M$, possibly with toral boundary. Important examples include the geodesic flow on the unit tangent bundle of a closed hyperbolic surface and the suspension flow on the mapping torus of a pseudo-Anosov homeomorphism. A pseudo-Anosov flow is called \define{circular} if it is such a suspension flow, up to isotopy and reparametrization.

A basic problem 
is to classify the surfaces in $M$ that are {transverse} to a pseudo-Anosov flow, up to isotopy. Mosher's Transverse Surface Theorem \cite{Mos92,mosher1992dynamical} from the early 1990s
 answers the related question of which homology classes have representatives almost transverse to the flow $\varphi$. Here, \textit{almost transverse} means that 
 a mild operation
 must first be performed on the singular orbits of flow; see \Cref{sec:flows} for details.
When $M$ is closed, Mosher proves that if a surface $S$ has nonnegative algebraic intersection with each closed orbit of $\varphi$, then $S$ is homologous to a surface almost transverse to $\varphi$.

Although Mosher's theorem is quite powerful, there are situations in which one needs to know whether a given surface $S$ is transverse to $\varphi$ up to {isotopy} rather than homology (see e.g. the discussion following \Cref{A}). Thirty years ago Cooper--Long--Reid asked, in the special case where $\phi$ is circular and $M$ is closed: \emph{which surfaces in $M$ can be made transverse to $\phi$ up to isotopy \cite[Sec. 4, Ex. 2]{cooper1994bundles}?}

Among our results, we give a complete answer to this question in the more general setting of transitive pseudo-Anosov flows on compact manifolds. 
An obvious necessary condition for $S$ to be almost transverse to $\phi$ up to isotopy is that $S$ have nonnegative algebraic intersection with each closed orbit. A more subtle one is that $S$ be \textbf{taut}, meaning it realizes the Thurston norm of its homology class and has no nullhomologous collection of components (\cite[Sec. 2]{mosher1992dynamical} and \Cref{lem:taut_from_flow}).  When specialized to the setting of Cooper--Long--Reid's question, our results say these two necessary conditions are in fact sufficient for almost transversality (\Cref{A}); we also describe exactly when almost transversality can be promoted to transversality (\Cref{E}).

\smallskip
Our characterization follows from a general treatment of almost transversality for surfaces with respect to an arbitrary transitive pseudo-Anosov flow $\phi$, which can be informally summarized as follows. See the next section for the formal statements.

\begin{itemize}
\item \textbf{Characterization via veering triangulations:} Our primary technical result is that, up to isotopy, a surface $S$ is almost transverse to $\phi$ if and only if it is relatively carried by any veering triangulation associated to $\phi$ (\Cref{E}). The point is that both the veering triangulation and the relatively carried condition allow us to study transversality in a way which is purely combinatorial. Moreover, the surface is honestly transverse to $\phi$ if and only if it has a specific type of carried position with respect to the veering triangulation.
\item \textbf{Transverse surface theorems:} Using the correspondence above, we prove a general transverse surface theorem that states that after isotopy $S$ is almost transverse if and only if it is taut, algebraically nonnegative on closed orbits, and can be isotoped to have no negative intersections with a specific finite collection of orbits (\Cref{B}). This last condition is vacuous for a large class of pseudo-Anosov flows (including circular flows), but we show by example that it is necessary in general (\Cref{thm:example}).
\item \textbf{Uniqueness of transverse position:} We show that the position of an almost transverse surface is essentially unique, up to isotopy along the flow. This includes the nontrivial fact that there is a unique way to minimally modify the flow (in a certain combinatorial sense) to make the surface transverse (\Cref{D}). 
\item \textbf{Existence of Birkhoff surfaces:} 
Plenty of pseudo-Anosov flows do not have (almost) transverse surfaces (e.g. geodesic flow of a hyperbolic surface). However, all pseudo-Anosov flows admit \emph{Birkhoff surfaces}. By extending Mosher's transverse surface theorem to manifolds with toral boundary (\Cref{th:mosher_with_boundary}) we give a general characterization for which relative homology classes represent (almost) Birkhoff surfaces for $\phi$ (\Cref{C}). This also extends Fried's characterization of relative classes representing Birkhoff sections (i.e. Birkhoff surfaces meeting every closed orbit in bounded time). 
\end{itemize}

As mentioned above, our results include the case of pseudo-Anosov flows on manifolds with (toral) boundary. See \Cref{sec:flows} for a precise definition. There are two important reasons for this, beyond simply generalizing from the closed case. First, the application to Birkhoff surfaces stated above is fairly straightforward after proving the transverse surface theorem (\Cref{th:mosher_with_boundary}) for manifolds with boundary. Second, such flows can be used to understand the structure of the Thurston norm on manifolds with boundary in a way that has only previously been studied in the closed case. See \Cref{th:flows_rep_faces} for details.

\subsection{Results on transverse surfaces}
Throughout the introduction we work with a pseudo-Anosov flow $\varphi$ on a compact, oriented $3$--manifold $M$. We allow $\partial M \neq \emptyset$ in which case each component of $\partial M$ is a torus. See \Cref{sec:flows} for the definitions.  

Our first result characterizes almost transverse surfaces for the class of pseudo-Anosov flows \emph{without perfect fits}. Such flows were first studied by Fenley \cite{fenley1998structure, fenley1999foliations} and include the class of circular (i.e. suspension) flows mentioned above. For our purposes here, it suffices to know that $\varphi$ has no perfect fits if and only if there are no \define{anti-homotopic orbits}, i.e. closed orbits $\gamma_1,\gamma_2$ with $\gamma_1$ homotopic to $-\gamma_2$.
See \Cref{sec:pfits}, and particularly \Cref{rmk:antihom}, for the precise definition and further discussion.

The following lemma says that everything ``seen" by these flows at the level of homology is also seen at the level of isotopy:

\begin{thm}[Strong transverse surface theorem]
\label{A}
Let $\phi$ be a pseudo-Anosov flow with no perfect fits on $M$, possibly with boundary. Then a properly embedded, oriented surface $S$ is almost transverse to $\phi$, up to isotopy, if and only if $S$ is taut and pairs nonnegatively with the closed orbits of $\phi$. 
\end{thm}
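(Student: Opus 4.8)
The plan is to deduce \Cref{A} from the general transverse surface theorem \Cref{B}, the entire content being that the auxiliary ``no negative intersections'' condition appearing there is vacuous once $\phi$ has no perfect fits. The \emph{only if} direction needs nothing new: if $S$ is almost transverse to $\phi$, then after a dynamic blow-up of finitely many singular orbits it is honestly transverse to a flow with the same closed orbits up to homotopy, so $S$ pairs nonnegatively with every closed orbit of $\phi$; and tautness of an (almost) transverse surface is exactly \Cref{lem:taut_from_flow}, following Mosher. So I would dispatch this direction in a sentence or two (or simply cite the easy direction of \Cref{B}).

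For the \emph{if} direction, suppose $S$ is taut and pairs nonnegatively with every closed orbit of $\phi$, and invoke \Cref{B} (which applies to properly embedded, possibly disconnected surfaces, so no reduction to the connected case is needed). By that theorem, to conclude that $S$ is almost transverse up to isotopy it remains only to show that $S$ can be isotoped to have no negative intersections with the orbits in the finite collection $\mathcal O = \mathcal O(\phi)$ produced there. The goal is therefore: \emph{if $\phi$ has no perfect fits, then for every taut $S$ that is algebraically nonnegative on all closed orbits, the condition of \Cref{B} on $\mathcal O$ holds automatically} — indeed, one expects $\mathcal O$ itself to be empty in this case, matching the remark that the extra condition is vacuous for circular flows.

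The crux, and the step I expect to be the main obstacle, is establishing this vacuity, and this is where the hypothesis is used through its reformulation \emph{no perfect fits $\iff$ no anti-homotopic orbits} (\Cref{rmk:antihom}). Concretely, I would trace back through the proof of \Cref{B} — via the veering triangulation $\Delta$ associated to $\phi$ and the correspondence of \Cref{E} between almost transverse surfaces and relatively carried surfaces — to identify the orbits of $\mathcal O$ with a specific combinatorial feature of $\Delta$, and then argue that a genuine (non-removable) negative intersection of $S$ with such an orbit $\gamma$ would force the existence of a closed orbit $\gamma'$ homotopic to $-\gamma$, a perfect fit, contradicting the hypothesis. Equivalently, one can run the whole argument on the level of cones: the classes relatively carried by $\Delta$ form a cone in $H_2(M,\partial M;\RR)$ cut out by the inequalities ``nonnegative on the closed orbits of $\phi$'', and the absence of perfect fits is precisely what guarantees that this cone already captures every taut class satisfying those inequalities; combined with \Cref{E} this yields almost transversality. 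Along the way one must be careful about the peripheral behaviour — properly embedded surfaces in a manifold with toral boundary, and orbits running into $\partial M$ — and must genuinely use tautness rather than mere norm-minimality, so that no nullhomologous piece of $S$ can interact pathologically with $\Delta$. The example of \Cref{thm:example} confirms that this vacuity step truly requires the no-perfect-fits hypothesis and cannot be bypassed.
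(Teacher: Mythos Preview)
Your observation that condition (3) of \Cref{B} becomes vacuous is correct, but you are overworking it: the statement of \Cref{B} lets you \emph{choose} any $\kappa$ that kills perfect fits, and when $\phi$ has no perfect fits the empty collection $\kappa=\emptyset$ is a valid choice by definition. No tracing through combinatorics or anti-homotopic-orbit arguments is needed; the paper itself remarks, immediately after \Cref{th:general_stst}, that \Cref{A} is the special case $\kappa=\emptyset$. The genuine problem with your plan is circularity: in this paper the proof of \Cref{B} (\Cref{th:general_stst}) proceeds by Fried-blowing up along $\kappa$ to produce a flow without perfect fits and then \emph{applying \Cref{A}} (\Cref{th:stst}) to the blown-up manifold. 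So \Cref{A} must be proved first, directly.

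The paper's direct proof is short and is essentially the ``cone'' alternative you sketch at the end. Since $\phi$ has no perfect fits, the associated veering triangulation $\tau$ lives on $M\ssm\Pi_s$ (i.e.\ $\kappa=\emptyset$) and is automatically \emph{strict}. The nonnegativity hypothesis places $[S]$ in $\cone_1^\vee(\Gamma)$ (closed orbits and dual-graph cycles generate the same cone in $H_1$), and then the Combinatorial Strong TST (\Cref{combiSTST})---which needs strictness---gives that the taut surface $S$ is relatively carried by $\tau$ up to isotopy; finally \Cref{th:dynamic blowup for carried} (one direction of \Cref{E}) yields almost transversality. The hypothesis ``no perfect fits'' enters exactly as strictness of $\tau$, which is what makes \Cref{combiSTST} available; this is the clean packaging of the intuition you were reaching for with anti-homotopic orbits.
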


See \Cref{lem:taut_from_flow} and \Cref{th:stst}. We remark that in general the classes in $H_2(M,\partial M)$ that pair nonnegatively with the closed orbits of $\phi$ form a closed cone, which under the hypotheses of \Cref{A}, is exactly the cone over a closed face of the Thurston norm ball. This was proved by Mosher when $\partial M = \emptyset$ and we give a proof in the case with boundary (\Cref{th:flows_rep_faces}). Hence, \Cref{A} implies that the surface $S$ is almost transverse to $\varphi$ up to isotopy if and only if $e_\varphi([S]) = \chi(S)$, where $e_\varphi$ is the Euler class of the flow, suitably defined, and $\chi$ is Euler characteristic. See \Cref{sec:flowsrepresentfaces} for additional details.

\Cref{A} is a key ingredient in \cite[Theorem A]{landry2023endperiodic} where we prove that every atoroidal endperiodic map is isotopic to the first return map of a circular pseudo-Anosov flow to a leaf of a transverse depth one foliation.
\smallskip

To complement \Cref{A}, we show by explicit example in \Cref{thm:example} that the hypothesis of no perfect fits cannot be entirely dropped. However, we \emph{can} characterize transverse surfaces for general transitive pseudo-Anosov flows using an auxiliary (noncanonical) collection of closed orbits. For the statement, a finite collection of closed orbits $\kappa$ \emph{kills the perfect fits of $\varphi$} if there are no anti-homotopic orbits in the punctured manifold $M \ssm \kappa$; again see \Cref{sec:pfits}. The following is a restatement of \Cref{th:general_stst}.

\begin{thm} \label{B}
Let $\phi$ be a transitive pseudo-Anosov flow on $M$, and let $\kappa$ be any finite collection of closed regular orbits of $\phi$ that kills its perfect fits. 
Then an oriented surface $S$ in $M$ is isotopic to a surface that is almost transverse to $\phi$ if and only if
\begin{enumerate}
\item $S$ is taut,
\item $S$ has nonnegative (homological) intersection with the closed orbits of $\phi$,
and 
\item the algebraic and geometric intersection numbers of $S$ with $\kappa$ are equal.
\end{enumerate}
\end{thm}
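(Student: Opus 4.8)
The plan is to deduce \Cref{B} from the no-perfect-fits case, \Cref{A}, by drilling out the orbits of $\kappa$. Let $M_\kappa$ denote the compact manifold obtained from $M$ by removing open solid-torus neighborhoods of the (regular) closed orbits comprising $\kappa$; it has the toral boundary of $M$ together with one new torus $T_j = \partial N(\kappa_j)$ for each component $\kappa_j$ of $\kappa$. Deleting a regular closed orbit from a pseudo-Anosov flow produces, after the standard local surgery, a pseudo-Anosov flow $\phi_\kappa$ on $M_\kappa$ in the sense of \Cref{sec:flows}, and $\phi_\kappa$ remains transitive since a transitive pseudo-Anosov flow admits a dense orbit disjoint from any prescribed finite collection of closed orbits. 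Because $\kappa$ kills the perfect fits of $\phi$, by definition (\Cref{sec:pfits}) the flow $\phi_\kappa$ has no anti-homotopic orbits and hence no perfect fits, so \Cref{A} is available for $\phi_\kappa$ on $M_\kappa$.

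For necessity, I would argue that if $S$ is isotopic to a surface almost transverse to $\phi$, then $S$ is taut by \Cref{lem:taut_from_flow}, giving $(1)$; that $S$ pairs nonnegatively with every closed orbit of $\phi$, since a transverse surface meets each orbit in points of a single sign and the dynamic blowup of the finitely many singular orbits does not change these intersection numbers, giving $(2)$; and that, the orbits of $\kappa$ being regular, the blowup is supported away from $\kappa$, so after isotopy $S$ meets each $\kappa_j$ transversely and coherently, whence its algebraic and geometric intersection numbers with $\kappa$ agree, giving $(3)$.

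For sufficiency, assume $(1)$--$(3)$ and isotope $S$ into minimal position with respect to $\kappa$; by $(3)$ every intersection point is then coherent, so $S' := S \cap M_\kappa$ is a properly embedded surface in $M_\kappa$ whose boundary on each $T_j$ is a coherent family of meridians of $\kappa_j$, and $S$ is recovered from $S'$ by capping these off with meridian disks. The goal is to check that $S'$ satisfies the hypotheses of \Cref{A} for $\phi_\kappa$. Nonnegativity is straightforward: the closed orbits of $\phi_\kappa$ are exactly the closed orbits of $\phi$ disjoint from $\kappa$, and for each such $\gamma$ one has $[S'] \cdot \gamma = [S] \cdot \gamma \ge 0$ by $(2)$. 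Tautness of $S'$ in $M_\kappa$ is the crux of the argument and the step I expect to be the main obstacle: because $S$ meets $\kappa$ coherently, capping off a coherent family of boundary meridians is norm-efficient, so $S'$ should realize the Thurston norm of $[S']$ and inherit from $S$ the absence of a nullhomologous subsurface. I would establish this by the standard cut-and-paste comparison of norm-minimizing surfaces under drilling an efficiently intersecting curve (equivalently, by the behavior of the Thurston norm under such drillings), being careful about the low-complexity components that can arise. Granting it, \Cref{A} gives that $S'$ is isotopic in $M_\kappa$ to a surface $S''$ almost transverse to $\phi_\kappa$; this isotopy preserves boundary slopes, so $\partial S''$ remains a union of meridians of the $\kappa_j$.

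Finally, I would cap $S''$ off with meridian disks of the $\kappa_j$, which may be chosen transverse to $\phi$ since the $\kappa_j$ are regular orbits, producing a surface $\wh S$ in $M$ that is honestly transverse to $\phi$ near $\kappa$ and almost transverse to $\phi|_{M_\kappa} = \phi_\kappa$ elsewhere via the dynamic blowup of the singular orbits (all of which lie in $M_\kappa$); hence $\wh S$ is almost transverse to $\phi$. Since $S''$ is isotopic to $S \cap M_\kappa$ in $M_\kappa$ through surfaces with meridional boundary, and $\wh S$ and $S$ are obtained from $S''$ and $S \cap M_\kappa$ by capping with meridian disks in the unique such isotopy class, $\wh S$ is isotopic to $S$ in $M$; promoting the $M_\kappa$-isotopy to an ambient isotopy of $M$ compatible with the capping is routine once the boundary curves on each $T_j$ are put in standard position. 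This establishes that conditions $(1)$--$(3)$ suffice, completing the reduction.
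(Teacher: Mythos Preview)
Your approach is essentially the paper's: what you call ``drilling out $\kappa$'' is exactly the Fried blowup $M^\sharp \to M$ used in the proof of \Cref{th:general_stst}, and both arguments reduce to \Cref{A} after checking that tautness and nonnegativity persist for the drilled surface. The paper's tautness argument is the same observation you anticipate (coherent meridional boundary forces any homologous surface to have at least as many boundary components, so norm-minimality transfers), not a full cut-and-paste.

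One small correction: your claim that ``the closed orbits of $\phi_\kappa$ are exactly the closed orbits of $\phi$ disjoint from $\kappa$'' is not quite right under the paper's definition of pseudo-Anosov flow on a manifold with boundary---the Fried blowup produces additional closed orbits on the new boundary tori $T_j$ (the prong curves), and nonnegativity for these requires a separate check. The paper handles this by the approximation argument from the proof of \Cref{th:mosher_with_boundary}; in your setup it also follows directly, since $\partial S'$ on each $T_j$ consists of coherently oriented meridians and hence pairs nonnegatively with any prong-slope curve.
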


Given a taut surface $S$ that has nonnegative algebraic intersection with each closed orbit of $\phi$, \Cref{B} gives a simple criterion for whether $S$ is almost transverse to $\phi$ up to isotopy: one need only check whether $S$ can be isotoped so that all its intersection points with the oriented link $\kappa$, if any, are positive.

\smallskip

Our general technique, which handles pseudo-Anosov flows on manifolds with boundary, has applications to the more general situation of Birkhoff surfaces. A \define{Birkhoff surface} (sometimes called a partial section) of $\phi$ is an immersed surface $\Sigma$ in $M$ whose boundary components cover closed orbits of $\phi$ and 
whose interior is embedded and transverse to $\phi$. More generally, a Birkhoff surface of a dynamic blowup of $\phi$ will be called an \emph{almost} Birkhoff surface of $\phi$.

The next theorem generalizes Fried's result on Birkhoff sections \cite[Theorem N]{fried1982geometry} and seems new even for geodesic flow on hyperbolic surfaces. See \Cref{th:Birk} for statement that includes conditions for when the `almost' can be dropped.

\begin{thm}[Representing relative classes with Birkhoff surfaces]
\label{C}
Let $\phi$ be a transitive pseudo-Anosov flow and let $\kappa$ be any collection of closed orbits. Then a class $\eta \in H_2(M, \kappa) = H^1(M \ssm \kappa)$ is represented by an almost Birkhoff surface $\Sigma$ in $M$ with $\partial \Sigma \subset \kappa$ if and only if it is nonnegative on closed orbits of $M \ssm \kappa$.

\end{thm}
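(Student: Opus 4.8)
The plan is to deduce \Cref{C} from the transverse surface theorem for manifolds with toral boundary (\Cref{th:mosher_with_boundary}): drill out $\kappa$, apply that theorem in the drilled manifold, and then glue the orbit neighborhoods back while extending the resulting transverse surface by ``fins.'' The forward direction is the easy one. If $\Sigma$ in $M$ represents $\eta$ and is an almost Birkhoff surface, then by definition there is a dynamic blowup $\phi^\flat$ of $\phi$, with collapsing map $M^\flat \to M$ taking a Birkhoff surface $\Sigma^\flat$ of $\phi^\flat$ to $\Sigma$; the interior of $\Sigma^\flat$ is embedded and transverse to $\phi^\flat$, and $\partial\Sigma^\flat$ lies on closed orbits. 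A closed orbit $\gamma$ of $\phi$ in $M\ssm\kappa$ lifts to an orbit (or to a boundary orbit of a blown-up set) $\gamma^\flat$ of $\phi^\flat$ that is disjoint from $\partial\Sigma^\flat$ and meets $\Sigma^\flat$ only in its transverse interior, so $\langle \eta, [\gamma]\rangle = \Sigma^\flat\cdot\gamma^\flat \ge 0$. Hence $\eta$ is nonnegative on the closed orbits of $M\ssm\kappa$.

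For the converse, suppose $\eta \in H_2(M,\kappa) = H^1(M\ssm\kappa)$ is nonnegative on the closed orbits of $M\ssm\kappa$. I would first drill: let $M_\kappa$ be the complement of an open tubular neighborhood of $\kappa$, so that $\phi$ induces a transitive pseudo-Anosov flow $\phi_\kappa$ on $M_\kappa$, now tangent to the toral boundary and exhibiting along each new torus $T_i = \partial N(\gamma_i)$ the local sector picture of $\gamma_i$; this is exactly the setting of \Cref{sec:flows} and \Cref{th:mosher_with_boundary}. Identifying $M\ssm\kappa$ with the interior of $M_\kappa$, Lefschetz duality sends $\eta$ to a relative class $\bar\eta \in H_2(M_\kappa, \partial M_\kappa)$, and $\bar\eta$ inherits nonnegativity on the closed orbits of $\phi_\kappa$ from the hypothesis on $\eta$: such orbits either lie in $M\ssm\kappa$, where the hypothesis applies directly, or lie on a torus $T_i$, where the condition is read off from $\eta|_{T_i}$. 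Applying \Cref{th:mosher_with_boundary} then yields a properly embedded surface $S\subset M_\kappa$ with $[S] = \bar\eta$ that is almost transverse to $\phi_\kappa$, i.e. transverse to a dynamic blowup $\phi_\kappa^\flat$ whose blown-up orbits are interior to $M_\kappa$.

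The heart of the proof is the reverse operation. Reattaching the solid tori $N(\gamma_i)$ realizes $\phi_\kappa^\flat$ as the restriction of a dynamic blowup $\phi^\flat$ of $\phi$ (blowing up the same interior orbits), and the task is to extend $S$ across each $N(\gamma_i)$ by flow-transverse fins limiting onto $\gamma_i$. This is exactly where \emph{almost} is forced: since $\partial S\cap T_i$ is transverse to the flow on $T_i$, it crosses the flow's periodic orbits there and so in general has nonzero meridional component, so it is not a union of longitudes and cannot cobound annuli with the core $\gamma_i$ inside the solid torus $N(\gamma_i)$. The fix is to additionally blow up $\gamma_i$ dynamically, choosing the blowup --- equivalently, a flow model on the reattached neighborhood --- adapted to the slope of $\partial S\cap T_i$, so that this multicurve is joined by flow-transverse annuli to the periodic orbits on the boundary of the blown-up invariant set; collapsing that set to $\gamma_i$ converts the annuli into the fins of a Birkhoff surface $\Sigma^\flat$ of $\phi^\flat$. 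Its image $\Sigma$ in $M$ is then an almost Birkhoff surface with $\partial\Sigma$ covering those $\gamma_i$ around which $\eta$ wraps nontrivially, and $[\Sigma] = \eta$. The local analysis near a periodic orbit of a pseudo-Anosov flow and its dynamic blowups --- bookkeeping slopes, co-orientations, and sectors so that the fins are genuinely transverse and genuinely close up --- is the step I expect to be the main obstacle, although most of the required local structure should already be available from the paper's treatment of pseudo-Anosov flows with toral boundary.

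The sharper statements packaged in \Cref{th:Birk} follow from the same argument with finer bookkeeping: when $\eta$ is \emph{strictly} positive on the closed orbits of $M\ssm\kappa$, the surface produced by \Cref{th:mosher_with_boundary} can be taken to be filling, yielding a genuine Birkhoff section and recovering Fried's theorem; and no dynamic blowup of $\gamma_i$ is needed exactly when $\partial S\cap T_i$ is already a union of longitudes, which by the veering-triangulation correspondence of \Cref{E} is the ``honestly transverse'' carried position.
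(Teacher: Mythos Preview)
Your overall strategy---Fried-blow-up at $\kappa$ to get a pseudo-Anosov flow $\phi^\sharp$ on a manifold $M^\sharp$ with boundary, apply \Cref{th:mosher_with_boundary} there, then blow back down---is exactly the paper's approach. The paper calls your $M_\kappa$ the Fried blowup $M^\sharp$ (indeed, the paper's \emph{definition} of a pseudo-Anosov flow on a manifold with boundary is precisely that it arises this way), and the extension-by-fins step is handled by positioning each boundary curve of $\Sigma^\sharp$ on the blown-up torus to be either a meridian of the collapse $T_i\to\gamma_i$ or transverse to it, citing Fried's local analysis. Your discussion of needing an ``additional'' blowup at $\gamma_i$ is somewhat tangled but is effectively the generalized dynamic blowup at the boundary already built into the paper's framework.

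The one genuine gap is your verification that $\bar\eta$ is nonnegative on the closed orbits of $\phi_\kappa$ lying \emph{on} the boundary tori $T_i$. You write that ``the condition is read off from $\eta|_{T_i}$,'' but this only says the pairing is determined by that restriction, not that it is nonnegative. The prong curves on $T_i$ are orbits of $\phi^\sharp$, not of $\phi|_{M\ssm\kappa}$, so the hypothesis on $\eta$ does not apply to them directly. The paper fills this gap with an approximation argument: for a prong curve $\gamma$ on $T_i$, one finds interior closed orbits $c_k$ with $[c_k]=[\gamma^k]\cdot[d]$ for a fixed orbit $d$ (using, e.g., a Markov partition or the veering flow graph); since $\eta^\sharp(c_k)=\eta(\pi(c_k))\ge 0$ and $k$ is arbitrary, one concludes $\eta^\sharp(\gamma)\ge 0$. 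This is the step you need to add.

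A minor caution on your last paragraph: the ``moreover'' of \Cref{th:Birk} promises an honest Birkhoff \emph{surface} under a hypothesis on singular orbits, not a Birkhoff \emph{section} under strict positivity; these are different statements.
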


\smallskip

Finally, we turn to the sense in which `almost transverse' position is essentially unique.
For this, note that the statement 
\[
\text{\emph{``$S$ is almost transverse to the pseudo-Anosov flow $\phi$ up to isotopy"}}
\]
contains two potential ambiguities. The first is that it does not specify a particular dynamic blowup to which $S$ is transverse up to isotopy. The second is that it says nothing about the \emph{way} in which $S$ can be realized transversely to a given dynamic blowup $\phi^\sharp$---can one interpolate between any two such positions by flowing along $\phi^\sharp$, or are there multiple transverse positions such that any isotopy between them must move through non-transverse surfaces?

These ambiguities are resolved with the following result, a combination of \Cref{thm:combinatorial equivalence} and \Cref{thm:same shadow}.

\begin{thm} \label{D}
Let $\phi$ be a pseudo-Anosov flow on $M$ and let $S_1$ and $S_2$ be isotopic properly embedded surfaces which are minimally transverse to dynamic blowups $\phi_1^\sharp$ and $\phi_2^\sharp$, respectively. Then $\phi_1^\sharp$ and $\phi_2^\sharp$ are combinatorially equivalent.

Moreover, if the surfaces are transverse to a single blowup $\phi^\sharp$, then $S_1$ and $S_2$ are isotopic along flowlines of $\phi^\sharp$.
\end{thm}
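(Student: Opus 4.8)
The plan is to deduce both assertions from the correspondence of \Cref{E}, which translates almost transversality into the combinatorial condition of being relatively carried by a veering triangulation $\mathcal{V}$ associated to $\phi$. A surface transverse to a dynamic blowup $\phi_i^\sharp$ is in particular almost transverse to $\phi$, hence by \Cref{E} isotopic to a surface relatively carried by $\mathcal{V}$. I first need the refinement that, when the transverse position is \emph{minimal}, the corresponding carried position $\sigma_i$ is minimal in a suitable combinatorial sense, and that $\sigma_i$ reconstructs $\phi_i^\sharp$ up to combinatorial equivalence. This reconstruction should be local near the singular orbits of $\phi$: there the branching pattern of the carried surface along the corresponding cusped edges of $\mathcal{V}$ prescribes exactly which prongs the blowup separates and in what cyclic order, which is precisely the combinatorial type of the blowup.

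Granting this dictionary, the first assertion reduces to the statement that the minimal carried position of a relatively carried surface depends, up to combinatorial equivalence, only on its isotopy class. I would prove this by analyzing how two carried positions of a single isotopy class are related. After normalizing both with respect to $\mathcal{V}$, an innermost-disk and outermost-arc analysis of the trace of an isotopy between them shows that any two normalized carried positions of isotopic surfaces differ by a finite sequence of elementary moves, each either a ``height'' move supported away from the singular orbits, or a move permuting branches of the carried surface within a fixed fan at a singular orbit. Neither kind of move alters the branching pattern feeding the blowup reconstruction, so $\sigma_1$ and $\sigma_2$ reconstruct combinatorially equivalent blowups, whence $\phi_1^\sharp$ and $\phi_2^\sharp$ are combinatorially equivalent.

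For the second assertion, suppose $S_1$ and $S_2$ are both transverse to a single blowup $\phi^\sharp$, with flow $\psi$. The triangulation $\mathcal{V}$ is layered along $\psi$, so that flowing a carried surface forward realizes, in finite time, the ``height'' moves above, and the resulting partial isotopy is automatically along flowlines of $\phi^\sharp$. Because $S_1$ and $S_2$ are transverse to the \emph{same} blowup, by the previous paragraph (now with the fan structures fixed) their carried positions differ by height moves alone, so it suffices to interpolate between them by flowing. Concretely, one shows that for $T \gg 0$ the surface $\psi_T(S_1)$ lies downstream of both $S_1$ and $S_2$, so that the region it cobounds with $S_1$ and the region it cobounds with $S_2$ are each honest product regions foliated by flow arcs; here one uses transversality, the expansion of $\psi$ across $\mathcal{V}$, and tautness (\Cref{lem:taut_from_flow}) together with transitivity of $\phi$ to exclude trapped recurrent dynamics inside these regions. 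Then $S_1$ and $S_2$ are each flow-isotopic to $\psi_T(S_1)$, and concatenating gives an isotopy from $S_1$ to $S_2$ along flowlines of $\phi^\sharp$.

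The step I expect to be the main obstacle is the rigidity underlying both parts: that the branching data of a minimal carried position at the singular orbits---hence the combinatorial type of the reconstructed blowup---depends only on the isotopy class. One must rule out that two genuinely inequivalent dynamic blowups are each realized minimally by isotopic surfaces, and this requires tight control of how an isotopy of carried surfaces can behave near the singular locus, which is exactly where the layered structure of $\mathcal{V}$ along the flow and the ``no perfect fits'' combinatorics of the blown-up flow enter. In the second part the parallel delicate point is excluding trapped orbits in the region between two transverse surfaces; this is where tautness (norm minimality together with the absence of nullhomologous components) and transitivity of $\phi$ are needed.
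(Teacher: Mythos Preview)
Your approach is genuinely different from the paper's and has real gaps that I do not see how to close along the lines you sketch.

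The paper does not work through carried positions and elementary moves at all. Instead it develops a structure theory for the \emph{shadow} $\Omega=\Theta(\wt S)\subset\orb$ of a lift of a transverse surface in the flow space of the blowup (Section~6), proving that $\fr(\Omega)$ is a disjoint union of stable/unstable slice leaves and rays, and classifying how $\Omega$ meets each blowup tree (\Cref{prop:localshadows}). For combinatorial equivalence, the key step (\Cref{lem:puncture_images_equal}) shows that compatible lifts of isotopic $S_1,S_2$ have equal images in the collapsed flow space $\orb^*$: one matches \emph{regular periodic points} in the two shadows via the homotopy, uses their density, and then invokes regularity of $\Omega_i$ (\Cref{cor:int_cls}) to pass from equality on a dense set to equality of the open sets. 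The combinatorial type of each blowup is then read off from which pairs of quadrants $p_i(\Omega_i)$ meets at each singular point. For flow isotopy, once the shadows agree in $\orb$, the two lifts are sections over the same base and one interpolates linearly; embeddedness is checked via the dual tree to $q^{-1}(S_i)$.

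Your proposal rests on two unproven assertions. First, the ``rigidity'' you flag as the main obstacle is in fact the entire content of the theorem: you assume that two minimal carried positions of isotopic surfaces differ by moves that preserve the branching data at the singular tubes, but you give no mechanism for this beyond a generic innermost-disk/outermost-arc sketch. Carried positions of isotopic surfaces in a branched surface are not in general related by such local moves without further input, and nothing in \Cref{E} or its proof supplies this. Second, your flow-isotopy argument is circular: the claim that $\psi_T(S_1)$ eventually lies downstream of $S_2$ presupposes that every $\phi^\sharp$-orbit meeting $\wt S_1$ also meets $\wt S_2$ (i.e.\ that the shadows coincide), which is exactly what must be shown. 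Tautness and transitivity do not by themselves rule out an orbit that hits $\wt S_1$ but misses $\wt S_2$; the paper's shadow analysis is what forces this.
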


We note here that `combinatorial equivalence' is a slightly weaker notion than orbit equivalence; see \Cref{global blowups}.
The proof of the theorem requires a detailed analysis of the flow space shadows of transverse surfaces that is carried out in \Cref{sec:generalshadows}.

\subsection{Veering triangulations as the main tool}
\label{sec:motivation}

As before, let $\varphi$ be a transitive pseudo-Anosov flow on $M$ and let $\kappa$ be a finite collection of closed orbits that kills its perfect fits. Such a collection exists by work of Fried \cite{fried1983transitive} and Brunella \cite{brunella1995surfaces}; see also Tsang \cite[Proposition 2.7]{Tsang_geodesic}. 
We let $\kappa_s$ denote the union of $\kappa$ and any singular orbits or boundary components of $M$. 

Our primary technical tool is a canonical \emph{veering triangulation} on the manifold $M \ssm \kappa_s$ which was essentially constructed by Agol and Gueritaud.
 In our previous work (\cite{LMT21}), we show the $2$-skeleton of this triangulation is a branched surface properly embedded in $M \ssm \kappa_s$ that is positively transverse to $\varphi$. This is exactly what makes the veering triangulation useful in understanding transverse surfaces.
 
 Given a properly embedded surface $S$ {in} $M$, one can ask whether $S$ can be put into a `relatively carried' position with respect to the triangulation. In essence, this means $S$ is carried by the branched surface away from $\kappa_s$ and intersects a neighborhood of $\kappa_s$ 
efficiently.
See \Cref{sec:veering_background} for the precise definitions and details. 

In \cite[Introduction]{Landry_norm} it was remarked that ``being [relatively] carried by $\tau^{(2)}$ is a combinatorial version of almost transversality." 
Our main result relating the veering triangulation to transverse surfaces makes this precise:

\begin{thm}[Almost transverse if and only if relatively carried]
\label{E}
		\label{th:honest}
	Let $\varphi$ be a transitive pseudo-Anosov flow on $M$ and let $\tau$ be any dual veering triangulation. For any surface $S$ in $M$, 
\[
	S \text{ is almost transverse to } \varphi \quad \iff \quad S \text{ is relatively carried by } \tau.	
\]

	Moreover, $S$ is \emph{honestly} transverse to $\varphi$ if and only if it is relatively carried $\tau$ without complementary annuli.
\end{thm}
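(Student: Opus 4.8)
\textbf{Proof proposal for \Cref{E}.}

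The plan is to prove the two directions of the equivalence separately, using the geometry of the veering branched surface $\tau^{(2)}$ inside $M \ssm \kappa_s$ and its positive transversality to $\varphi$ established in \cite{LMT21}.

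For the ``relatively carried $\Rightarrow$ almost transverse'' direction, suppose $S$ is isotoped into relatively carried position with respect to $\tau$. Away from a neighborhood $N(\kappa_s)$ of $\kappa_s$, the surface $S$ is carried by the branched surface $\tau^{(2)}$, which is positively transverse to $\varphi$; so the portion of $S$ in $M \ssm N(\kappa_s)$ is already transverse to $\varphi$. The work is then entirely local near $\kappa_s$: I would analyze how the relatively carried position forces $S$ to meet a neighborhood of each orbit $\gamma \subset \kappa_s$ in a controlled spiralling pattern (this is the content of the ``intersects $N(\kappa_s)$ efficiently'' condition), and show that this pattern is exactly what one obtains from a surface transverse to an appropriate dynamic blowup of $\varphi$ along $\gamma$. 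The number of prongs of the blowup and the way the sectors of the blown-up orbit are arranged should be dictated combinatorially by the way the carried surface wraps around $\gamma$; when there are complementary annuli in the carried position, these correspond to boundary-parallel annular pieces that allow $S$ to be made honestly transverse (no blowup needed), which is exactly the ``moreover'' clause. Assembling the transverse piece outside $N(\kappa_s)$ with the model transverse pieces inside, and checking the two match up along $\partial N(\kappa_s)$, yields a surface almost transverse to $\varphi$ and isotopic to $S$.

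For the ``almost transverse $\Rightarrow$ relatively carried'' direction, suppose $S$ is transverse to a dynamic blowup $\varphi^\sharp$ of $\varphi$. Here I would use the fact that the veering triangulation $\tau$ can itself be built (following Agol--Gu\'eritaud, and as used in our setup) from the flow data of $\varphi^\sharp$ on $M \ssm \kappa_s$: its $2$-skeleton $\tau^{(2)}$ is a branched surface positively transverse to $\varphi^\sharp$. A surface $S$ transverse to $\varphi^\sharp$ and the branched surface $\tau^{(2)}$ transverse to $\varphi^\sharp$ are both ``$\varphi^\sharp$-transverse objects'', and the standard move is to use the flow to push $S$ onto the branched surface: flowing $S$ forward (or backward) along $\varphi^\sharp$ and using a Markov-type partition / the product structure of the flow on the complement of $\kappa_s$, one shows $S$ becomes carried by $\tau^{(2)}$ away from $\kappa_s$. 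Near $\kappa_s$ one checks that transversality to $\varphi^\sharp$ translates into the efficient-intersection condition, using that $S$ has no negative intersections with the blown-up singular/boundary orbits. Here the ``minimally transverse'' normalization and the efficiency of $S$ with respect to $\kappa$ (coming from tautness in the downstream applications, but here assumed as part of being genuinely almost transverse) are what prevent trivial bigons from obstructing the carried position.

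The main obstacle I anticipate is the local model near $\kappa_s$: matching the ``efficient intersection with $N(\kappa_s)$'' combinatorial condition of the veering triangulation to the geometric picture of a surface transverse to a dynamic blowup. One must pin down precisely which blowup (how many prongs, what cyclic pattern of sectors) is determined by a given relatively carried position, show this is well-defined, and show conversely that every transverse-to-blowup local picture arises this way. This is where the book-keeping of \Cref{global blowups} and the flow-space shadow analysis of \Cref{sec:generalshadows} will have to be invoked, and where the honest-transversality refinement (relatively carried without complementary annuli) is most delicate, since one must verify that the absence of complementary annuli is exactly equivalent to being able to take the trivial (zero-prong, i.e.\ no) blowup. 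The global assembly --- gluing the outside-$N(\kappa_s)$ carried piece to the inside model and checking consistency of orientations and of the transverse direction along $\partial N(\kappa_s)$ --- should be routine once the local models are understood, modulo a standard innermost-disk argument to remove any bigons created in the process.
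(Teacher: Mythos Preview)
Your plan for the ``relatively carried $\Rightarrow$ almost transverse'' direction is essentially what the paper does (Section~4): analyze the coherently cooriented arc system on a meridional disk/annulus at each ladderpole tube and use it to build the blowup tree explicitly. However, you have the role of the annuli backwards. Ladderpole (``complementary'') annuli are precisely what \emph{force} a nontrivial blowup; their \emph{absence} is what permits honest transversality. Re-read \Cref{prop:efficientblowup}.

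The ``almost transverse $\Rightarrow$ relatively carried'' direction has a genuine gap. Your proposal to ``flow $S$ onto the branched surface'' using a Markov-type argument is not how the paper proceeds, and it is unclear how to make that converge: orbits can spend unbounded time between consecutive meetings with $\tau^{(2)}$, and there is no obvious limiting carried position. The paper's argument is cohomological rather than dynamical. First one shows (\Cref{lem:taut_from_flow}) that any almost transverse surface is taut and satisfies $\chi_-(S)=-e_\tau([S])$, by reading off the singularities of the induced foliation $S\cap W^s$. Then one performs a \emph{Fried blowup} along $\kappa$ to obtain a flow $\phi^*$ on $M^*$ with no perfect fits, so that the associated veering triangulation $\tau^*$ (which equals $\tau$ on the fully punctured manifold) is \emph{strict}. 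This is the key maneuver: strictness is exactly what is needed to apply the Combinatorial Strong TST (\Cref{combiSTST}), whose proof is an index-counting argument that forces every negative switch of $S\cap B^{s/u}$ into a bigon and then feeds into the carrying algorithm of \cite{Landry_norm}. Finally one blows back down. You are missing both the Euler-class/tautness step and the Fried-blowup reduction to the strict case.

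For the ``moreover'' clause, the nontrivial direction (honestly transverse $\Rightarrow$ no ladderpole annuli in efficient position) is not a local-model statement. It is deduced from the \emph{uniqueness} of minimal blowup type (\Cref{thm:combinatorial equivalence}): if $S$ were both honestly transverse to $\phi$ and efficiently carried with a ladderpole annulus, one would obtain two combinatorially inequivalent minimal blowups transverse to isotopic surfaces, a contradiction. So the shadow analysis of \Cref{sec:generalshadows} enters not as book-keeping for the local model but as the engine behind uniqueness.
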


See \Cref{thm:ATtocarried} and \Cref{cor:no_lad}.

\subsection*{Basic conventions}
\label{sec:veering_background}
In this paper all the 3-manifolds we consider are oriented, and all homology and cohomology groups have coefficients in $\R$. If $B$ is a subset of a metric space $A$, we denote the completion of $A-B$ in the induced path metric by $A\cut B$.

\subsection*{Acknowledgments} We thank Chi Cheuk Tsang for his detailed comments on an earlier draft.

\section{Veering triangulations and relatively carried surfaces}

We begin with some background on veering triangulations. Our goal is to establish \Cref{combiSTST}, which itself extends the main theorem from \cite{Landry_norm}.

\subsection{Index and train tracks}

A \textbf{surface with cusps} is a surface $S$ with a discrete collection of points in $\del S$ called \textbf{cusps} which are modeled on the point $(0,0)\in\{(x,y)\in \R^2\mid y\ge\sqrt{|x|}\}$. 
The \textbf{index} of a compact surface with cusps $S$ is 
\[
\ind(S)=2\chi(S)-\#\cusps(S).
\]

Recall that a \textbf{train track} in a surface $S$ is a 1-complex in $S$ with an everywhere-defined tangent space. This means that the edges meeting a given vertex are ``combed" so as to be tangent. The edges of a train track are called \textbf{branches} and the vertices are called \textbf{switches}. In this paper all the train tracks we consider will lie in the interior of surfaces.

Given a train track $t\subset S$, a \textbf{patch} of $t$ is a component of $S\cut t$. A patch naturally has the structure of a surface with cusps, with the cusps arising from the switches of $t$. The index of these patches is additive, in the sense that $2\chi(S)=\ind(S)=\sum \ind(p)$ where the sum is taken over all patches of $t$.

\subsection{Veering triangulations}

An ideal tetrahedron is a tetrahedron minus its vertices, and an ideal triangulation of a (necessarily noncompact) 3-manifold $Z$ is a cellular decomposition of $Z$ into ideal tetrahedra.

A \textbf{veering tetrahedron} $\Delta$ is an ideal tetrahedron together with the following extra data:
\begin{itemize}
\item Two faces are cooriented outward, and two are cooriented inward. These are called the top and bottom faces of $\Delta$, respectively.
\item The edge along which the top faces meet is labeled $\pi$, and so is the edge along which the bottom faces meet. These are called the top and bottom edges of $\Delta$, respectively.
\item The remaining edges, called equatorial edges, are labeled 0.
\item The equatorial edges are colored red or blue in an alternating fashion (see \Cref{fig:veertet}). In some of the literature the red and blue edges are called right and left veering, respectively, and the property of being right or left veering is called the \textbf{veer} of an edge.
\end{itemize}

Up to oriented equivalence, there are two veering tetrahedra in $\R^3$. The ``standard" veering tetrahedron is distinguished by the property that when viewed from above and projected to the plane as in \Cref{fig:veertet}, the red edges have positive slope and the blue edges have negative slope.

\begin{figure}
\centering
\includegraphics[]{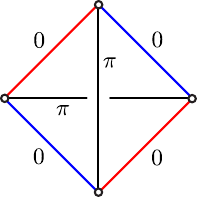}
\caption{The standard veering tetrahedron. Each face is cooriented out of the page.}
\label{fig:veertet}
\end{figure}

A \textbf{veering triangulation} of an oriented 3-manifold $Z$ is an ideal triangulation of $Z$ together with an assignment of veer to each edge and coorientation to each face, such that:
\begin{itemize}
\item each ideal tetrahedron is equivalent to the \emph{standard} veering tetrahedron via an orientation-preserving map, and
\item the sum of the 0 and $\pi$ labels around each edge of the ideal triangulation is $2\pi$.
\end{itemize}

We will always think of the the 0 or $\pi$ label of an edge of a veering tetrahedron as describing the dihedral angle for that edge. Face coorientations, together with the condition that edge labels sum to $2\pi$ around each edge, endow the 2-skeleton with the structure of a cooriented ``branched surface" as depicted in \Cref{fig:edgepinch}. We now briefly discuss branched surfaces.

\begin{figure}
\centering
\includegraphics[height=1in]{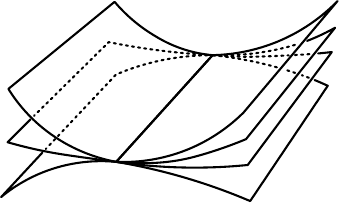}
\caption{A local picture of part of an edge of a veering triangulation, showing the 2-skeleton's branched surface structure.}
\label{fig:edgepinch}
\end{figure}

\subsection{Branched surfaces and carrying}
A \textbf{branched surface} is the 2-dimensional analogue of a train track: it is a 2-complex with a continuously varying tangent plane at each point, locally modeled on the quotient of a finite stack of  disks by identifying half disks of adjacent disks, requiring that the inclusion of each disk be smooth. If $B$ is a branched surface, then the \textbf{branch locus} $\brloc(B)$ is the union of all nonmanifold points of $B$, and the \textbf{sectors} of $B$ are the components of $B\cut \brloc(B)$.

Let $B$ be a branched surface living in a 3-manifold. A \textbf{standard neighborhood} of $B$ is a small tubular neighborhood of $B$ foliated in a standard way by intervals (see \cite{floyd1984incompressible, oertel1986homology}). This foliation is called the \textbf{vertical foliation} of $N(B)$. We say that $B$ is \textbf{cooriented} if the vertical foliation is oriented. If $\Lambda$ is a 2-dimensional lamination embedded in $N(B)$ transversely to the vertical foliation, we say $\Lambda$ is \textbf{carried} by $B$ (note $\Lambda$ could simply be a compact surface). If $B$ is cooriented and $\Lambda$ is cooriented, when we say that $\Lambda$ is \textbf{carried} by $B$ we additionally assume that each leaf of the vertical foliation passes from the negative side to the positive side of $\Lambda$ at each intersection point. We refer the reader to \cite{floyd1984incompressible, oertel1986homology} for more details on branched surfaces and carrying. 

There is a homotopy equivalence $N(B)\onto B$, called the \textbf{collapsing map}, that collapses the leaves of the vertical foliation. If $S$ is a surface carried by $B$, the \textbf{collapse} of $S$ is the image of $S$ under the collapsing map.

\begin{figure}
\centering
\includegraphics[height=1.25 in]{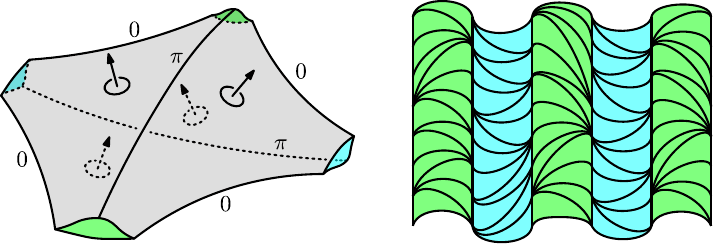}
\caption{Left: a truncated veering tetrahedron. Right: the tips of truncated tetrahedra divide the boundary of $M_U$ into upward and downward ladders (green and blue, respectively) separated by ladderpole curves.}
\label{fig:trunc}
\end{figure}

\subsection{Relative branched surfaces and relative veering triangulations}
\label{sec:reltube}
Let $M$ be a compact oriented 3-manifold with toral boundary. A \textbf{tube system} $U$ for $M$ is the union of a small closed tubular neighborhood of $\del M$ with a small closed tubular neighborhood of a link $L$ in $M$. Here it is possible that $\del M=\varnothing$ or $L=\varnothing$. The solid torus components of $U$ are called \textbf{solid tubes} and the components homeomorphic to $T^2\times I$ are called \textbf{hollow tubes}.
A \textbf{veering triangulation of $M$ relative to $U$} is a veering triangulation of $Z=\intr(M)-L$ such that 
\begin{itemize}
\item the 2-skeleton $\hbs$ of $\tau$ intersects $\del U$ transversely, and 
\item $\hbs\cap U$ is diffeomorphic to $(\hbs\cap \del U)\times [0,\infty)$.
\end{itemize}
We remark that for any veering triangulation on $Z$, a tube system of $M$ can be chosen to satisfy these conditions.
We also use the terminology ``relative veering triangulation of $M$ with tube system $U$."

\smallskip
Let $\tau$ be a relative veering triangulation of $M$ with tube system $U$. We set the notation
\[
\tau_U=\hbs\cap M\cut U.
\]
We see that $\tau_U$ is a branched surface in $M_U:=M\cut U$. There is an induced cellular decomposition of $M_U:=M\cut U$ whose cells are truncated veering tetrahedra, as shown on the left side of \Cref{fig:trunc}.
Each tip of a truncated veering tetrahedron, corresponding to an ideal vertex, is combinatorially a triangle with cooriented sides. However, the veering structure endows each tip with the smooth structure of a disk with 2 cusps. The 1-skeleton of the tessellation of $\del M$ is a train track $\del \tau_U\colon= \tau_U\cap \del M_U$ whose patches are the tips of truncated veering tetrahedra. 

A tip is called \textbf{upward} if it has two sides cooriented outward, and \textbf{downward} otherwise. The definition of veering triangulations implies, with some work, that the union of all upward tips meeting $\del M_0$ is a collection of annuli called \textbf{upward ladders}, and symmetrically for downward tips and \textbf{downward ladders}. See the right side of \Cref{fig:trunc}. The upward and downward ladders meet along a collection of curves called \textbf{ladderpole curves}. For a given component $T$ of $\del M_U$, the slope of the ladderpole curves on $T$ is called the \textbf{ladderpole slope} for $T$. This structure was first observed in \cite[Section 2]{futer2013explicit}; see that paper for more details.

A properly embedded surface $S\subset M$ is \textbf{relatively carried by $\tau$} if $S\cap (M\cut U)$ is carried by the branched surface $\tau_U$ in the standard sense of branched surfaces, and each component of $S\cap U$ is either a meridional disk in a solid tube or an annulus with at most one boundary component on $\del M$. 
There are two types of possible annuli in $S \cap U$; those that meet $\partial M$ and those that do not. We call the latter type \define{ladderpole annuli} since both their boundary components have ladderpole slope in $\partial M_U$.

Let $N(\tau_U)$ be a standard neighborhood of $\tau_U$ in $M\cut U$. The collapsing map $N(\tau_U)\onto \tau_U$ extends over $U$ by a map that is the identity away from a small neighborhood of $\del M_U\subset M$. Hence we can speak of the collapse of a surface relatively carried by $\tau_U$---this will be the union of some sectors of $\tau_U$ with some disks and annuli lying in $U$.

\subsection{Cusped tori}

Let $D$ be a compact disk with $n>0$ cusps in its boundary, and let $f\colon D\to D$ be an orientation-preserving diffeomorphism. The mapping torus $T$ of $f$ is called a \textbf{cusped solid torus} (see \Cref{fig:cuspedtori}), and the suspensions of the cusps of $D$ under $f$ are called the \textbf{cusp curves} of $T$. The \textbf{index} of $T$ is defined to be the index of $D$, that is $2(1)-n$. We say that $n$ is the \textbf{number of prongs} of $T$, denoted $\prongs(T)$. In particular $\ind(T)=2-\prongs(T)$. Note that $\prongs(T)$ is not necessarily the same as the number of cusp curves; in general the number of cusp curves is equal to $\prongs(T)$ divided by the order of the permutation obtained by restricting $f$ to the cusps of $D$.

\begin{figure}
\centering
\includegraphics[height=1.25in]{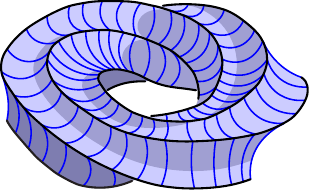}
\caption{An example of a cusped solid torus. To obtain a cusped torus shell, we would drill out a neighborhood of the core circle.}
\label{fig:cuspedtori}
\end{figure}

A \textbf{cusped torus shell} is the object obtained by modifying the definition of a cusped solid torus by replacing the disk $D$ with an annulus with one smooth boundary component and one with $n$ cusps. The cusp curves of cusped torus shells are defined as for cusped solid tori. We do not define the index of a cusped torus shell because there is no canonical choice of meridian.

\subsection{Stable and unstable branched surfaces}\label{sec:bs}

Let $M$ be a compact oriented 3-manifold, and let $\tau$ be a veering triangulation of $M$ relative to a tube system $U$.
There are two branched surfaces with generic branch locus associated to $\tau$. The \textbf{stable branched surface}, denoted $B^s$, intersects each veering tetrahedron as shown the top row of \Cref{fig:stableunstable}. The \textbf{unstable branched surface}, denoted $B^u$, intersects each veering tetrahedron as shown in the bottom row of \Cref{fig:stableunstable}. In this paper we are not concerned with the positions of $B^s$ and $B^u$ relative to each other---it is enough to know that they intersect each tetrahedron as described. We refer the reader to \cite[Section 4.2]{LMT20} for additional details.

\begin{figure}
\begin{center}
\includegraphics[height=2.5in]{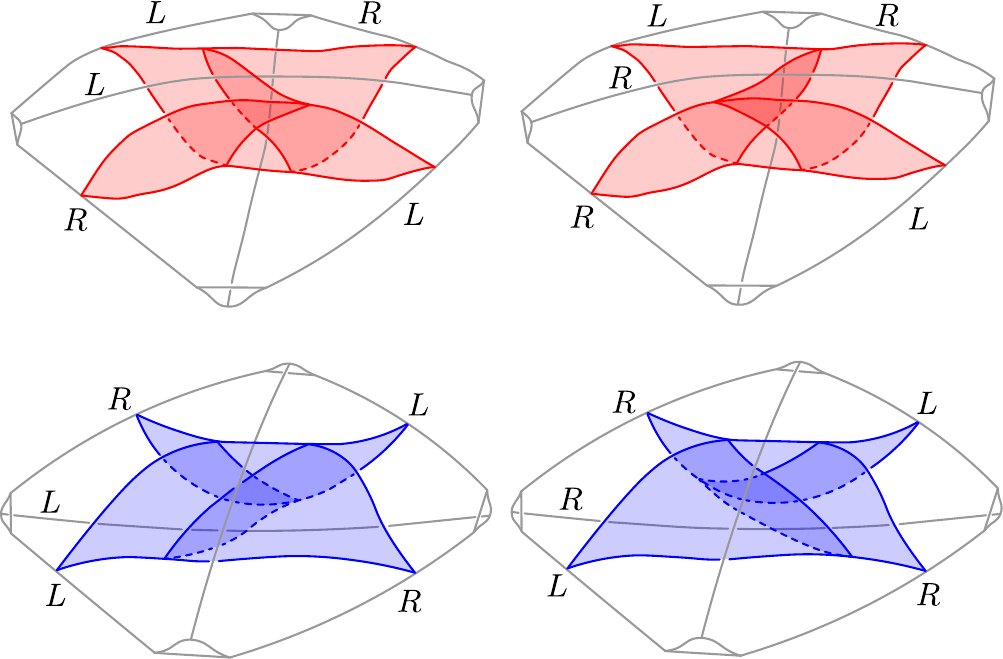}
\caption{The stable and unstable branched surfaces of a veering triangulation intersect veering tetrahedra as shown, where $L$ and $R$ labels denote left and right veering edges, respectively. Note the veers of the bottom edges in the top row are irrelevant, as are the veers of the top edges in the bottom row.}
\label{fig:stableunstable}
\end{center}
\end{figure}

The components of $M\cut B^s$ and $M\cut B^u$ are cusped solid tori and cusped torus shells, each of which contains a single solid or hollow tube, respectively. We will refer to these as the \textbf{cusped tori of $B^s$ and $B^u$}.

Let $U_0$ be a component of $U$, and let $T_0^s$ and $T_0^u$ be the cusped tori of $B^s$ and $B^u$ containing $U_0$. Let $t_0$ be the tessellation induced by the intersection of $\tau$ with $\del U_0$. Then as described in \cite[Section 5.1.1]{LMT20}, the cusp curves of $T_0^s$ are in canonical bijection with the cores of upward ladders of $t_0$. Similarly the cusp curves of $T_0^u$ are in canonical bijection with the cores of downward ladders. In particular $T_0^s$ and $T_0^u$ are diffeomorphic.

\subsection{Spheres, disks, tori, and annuli}

We continue to assume $\tau$ is a relative veering triangulation of $M$ relative to $U$.

The existence of $\tau$ restricts the types of properly embedded surfaces in $M$ with nonnegative Euler characteristic that can exist.
As observed in \cite{Schleimer2019veering}, the branched surfaces $B^s$ and $B^u$ are laminar and therefore carry essential laminations; it follows that $M$ is irreducible \cite{Li02, GO89}. (For a given boundary component of $M$, the degeneracy slope of both laminations is equal to the ladderpole slope of the veering triangulation). The next lemma lets us analyze disks and tori.

\begin{lemma}\label{lem:noposindex}
Let $M$ be a 3-manifold admitting a relative veering triangulation $\tau$. Let $S$ be any 
properly embedded surface in $M$. Then $S$ can be isotoped rel $\del M$ so that $S\cap B^s$ and $S\cap B^u$ have no patches of positive index disjoint from $\del S$.
\end{lemma}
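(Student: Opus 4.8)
## Proof Proposal for Lemma \ref{lem:noposindex}

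The plan is to run a standard innermost-disk / taut-normal-form argument, exploiting the fact that $B^s$ and $B^u$ are laminar branched surfaces (hence carry essential laminations) and that $M$ is therefore irreducible with incompressible boundary tori. The key point is that a patch of positive index disjoint from $\partial S$ is either a disk patch with at most one cusp or a (smooth, cuspless) disk, and each such configuration produces a compression or a bigon that can be removed by an isotopy of $S$ supported away from $\partial M$.

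First I would make $S$ transverse to the branch loci of $B^s$ and $B^u$ and consider the train track $t = S \cap B^s$ (the argument for $B^u$ is identical, and one handles them successively). By the index additivity $\ind(S) = \sum_p \ind(p)$ over patches $p$ of $t$ in $S$, and since patch indices are integers, a patch of positive index has index $1$ or $2$: it is a disk with one cusp ($\ind = 1$) or a disk with no cusps ($\ind = 2$). A cuspless disk patch disjoint from $\partial S$ lies entirely in the interior of $M$ and its boundary is a simple closed curve on $B^s$ bounding a disk; because $B^s$ is laminar this curve bounds a disk in a leaf of the carried lamination (or, more elementarily, one uses irreducibility of $M$ to cap off and isotope $S$ across a ball), reducing $S \cap B^s$ without increasing complexity elsewhere. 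This is the easy case.

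The substantive case is a one-cusped disk patch $p$ disjoint from $\partial S$. Its frontier in $S$ is an arc of $t$ together with the cusp, so $p$ is a ``bigon'' between $S$ and a sector of $B^s$: pushing $p$ across the corresponding sector of $B^s$ is an isotopy of $S$, supported in a neighborhood of that sector (hence away from $\partial M$, after an initial isotopy rel $\partial M$ putting $S$ in good position near the tube system), that strictly decreases the number of components of $t$ (equivalently decreases $|S \cap \brloc(B^s)|$, measured with a suitable lexicographic complexity). One must check this move does not create new positive-index patches — it may merge patches, but index is additive under the merge and a merge of a positive-index patch with any patch $q$ yields index $\ind(p) + \ind(q) - \ldots$; I would set up the complexity (say, the number of branch-locus intersection arcs, or a Gabai-style weight on patches) so that it is strictly monotone and terminates. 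Iterating removes all positive-index patches for $B^s$; then repeat the entire argument for $B^u$, noting that the $B^u$-moves are supported near $B^u$ and one checks they do not reintroduce positive-index $B^s$-patches (or simply re-run the finite process, using that the total complexity $|S \cap \brloc(B^s)| + |S \cap \brloc(B^u)|$ strictly decreases at each step).

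The main obstacle I anticipate is the termination/monotonicity bookkeeping: one must choose a complexity on the pair $(S \cap B^s,\, S \cap B^u)$ that is decreased by every bigon-pushing and innermost-disk move and that controls the creation of new patches under patch merges, so that the process stops with no positive-index patches disjoint from $\partial S$ for \emph{both} branched surfaces simultaneously. The irreducibility of $M$ (from \cite{Li02, GO89}, as recalled just before the lemma) and incompressibility of $\partial M$ are exactly what guarantee that the innermost cuspless-disk case can always be resolved by an isotopy rather than being an essential obstruction; everything else is the careful normal-form induction.
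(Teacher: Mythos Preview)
Your overall strategy---reduce to nullgons and monogons, then remove them by innermost--disk/irreducibility arguments---is the same as the cited proof from \cite{Landry_norm}, and your treatment of the nullgon (index~$2$) case is correct.

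The genuine gap is your handling of monogons. A one--cusped disk patch $p$ has frontier in $S$ a \emph{closed} curve in $t=S\cap B^s$ with a single cusp; it is not ``an arc together with the cusp'' in any sense that lets you view $p$ as a bigon between $S$ and a single sector of $B^s$. There is no canonical sector to push across, the arc $\partial p$ need not lie in one sector, and even when it does, $\partial p$ need not bound a disk there---so the move you describe is not an isotopy of $S$. The correct observation is that monogon patches cannot occur at all: a one--cusped disk in a complementary cusped torus of $B^s$ is, after thickening to a fibered neighborhood $N(B^s)$, exactly a monogon for $B^s$ in the Gabai--Oertel sense, and the paragraph immediately preceding the lemma records that $B^s$ and $B^u$ are laminar. (Equivalently, one checks homologically that a disk patch meeting the cusp curves of its cusped torus exactly once forces that torus to be a one--pronged cusped solid torus, which is itself a monogon region and hence excluded by laminarity.) Once monogons are ruled out, only nullgons remain; each innermost nullgon removal strictly decreases the number of circle components of $S\cap B^s$, so your termination worry evaporates, and the argument for $B^u$ is identical.
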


\begin{proof}
This is \cite[Lemma 5.1]{Landry_norm} in the additional generality of our setting, i.e. $S$ may have boundary. The same proof (which uses irreducibility of $M$) applies.
\end{proof}

Now suppose that $D$ is a properly embedded disk in $M$. Applying \Cref{lem:noposindex}, we can isotope $D$ to have no patches of positive index disjoint from $\del D$. Euler characteristic forces $D\cap B^s=\varnothing$, so $D$ lies in a cusped torus shell of $B^s$ and is isotopic into $\del M$.

Now we consider annuli and tori. We say that the relative veering triangulation $\tau$ is \textbf{strict} if each cusped solid torus of $B^s$ has negative index. By the discussion above this is equivalent to requiring the same of $B^u$, or requiring that the meridional curve for every solid tube has geometric intersection number $\ge6$ with the union of all ladderpole curves.

Suppose that $\tau$ is strict, and that $A$ is a properly embedded annulus in $M$. Note that no patch of $A\cap B^s$ meeting $\del A$ can have positive index. After an application of \Cref{lem:noposindex}, all patches of $A\cap B^s$ disjoint from $\del A$ will have nonpositive index. This implies that all patches of $A\cap B^s$ have index $0$. By strictness, none of the patches disjoint from $\del A$ is a meridional disk for a solid tube; hence $A$ may be isotoped to lie outside of $U$ away from a regular neighborhood of $\del A$, unless $A$ already lies entirely inside a component of $U$. Since $M- U$ admits a complete hyperbolic metric by \cite{hodgson2011veering, futgue11}, $A$ must be homotopic rel $\del A$ into $\del M$.

Continuing to assume $\tau$ is strict, suppose that $T$ is a torus embedded in $M$. As with annuli, we can apply \Cref{lem:noposindex} to arrange for $T\cap B^s$ to have no patches of positive index. In this case, strictness implies that $T$ can be isotoped to lie outside of $U$. As above, $M-U$ admits a complete hyperbolic metric. Hence, $T$ is either parallel to a component of $\del M$ or $T$ is compressible in $M-U$, hence also in $M$.

We summarize the above discussion in the following proposition.

\begin{proposition} \label{prop:top}
If $\tau$ is a relative veering triangulation of a 3-manifold $M$, then:
\begin{enumerate}[label=(\alph*)]
\item $M$ is irreducible and $\del$-irreducible, and
\item if $\tau$ is strict, then $M$ is also atoroidal and anannular.
\end{enumerate}
\end{proposition}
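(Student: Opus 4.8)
# Proof Proposal for Proposition \ref{prop:top}

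The plan is to assemble the claims directly from the structural analysis that immediately precedes the proposition statement, organizing the already-established facts into a clean argument. Essentially all the substantive work has been done in the discussion of spheres, disks, annuli, and tori; what remains is to record it carefully and fill the small gaps.

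\textbf{Part (a): irreducibility and $\partial$-irreducibility.} First I would note that the branched surfaces $B^s$ and $B^u$ are laminar and carry essential laminations, as cited from \cite{Schleimer2019veering}; by the results of Gabai--Oertel and Li \cite{GO89, Li02}, a $3$-manifold carrying an essential lamination is irreducible, so $M$ is irreducible. For $\partial$-irreducibility I would invoke \Cref{lem:noposindex} applied to a properly embedded disk $D$: after isotopy rel $\partial M$, $D\cap B^s$ has no positive-index patches disjoint from $\partial D$, and an Euler characteristic count (each patch meeting $\partial D$ has nonpositive index, the indices sum to $2\chi(D)=2$) forces $D\cap B^s = \varnothing$. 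Then $D$ lies in a single cusped torus shell of $B^s$, which deformation retracts to its hollow tube and hence to a component of $\partial M$; so $D$ is $\partial$-parallel, and $M$ is $\partial$-irreducible. This is exactly the argument already spelled out after \Cref{lem:noposindex}, so I would just transcribe it.

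\textbf{Part (b): atoroidal and anannular under strictness.} Here I would reproduce the two paragraphs preceding the proposition. For an essential annulus $A$: by \Cref{lem:noposindex} and the observation that patches meeting $\partial A$ cannot have positive index, all patches of $A\cap B^s$ have index $0$; strictness rules out meridional disks among the interior patches, so $A$ is isotopic (rel a neighborhood of $\partial A$) out of $U$, or already lies inside a component of $U$. In either case, since $M-U$ admits a complete hyperbolic metric by \cite{hodgson2011veering, futgue11}, $A$ is homotopic rel $\partial A$ into $\partial M$, so $M$ is anannular. For a torus $T$: again apply \Cref{lem:noposindex} to arrange no positive-index patches in $T\cap B^s$; strictness lets us isotope $T$ out of $U$; hyperbolicity of $M-U$ then forces $T$ to be boundary-parallel or compressible in $M-U$, hence compressible or boundary-parallel in $M$ — i.e. $M$ is atoroidal.

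\textbf{Anticipated obstacle.} The only genuinely delicate point is making sure the ``isotope out of $U$'' steps are stated with the right caveats — in the annulus case one must allow for $A$ lying entirely inside a component of $U$ (where it is trivially compressible or boundary-parallel), and in the disk case one must note the cusped torus shell retracts onto $\partial M$ rather than being a disk. Since the preceding text already handles these, the proof is essentially a matter of citation and bookkeeping: I would simply write ``This is a summary of the preceding discussion'' and then, for the reader's convenience, restate the three key isotopy reductions (disk, annulus, torus) with pointers to \Cref{lem:noposindex}, the strictness hypothesis, and the hyperbolicity of $M-U$.
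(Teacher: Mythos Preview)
Your proposal is correct and follows exactly the paper's approach: the proposition is stated in the paper as a summary of the immediately preceding discussion, and you have faithfully reproduced that discussion (irreducibility via laminar branched surfaces and \cite{Li02, GO89}, the disk/annulus/torus arguments via \Cref{lem:noposindex} plus strictness plus hyperbolicity of $M-U$). There is nothing to add; the paper itself gives no separate proof beyond the paragraphs you have restated.
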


\subsection{The dual graph}

Let $M$ be a compact 3-manifold, and let $\tau$ be a veering triangulation of $M$ relative to a tube system $U$.

We construct a graph by placing a vertex in each tetrahedron of $\tau$ and placing an edge between two vertices for each face identification. There is an obvious embedding of $\Gamma$ into $M$ such that each edge of $\Gamma$ passes through one face of $\tau$, and we can orient each edge so that these intersections are compatible with the coorientation of the $\tau$-faces. This directed graph (together with its embedding into $M$) is called the \textbf{dual graph}, and denoted $\Gamma$. Since each vertex of $\Gamma$ has two incoming and two outgoing edges, $\Gamma$ is a 1-cycle representing a class in $H_1(M)$ that (by a slight abuse) we also call $\Gamma$.

It is clear from \Cref{fig:stableunstable} that the branch loci of both $B^s$ and $B^u$ can be canonically identified with $\Gamma$. We will always think of $\brloc(B^s)$ and $\brloc(B^u)$ as directed graphs using this identification with $\Gamma$. Note that this endows each cusp curve of each cusped torus of $B^s$ and $B^u$ with an orientation. It turns out that these orientations are coherent, in the sense that any two cusp curves of a cusped torus are isotopic preserving orientation.

If $T$ is a cusped solid torus, there is a natural orientation of its core curve, defined by the property that any cusp curve is homotopic to a positive multiple of the core curve.

\subsection{Removable annuli and efficient carrying.}

Let $\tau$ be a veering triangulation of $M$ relative to a tube system $U$, and let $S$ be a surface in $M$ relatively carried by $\tau$. This position is not (combinatorially) unique in general, and it will be important for us (e.g. in \Cref{prop:efficientblowup}) to discuss a particular simplification that can eliminate components of $S\cap U$.

For this, suppose that $A$ is an annulus component of $S\cap U$ disjoint from $\del M$, such that after collapsing $S$, both of its boundary components lying on \emph{adjacent} ladderpoles of some ladder $\ell$. If this collapsing of $A$ is homotopic in $U$ to $\ell$ fixing $\del A$, then we say that $A$ is a \textbf{removable annulus}. 
If $S$ is relatively carried by $\tau$ without any removable annuli, then we say that $S$ is \textbf{efficiently relatively carried by $\tau$}. 
The next lemma justifies this terminology by saying that ``removable annuli can be removed":

\begin{lemma}\label{lem:noladders}
Let $\tau$ be a veering triangulation of a 3-manifold $M$ relative to a tube system $U$. Let $S$ be a surface relatively carried by $\tau$. Then $S$ is isotopic to a surface efficiently relatively carried by $\tau$.
\end{lemma}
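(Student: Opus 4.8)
The plan is to give an explicit local move that removes a single removable annulus and decreases some complexity measure, then iterate. Here is the setup. Suppose $A$ is a removable annulus component of $S\cap U$, lying in a solid or hollow tube $U_0$ corresponding to a ladder $\ell$. By definition, after collapsing $S$, the two boundary components $\del A = \alpha_1 \cup \alpha_2$ of $A$ lie on adjacent ladderpole curves bounding $\ell$, and $A$ collapses to an annulus homotopic in $U$ to $\ell$ rel $\del A$. The curves $\alpha_1,\alpha_2$ are carried by the train track $\del\tau_U$ on $\del M_U$ and run parallel to the core of the ladder $\ell$. The ladder $\ell$, being an annulus tessellated by tips, has a natural ``width'' — the number of tips one crosses going from one ladderpole to the adjacent one. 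The idea is to push $A$ across $\ell$: there is a product region $P \cong \ell \times I$ in $M_U$, embedded, with $\ell \times \{0\}$ on the boundary torus $\del M_U$, $\ell \times \{1\}$ a subsurface of the branched surface $\tau_U$ (a union of tips on the opposite side), and $P \cap S = A$ after a small isotopy. (That such a product region exists is exactly the content of the ladder structure from Futer--Gu\'eritaud, together with the homotopy hypothesis making $A$ removable, which pins down the isotopy class.) Isotoping $A$ across $P$ and slightly past $\ell \times \{1\}$ replaces $A$ with a sub-surface of $S$ that is carried by $\tau_U$ in the honest branched-surface sense, eliminating the component $A$ from $S \cap U$.

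The key steps, in order, are: (1) Make precise the product region $P = \ell \times I$ and verify it is embedded and meets $S$ only in $A$; here one uses that $\tau_U \cap U_0$ is a product $(\tau_U \cap \del U_0) \times [0,\infty)$, so the region between $\del M_U$ and the first ``layer'' of the branched surface past $\ell$ is genuinely a product. (2) Check the homotopy hypothesis in the definition of removable annulus is exactly what is needed to conclude $A$ is \emph{isotopic} (not merely homotopic) into $P$ rel $\del A$, using $\del$-irreducibility of $M$ from \Cref{prop:top}(a) and incompressibility of the boundary tori — so that an innermost-disk / irreducibility argument upgrades the homotopy to an ambient isotopy. (3) Perform the isotopy of $S$ supported in a neighborhood of $P$, obtaining a new surface $S'$, isotopic to $S$, still relatively carried by $\tau$, with one fewer annulus component in $S' \cap U$ — and crucially verify that this isotopy creates \emph{no new} components of $S' \cap U$ and does not destroy the relatively-carried condition elsewhere (the isotopy is supported near $U_0$ and only pushes $A$ out of $U_0$ onto $\tau_U$). (4) Choose a complexity, e.g. the number of components of $S \cap U$ that are non-meridional annuli (equivalently the ladderpole annuli plus boundary-touching annuli), observe the move strictly decreases it, and conclude by induction that finitely many moves produce an efficiently relatively carried surface.

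The main obstacle I expect is Step (2)–(3): ensuring the local move is \emph{clean}, i.e. that pushing $A$ across the product region does not interfere with the rest of $S \cap U$ (other annuli sharing the tube $U_0$, or meridional disks) and does not create new intersections with $\del M$. One has to be careful that several removable annuli in the same tube are ``nested'' compatibly so they can be pushed off in order from outermost to innermost, and that after pushing one off, the collapse of the resulting surface still lands in sectors of $\tau_U$ plus disks/annuli in $U$. This is a standard but fiddly normal-surface-style cut-and-paste argument; the veering/ladder combinatorics of \cite{futer2013explicit} and \cite{LMT20} provide the product structure that makes it go through, and $\del$-irreducibility from \Cref{prop:top} rules out the bad configurations. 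A secondary subtlety is that a priori the move might turn an annulus into something that is not carried because its collapse wraps the wrong way; the coherence of the branch-locus orientations (all cusp curves of a cusped torus coherently oriented) together with the definition of carrying for cooriented branched surfaces is what guarantees the pushed-off piece is positively transverse to the vertical foliation, hence genuinely carried.
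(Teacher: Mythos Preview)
Your approach is essentially the same as the paper's: remove removable annuli one at a time by an explicit local isotopy (what the paper calls the reverse of an ``annulus move'' from \cite{Landry_norm}), and iterate. The paper's proof is much shorter because it simply cites that move and the accompanying figure rather than re-deriving the product structure.

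One point worth noting: your main stated obstacle --- ensuring the move does not interfere with other components of $S\cap U_0$ --- is handled in the paper by processing \emph{innermost} annuli first, where ``innermost'' means the component of $U_0\cut A$ containing the ladder $\ell$ is disjoint from $S$. This is the opposite order from what you suggest (``outermost to innermost''), and it is exactly what makes the move clean: with nothing between $A$ and $\ell$, the isotopy pulling $A$ across $\ell$ and out of the tube is unobstructed. Your appeal to $\partial$-irreducibility to upgrade homotopy to isotopy in Step~(2) is unnecessary once you work with an innermost $A$, since the region between $A$ and $\ell$ is then a genuine embedded product with no other pieces of $S$ inside. Also, your description places the product region $P$ in $M_U$, but $A$ lives in the tube $U_0$; the actual move pushes $A$ from inside $U_0$ across the ladder on $\partial M_U$ so that it becomes carried by $\tau_U$.
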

\begin{proof}
Suppose $A$ is a removable annulus, with both components of $\del A$ carried by the ladderpoles of a ladder $L$. If $A$ is innermost, meaning that the component of $U\cut A$ containing $L$ is disjoint from $S$, then we may apply an \emph{annulus move} as defined in \cite[Sec. 4.5]{Landry_norm} and shown in \Cref{fig:annulusmove} (technically, this is the reverse of what that paper calls an annulus move) to eliminate $A$ while preserving the fact that $S$ is relatively carried by $\tau$. Applying this argument finitely many times to innermost such annuli, we are done.
\end{proof}

\begin{figure}
\begin{center}
\includegraphics[width=4.25in]{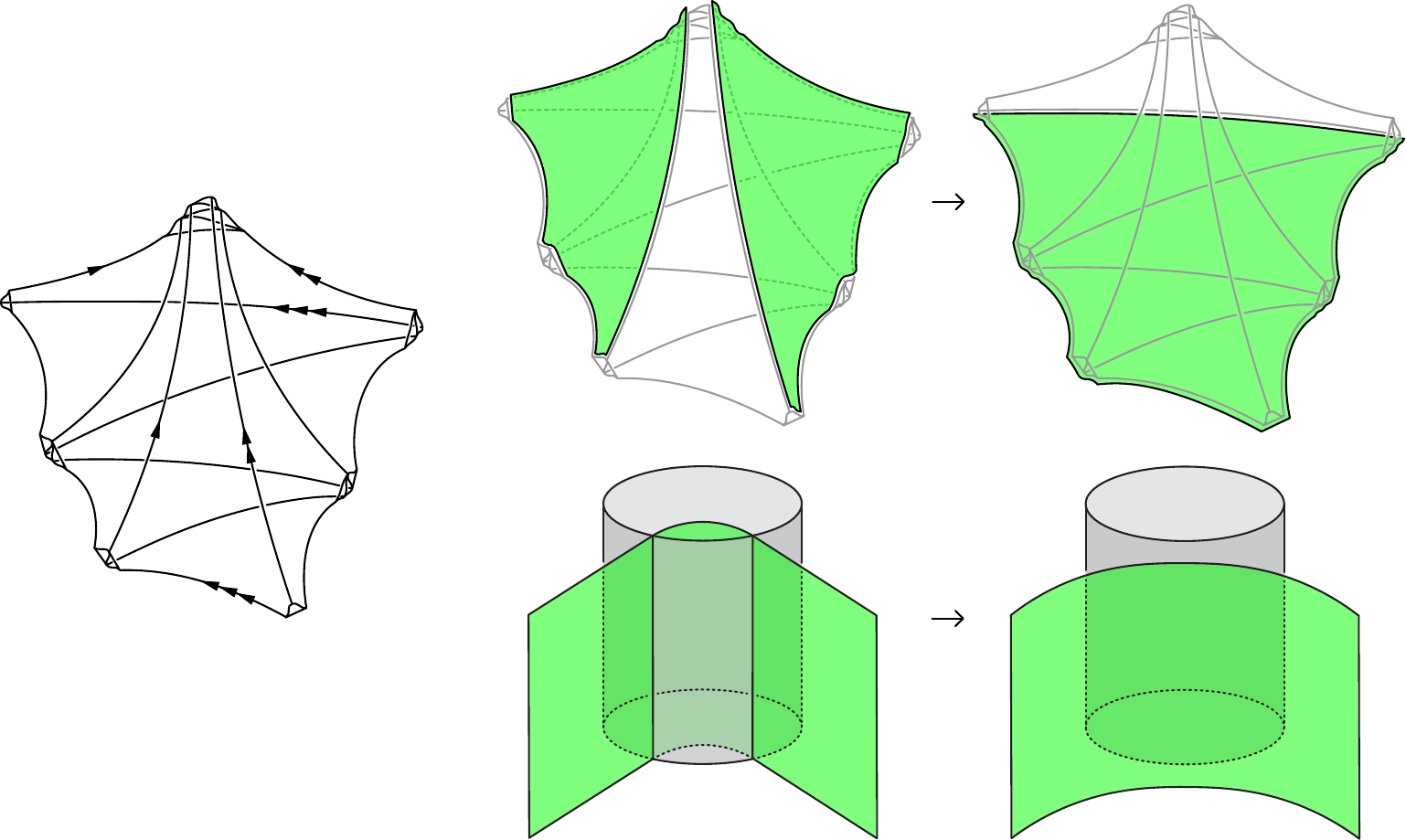}
\caption{Left: part of a veering triangulation with some edge identifications specified. Right, top row: before and after an annulus move corresponding to an upward ladder, looking only at $M\cut U$. Right, bottom row: a cartoon of what this annulus move accomplishes---it pulls an annulus contained in a relatively carried surface out of a tube. Note the tube could be hollow, although a solid one is pictured here.}
\label{fig:annulusmove}
\end{center}
\end{figure}

\subsection{Thurston's norm on homology and its dual}\label{sec:tnorm}

We now briefly define the Thurston norm on $H_2(M,\del M)$ and mention some of its properties. See \cite{thurston1986norm} for more details and proofs.
Let $S$ be an oriented surface that is properly embedded in $M$, and define
\[
\chi_-(S)=-\chi \left(S-(\text{sphere, disk, torus, and annulus components})\right).
\]
For an integral homology class $\alpha\in H_2(M, \del M)$, define 
\[
x(\alpha)=\min_{[S]=\alpha}\{\chi_-(S)\},
\]
where the minimum is taken over all oriented properly embedded surfaces representing $\alpha$.

Thurston showed that this function on the integral lattice of $H_2(M,\del M)$ extends to a continuous convex function $H_2(M,\del M)\to \R$ that is linear on rays through the origin, and vanishes precisely on the subspace spanned by the classes of properly embedded surfaces with nonnegative Euler characteristic. If there are no such surfaces representing nonzero homology classes, then $x$ is a vector space norm on $H_2(M,\del M)$; otherwise $x$ is merely a pseudonorm. In a slight abuse of terminology, $x$ is known as the \textbf{Thurston norm} regardless of whether it is a \emph{bona fide} norm. We often think of $x$ as a function on $H^1(M)$ via Poincar\'e/Lefschetz duality.

The \textbf{dual Thurston norm} $x^*\colon H^2(M,\del M)\to \R\cup{\infty}$ is defined by 
\[
x^*(u)=\sup\{ u\mid x(u)\le 1\}.
\]
Regardless of whether $x$ is a norm, the function $x^*$ restricts to a norm on the subspace on which it takes finite values.

\subsection{The Euler class}\label{sec:eulerclass}

Let $\gamma$ be the homology class of the union of the oriented cores of the solid torus components of $U$. Define $e_\tau\in H^2(M, \del M)$ by the formula
\[
2e_\tau=\langle 2\gamma- \Gamma, \cdot \rangle.
\]
Here $\langle \cdot,\cdot \rangle$ is the intersection pairing between $H_1(M)$ and $H_2(M,\del M)$. We call $e_\tau$ the \textbf{Euler class} of $\tau$.

We now explain that this generalizes previous definitions in the literature of ``Euler classes" for veering triangulations.
If $U$ has no solid torus components then the formula reduces to $2e_\tau=-\langle \Gamma, \cdot\rangle$, which is the formula for combinatorial Euler class given in \cite{LMT20}, based on Lackenby's definition for arbitrary taut ideal triangulations from \cite{lackenby2000taut}.

If $T$ is a cusped solid torus of $B^s$, let $\beta_T$ be the union of cusp curves of $T$. Then $\beta_T$ is homologous to the 1-chain $\prongs(T)\cdot \core(T)$, which can be seen by homotoping $\beta_\tau$ onto $\core(T)$.
The Euler class defined in \cite{Landry_norm} is an element of the first homology of a closed manifold defined as $\frac{1}{2}\sum \ind(T)\cdot [\core(T)]$, where the sum is taken over all cusped solid tori of $B^s$ (all cusped tori are solid here because the manifold is closed). We have
\begin{align*}
\sum_{} \ind(T)\cdot [\core(T)]
&=\sum(2-\prongs(T))\cdot [\core(T)]\\
&=\sum(2[\core(T)]-[\beta_T])\\
&=2\gamma-\Gamma,
\end{align*}
where in the last line we have used that $\Gamma$ is homologous to the union of $\beta_T$ over all tubes $T$. Therefore in the case when $B^s$ has no cusped torus shells, our Euler class is Poincar\'e dual to the Euler class from \cite{Landry_norm}. If cusped torus shells are present, we can still apply the above analysis to each cusped solid torus. This shows the following.

\begin{lemma}\label{lem:partialeulerclassformula}
Let $c$ be the union of all 
cusp curves of cusped torus shells of $B^s$. Then
\[
2e_\tau=\left\langle\left( \sum \ind(T_i)\cdot\core(T_i)\right)-c,\cdot\right\rangle,
\]
where the sum is over all cusped solid tori $T_i$ of $B^s$.
\end{lemma}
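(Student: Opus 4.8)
The plan is to mimic the computation already carried out for cusped \emph{solid} tori, but now bookkeeping the contribution of the cusped torus \emph{shells} of $B^s$ separately, since these have no well-defined index. First I would recall that the manifold $M\cut U$ decomposes under $B^s$ into cusped solid tori (one around each solid tube of $U$) and cusped torus shells (one around each hollow tube of $U$, i.e. around each component of $\del M$), and that under the canonical identification $\brloc(B^s)\cong\Gamma$ the $1$-cycle $\Gamma$ is homologous to the union over \emph{all} cusped tori $T$ (solid and shell) of the cusp-curve cycle $\beta_T$. This is the key homological input and is exactly what was used in the solid case; for a shell it still makes sense to form $\beta_T$ as the union of its cusp curves, even though $\core(T)$ and hence the index are not defined.

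Next I would split the sum. Write $2\gamma - \Gamma$ and replace $\Gamma$ by $\sum_T \beta_T$ over all $T$. For the solid tori $T_i$ we have $\beta_{T_i}\simeq \prongs(T_i)\cdot\core(T_i)$ (homotope the cusp curves onto the core), and $\gamma=\sum_i \core(T_i)$ is precisely the union of the oriented cores of the solid tubes; thus $2\core(T_i)-\beta_{T_i}\simeq (2-\prongs(T_i))\core(T_i)=\ind(T_i)\cdot\core(T_i)$, reproducing the earlier display. For the shell components there is no $\core$ contribution to $\gamma$, so these contribute only $-\beta_T$, and by definition the union of these cusp curves over all shells is exactly $c$. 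Collecting terms,
\[
2\gamma - \Gamma \;\simeq\; \sum_i \ind(T_i)\cdot\core(T_i)\;-\;c,
\]
and pairing with $H_2(M,\del M)$ via $\langle\cdot,\cdot\rangle$ gives the claimed formula for $2e_\tau$.

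I expect the only real point requiring care — the main obstacle, such as it is — to be verifying that $\Gamma$ is homologous to $\sum_T\beta_T$ including the shell terms, i.e. that the earlier identity "$\Gamma$ is homologous to the union of the $\beta_T$ over all tubes $T$" genuinely accounts for hollow tubes and not merely solid ones. This should follow from the local picture of $B^s$ in each veering tetrahedron (\Cref{fig:stableunstable}) together with the identification of $\brloc(B^s)$ with the dual graph $\Gamma$: each cusp curve of a cusped torus of $B^s$ is a cycle in $\brloc(B^s)$, and summing over all cusped tori (solid and shell) covers every directed edge of $\Gamma$ with the correct multiplicity. Granting this, the rest is the bookkeeping above. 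One should also note that the orientations match: the coherent orientation on the cusp curves induced from $\Gamma$ is what makes $\beta_{T_i}$ a \emph{positive} multiple of $\core(T_i)$, so no signs are lost. With these points in place the lemma follows immediately from the definition $2e_\tau=\langle 2\gamma-\Gamma,\cdot\rangle$.
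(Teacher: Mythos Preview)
Your proposal is correct and follows essentially the same approach as the paper: start from the definition $2e_\tau=\langle 2\gamma-\Gamma,\cdot\rangle$, use that $\Gamma$ is homologous to the union of cusp-curve cycles $\beta_T$ over all cusped tori (solid and shell), convert the solid-torus contributions via $\beta_{T_i}\simeq\prongs(T_i)\cdot\core(T_i)$ and $\gamma=\sum_i\core(T_i)$ to get $\ind(T_i)\cdot\core(T_i)$, and observe that the shell contributions give exactly $-c$. The paper's argument is presented in the paragraph immediately preceding the lemma statement and is identical in structure to yours.
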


The next two lemmas will be key tools for connecting veering triangulations to the Thurston norm. They are analogous to \cite[Lemma 5.3, Lemma 5.4]{Landry_norm} and the proofs are adaptations of the proofs of those lemmas to the more general setting here. Before we state and prove them, we make a few definitions and set some notation.

Let $S$ be a surface properly embedded in $M$ transverse to $B^s$ such that each patch $p$ of the train track $t^s:=S\cap B^s$ is $\pi_1$-injective in its cusped torus, which we denote by $T(p)$. 
If $p$ is a patch such that $T(p)\cut p$ is disconnected, or if $p\cap \del M$ is nonempty and has null pairing with the cusp curves or $T$, then we say that $p$ is \textbf{superfluous}. Note that if $p$ is superfluous, then $p$ does not contribute to the algebraic intersection of $S$ with $\Gamma$ or $\gamma$, hence does not contribute to the pairing of $S$ with $e_\tau$.

Let $\text{NSD}$ and $\text{NSA}$ denote the sets of nonsuperfluous disks and annuli, respectively. If $p$ is a nonsuperfluous disk then $T(p)$ is a cusped solid torus and $p$ is a meridional disk of $T(p)$. Moreover $p$ has pairing $+1$ or $-1$ with the core of $T(p)$; we denote this pairing by $\sign(p)$. For any patch $p$, we say a cusp of $p$ is {positive} or {negative} according to whether the corresponding point of $S\cap \Gamma$ is positive or negative, respectively.
Define $\algcusps(p)$ to be the number of positive cusps of $p$ minus the number of negative cusps of $p$. 
Note that $\algcusps(p)$ is always less than or equal to $\cusps(p)$, the number of cusps of $p$.

Having set all this notation, \Cref{lem:partialeulerclassformula} now gives
\begin{align}\label{euler_patch}
2e_\tau([S])=\sum_{\text{NSD}}\sign(p)\cdot \ind(T(p)) +\sum_{\text{NSA}}(-\algcusps(p)).
\end{align}

Recall that a properly embedded surface $S$ in $M$ is \define{taut} if $x([S])= \chi_-(S)$ and no collection of its components is nullhomologous.

\begin{lemma}\label{lem:dualnorm}
The dual Thurston norm of $e_\tau$ is $\le 1$. 

As a consequence, if $Y$ is a surface such that $\chi_-(Y)=-e_\tau([Y])$, then 
\begin{enumerate}[label=(\alph*)]
\item $x([Y])=-e_\tau([Y])$ and
\item if no collection of components of $Y$ is nullhomologous, then $Y$ is taut.
\end{enumerate}
\end{lemma}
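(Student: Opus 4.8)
The plan is to establish the inequality $x^*(e_\tau) \le 1$ first, since parts (a) and (b) follow from it by a general duality argument. By definition of the dual Thurston norm, it suffices to show $e_\tau([Y]) \le x([Y])$ for every class represented by a properly embedded surface $Y$, or equivalently $e_\tau([Y]) \le \chi_-(Y)$ for every such surface; taking the minimum over representatives then gives the bound on integral classes, and continuity and homogeneity extend it to all of $H_2(M,\partial M)$. So fix a properly embedded surface $Y$ representing a given class. After isotoping $Y$ to be transverse to $B^s$ and applying \Cref{lem:noposindex}, every patch of $t^s = Y \cap B^s$ that is disjoint from $\partial Y$ has nonpositive index; one arranges further (by standard incompressibility/irreducibility arguments on the cusped tori, using \Cref{prop:top}) that every patch is $\pi_1$-injective in its cusped torus, so that the patch notation and in particular formula \eqref{euler_patch} apply.

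Next I would bound the two sums in \eqref{euler_patch} termwise against $\chi_-(Y) = -\chi(Y - (\text{disk/sphere/torus/annulus components}))$, using additivity of index over patches, $2\chi(Y) = \sum_p \ind(p)$. For a nonsuperfluous disk patch $p$, $T(p)$ is a cusped solid torus and $\ind(T(p)) = 2 - \prongs(T(p)) \le -4 < 0$ by strictness-free reasoning here — actually one only needs $\ind(T(p)) \le$ (something), and $\sign(p)\cdot \ind(T(p)) \le -\ind(T(p)) = \prongs(T(p)) - 2$; one compares this with the contribution of the cusps of $p$ to $\ind(p) = 2\chi(p) - \#\cusps(p) = 2 - \#\cusps(p)$ since $p$ is a disk. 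For a nonsuperfluous annulus patch $p$, $\ind(p) = -\#\cusps(p) = 2\chi(p) - \#\cusps(p)$, and $-\algcusps(p) \le \#\cusps(p)$. Assembling these, together with the fact that superfluous patches and the removed disk/sphere/torus/annulus components of $Y$ contribute $0$ to $e_\tau([Y])$, should yield $2e_\tau([Y]) \le -2\chi_-(Y) \cdot$ (correct sign) — i.e. $e_\tau([Y]) \le \chi_-(Y)$. This is essentially the argument of \cite[Lemma 5.3]{Landry_norm} with the bookkeeping extended to allow annulus patches meeting $\partial M$ and cusped torus shells.

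For the consequences: suppose $\chi_-(Y) = -e_\tau([Y])$. Since $x(\alpha) = -e_\tau(\alpha)$ would violate nothing, we have from $x^*(e_\tau)\le 1$ that $-e_\tau([Y]) \le x([Y])$, while $x([Y]) \le \chi_-(Y) = -e_\tau([Y])$ always. Hence $x([Y]) = -e_\tau([Y]) = \chi_-(Y)$, proving (a). For (b), if in addition no subcollection of components of $Y$ is nullhomologous, then $Y$ realizes the Thurston norm of its class and has no nullhomologous subcollection of components, which is exactly the definition of taut; this is immediate.

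The main obstacle will be the patch-by-patch estimate in the middle step — specifically, handling the annulus patches that meet $\partial M$ and verifying that the notion of "superfluous" correctly captures exactly the patches contributing zero to the pairing with $e_\tau$, so that the inequality is not lost on boundary-touching pieces. One must be careful that isotoping $Y$ rel $\partial M$ to apply \Cref{lem:noposindex} does not disturb the homology class or the count of nonnegative-Euler-characteristic components, and that the $\pi_1$-injectivity normalization of patches can genuinely be achieved in the bounded setting; these are the points where the generalization beyond \cite{Landry_norm} needs the most care, though \Cref{prop:top} and irreducibility/$\partial$-irreducibility of $M$ provide the needed input.
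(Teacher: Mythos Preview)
Your approach is the same as the paper's---use formula \eqref{euler_patch}, bound each term against the index of the corresponding patch, and compare to $2\chi(Y)=\sum_p\ind(p)$---and your derivation of (a) and (b) from the dual norm bound is correct and matches the paper verbatim.

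The issue is that your signs are tangled. You write ``should yield $2e_\tau([Y])\le -2\chi_-(Y)\cdot(\text{correct sign})$ --- i.e.\ $e_\tau([Y])\le\chi_-(Y)$,'' which is internally inconsistent, and the parenthetical shows you have not actually resolved the direction. The paper bounds from \emph{below}: for a taut $S$ (so $-\chi(S)=x([S])$) it runs
\[
\sign(p)\cdot\ind(T(p))\ \ge\ \ind(T(p))\ \ge\ \ind(p)\quad(p\in\mathrm{NSD}),\qquad
-\algcusps(p)\ \ge\ -\cusps(p)\ =\ \ind(p)\quad(p\in\mathrm{NSA}),
\]
using only $\ind(T(p))\le 0$ and the fact that a meridional disk of a cusped solid torus has $\ind(p)\le\ind(T(p))$. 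Adding in the nonpositive indices of the remaining patches gives $2e_\tau([S])\ge\sum_p\ind(p)=2\chi(S)$, hence $-e_\tau([S])\le -\chi(S)=x([S])$. Your upper-bound direction can be made to work too (via $\sign(p)\cdot\ind(T(p))\le -\ind(T(p))\le -\ind(p)$, etc.), but you must then state explicitly the step $\cusps(p)\ge\prongs(T(p))$ for a nonsuperfluous meridional disk, which you only gesture at. Finally, your aside ``$\ind(T(p))\le -4$'' is false without strictness---regular orbits in $\kappa$ give cusped solid tori of index $0$---and is in any case unnecessary: all that is used is $\ind(T(p))\le 0$.
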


\begin{proof}
Let $S$ be a taut surface in $M$. 
We can isotope $S$ so that each patch $\pi_1$-injects into its cusped torus, and so that, by \Cref{lem:noposindex} combined with the fact that $S$ is not a disk homotopic into $\del M$, there are no patches of $S\cap B^s$ with positive index.

We then have the following equalities and inequalities (justifications are below):
\begin{align*}
2e_\tau([S])&= \sum_{\text{NSD}}\sign(p)\cdot \ind(T(p)) +\sum_{\text{NSA}}(-\algcusps(p))\\
&\ge \sum_{\text{NSD}}\ind(T(p)) +\sum_{\text{NSA}}-\cusps(p)\\ 
&\ge \sum_{\text{NSD}}\ind(p) +\sum_{\text{NSA}}\ind(p)\\
&\ge\sum_{\text{all patches}} \ind(p)\\
&=2\chi(S)
\end{align*}
The first line is a restatement of \Cref{euler_patch}.
The second uses that all cusped solid tori have index $\le 0$. 
The third line uses that $\ind(p)\le \ind (T(p))$ for $p\in \text{NSD}$ and $\ind(p)=-\cusps(p)$ for $p\in \text{NSA}$.
For the fourth line we have used that all patches of $S\cap B^s$ have nonpositive index. Hence $\chi(S)\le e_\tau([S])$, so 
\[
-e_\tau([S])\le -\chi(S)=x([S]). 
\]
This shows that $e_\tau$ has dual Thurston norm $\le 1$ as claimed.

Now suppose that $Y$ is as in the statement of the lemma. Then $x([Y])\le\chi_-(Y)=-e_\tau([Y])\le x([Y])$ so all these quantities are equal. In particular (a) holds, and if no collection of components of $Y$ is nullhomologous then $Y$ is taut.
\end{proof}

The next lemma is a generalization of \cite[Lemma 5.4]{Landry_norm}.

\begin{lemma}\label{lem:carriedtaut}
Let $S$ be a surface properly embedded in $M$ that is relatively carried by a relative veering triangulation $\tau$ of $M$. Then $S$ is taut and $x([S])=-e_\tau([S])$.
\end{lemma}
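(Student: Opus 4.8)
The plan is to show that any surface $S$ relatively carried by $\tau$ can be isotoped into a position where the hypothesis $\chi_-(S) = -e_\tau([S])$ of \Cref{lem:dualnorm} is met (with $S$ playing the role of $Y$), together with the hypothesis that no collection of its components is nullhomologous. The key is the interplay between the carried position with respect to the $2$-skeleton $\hbs$ and the transverse intersections with the auxiliary stable branched surface $B^s$, since $e_\tau$ is computed via the patch formula \eqref{euler_patch}.

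\textbf{Step 1: Reduce to an efficient carried position.} First I would apply \Cref{lem:noladders} to replace $S$ by an isotopic surface that is \emph{efficiently} relatively carried by $\tau$, so that $S \cap U$ has no removable annuli. This is the combinatorial normalization that will make the Euler-class count tight. I also note that, being carried by (a $2$-skeleton built from) the branched surface $\hbs$, which carries essential laminations, $S$ has no sphere or disk components that are innermost/trivial, and more importantly no nullhomologous collection of components: a surface carried by a cooriented branched surface positively transverse to a flow (as in \cite{LMT21}) pairs nonnegatively, hence cannot have a nullhomologous subcollection unless that subcollection is trivial — this point should be spelled out carefully using the coorientation, but it is exactly the mechanism that makes tautness possible.

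\textbf{Step 2: Transfer to the $B^s$ picture and compute.} Next I would isotope $S$ (rel $\del M$, using \Cref{lem:noposindex}) so that $S \cap B^s$ has no patches of positive index disjoint from $\del S$, and so that each patch $\pi_1$-injects into its cusped torus — here one uses that $S$, being relatively carried, is not a disk isotopic into $\del M$, and uses \Cref{prop:top}(a) ($M$ is irreducible, $\del$-irreducible). The efficiency from Step 1 ensures that the only nonsuperfluous disk patches are meridional disks of solid tubes arising from meridional-disk components of $S \cap U$, and each such contributes with a \emph{definite} sign: since $S$ is carried with the correct coorientation, every such meridional disk pairs $+1$ with the core $\core(T(p))$, i.e. $\sign(p) = +1$. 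Similarly, the nonsuperfluous annulus patches are the annulus components of $S \cap U$; efficiency (no removable annuli) forces their cusps, seen along $\Gamma = \brloc(B^s)$, to be coherently oriented so that $\algcusps(p) = \cusps(p)$ for each. Feeding $\sign(p) = +1$ and $\algcusps(p) = \cusps(p)$ into \eqref{euler_patch} and then running the chain of (in)equalities in the proof of \Cref{lem:dualnorm} \emph{in reverse} — now all of them are equalities because we have arranged $\ind(p) = \ind(T(p))$ on NSD and $\ind(p) = -\cusps(p) = -\algcusps(p)$ on NSA and no patch has positive index — yields $2e_\tau([S]) = 2\chi(S)$, i.e. $-e_\tau([S]) = -\chi(S) = \chi_-(S)$, the last equality because $S$ has no sphere/disk/torus/annulus components (from Step 1 and \Cref{prop:top}, after discarding inessential pieces).

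\textbf{Step 3: Conclude via \Cref{lem:dualnorm}.} With $\chi_-(S) = -e_\tau([S])$ and no nullhomologous subcollection of components (Step 1), \Cref{lem:dualnorm}(b) immediately gives that $S$ is taut, and \Cref{lem:dualnorm}(a) gives $x([S]) = -e_\tau([S])$, which is the claim. The main obstacle I anticipate is Step 2: namely, verifying rigorously that efficient relative carrying forces $\sign(p) = +1$ on all nonsuperfluous disk patches and $\algcusps(p) = \cusps(p)$ on all nonsuperfluous annulus patches — equivalently, that there are no ``backward'' meridional disks and no cancelling cusps along $\Gamma$. This is where the coherence of the induced orientations on cusp curves (the dual-graph discussion and the fact that $\hbs$ is positively transverse to $\varphi$) must be combined with the definition of removable annulus to rule out the sign cancellations that would otherwise make the Euler-class count a strict inequality. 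Once that local sign analysis is in hand, the rest is a bookkeeping reversal of the computation already carried out in \Cref{lem:dualnorm}.
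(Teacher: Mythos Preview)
Your overall strategy---verify $\chi_-(S)=-e_\tau([S])$ and then invoke \Cref{lem:dualnorm}---is exactly the paper's. But your implementation has a genuine gap and is more complicated than necessary.

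The gap is in Step 2. You propose to apply \Cref{lem:noposindex} to isotope $S$ so that $S\cap B^s$ has no positive-index patches, and \emph{then} use the carried position to deduce $\sign(p)=+1$ and $\algcusps(p)=\cusps(p)$. But the isotopy of \Cref{lem:noposindex} is a general isotopy; there is no reason it preserves the relatively carried position of $S$. Once $S$ is no longer carried, your claims about ``meridional-disk components of $S\cap U$'' and coherently oriented cusps lose their justification. You cannot simultaneously appeal to carried position (for the sign analysis) and to the output of \Cref{lem:noposindex} (for nonpositivity of patch indices) unless you first check that carried position already satisfies the conclusion of \Cref{lem:noposindex}---which you do not do.

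The paper avoids this entirely by observing that carried position \emph{directly} determines the structure of $S\cap B^s$: since $B^s$ meets each veering tetrahedron in the standard pattern of \Cref{fig:stableunstable}, a surface carried by $\tau_U$ meets $B^s$ in a train track whose patches are completely classified. Each patch $p$ is either (a) a meridional disk of a cusped solid torus, (b) an annulus with one boundary component on $\del M$ and the other having only positive cusps, or (c) an annulus with no cusps. In each case one checks by inspection that $\ind(p)$ equals the contribution of $p$ to $2e_\tau([S])$, so $2\chi(S)=\sum\ind(p)=2e_\tau([S])$ on the nose---no inequalities to reverse, no efficient position needed (\Cref{lem:noladders} is irrelevant here), and no appeal to \Cref{lem:noposindex}. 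Your anticipated ``main obstacle'' simply does not arise.

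For the nullhomologous subcollection claim in Step 1, your sketch is vague. The paper's argument is clean: the dual graph $\Gamma$ is strongly connected, so every $\Gamma$-edge lies on a directed cycle; since any carried component meets $\Gamma$ positively, it pairs positively with some such cycle and hence is not nullhomologous.
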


\begin{proof}

By \cite[Lemma 5.8]{LMT20} the dual graph $\Gamma$ is strongly connected, so every $\Gamma$-edge is part of a directed cycle. Hence each component of $S$ pairs positively with some directed curve in $M$, so no collection of components is nulhomologous. In particular, using \Cref{prop:top}, we have $\chi_-(S)=-\chi(S)$.

Because $S$ is relatively carried by $\tau$, each patch $p$ of $S\cap B^s$ is either 
\begin{enumerate}[label=(\alph*)]
\item a meridional disk of a cusped solid torus,
\item a topological annulus with one boundary component on $\del M$ and another boundary component having a nonzero number of positive cusps, or
\item a topological annulus with no cusps in its boundary.
\end{enumerate}
In case (a), $\ind(p)=2-\cusps(p)$. In case (b), $\ind(p)=-\cusps(p)$. In case (c), $\ind(p)=0$. In each case $\ind(p)$ is precisely the contribution of $p$ to $2e_\tau([S])$, so we have $\chi_-(S)=-e_\tau([S])$. \Cref{lem:dualnorm} finishes the proof. 
\end{proof}

\subsection{Combinatorial transverse surface theorems}

We define $\cone_1(\Gamma)$ to be the smallest closed convex cone in $H_1(M)$ containing the homology class of each directed cycle in the dual graph $\Gamma$. Its dual cone $\cone_1^\vee(\Gamma)$ is the cone of all classes in $H^1(M)$ that pair nonnegatively with each element of $\cone_1(\Gamma)$.

The following is the generalization of \cite[Theorem 5.5]{Landry_norm} to our setting.

\begin{theorem}[Combinatorial TST]
Let $\tau$ be a veering triangulation of $M$ relative to a tube system $U$. Let $u$ be an integral cohomology class in $H^1(M)$. Then $u\in \cone_1^\vee(\Gamma)$ if and only if there exists a surface $S$, necessarily taut, such that $S$ is relatively carried by $\tau$ and $[S]$ is Poincar\'e dual to $u$. 
\end{theorem}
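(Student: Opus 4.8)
The plan is to prove the two implications separately, with the reverse implication (carried $\Rightarrow$ nonnegative pairing) being essentially immediate and the forward implication (nonnegative pairing $\Rightarrow$ carried) being the substantive content, following the strategy of \cite[Theorem 5.5]{Landry_norm} but keeping careful track of the relative setting.

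For the reverse direction: suppose $S$ is relatively carried by $\tau$ with $[S]$ Poincar\'e dual to $u$. By \Cref{lem:carriedtaut}, $S$ is taut and $x([S]) = -e_\tau([S])$. To see $u \in \cone_1^\vee(\Gamma)$, I would argue that a relatively carried surface has nonnegative intersection with every directed cycle of $\Gamma$: each $\Gamma$-edge is (identified with) a branch-locus point, and transversality of $S$ to the branched surface $\tau_U$ together with the coorientation compatibility forces every intersection of $S$ with a $\Gamma$-edge to be positive (this is exactly the coorientation condition built into ``carried'' for cooriented branched surfaces); intersections inside $U$ only occur along meridional disks and the prescribed annuli, which one checks contribute nonnegatively. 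Summing over the edges of any directed cycle $c$ gives $\langle [S], c\rangle \ge 0$, i.e. $u(c) \ge 0$, and since such cycles generate $\cone_1(\Gamma)$ this gives $u \in \cone_1^\vee(\Gamma)$. The tautness assertion in the statement is then just \Cref{lem:carriedtaut}.

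For the forward direction: given an integral $u \in \cone_1^\vee(\Gamma)$, I want to produce a relatively carried $S$ dual to $u$. The key identity is \eqref{euler_patch} together with \Cref{lem:dualnorm}: I would first observe that $u \in \cone_1^\vee(\Gamma)$ means $u$ pairs nonnegatively with every directed cycle, and in particular (since $\Gamma$ itself and the cusp-curve classes $\beta_T$, hence $\gamma$, are represented by sums of directed cycles by strong connectivity, \cite[Lemma 5.8]{LMT20}) one relates $u(2\gamma - \Gamma) = 2e_\tau(\mathrm{PD}(u))$ to the face structure of the dual Thurston norm: by \Cref{lem:dualnorm}, $e_\tau$ has dual norm $\le 1$, so the classes on which it is extremal cut out a face, and $\cone_1^\vee(\Gamma)$ should be identified with (a subcone of) the cone over the face of the $x$-ball dual to $e_\tau$. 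Concretely, I would take any taut surface $S_0$ representing $\mathrm{PD}(u)$, isotope it via \Cref{lem:noposindex} so that $S_0 \cap B^s$ has no positive-index patches disjoint from $\partial S_0$, and then run the inequality chain from the proof of \Cref{lem:dualnorm} in reverse: the hypothesis $u \in \cone_1^\vee(\Gamma)$ forces $2e_\tau([S_0]) = 2\chi(S_0)$, which forces every inequality in that chain to be an equality. Equality in the chain pins down the combinatorial type of every patch of $S_0 \cap B^s$: each nonsuperfluous disk patch has $\sign(p) = +1$ and equals a meridional disk (so $\ind(p) = \ind(T(p))$), each nonsuperfluous annulus patch has $\algcusps(p) = \cusps(p)$ (all cusps positive), superfluous patches can be removed by isotopy, and no patch has positive index. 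This is precisely the list (a)--(c) of patch types appearing in the proof of \Cref{lem:carriedtaut}, which is the local certificate for being carried by the branched surface $\tau_U$ with correct coorientations; the global statement that these local conditions assemble to a genuine carried position (the surface lies in the standard neighborhood $N(\tau_U)$ transverse to the vertical foliation, and the pieces inside $U$ are meridional disks or the allowed annuli) then follows, after possibly applying \Cref{lem:noladders} to clean up removable annuli.

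The main obstacle I anticipate is the step identifying ``all local patch conditions hold'' with ``$S$ is honestly relatively carried'' --- i.e. promoting a surface that is merely transverse to $B^s$ with the right patch combinatorics to one that sits inside a standard neighborhood of $\tau_U$ transverse to the vertical foliation, with the correct behavior in the tube system $U$. In the closed case of \cite{Landry_norm} this is handled by an argument about positions of $B^s$ and $B^u$ relative to each other and to the branched surface $\tau^{(2)}$; here one must additionally control the intersection of $S$ with each solid and hollow tube, ensuring the pieces of $S \cap U$ are exactly meridional disks or annuli with at most one boundary on $\partial M$, and that the annuli without boundary on $\partial M$ (ladderpole annuli) are accounted for. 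I expect this to go through by the same reasoning as in \cite{Landry_norm}, using that the patch data determines how $S$ crosses each edge of $\tau$ and hence how it enters each tube, but it is the place where the relative setting genuinely adds bookkeeping rather than being a formal generalization.
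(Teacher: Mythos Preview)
Your reverse direction is correct and matches the paper's argument.

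Your forward direction has a genuine gap and conflates this theorem with the \emph{Strong} Combinatorial TST (\Cref{combiSTST}). The paper does \emph{not} start from an arbitrary taut representative $S_0$ and isotope it into carried position. Instead, it pulls $u$ back to $u_U\in H^1(M_U)$, invokes \cite[Proposition~5.11]{LMT20} to directly \emph{construct} a surface $S_U$ in $M_U$ carried by $\tau_U$ and dual to $u_U$, and then caps $S_U$ off across the tubes (disks and annuli in solid tubes via \cite[Lemma~3.3]{landry2018taut}, product annuli in hollow tubes). Tautness is then a consequence of \Cref{lem:carriedtaut}, not an input. The theorem only asserts \emph{existence} of a carried surface in the class, not that every taut representative can be made carried.

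Your argument breaks at the step ``the hypothesis $u\in\cone_1^\vee(\Gamma)$ forces $2e_\tau([S_0])=2\chi(S_0)$.'' From \Cref{lem:dualnorm} you only get $e_\tau([S_0])\ge \chi(S_0)$; the reverse inequality for a given taut $S_0$ is precisely \Cref{cor:easycontainment2}, which is stated in the paper as a \emph{corollary} of the Combinatorial TST. Your proposed justification (strong connectivity lets you express $\gamma$ and $\Gamma$ as sums of directed cycles) only gives signs, not the exact value needed. So the equality-forcing step is circular.

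Even granting the equality, the final promotion step---turning ``patches of $S_0\cap B^s$ have the correct combinatorics'' into ``$S_0$ is relatively carried''---is exactly \Cref{lem:export}, which requires $\tau$ to be \emph{strict}. The present theorem has no strictness hypothesis; that hypothesis is what distinguishes \Cref{combiSTST} from the result you are proving. Your outline is essentially a sketch of the proof of \Cref{combiSTST} (where one does start from an arbitrary taut $S$, assume $x([S])=-e_\tau([S])$, and isotope), not of the basic TST.
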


\begin{proof}
The proof of \cite[Theorem 5.5]{Landry_norm} goes through almost without modification, but we describe it here in broad strokes. 

If $u$ is dual to a carried surface, then that surface is taut by \Cref{lem:carriedtaut} and it is clear that $u\in \cone_1^\vee(\Gamma)$ since the surface is positively transverse to $\Gamma$.

Conversely, let $u\in \cone_1^\vee(\Gamma)$, and let $u_U$ be the pullback to $H^1(M_U)$. We can use $u_U$ to produce a surface $S_U$ in $M_U$ that is carried by $\tau_U$ and dual to $u_U$ \cite[Proposition 5.11]{LMT20}. Let $M_0$ be the union of $ M_U$ with the solid tubes. The surface $S_U$ can be capped off by annuli and disks inside the solid tubes to produce a surface dual to $u_0$, the pullback of $u$ to $M_0$. This is essentially because $u\in H^1(M)$ (see \cite[Lemma 3.3]{landry2018taut}; that proof goes through here, guaranteeing that capping off is possible).

The only mild novelty in the setting of this theorem is the possibility that $M_0\ne M$; that is, our tube system may have hollow tubes. At a component of $\del M_U$ to which a hollow tube is glued, we simply extend $S_U$ to $\del M$ by gluing in annuli using the $T^2\times I$ product structure of the hollow tubes.
The result is a relatively carried surface $S$ that is taut by \Cref{lem:carriedtaut}. Since $S_U$ is dual to $u_U$, $S$ is dual to a class mapping to $u_U$ under $H^1 (M)\to H^1(M_U)$. Since this map is injective, $S$ is Poincar\'e dual to $u$.
\end{proof}

\begin{corollary}
If $\tau$ is a relative veering triangulation in $M$, then the dual cone $\cone_1^\vee(\Gamma)$ is contained in the cone in $H^1(M)$ on which $-e_\tau$ and $x$ agree as maps $H^1(M)\to \R$.
\end{corollary}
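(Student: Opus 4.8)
The plan is to deduce the corollary directly from the Combinatorial TST together with Lemmas \ref{lem:carriedtaut} and \ref{lem:dualnorm}, using a density argument to pass from integral to real classes. First I would observe that it suffices to prove the containment on the integral lattice points of $\cone_1^\vee(\Gamma)$: since $\cone_1^\vee(\Gamma)$ is a rational closed convex cone, its integral points are dense in it (after scaling), and both $x$ and $-e_\tau$ are continuous (indeed $x$ is a seminorm and $-e_\tau$ is linear), so an equality $x(u) = -e_\tau(u)$ holding on a dense subset of the cone holds on all of it.

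So let $u$ be an integral class in $\cone_1^\vee(\Gamma)$. By the Combinatorial TST, there is a surface $S$, relatively carried by $\tau$, with $[S]$ Poincar\'e dual to $u$. By \Cref{lem:carriedtaut}, $S$ is taut and $x([S]) = -e_\tau([S])$. Translating through Poincar\'e/Lefschetz duality, this says exactly $x(u) = -e_\tau(u)$, i.e. $-e_\tau$ and $x$ agree at $u$. Running this over all integral $u \in \cone_1^\vee(\Gamma)$ and invoking the density argument above finishes the proof.

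One point that needs a little care is the behavior of $-e_\tau$ on the cone: a priori $-e_\tau$ is just some linear functional, and there is no reason for it to be nonnegative or to equal $x$ outside this cone. But inside $\cone_1^\vee(\Gamma)$ the existence of a carried (hence taut) representative for each integral class is precisely what pins $-e_\tau$ to $x$; and by \Cref{lem:dualnorm} we already know $x^*(e_\tau) \le 1$, which guarantees $-e_\tau(u) \le x(u)$ everywhere, so the content is really the reverse inequality on the cone, supplied by the carried surface. I expect the only mild obstacle to be making the integral-to-real passage fully rigorous — i.e. checking that $\cone_1^\vee(\Gamma)$ is rational (which follows since $\Gamma$ has finitely many directed cycles, all representing integral classes, so $\cone_1(\Gamma)$ is a rational polyhedral cone and hence so is its dual) and that scaled lattice points are dense in it — but this is standard convex geometry and should be dispatched in a sentence.

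Alternatively, and perhaps more cleanly, one can avoid density entirely by noting that $\cone_1^\vee(\Gamma)$ is spanned by its rational rays, on each of which both $x$ and $-e_\tau$ are linear (homogeneous of degree one); agreement at one integral point of each ray then forces agreement along the ray, and since the functions agree on a spanning set of rays and $x$ is convex while $-e_\tau$ is linear with $-e_\tau \le x$ throughout, they must agree on the whole convex cone they span. I would present whichever of these two formulations reads most smoothly in context; the mathematical content is identical.
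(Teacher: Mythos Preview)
Your proof is correct and matches the paper's (implicit) argument: the paper states this as an immediate corollary of the Combinatorial TST together with \Cref{lem:carriedtaut}, and your write-up is just a careful unpacking of that, including the integral-to-real passage which the paper leaves tacit. One tiny slip: $\Gamma$ need not have finitely many directed cycles, but it has finitely many \emph{simple} ones and every cycle decomposes into these, so $\cone_1(\Gamma)$ is still rational polyhedral and your density argument goes through.
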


\begin{corollary}\label{cor:easycontainment2}
Let $S$ be a taut surface with $[S]\in \cone_1^\vee(\Gamma)$. Then $x([S])=\chi_-(S)=-e_\tau([S])$. 
\end{corollary}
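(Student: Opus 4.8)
\textbf{Proof plan for \Cref{cor:easycontainment2}.}

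The plan is to derive this as an immediate consequence of the Combinatorial TST together with \Cref{lem:carriedtaut}, handling the fact that $[S]$ is only assumed to be in $\cone_1^\vee(\Gamma)$ rather than represented by a carried surface. First I would invoke the Combinatorial TST applied to the cohomology class $u$ Poincar\'e dual to $[S]$: since $u \in \cone_1^\vee(\Gamma)$ (this is exactly the hypothesis $[S] \in \cone_1^\vee(\Gamma)$ transported through duality), there exists a surface $S'$ that is relatively carried by $\tau$ with $[S']$ Poincar\'e dual to $u$, i.e. $[S'] = [S]$ in $H_2(M,\del M)$. The one subtlety is that the Combinatorial TST as stated requires $u$ to be an integral class, whereas $[S]$ is automatically integral since it is represented by an embedded surface, so no rationality argument is needed here.

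Next I would apply \Cref{lem:carriedtaut} to $S'$: because $S'$ is relatively carried by $\tau$, it is taut and satisfies $x([S']) = -e_\tau([S'])$. Since $[S'] = [S]$ and both $x$ and $e_\tau$ depend only on the homology class, we get $x([S]) = -e_\tau([S])$. Finally, since $S$ itself is assumed taut, by definition $\chi_-(S) = x([S])$, and stringing the equalities together yields
\[
x([S]) = \chi_-(S) = -e_\tau([S]),
\]
which is the claim.

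The only point requiring a little care is that the corollary is really just repackaging: \Cref{lem:carriedtaut} already gives the equality $x = -e_\tau$ on the homology class of \emph{any} carried surface, and the Combinatorial TST guarantees that every class in the dual cone is hit by such a surface, so the content is simply that tautness of $S$ lets us replace $x([S])$ by $\chi_-(S)$. I do not anticipate a genuine obstacle; the main thing to state cleanly is that $e_\tau$ and $x$ are functions of homology classes alone, so that passing from the carried representative $S'$ back to $S$ is legitimate. One could alternatively phrase the argument entirely in terms of $H^1(M)$ using the preceding corollary (that $-e_\tau$ and $x$ agree on $\cone_1^\vee(\Gamma)$) and then note $\chi_-(S) = x([S])$ by tautness; either route is a two-line deduction.
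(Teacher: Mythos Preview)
Your proposal is correct and matches the paper's approach: the paper gives no separate proof, treating the corollary as immediate from the preceding corollary (that $-e_\tau$ and $x$ agree on $\cone_1^\vee(\Gamma)$) together with the definition of tautness, which is exactly the alternative route you describe at the end.
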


We now aim to prove a version of the main result in \cite{Landry_norm} when $\tau$ is strict. We will use the following, which is a stronger version of \cite[Theorem 8.1]{Landry_norm}:

\begin{lemma}\label{lem:export}
Assume that $\tau$ is a strict relative veering triangulation of $M$ and
let $S$ be an incompressible and boundary incompressible surface in $M$. Further suppose that $S$ has the property that for any surface $S'$ isotopic to $S$ that is transverse to $B^u$ and $B^s$, either: 
\begin{itemize}
\item one of $S' \cap B^u$ or $S' \cap B^s$ has a patch of positive index, or
\item every negative switch of $S' \cap B^u$ and every negative switch of $S' \cap B^s$ belongs to a bigon.
\end{itemize}
Then $S$ is isotopic to a surface relatively carried by $\tau$.
\end{lemma}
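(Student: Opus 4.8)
The plan is to start from an arbitrary transverse representative of $S$, simplify its intersections with $B^s$ and $B^u$ through a finite sequence of local moves until every patch has index $0$ and no switch is negative, and then recognize the end product as relatively carried by $\tau$.

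\emph{Normalization.} After the standard reductions (cf. the proof of \Cref{lem:dualnorm}), and using that $S$ is incompressible and boundary incompressible while $M$ is irreducible, boundary irreducible, atoroidal and anannular by \Cref{prop:top}, I may assume $S$ has negative Euler characteristic and that every patch of $S\cap B^s$ and of $S\cap B^u$ $\pi_1$-injects into its cusped torus; in particular no patch meeting $\del S$ has positive index. Then \Cref{lem:noposindex} removes the positive-index patches disjoint from $\del S$ as well, producing a representative $S'$ with no positive-index patches at all.

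\emph{Invoking the hypothesis and clearing bigons.} For this $S'$ the first alternative of the hypothesis fails, so every negative switch of $S'\cap B^s$ and of $S'\cap B^u$ lies in a bigon. Each such bigon supports a local isotopy, a bigon move as in \cite[\S4.5]{Landry_norm} (boundary half-bigons handled identically via boundary incompressibility), which strictly decreases the number of switches. Since a bigon move may create positive-index patches, I reapply \Cref{lem:noposindex} after each move, and to see the alternation terminates I would track the lexicographically ordered complexity
\[
c(S')=\bigl(\,\#\{\text{positive-index patches of }S'\cap(B^s\cup B^u)\}\,,\ \#\{\text{switches of }S'\cap(B^s\cup B^u)\}\,\bigr),
\]
checking, exactly as in the proof of \cite[Theorem~8.1]{Landry_norm}, that the move of \Cref{lem:noposindex} does not increase $c$ and a bigon move strictly decreases it once the first coordinate is $0$. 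When no move applies, $S'$ has no positive-index patch, whence the hypothesis now forces it to have \emph{no} negative switch; so every patch has index $0$ and every switch is positive.

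\emph{Recognizing relative carrying.} In the presence of coorientations, ``all patches of index $0$ and no negative switch'' is exactly the condition for $S'$ to be carried in the cooriented sense by the generic branched surface $B^s$, and likewise by $B^u$. Since $B^s$ and $B^u$ are the two resolutions of the $2$-skeleton $\hbs$ of $\tau$ and the isotopies above are performed simultaneously, the standard dictionary of \cite{LMT20} converts this into a relatively carried position: the part of $S'$ in $M\cut U$ is carried by $\tau_U$, and strictness together with $\pi_1$-injectivity of patches forces each component of $S'\cap U$ to be a meridional disk in a solid tube or an annulus with at most one boundary component on $\del M$. The delicate point in this outline is the termination bookkeeping in the previous paragraph --- that clearing a positive-index patch never spawns enough new switches to loop the process --- which is precisely the estimate already established in \cite[Theorem~8.1]{Landry_norm}; the one genuinely new feature here, boundary patches and hollow tubes, is absorbed by the boundary incompressibility of $S$ and the strictness of $\tau$ and does not affect the count.
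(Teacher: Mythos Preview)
The paper does not give an independent proof of this lemma: it simply cites \cite[Theorem~8.1]{Landry_norm} and remarks (\Cref{rmk:export}) that the argument there goes through verbatim in the present setting, the only novelty being solid tubes and boundary, which are absorbed by strictness. So there is no detailed ``paper's proof'' to compare against beyond that reference.

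Your outline is in the right spirit---normalize, use the hypothesis to locate bigons, simplify, recognize carried position---but it has two genuine gaps that the deferred proof in \cite{Landry_norm} actually addresses.

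First, the termination argument as you wrote it does not work. You say a bigon move ``may create positive-index patches'', and then claim the lexicographic complexity $c=(\#\text{pos-index patches},\#\text{switches})$ decreases. But a bigon move starting from $c=(0,n)$ that creates a positive-index patch lands at $c=(k,n')$ with $k>0$, which is lexicographically \emph{larger}; reapplying \Cref{lem:noposindex} brings the first coordinate back to $0$ but may well raise the switch count past $n$. The actual algorithm in \cite{Landry_norm} is more delicate than alternating bigon moves with \Cref{lem:noposindex}; the paper itself describes it as ``fairly complex'' in the remark following \Cref{thm:ATtocarried}.

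Second, the endgame is not justified. ``All patches index $0$ and no negative switch'' for $S\cap B^s$ and $S\cap B^u$ is \emph{not} the definition of being carried by $\tau_U$. The branched surfaces $B^s$ and $B^u$ are not resolutions of $\tau^{(2)}$ in any sense that makes this automatic (compare \Cref{fig:stableunstable}: they sit transversely inside the tetrahedra, not as smoothings of the $2$-skeleton), and there is no ``standard dictionary'' in \cite{LMT20} performing this conversion. Getting from the train-track conditions on $S\cap B^s$, $S\cap B^u$ to an actual carried position in $N(\tau_U)$ is a substantial part of the work in \cite{Landry_norm}.
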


\begin{remark}\label{rmk:export}
\Cref{lem:export} is stated in greater generality than originally appeared in \cite{Landry_norm}. There it was assumed that all tubes of $U$ are hollow, so that $\tau$ is automatically strict.
However, the proof given in \cite{Landry_norm} goes through as written, modulo slight differences in terminology. 
We note the additional fact that the individual ``moves" comprising the isotopy in \Cref{lem:export} can be performed some distance away from $\del M$, so the isotopy can be performed fixing $\del S$ pointwise.
\end{remark}

\begin{theorem}[Combinatorial Strong TST]\label{combiSTST}
Let $\tau$ be a strict relative veering triangulation of $M$. Let $S$ be a taut surface in $M$. The following are equivalent:
\begin{enumerate}[label=(\alph*)]
\item $[S]\in \cone_1^\vee(\Gamma)$
\item $x([S])=-e_\tau([S])$
\item $S$ is relatively carried by $\tau$ up to an isotopy, which can be chosen to fix $\del S$.
\end{enumerate}
\end{theorem}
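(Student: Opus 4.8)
The plan is to establish the cycle of implications $(c)\Rightarrow(a)\Rightarrow(b)\Rightarrow(c)$, two of which are quick. For $(c)\Rightarrow(a)$: a surface relatively carried by $\tau$ is carried by the cooriented branched surface $\tau_U$ away from the tube system, hence positively transverse to the branch locus $\Gamma$, and its pieces inside the solid tubes are (efficient) meridional disks by definition; so $[S]$ pairs nonnegatively with every directed cycle of $\Gamma$, i.e. $[S]\in\cone_1^\vee(\Gamma)$. (This is the observation already used in the proof of the Combinatorial TST above.) And $(a)\Rightarrow(b)$ is exactly \Cref{cor:easycontainment2}.

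The substance is $(b)\Rightarrow(c)$, which I plan to deduce from \Cref{lem:export}. First, a taut surface satisfies that lemma's standing hypotheses: strictness and \Cref{prop:top} make $M$ irreducible, $\partial$-irreducible, atoroidal and anannular, so a norm-minimizing surface with no nullhomologous subcollection has only components of negative Euler characteristic and is incompressible and $\partial$-incompressible. It then remains to verify the lemma's main hypothesis: that for every $S'$ isotopic to $S$ and transverse to $B^s$ and $B^u$, either one of $S'\cap B^s$, $S'\cap B^u$ has a positive-index patch, or every negative switch of each of these two train tracks lies in a bigon patch.

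To verify it, fix such an $S'$ and assume neither train track has a positive-index patch. Using incompressibility of $S$ and irreducibility of $M$, we may — after an isotopy that neither creates positive-index patches nor disturbs the property to be verified (this is part of the work, cf. \cite{Landry_norm}) — further assume every patch $\pi_1$-injects into its cusped torus, so that the patch formula \eqref{euler_patch} applies to $S'$. Now run the chain of (in)equalities from the proof of \Cref{lem:dualnorm} with $S'$ in place of the surface there: using strictness (each cusped solid torus of $B^s$ has \emph{strictly} negative index), $\algcusps(p)\le\cusps(p)$, $\ind(p)\le\ind(T(p))$ for nonsuperfluous meridional disks, and the absence of positive-index patches, we get $2e_\tau([S'])\ge 2\chi(S')=2\chi(S)$. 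But $S$ taut gives $x([S])=-\chi(S)$, and $(b)$ gives $x([S])=-e_\tau([S])=-e_\tau([S'])$, so all the inequalities are equalities. Unwinding: every nonsuperfluous meridional-disk patch of $S'\cap B^s$ has sign $+1$ and is efficient (its number of cusps equals the number of prongs of its cusped solid torus), hence — by coherence of the cusp-curve orientations induced by $\Gamma$ — has no negative cusp; every nonsuperfluous annular patch has only positive cusps; and every remaining patch has index $0$, hence is a bigon or a cusp-free annulus. Therefore a negative switch of $S'\cap B^s$, being a negative cusp of one of the patches it meets, can only lie in a bigon. The identical argument with $B^u$ in place of $B^s$ (legitimate because $e_\tau$ depends only on $\Gamma$, so an analogous patch formula holds for $B^u$) handles $S'\cap B^u$. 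This verifies the hypothesis of \Cref{lem:export}, so $S$ is isotopic to a surface relatively carried by $\tau$; by \Cref{rmk:export} this isotopy may be taken to fix $\partial S$.

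The main obstacle is the equality analysis in $(b)\Rightarrow(c)$ — in particular the passage from "all inequalities are equalities" to "every negative switch lies in a bigon", which requires a careful local examination of patches inside cusped solid tori and cusped torus shells and of the coherence of their cusp-curve orientations. A related point demanding care in the present, possibly bounded, setting is that the normalization of an arbitrary transverse $S'$ to $\pi_1$-injective patches be carried out rel $\partial M$ without spoiling either the "no positive-index patch" condition or the negative switches whose position we are analyzing, and that patches meeting $\partial M$ (which contribute nothing to $e_\tau$) be correctly bookkept in the Euler-characteristic count. These are adaptations of the arguments of \cite{Landry_norm} — where the closed case is \cite[Theorem 5.5]{Landry_norm} — to manifolds with toral boundary.
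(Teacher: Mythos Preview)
Your proof is correct and follows essentially the same route as the paper: the same cycle $(c)\Rightarrow(a)\Rightarrow(b)\Rightarrow(c)$, the same appeal to \Cref{lem:export} for $(b)\Rightarrow(c)$, and the same equality analysis obtained by subtracting the index-sum formula for $2\chi(S)$ from the patch formula \eqref{euler_patch} for $2e_\tau([S])$. You are slightly more explicit than the paper in two places --- noting that tautness together with \Cref{prop:top} yields incompressibility and $\partial$-incompressibility, and flagging that the $\pi_1$-injectivity normalization of an arbitrary $S'$ must be carried out without disturbing the condition being verified --- but these are exactly the points the paper leaves implicit.
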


\begin{proof}
Condition (a) implies (b) by \Cref{cor:easycontainment2}. Also (c) implies (a) because any carried surface has only positive intersections with $\Gamma$.

It remains to prove that (b) implies (c), so suppose that $x([S])=-e_\tau([S])$. In light of \Cref{lem:export}, our strategy will be to show that for any surface isotopic to $S$, one of the two conditions in \Cref{lem:export} holds. The proof is analogous to the proof of \cite[Theorem 5.12]{LMT20}, but with our slightly different definition of the Euler class. We present it for the sake of completeness.

By an isotopy, we can assume that $S$ is transverse to both $B^s$ and $B^u$ and that each patch of $t^s=B^s\cap S$ and $t^u=B^u\cap S$ is $\pi_1$-injective in its cusped torus. We will show that if $t^s$ has no patches of positive index, then every negative switch of $t^s$ belongs to a bigon.

If $p$ is a superfluous patch, note that $p$ has $\algcusps(p)=0$ and the number of intersections of $p$ with $\gamma$ counted with sign is 0.

By (b) and the fact that $S$ is taut, we have $2\chi(S)=2e_\tau([S])$. By \Cref{euler_patch}
we can write this as
\begin{align*}
2\chi(S)&= \sum_{\text{NSD}} \sign(p)\cdot\ind(T(p)) +\sum_{\text{NSA}} (-\algcusps(p)).
\end{align*}

If we let SP denote the set of superfluous patches, the index sum formula for $\chi(S)$ gives
\[
2\chi(S)=\sum_{\text{all patches}} \ind(p)=\sum_{\text{SP}} \ind(p) +\sum_{\text{NSD}} \ind(p)+\sum_{\text{NSA}}\ind(p).
\]
Subtracting the above two expressions for $2\chi(S)$, we obtain
\[
0=\sum_{\text{SP}} \ind(p) +\sum_{\text{NSD}}(\ind(p)-\sign(p)\cdot\ind(T(p)))+\sum_{\text{NSA}}(\ind(p)+\algcusps(p)).
\]
We claim every term in each of the three sums above is nonpositive:
\begin{itemize}
\item First sum: every term is nonpositive because we are assuming there are no patches of positive index. 
\item Second sum: if $p$ is a nonsuperfluous disk with $\sign(p)=-1$ then clearly $\ind(p)-\sign(p)\cdot\ind(T(p))\le 0$ is a sum of two nonpositive numbers, and if $\sign(p)=1$ then the inequality follows from the fact that a meridional disk of a cusped solid torus has index at most the index of the cusped solid torus.
\item Third sum: if $p$ a nonsuperfluous annulus then $\chi(p)\le0$. Hence $\ind(p)+\algcusps(p)\le \ind(p)+\cusps(p)=2\chi(p)\le 0$, so every term in the second sum is nonpositive.
\end{itemize}

Therefore each term in the sums over SP, NSD, and NSA is equal to 0. This has the following implications:
\begin{itemize}
\item each superfluous patch has index 0, so is either a bigon or an annulus with no cusps,
\item each nonsuperfluous disk patch $p$ has $\ind(p)=\sign(p)\cdot\ind(T(p))$, so $\sign(p)=+1$ and all cusps of $p$ are positive.
\item each nonsuperfluous annulus patch $p$ in a cusped torus shell has $\ind(p)=-\algcusps(p)$, so $p$ is equal to an annulus with no negative cusps,
\end{itemize}
We have shown that any negative switch of $t^s$ must belong to a bigon. A symmetric argument shows the same for $t^u$. Hence (b) implies (c).
\end{proof}

\begin{corollary}\label{cor:combinatorialflowsrepresentfaces}
If $\tau$ is a strict relative veering triangulation in $M$, then $\cone_1^\vee(\Gamma)$ is equal to the cone over a face of the the Thurston norm ball whose codimension is equal to the dimension of the largest linear subspace contained in $\cone_1(\Gamma)$. Moreover, $\cone_1^\vee(\Gamma)$ is the maximal domain on which $x$ and $-e_\tau$ agree as functions $H^2(M,\del M)\to \R$.
\end{corollary}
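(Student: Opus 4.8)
The plan is to show that $\cone_1^\vee(\Gamma)$ coincides with
\[
E := \{\, u \in H^1(M) : x(u) = -e_\tau(u) \,\},
\]
and then to recognize $E$ as a cone over a face of the Thurston norm ball and read off its codimension. As preliminary observations: since $\tau$ is strict, \Cref{prop:top} makes $M$ irreducible, $\partial$-irreducible, atoroidal, and anannular, and the usual compression and parallelism arguments then show that any properly embedded surface all of whose components have nonnegative Euler characteristic is null-homologous in $H_2(M,\partial M)$, so that $x$ is a genuine norm. By \Cref{lem:dualnorm} the dual Thurston norm of $e_\tau$ is at most $1$, so (as $x^*$ is symmetric) $-e_\tau(u) \le x(u)$ for every $u$; hence $E$ is exactly the zero locus of the nonnegative function $x + e_\tau$, and in particular is by construction the maximal subset of $H^1(M)$ on which $x$ and $-e_\tau$ agree.

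Next I would note that $E$ and $\cone_1^\vee(\Gamma)$ are both rational polyhedral cones, so to prove they are equal it suffices to compare their integral points. For $\cone_1^\vee(\Gamma)$: the cone $\cone_1(\Gamma)$ of nonnegative cycles in the finite graph $\Gamma$ is generated by its finitely many simple directed cycles, hence is rational polyhedral, and so is its dual. For $E$: writing $x(u) = \max_i \ell_i(u)$ over the (rationally defined) vertices $\ell_i$ of the dual unit ball, the inequality $x(u) + e_\tau(u) \le 0$ becomes the finite rational linear system $\{\ell_i(u) + e_\tau(u) \le 0\}_i$, so $E$ is rational polyhedral. Now let $u$ be an integral class and let $S$ be a taut surface with $[S]$ Poincar\'e dual to $u$ (start from a norm-minimizing representative and discard any null-homologous collection of components). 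Applying \Cref{combiSTST} to $S$, the conditions ``$[S] \in \cone_1^\vee(\Gamma)$'' and ``$x([S]) = -e_\tau([S])$'' are equivalent, i.e. $u \in \cone_1^\vee(\Gamma)$ if and only if $u \in E$. Since two rational polyhedral cones with the same integral points coincide, $\cone_1^\vee(\Gamma) = E$; this already gives the ``maximal domain'' clause.

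It then remains to express $E$ as a cone over a face and to compute its codimension. From $x^*(-e_\tau) \le 1$, the set $F := \{\, u : x(u) \le 1,\ -e_\tau(u) = 1 \,\}$ is a (possibly empty) face of the Thurston norm ball. Using that $x$ is a genuine norm one checks that $E$ is precisely the cone $\{\, tu : t \ge 0,\ u \in F \,\}$ over $F$: if $u \in F$ then $1 = -e_\tau(u) \le x(u) \le 1$, so $x(u) = -e_\tau(u)$ and $u \in E$, while conversely a nonzero $u \in E$ has $x(u) = -e_\tau(u) > 0$ and hence $u/x(u) \in F$. For the codimension I would invoke the standard fact that for any closed convex cone $C$ one has $\operatorname{span}(C^\vee) = (\mathrm{lineality}\, C)^{\perp}$; with $C = \cone_1(\Gamma)$ this says that the codimension of $\cone_1^\vee(\Gamma) = \cone(F)$ in $H^1(M)$ equals the dimension of the largest linear subspace contained in $\cone_1(\Gamma)$, as asserted (with the convention that the empty face has codimension $\dim H^1(M)$, which is the degenerate case $\cone_1^\vee(\Gamma) = \{0\}$, i.e. $\cone_1(\Gamma) = H_1(M)$).

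The only places that need genuine care are the polyhedrality and rationality inputs that license reducing to integral classes, and the convex-geometric identity relating the lineality space of $\cone_1(\Gamma)$ to the span of its dual cone; everything else is bookkeeping. The one substantive geometric ingredient—that a taut surface represents a class of $\cone_1^\vee(\Gamma)$ exactly when $x = -e_\tau$ holds on its class—is precisely \Cref{combiSTST}, so I do not expect any further obstacle.
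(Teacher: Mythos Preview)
Your argument is correct and is precisely the elaboration the paper leaves implicit: the corollary is stated without proof immediately after \Cref{combiSTST}, and what you have written is the natural deduction from that theorem together with \Cref{lem:dualnorm}, \Cref{prop:top}, and standard facts about polyhedral norms and dual cones. The key steps---that strictness forces $x$ to be a genuine norm, that $E$ and $\cone_1^\vee(\Gamma)$ are rational polyhedral and hence determined by their integral points, that \Cref{combiSTST} identifies those integral points, and that the span/lineality duality computes the codimension---are all sound.

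Two minor remarks. First, your reduction to integral classes via rational polyhedrality is fine, but it can be streamlined: once you know $E$ is the cone over the face $F=\{-e_\tau=1\}\cap\{x\le 1\}$, the rationality of $e_\tau$ and of the Thurston norm ball makes $F$ a rational polytope, so integral points are dense in the cone and \Cref{combiSTST} applies directly. Second, regarding the codimension clause: your identity $\operatorname{span}(C^\vee)=(\text{lineality }C)^\perp$ indeed gives that the codimension of $\cone_1^\vee(\Gamma)$ in $H^1(M)$ equals the lineality dimension of $\cone_1(\Gamma)$; since the cone over a $k$-dimensional face has dimension $k+1$, this matches the paper's statement when ``codimension of the face'' is read as codimension within the boundary sphere of the norm ball (equivalently, codimension of the cone in $H^1(M)$), which is the intended reading.
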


\section{Pseudo-Anosov flows and dynamic blowups}
\label{sec:flows}

We refer to Mosher \cite{Mos92}, Fenley--Mosher \cite{fenley2001quasigeodesic} and Agol--Tsang \cite{AgolTsang} for thorough discussions of the definition of a
pseudo-Anosov flow on a closed 3-manifold. There are in fact two
definitions, a smooth and a topological one, and their equivalence for transitive flows has
recently been shown by Agol--Tsang \cite{AgolTsang} relying on the Anosov-case established by Shannon \cite{Shannon20}.
It will suffice for our purposes to record some of the main properties associated with (either
type of) pseudo-Anosov flow, which we will do below.

We will then define the notion of a \define{dynamic blowup} 
of such a flow, which is a
generalization of the definition given by Mosher \cite{mosher1990correction}. A special
case of this will yield a definition---basically the expected one---of pseudo-Anosov flows on manifolds with toral
boundary, and the general case will give us the definition of {\em almost} pseudo-Anosov
flows, in both the closed case and the case with boundary.

\smallskip

A \define{pseudo-Anosov flow} $\varphi$ on a closed 3-manifold $M$ has in particular the
following properties: 
\begin{enumerate}
  \item The flow $\varphi$ has a finite collection $\Pi_s$
  of closed orbits (called
    \define{singular orbits}), and a $\varphi$-invariant pair of {\em transverse singular
      foliations} of codimension 1 whose leaves intersect along orbits of $\varphi$. The foliations are nonsingular and transverse in the complement of $\Pi_s$ and have a standard form in a neighborhood of $\Pi_s$, as described below.
  \item The two foliations are called the \define{stable} and the \define{unstable} foliations
    of $\varphi$, denote $W^s$ and $W^u$, respectively. 
    For any two points $x,y$ in a single leaf of the stable foliation $W^s$, the
    trajectories are asymptotic, meaning that for some orientation-preserving
    $s\colon \R_+\to\R_+$, $\lim_{t\to+\infty}d(\varphi_t(x),\varphi_{s(t)}(y)) = 0$. The
    corresponding statement holds for the unstable foliation $W^u$ with $t\to \infty$ replaced by $t \to -\infty$. 
\end{enumerate}

We will typically work with {\em transitive} flows (i.e. possessing a dense
orbit). Mosher \cite[Proposition 2.7]{Mos92} proved that all pseudo-Anosov
flows on closed atoroidal manifolds are transitive. Once we define pseudo-Anosov flows on manifolds with boundary, it will be easy to see that atoroidality implies transitivity in the broader setting, by choosing appropriate Dehn fillings and applying Mosher's result in the closed case.

\smallskip
It remains to describe 
the local dynamics around a periodic orbit $\gamma$ in
$\Pi_s$,  as a suspension of a
surface homeomorphism.
In what follows, we err on the laborious side in order to prepare for the
description of the blowups. 
Our description is equivalent to the \emph{pseudo-hyperbolic orbits} described in e.g. \cite{fenley2001quasigeodesic, AgolTsang}. 

Let $U\subset V$ be nested open 2-disks, and $f\colon U\to V$ an orientation-preserving embedding, with the following
properties: There is a one-vertex tree $T$ with an even number of {\em prongs} (at least 4) properly embedded
in $V$ so that $f^{-1}(T) \subset T\intersect U$ and $f$ fixes the vertex $p$ and no other
points. 
Each component $Q$ of $V\ssm T$ is an {\em open quadrant} and its closure  in $V$,
$\ol Q$, is
a {\em closed quadrant.} The map $f$ permutes the germs of quadrants in the sense that it takes $Q\intersect U$ into
some $Q'$. Thus there is a least power $f^k$ that takes germs of quadrants into themselves (that
is, there is some smaller neighborhood $U'\subset U$ such that $f^k(Q\intersect U') \subset
Q$). 

Finally, there is an identification of $\ol Q$ with a neighborhood of the
origin in a closed quadrant of the Euclidean plane, which conjugates $f^k$ to the map $(x,y)
\mapsto (\lambda x,y/\lambda)$ for some $\lambda> 1$ (on the image of $\ol Q
\intersect U$).

In particular note that along  the interval $\boundary Q \intersect T$ the map $f$ fixes
the point $p$ and translates the rest in the {\em same} direction as viewed from inside $Q$.
Hence, each prong at $p$ is either \emph{stable} (i.e. contracting) or \emph{unstable} (i.e. expanding). Note that our convention is to draw the stable direction as vertical and the unstable direction as horizontal.

We suspend this picture by considering $V\times [0,1]$, with $U\times 1$ identified with
$f(U)\times 0$ using $f$. Vertical flow is then defined, for bounded times, on a small neighborhood of the
closed orbit $p\times[0,1]/\sim$. This is the model for a pseudo-Anosov flow in a
neighborhood of a closed orbit $\gamma$; the orbit is \define{singular} (i.e. $\gamma \in \Pi_s$) if the number of stable (and unstable) prongs is at least $3$. 
The suspended vertical lines of the quadrants are the intersections of leaves of the
stable foliation with the neighborhood, and the suspended horizontal lines are
intersections of leaves of the unstable foliation. Note that prongs thus alternate between
stable and unstable singular leaves. 
Finally, we define the \define{index} of $\gamma$ to be $2$ minus the number of stable prongs.

From this description one sees that each singular periodic orbit is contained in a singular leaf of
the stable foliation, which has the structure of 3 or more annuli glued together along the
orbit (this is true at least locally and it is not hard to see that it is globally
true.) The orbit is the attractor of all flow lines in its stable leaf. The corresponding
statement is true for the unstable singular leaf.

\subsection{Local blowups}
\label{local blowups}

Let $U^\sharp\subset V^\sharp$ be either nested disks as before, or nested annuli
homeomorphic to $S^1\times[0,1)$ sharing the boundary component $\boundary U^\sharp =
  \boundary V^\sharp = S^1\times 0$. An embedding $f^\sharp \colon U^\sharp \to V^\sharp$ is a
 \define{blowup} of $f$ if the following holds:

There is a degree 1 map $\pi \colon V^\sharp\to V$ taking $U^\sharp $ to $U$, which
semiconjugates $f^\sharp$ to $f$. 
The preimage $\pi^{-1}(T)$ is a graph $T^\sharp$ (without valence $2$ vertices) properly embedded in $V^\sharp$, containing
a connected subgraph $G=\pi^{-1}(p)$ in $U^\sharp$. In the \emph{disk case}
 $T^\sharp$ is a tree, and in the
\emph{annulus case} $T^\sharp$ has Euler characteristic $0$, containing $\boundary U^\sharp\subset G$ as its
unique circle each of whose vertices we require to have degree $3$. 
We refer the reader to the right-hand sides of \Cref{fig:arcs-internal} and \Cref{fig:arcs-boundary} 
for examples in the disk and annulus case, respectively.
The complement of $G$ in $T^\sharp$ is a union
of intervals, which correspond to the edges of $T$. 
The restriction of $\pi$ to
$V^\sharp \ssm G$ is a homeomorphism to $V\ssm p$, which conjugates (the
restrictions of) $f^\sharp$ to $f$.

We require that the periodic points of $f^\sharp$ are the vertices of $T^\sharp$, so on the interior of each edge some
power of $f^\sharp$ acts by translation. We again have open quadrants, the components of
$V^\sharp \ssm T^\sharp$, which are taken homeomorphically to open quadrants of
$V\ssm T$. The closure of each quadrant now has a boundary which is a segment in
$T^\sharp$ composed of several edges. Note that the translations of (a suitable
power of) $f^\sharp$ on these edges are all in the same direction, as viewed from the
quadrant. We call such a picture a {\em consistently oriented} blowup quadrant. One implication is that around any vertex $v$ of $T^\sharp$ the adjacent prongs alternate between attracting and repelling at $v$.

In the simplest blowup of a quadrant map, the corner point pulls back to an
interval connecting two fixed points. 
One can easily construct such a blowup, in coordinates where $f$ is
differentiable at the corner with eigenvalues $\lambda > 1 > \mu > 0$, using a ``real
blowup'' where the corner is replaced by the interval of unit tangent vectors pointing
into the quadrant. See \Cref{fig:simple-blowup}, and also the appendix of \cite{cantwell1999isotopies}.

\begin{figure}[htbp]
\begin{center}
\includegraphics{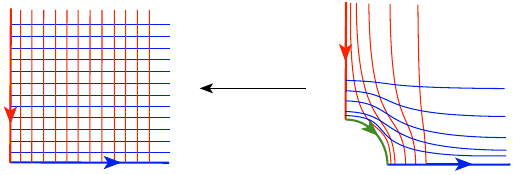}
\caption{Standard blowup of a quadrant.}
\label{fig:simple-blowup}
\end{center}
\end{figure}

A blowup with more fixed points (which still satisfies the consistent orientation
condition) can then be obtained by repeating the construction at the new fixed points; see
\Cref{fig:repeated-blowup}.

\begin{figure}[htbp]
\begin{center}
\includegraphics[height=1.5in]{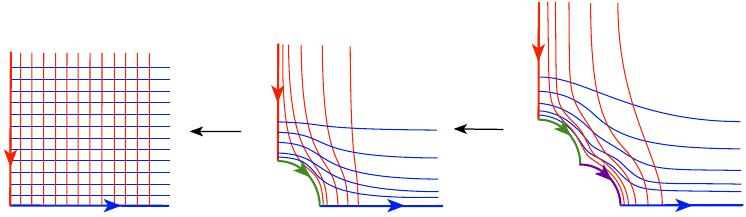}
\caption{A repeated blowup has additional fixed points in the blowup interval}
\label{fig:repeated-blowup}
\end{center}
\end{figure}

We suspend $f^\sharp$ in the same way as before, obtaining a flow defined for bounded
times in a
neighborhood of the suspension of $G$. This is 
what we call a blowup of the
suspension of $f$. Note that in the annulus case we obtain a manifold with torus
boundary. 

The edges of the graph $G$ suspend to flow-invariant annuli which we call \textbf{blown annuli}. Note that in each such annulus the flow lines are asymptotic to one boundary
component in backward time and the other in forward time. The suspension of $G$ itself is called an \define{annular complex}; it is a union of blown annuli.

\begin{remark}[Nearby homeomorphism] \label{nearby homeo} Note that in the disk case, the map $\pi$ is homotopic to a homeomorphism by a
homotopy supported on a small neighborhood of $\pi^{-1}(p)$, since such a neighborhood is
a disk. In the annulus case the two surfaces are not homeomorphic, but if we had two
annulus-type blowups $\pi_1\colon V_1^\sharp\to V$ and $\pi_2\colon V_2^\sharp\to V$, one similarly
sees that there is a homeomorphism $h \colon V_1^\sharp \to V_2^\sharp$ which is equal to
$\pi_2^{-1}\circ \pi_1$ outside a small neighborhood of $\pi_1^{-1}(p)$.  
Similar statements hold for the suspensions. 
\end{remark}
 
\subsection{Global blowups}
\label{global blowups}
A {\bf dynamic blowup} of a pseudo-Anosov flow $\phi$ on a closed 3-manifold $M$ is a flow
$\phi^\sharp$ on a compact 3-manifold $M^\sharp$, together with a map $\pi \colon M^\sharp \to M$ with the
following properties:
\begin{enumerate}
  \item $\pi$ semiconjugates $\phi^\sharp$ to $\phi$. 
\item There is a finite collection $\Pi$ of closed orbits of $\phi$ such that, from
  $\pi^{-1}(M\ssm \Pi)$ to $M\ssm \Pi$, $\pi$ is a homeomorphism and a conjugacy of
  the flows.
\item For each orbit $\gamma$ in $\Pi$ there is a neighborhood $W$ whose preimage under
  $\pi$ is a local blowup in the sense described above. In particular, $\pi^{-1}(W)$ is either a
  neighborhood of the suspension of a tree homeomorphism (the ``disk case'' of \Cref{local blowups}), or a
  neighborhood of a toral boundary component (the ``annulus case'').

\end{enumerate}
 We call the neighborhood $W$ from item $(3)$ above a \define{standard neighborhood} of $\gamma$.

A blowup $\phi^\sharp$ of a pseudo-Anosov flow is called an \define{almost pseudo-Anosov flow}. 
We say that the \define{combinatorial type} of such a blowup is the blowup graph $T^\sharp$
associated to each local blowup, together with its embedding in the disk or annulus $V^\sharp$,
up to  proper isotopy. 
Note that the combinatorial type is completely specified by saying
which pairs of  quadrants of the original return map, in a neighborhood of each orbit, are to be
connected along the blowup graph. We do {\em not} claim that, given the combinatorial
type,  the blowup flow $\phi^\sharp$ is unique up to orbit equivalence. Indeed there are
further choices in the construction (notably the choice of gluing map across the blowup
graph), but these will not matter to our discussion.

\smallskip

There are several important special cases of dynamic blowups. If all the local blowups are in the disk case, then
$M^\sharp$ is still a closed manifold and in fact, using the discussion in \Cref{local
  blowups}, there is a homotopy of $\pi$ to a homeomorphism from $M^\sharp$ to $M$,
supported on a small neighborhood of $\pi^{-1}(\Pi)$. This is exactly the case defined
and used by Mosher \cite{mosher1990correction}.

If a local blowup is in the annulus case then the subgraph $G$ is a circle each of whose vertices has degree $3$ in $T^\sharp$ 
 (i.e. each singularity is $3$ pronged),
so we have blown up a closed orbit to a boundary component in the simplest possible
way. We call this a \define{Fried blowup} (see \Cref{fig:blowup2}) and it gives us a clean way to
define a pseudo-Anosov flow on a manifold with boundary:

  If $M$ is a compact 3-manifold with toral boundary and $\varphi$ is a flow that is
  tangent to the boundary, we say $\varphi$ is \define{pseudo-Anosov} if there is a Dehn filling
  $\pi\colon M \to M^\flat$ such that $\varphi$ is the blowup of a pseudo-Anosov flow on $M^\flat$, for
  which all the local blowups are Fried blowups.

\begin{figure}[htbp]
\begin{center}
\includegraphics{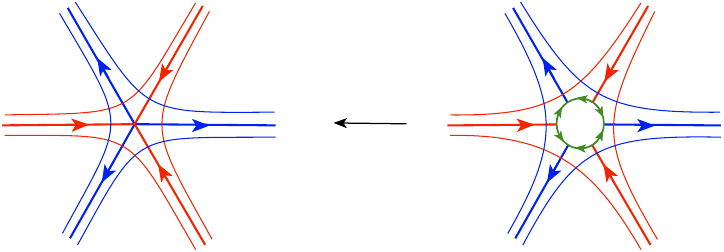}
\caption{The Fried blowup of a singular point yields the dynamics around a boundary component.}
\label{fig:blowup2}
\end{center}
\end{figure}

By construction, each boundary component $P$ of $\partial M$ is a union of (consistently oriented) blown annuli and so the restriction of $\phi$ to $P$ has alternating expanding/contracting periodic orbits. The contracting periodic orbits in $\partial M$ are called \define{unstable prong curves} and the expanding periodic orbits are called \define{stable prong curves}. 
The associated slopes on $\partial M$ are called the \define{prong slopes}; these are also called the degeneracy slopes in the literature.

\begin{remark}[Singular orbits and blowups]
\label{rmk:no_blow}
By definition only singular orbits of $\phi$ can be blown up, except for the case of a Fried blowup at a regular orbit. Hence, when $\pi \colon M^\sharp \to M$ restricts to a homeomorphism on boundary components (or the manifolds are closed), $\Pi \subset \Pi_s$. 
\end{remark}

\subsection{Blowing down and up}
\label{rmk:blowup_on_M}
We now discuss a more general blowup construction with the goal of producing a modified flow on the \emph{same} manifold $M$ in the case where $M$ has nonempty boundary. 
For some simple examples, see the bottom row of the bottom chart in \Cref{fig:shadows} and ignore the green surface; see also \Cref{fig:arcs-internal} and \Cref{fig:arcs-boundary}.

Starting with a manifold with boundary and a pseudo-Anosov flow, we will first ``blow down'' to a closed $M^\flat$
so that $M\to M^\flat$ is a Fried blowup, and then blow up $M^\flat$ to obtain $M^\sharp$
with general annulus type blowups that recover all the boundary components, so that $M^\sharp$
and $M$ are homeomorphic. As in \Cref{nearby homeo}, we obtain a homeomorphism $h \colon M^\sharp \to M$ and an almost pseudo-Anosov flow $\varphi^\sharp$ on $M^\sharp$ so that $h$ pushes $\phi^\sharp$ to a flow on $M$ which, away from
a small neighborhood of blown singular orbits and boundary components of $M$, agrees with
$\phi$. We refer to this combined operation as a
\textbf{generalized dynamic blowup}; however for brevity we will usually just use the term dynamic blowup.
Moving forward, we will use $h$ to  {\em identify} $M^\sharp$ with $M$ and consider $\phi^\sharp$ 
as a small modification of $\phi$ on $M$. With this change, the above setup gives

\begin{equation}\label{eqn:blowupdown}
  \begin{tikzcd}
(M,\phi^\sharp)\arrow{rd}{\pi_\sharp}& & (M,\phi)\arrow{ld}[swap]{\pi_\flat}\\
&(M^\flat,\phi^\flat)&\\
  \end{tikzcd}
\end{equation}
where $\pi_\sharp$ and $\pi_\flat$ are the blowdown maps. Moreover,  
we have a homeomorphism
\[
g \colon M \ssm \left( \text{blowup locus of }\pi_\sharp  \right) \to M \ssm \left(\text{blowup locus of }\pi_\flat \right)
\]
which is an orbit equivalence from the restriction of $\phi^\sharp$ to that of $\phi$, and
equal to the \emph{identity} 
away from a small neighborhood of the blowup loci.

\begin{remark}[Goodman--Fried surgery] \label{remark:FGsurg}
We observe that for any pseudo-Anosov flow $\varphi$ on $M$ and any finite collection of orbits $\kappa$, we can apply a Fried blowup to each of the orbits of $\kappa$ to obtain a manifold $\mr M$ with pseudo-Anosov flow $\mr \phi$. 

For each component $k$ of $\kappa$, let $s_k$ be an essential simple closed curve on the associated component $P_k$ of $\partial \mr M$ with the property that its geometric intersection number with the union of unstable prong curves of $P_k$ is at least $2$. Then Fried shows that there is a blowdown $\mr M \to M'$ to a pseudo-Anosov flow $\phi'$ on the Dehn filled manifold $M' = \mr M(s_k: k \in \kappa)$ that collapses $P_k$ to a closed orbit $k'$ whose index is the above mentioned geometric intersection number. Of course, $\mr M$ is also a Fried blowup of $M'$, illustrating the flexibility in the definition of a pseudo-Anosov flow on a manifold with boundary.

See Fried \cite{fried1982geometry} for the details of the construction and Tsang \cite{tsang2022constructing} for the related history.
\end{remark}

\subsection{Stable/unstable foliations of almost pseudo-Anosov flows}
Let $\phi$ be an almost pseudo-Anosov flow on a compact 3-manifold $M$, obtained by
dynamically blowing up a pseudo-Anosov flow $\phi^\flat$ on a closed 3-manifold $M^\flat$. The
\textbf{stable foliation} of $\phi$ is a singular foliation whose leaves are the
preimages of leaves of the stable foliation of $\phi^\flat$ 
under the semiconjugacy $(M, \phi)\to (M^\flat, \phi^\flat)$. 
It is not hard to see that this definition is independent of which blowdown
$(M^\flat,\phi^\flat)$ we choose. 
We similarly define the
\textbf{unstable foliation} of $\phi$. One sees from the local structure of a
blowup that the preimage of a leaf is is connected, so
there is a one-to-one correspondence between leaves of the stable and unstable foliations
of $\phi^\flat$ and $\phi$. If $H$ is a stable leaf of $\phi^\flat$ containing a singular orbit
$\gamma$ of $\phi^\flat$ which is blown up to obtain $\phi$, then the stable leaf of
$\phi$ corresponding to $H$ will contain every blown annulus collapsing to
$\gamma$. Note that the blown annuli simultaneously belong to both stable and unstable leaves
  of the foliations of $\phi$, and in particular the foliations are not transverse
  even away from the periodic orbits.

We say two orbits $\gamma$ and $\gamma'$ of a flow on 
a manifold $X$ with metric $d$ are \textbf{asynchronously proximal} (in the forward
direction) 
if there are sequences of
times $(t_n), (t_n')$ with $t_n, t_n' \to \infty$ such that $d(\gamma(t_n),\gamma'(t_n'))\to 0$. Note that this is a weaker notion than forward asymptotic: asynchronously proximal orbits get arbitrarily close, while asymptotic orbits get arbitrarily close and \emph{stay} arbitrarily close. 

The definition of asynchronously proximal will be relevant to us when we lift an almost pseudo-Anosov flow on a compact 3-manifold $M$ to its universal cover. 

For the next lemma, a stable \define{half-leaf} is a complementary component of the closed orbits within a stable leaf of $\varphi$.
An unstable half-leaf is defined similarly. Note that a stable/unstable leaf can contain $k$ half-leaves, where $k$ is any integer $k\ge 1$. The following is a key feature of almost pseudo-Anosov flows.

\begin{lemma}\label{lem:proximal}
Let $\phi$ be an almost pseudo-Anosov flow on a compact 3-manifold $M$, and let $\wt\phi$ be its lift to the universal cover $\wt M$. If $\gamma$ and $\gamma'$ lie in the same stable half-leaf 
of $\wt\phi$, then $\gamma$ and $\gamma'$ are asynchronously proximal. If they lie in the same unstable half-leaf, then they are asynchronously proximal in the backward direction.
\end{lemma}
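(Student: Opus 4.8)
The plan is to work in the universal cover and exploit the local structure of a dynamic blowup together with the asymptotic behavior inherited from the underlying pseudo-Anosov flow $\phi^\flat$. First I would reduce to the pseudo-Anosov case: let $\pi\colon (\wt M,\wt\phi)\to (\wt M^\flat, \wt\phi^\flat)$ be the lift of the semiconjugacy to universal covers (after blowing down to a closed $M^\flat$, noting that $\pi_1$ of $M$ and $M^\flat$ agree up to the Dehn filling, so this is legitimate on universal covers). A stable leaf of $\wt\phi$ is, by definition, $\pi^{-1}$ of a stable leaf of $\wt\phi^\flat$, and the blown annuli in it collapse to orbits. So a stable half-leaf $\Lambda$ of $\wt\phi$ maps onto a stable half-leaf (or portion of a leaf) $\Lambda^\flat$ of $\wt\phi^\flat$, and the map $\Lambda\to\Lambda^\flat$ is a quotient collapsing certain flow-invariant arcs/annuli. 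Given orbits $\gamma,\gamma'$ in $\Lambda$, their images $\bar\gamma=\pi(\gamma)$, $\bar\gamma'=\pi(\gamma')$ are either orbits of $\wt\phi^\flat$ lying in the same stable (half-)leaf, or one/both of them is a single orbit coming from a collapsed blown annulus.

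Second, I would handle the ``interior'' of a stable leaf using the defining property of the stable foliation of $\phi^\flat$: two orbits $x,y$ in the same stable leaf of $\wt\phi^\flat$ are forward asymptotic, i.e. $d(\wt\phi^\flat_t(x),\wt\phi^\flat_{s(t)}(y))\to 0$ for a suitable reparametrization $s$. This is stated as property (2) of pseudo-Anosov flows in the excerpt (it holds in the universal cover since the lifted metric is the pullback and distances only decrease, or at worst are comparable on compact pieces — more carefully, asymptoticity is a local-to-global statement preserved under lifting because the leaf lifts homeomorphically). In particular $\bar\gamma$ and $\bar\gamma'$ are forward asymptotic, hence certainly asynchronously proximal. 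Then I would push this back up through $\pi$: since $\pi$ is a semiconjugacy and is a homeomorphism away from the (discrete, flow-invariant) preimages of blown-up orbits, and since the flow $\wt\phi$ is obtained by the ``real blowup'' local model $(x,y)\mapsto(\lambda x, y/\lambda)$ which does not separate points that are close in the quotient, sequences of times $t_n,t_n'\to\infty$ realizing proximality downstairs lift to sequences realizing proximality upstairs. The key point making this clean: asynchronous proximality is the statement $\liminf_{t,t'\to\infty} d(\gamma(t),\gamma'(t'))=0$, which only requires producing \emph{some} pair of large times with small distance, and the blowup is ``thin'' — the blown annuli suspend arcs of bounded size, and a standard neighborhood of the annular complex is an arbitrarily small neighborhood of its image, so points close downstairs have preimages close upstairs.

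Third, I would deal with the case where $\gamma$ or $\gamma'$ actually lies in a blown annulus $A$ (these are the new orbits created by the blowup; note a blown annulus simultaneously belongs to a stable and an unstable leaf). All flowlines of such an annulus are asymptotic in forward time to one of its boundary components, say the orbit $\beta^+$; and $\beta^+$ is an orbit lying in the corresponding stable leaf whose image under $\pi$ is the blown-up orbit $\gamma^\flat\in\Pi_s$. So within the same stable half-leaf, any orbit of a blown annulus is forward asymptotic (hence asynchronously proximal) to the appropriate boundary orbit, which reduces us to the previous case by transitivity of ``being asynchronously proximal'' (more precisely: if $\gamma$ is proximal to $\beta$ and $\beta$ is asymptotic, hence proximal, to $\gamma'$, then $\gamma$ is proximal to $\gamma'$ — a $3\epsilon$/triangle-inequality argument choosing the time sequences compatibly). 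The backward-direction statement for unstable half-leaves is the time-reversed version of everything above and needs no separate argument.

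The main obstacle I anticipate is the bookkeeping in the third step and in the push-forward/pull-back through the semiconjugacy: one must be careful that when two orbits lie in the same stable half-leaf of $\wt\phi$ but the half-leaf of $\phi^\flat$ they map to has been ``folded'' by collapsing several blown annuli, the relevant boundary orbits of those annuli really do all sit in one stable leaf of $\wt\phi^\flat$ and really are mutually forward asymptotic. This is where the hypothesis that we stay within a single \emph{half}-leaf (a complementary component of the closed orbits) is essential — it prevents us from having to cross a singular orbit, where the attracting dynamics would fail to give proximality between orbits on different prongs. I would isolate this as the technical heart: establish that a stable half-leaf of $\wt\phi$ maps under $\pi$ into a single stable leaf of $\wt\phi^\flat$ with all of its constituent collapsed-annulus-boundary-orbits mutually forward asymptotic, and then the proximality statements follow by triangle-inequality chaining as above.
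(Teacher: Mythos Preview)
Your proposal is correct and follows essentially the same approach as the paper: reduce to the underlying pseudo-Anosov flow $\phi^\flat$ via the semiconjugacy, use the defining forward-asymptotic property of stable leaves there, and then lift back through the blowup. The paper in fact gives only a one-sentence sketch (citing Fenley), whose content is exactly the observation that makes your pull-back step clean: the semiconjugacy $\pi$ can be taken to be an \emph{isometry} away from a neighborhood of the blown annuli, so proximality downstairs transfers upstairs without the delicate bookkeeping you anticipated.
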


In the case where $M$ is closed, this lemma is an observation of Fenley in \cite[Proof of Theorem 4.1]{Fen09} although he does not use the term asynchronously proximal. The key point there and in this more general setting is that a semiconjugacy collapsing blown annuli can be taken to be an isometry away from a neighborhood of the blown annuli.

\subsection{Flow spaces and  perfect fits}
\label{sec:flowspace}
For a flow $\phi$ on a manifold $M$, its \define{flow space} $\orb$ is obtained by taking the quotient of the universal cover $\wt M$ of $M$ by the orbits of the lifted flow $\wt \phi$. The quotient map $\Theta \colon \wt M \to \orb$ is equivariant with respect to the $\pi_1(M)$--actions, and when $\phi$ is almost pseudo-Anosov, its stable/unstable invariant foliations project to invariant singular foliations of $\orb$ which we continue to call the \define{stable/unstable} foliations of $\mc O$.  

A \define{blown segment} in $\orb$ is a topological arc obtained by lifting a blown annulus to $\wt M$ and projecting to $\mc O$; it is contained in leaves of both foliations. 

\begin{theorem}\label{th:flow_space}
Let $\phi$ be an almost pseudo-Anosov flow on a compact manifold $M$ and let $\orb$ be its flow space. Then $\orb$ is a simply connected surface that is (singularly) foliated by the images of the stable/unstable foliations, which are transverse away from blown segments.
If $M$ is closed, $\orb$ is homeomorphic to $\mathbb{R}^2$. Otherwise, $\orb$ is homeomorphic to a disk minus a closed subset of its boundary.
\end{theorem}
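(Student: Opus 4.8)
The plan is to reduce the claim to the known closed case and then track what the Fried/dynamic-blowup construction does to the flow space. First I would recall that when $M$ is closed and $\phi$ is genuinely pseudo-Anosov, the flow space $\orb$ is a simply connected surface with two transverse singular foliations, homeomorphic to $\RR^2$; this is classical (Fenley--Mosher, see \cite{fenley2001quasigeodesic}) and the singular foliations there have only finitely many singularity types with $\ge 3$ prongs. For an almost pseudo-Anosov flow $\phi$ on closed $M^\flat$ obtained from $\phi^\flat$ by disk-type blowups only, I would use the semiconjugacy $\Theta^\flat\colon\orb^\sharp\to\orb^\flat$ induced by $\pi$: by \Cref{lem:proximal} and the local description in \Cref{local blowups} it collapses each blown segment to a point, is a homeomorphism off the blown segments, and the preimage of each point is either a point or a single blown segment (an arc). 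Hence $\orb^\sharp$ is obtained from $\RR^2$ by splitting open a locally finite collection of points into arcs, which keeps it a simply connected noncompact surface; again it is homeomorphic to $\RR^2$.

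The substantive case is $\partial M\neq\varnothing$. Here $\phi$ is, by definition, a Fried (more generally annulus-type) blowup of a pseudo-Anosov flow $\phi^\flat$ on a closed manifold $M^\flat$, via $\pi\colon M\to M^\flat$. Choose lifts and let $\wt\pi\colon\wt M\to\wt{M^\flat}$. Passing to flow spaces gives a map $\Theta\colon\orb\to\orb^\flat\cong\RR^2$. The key local computation is: near a boundary torus, the blowup replaces a singular orbit by a circle of degree-$3$ vertices joined by blown annuli (\Cref{global blowups}); lifting, the annular complex over one blown-up orbit lifts to a tree (an $\RR$-tree's worth of blown segments), and the preimage of the corresponding singular point of $\orb^\flat$ is an arc in $\boundary\orb$ — half of it is genuinely deleted from $\orb$ (the part that becomes boundary torus), the rest is blown-segment boundary. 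So $\Theta$ exhibits $\orb$ as $\RR^2$ with a $\pi_1(M)$-invariant, locally finite (in the quotient) family of ``bites'' removed along open boundary arcs and simultaneously with some interior points split open into arcs. I would argue this is still simply connected (each modification is a deformation retract) and noncompact, and that its ideal boundary structure makes it homeomorphic to a disk minus a closed subset of its boundary circle: the removed bites are exactly the ``closed subset of the boundary.'' For the transversality statement I just invoke that away from blown segments $\Theta$ is a local conjugacy, so the stable/unstable foliations of $\orb$ pull back the transverse foliations of $\orb^\flat$; along blown segments both foliations contain the segment, which is precisely the asserted exception.

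The main obstacle I expect is the boundary-behavior argument: verifying carefully that the quotient $\wt M\to\orb$ is Hausdorff and that the resulting surface has the stated boundary structure (a disk minus a \emph{closed} subset of its boundary, rather than something wilder) when $\partial M\neq\varnothing$. Concretely one must show (i) orbits do not accumulate on a boundary-tangent orbit in a way that makes the quotient non-Hausdorff — this follows from $\phi$ being tangent to $\boundary M$ and the product structure of a standard neighborhood of a boundary torus, so $\wt M$ near a boundary component is a half-space foliated by lines; (ii) the family of ``removed arcs'' in $\boundary\RR^2$ is closed, which comes from properness of the covering action together with the fact that each boundary torus of $M$ contributes only finitely many prong curves. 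Once Hausdorffness and local structure at the boundary are pinned down, the global conclusion is a soft topological statement: a simply connected surface with connected noncompact boundary (possibly with the boundary only partially present) and no ends of the ``closed-surface'' type is a disk minus a closed subset of its boundary, which can be extracted from the classification of (possibly non-compact) surfaces or proved directly by an exhaustion argument.
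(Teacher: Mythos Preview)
Your treatment of the closed case is fine, and your instinct to reduce to the Fenley--Mosher result is exactly what the paper does. The genuine gap is in the boundary case, and it is not a detail: your description of the map $\Theta\colon\orb\to\orb^\flat$ is incorrect at a structural level.

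When $\partial M\ne\varnothing$, the blowdown $\pi\colon M\to M^\flat$ collapses each boundary torus to a closed orbit, so $\pi_*\colon\pi_1(M)\to\pi_1(M^\flat)$ is surjective but has \emph{infinite kernel} (the normal closure of the meridians). Consequently the lift $\wt\pi\colon\wt M\to\wt{M^\flat}$ is infinite-to-one on generic points, and the induced map $\Theta\colon\orb\to\orb^\flat\cong\RR^2$ is likewise infinite-to-one. So $\orb$ is \emph{not} ``$\RR^2$ with a locally finite family of bites removed,'' and the preimage of a blown-up singular point is not a single arc in $\partial\orb$ but an infinite collection of entire boundary lines. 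Your picture would be correct only if $\pi_*$ were an isomorphism, as it is in the disk-type (closed) blowup case.

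The paper's fix is to insert the intermediate cover $M'\to M$ determined by $\ker\pi_*$, so that $\pi'\colon M'\to\wt{M^\flat}$ is the honest ``collapse annular complexes to orbits'' map. Its flow space $\orb'$ is then genuinely the plane with a countable collection of open disks removed (one per boundary torus of $M'$), and $\orb$ is the universal cover of $\orb'$---which yields the disk-minus-closed-subset description. Hausdorffness and local two-dimensionality are checked for $\orb'$ by pushing $(\epsilon,T)$-cycles down to $\wt{M^\flat}$ via $\pi'$ and invoking Fenley--Mosher's argument there; your direct local argument at the boundary could perhaps be made to work, but it does not by itself give the global topological description you claim.
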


In the case where $\phi$ is pseudo-Anosov and $M$ is closed, \Cref{th:flow_space} was proven by Fenley--Mosher \cite[Proposition 4.1]{fenley2001quasigeodesic}. They also establish the result when $\phi$ is almost pseudo-Anosov (again when $M$ is closed) assuming that $\phi$ is transverse to a taut foliation. 

We begin with some setup with the goal of showing that the general result follows from the special case proven by Fenley--Mosher. By definition (see \Cref{global blowups}), there is a quotient map $\pi \colon M \to M^\flat$, that blows $\phi$ down to a pseudo-Anosov flow $\phi^\flat$ on a closed manifold $M^\flat$. Let $\orb^\flat$ be the flow space of $\phi^\flat$; we will use the fact, referenced above, that $\orb^\flat$ is a bifoliated plane. 

\begin{remark}
The blowdown $M \to M^\flat$ is not unique (see \Cref{remark:FGsurg}) but the following discussion is true for any such blowdown.
\end{remark}

The map $\pi \colon M \to M^\flat$ is $\pi_1$--surjective and we let $M' \to M$ be the
covering space determined by $\ker \pi_*$
so that there is an induced quotient map $\pi' \colon M' \to \wt M^\flat$ to the universal cover $\wt M^\flat$ of $M^\flat$.
Denote by $\orb'$ the quotient of $M'$ obtained by collapsing orbits in $M'$ of the lifted flow. Then there is an induced quotient map $\pi'_\orb \colon \orb' \to \orb^\flat$. Since $\pi' \colon M' \to \wt M^\flat$ collapses blown annular complexes (including boundary components) to singular orbits, $\pi'_\orb \colon \orb' \to \orb^\flat$ collapses connected graphs of blown segments to singularities of $\orb^\flat$. 

By construction, the flow space $\orb = \orb_M$ of $\phi$ is the universal cover $q \colon \orb_M \to \orb'$ and the composition $\pi_\orb = \pi'_\orb \circ q \colon \orb_M \to \orb^\flat$ is equivariant with respect to $\pi_*$.

\begin{proof}[Proof of \Cref{th:flow_space}]
 With the description above, it now suffices to show that $\orb'$ is a surface, possibly
 with boundary; that is, locally two-dimensional, and Hausdorff.  The argument in
 Fenley--Mosher \cite[Proposition 4.2]{fenley2001quasigeodesic} goes through verbatim to show that $\orb'$ is Hausdorff.  
 
 Following Fenley--Mosher, to show that $\orb'$ is locally two-dimensional,
 it suffices to show that every point of $M'$ has a transverse (half-)disk neighborhood
 that injects into $\orb'$. This follows from the fact that $M'$ does not contain $(\epsilon, T)$-cycles with $\epsilon$ arbitrarily small and $T$ arbitrarily large. We recall that an $(\epsilon,T)$ cycle is a flow segment $[x,y]$ with $y$ obtained from $x$ by flowing for time $T$ such that the distance between $x$ and $y$ is no more than $\epsilon$. 

To prove the fact, note that such a cycle will project to an $(\epsilon', T')$-cycle in
$M^\flat$ with $\epsilon' \le O(\epsilon)$, since the map $\pi$ is Lipschitz. Moreover, as in
the claim in the proof of \cite[Proposition 4.2]{fenley2001quasigeodesic}, $T'/T$ is uniformly bounded
above and below.  Hence, we obtain an $(\epsilon', T')$-cycle of $\wt M^\flat$ with $\epsilon'$
arbitrarily small and $T'$ arbitrarily large. However, this is shown to be impossible
in the proof of Proposition 4.1 of \cite{fenley2001quasigeodesic}.

This shows that $\orb'$ is a surface. The description of $\orb'$ given above then implies
that it is obtained topologically by removing the interiors of a countable collection
(zero when $\partial M = \emptyset$)  of disjoint disks from the plane.
This implies that its universal cover $\orb_M$ has the required description; the statement about the singular foliations on $\orb_M$ follows from the corresponding statement for the foliations on $M$.
\end{proof}

\subsubsection{Perfect fits and flow space relations}
\label{sec:pfits}\label{prop:tsang}

Fix an almost pseudo-Anosov flow $\varphi$ on $M$ with flow space $\mc O$. A \define{perfect fit rectangle} in $\mc O$ is a proper embedding of the rectangle with missing corner $[0,1]^2 \setminus \{(1,1)\}$ into $\mc O$ that maps the horizontal/vertical foliations to the unstable/stable foliations of $\mc O$. We note that any perfect fit rectangle contains a perfect fit subrectangle that is disjoint from the singularities and blown segments of $\mc O$. 

For any collection of closed orbits $\kappa$, let $\wh \kappa$ be the $\pi_1$--invariant
discrete collection of points in $\mc O$ obtained by lifting $\kappa$ to the universal
cover $\wt M$ and projecting to $\mc O$. Then we say that $\kappa$ \define{kills perfect
  fits} if $\wh \kappa$ meets the interior of every perfect fit rectangle in $\mc O$.
Note that for the definition, it suffices to only ever consider regular orbits since
singular orbits correspond to points in $\mc O$ that do not lie in the interior of any
rectangle. If $\mc O$ has no perfect fit rectangles (i.e. $\kappa = \emptyset$ kills
perfect fits) then we say that $\phi$ \define{has no perfect fits}. The importance of this condition was first recognized and studied by Fenley; see \cite{fenley1998structure, fenley1999foliations}.

Now fix a collection $\kappa$ of closed \emph{regular} orbits of $\phi$, and let $\pi
\colon M \to M^\flat$ be any blowdown to a pseudo-Anosov flow $\phi^\flat$ on a closed
manifold $M^\flat$. 
Since the orbits in $\kappa$ are regular, $\kappa$ is mapped
homeomorphically to its image in $M^\flat$.
We define $\kappa^\flat$ to be this image together with any
regular orbits of $\phi^\flat$ that are the images of components of $\partial M$.
Using the induced flow space map $\pi_\orb \colon \orb \to \orb^\flat$ (discussed 
before the proof of
\Cref{th:flow_space}), it is clear that $\kappa$ kills perfect fits for $\phi$ if and only
if $\kappa^\flat$ kills perfect fits for $\phi^\flat$.

Still with $\kappa$ fixed, we set $\kappa_s$ to be the union of $\kappa$ with the set of all singular orbits and blown annular complexes of $\phi$ (including those containing the boundary), and we let $\wh \kappa_s$ be obtained by lifting to the universal cover and projecting to $\mc O$. The blowdown $\pi$ sends $\kappa_s$ to $\kappa^\flat_s$, where $\kappa^\flat_s$ is the union of $\kappa^\flat$ with all singular orbits of $\phi^\flat$. 
In particular, we may use the restriction of $\pi$ to \emph{identify} the manifolds $M \ssm \kappa_s = M^\flat \ssm \kappa^\flat_s$, which we call
 the \define{fully-punctured manifold} associated to the pair $(\phi, \kappa)$. 
The flow space of the fully-punctured manifold (for the restriction of $\phi$) is denoted $\mr {\mc P}$, and the natural projections $\mr{\mc P} \to \mc O \ssm \wh \kappa_s \to \mc O^\flat \ssm \wh \kappa^\flat_s$ are covering spaces. 

The associated branched cover $\mc P \to \mc O^\flat$, infinitely branched over 
$\wh \kappa^\flat_s \subset \orb^\flat$,
is called the \define{completed flow space} of the fully-punctured manifold. The added branch points are also called completion points or singularities of $\mc P$. Finally, we observe that $\kappa$ fills perfect fits of $\phi$ if and only if $\mc P$ has no perfect fit rectangles. 

\begin{remark}
\label{rmk:blow_to_no}
From the discussion here, we see that $\kappa$ kills perfect fits of $\phi$ if and only if the associated Fried blowup 
has no perfect fits.  
\end{remark}

To conclude, we recall the fact that for every transitive pseudo-Anosov flow $\phi$ there exists a finite collection of regular orbits that kills its perfect fits. This follows from that fact, due to Fried \cite{fried1983transitive} and Brunella \cite{brunella1995surfaces}, that every such flow has a Fried blowup that blows down to pseudo-Anosov suspension flow. From the discussion above, it is then clear that the closed regular orbits of the Fried blowup kill the perfect fits of $\phi$. In fact, Tsang has recently proven that for any transitive pseudo-Anosov flow on a closed $3$-manifold, there is a single regular orbit that kills its perfect fits \cite[Proposition 2.7]{Tsang_geodesic}. Just as before, this is easily extended to all transitive, almost pseudo-Anosov flows on compact manifolds using any blowdown to a pseudo-Anosov flow on closed manifold.

We conclude with a remark that ties the notion of perfect fits to that of anti-homotopic  orbits from the introduction.

\begin{remark}[Anti-homotopic orbits] \label{rmk:antihom}
Let $M$ be a closed $3$-manifold with a pseudo-Anosov flow $\varphi$. If $\varphi$ has perfect fits, then Fenley proved that $\varphi$ has anti-homotopic orbits (\cite[Theorem B]{Fen16}). Conversely, if $\varphi$ has anti-homotopic orbits $\gamma_1$ and $\gamma_2$, then after taking compatible lifts to the universal cover and projecting to the flow space $\orb$, we obtain points $p_1$ and $p_2$ that are fixed by some $g \in \pi_1 (M)$ in the conjugacy class determined by $\gamma_1,\gamma_2$. Fenley shows that in this case $p_1$ and $p_2$ are connected by a \emph{chain of lozenges} (\cite[Theorem 4.8]{fenley1999foliations}), which in particular implies that $\varphi$ has perfect fits. 
\end{remark}

\subsection{The Agol--Gueritaud construction and transversality to the flow}
\label{sec:AG}
Given a pseudo-Anosov flow $\phi$ on a closed $3$-manifold and a finite collection $\kappa$ of regular orbits that kills its perfect fits, the Agol--Gueritaud construction produces veering triangulation $\tau$ on the manifold $M \ssm \kappa_s$, and our previous work shows that the $2$-skeleton of $\tau$ is positively transverse to $\phi$. From the above discussion, this generalizes as follows:

\begin{theorem} \label{th:AG+LMT}
Suppose that $\phi$ is a transitive almost pseudo-Anosov flow on a compact manifold $M$. Then there is a veering triangulation $\tau$ on $M \ssm \kappa_s$ whose $2$-skeleton is positively transverse to $\phi$.
\end{theorem}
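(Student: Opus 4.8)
The plan is to reduce to the closed pseudo-Anosov case already established in our previous work, using the blowdown/blowup machinery of \Cref{global blowups} and \Cref{rmk:blowup_on_M}. Recall that the construction of a veering triangulation transverse to a pseudo-Anosov flow on a \emph{closed} manifold, together with the positivity of the transversality of its $2$-skeleton, is precisely the content of \cite{LMT21} (building on Agol--Gueritaud); so the entire content of the theorem is to transport this across a generalized dynamic blowup. First I would fix, using the discussion in \Cref{sec:pfits}, a finite collection $\kappa'$ of closed regular orbits of $\phi$ that kills its perfect fits (this exists by Fried--Brunella, or Tsang, extended to the almost pseudo-Anosov compact case as explained in the text), and enlarge $\kappa$ if necessary so that $\kappa \supseteq \kappa'$; enlarging $\kappa$ only adds more punctures, so it does no harm for the statement. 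Then choose a blowdown $\pi \colon M \to M^\flat$ to a pseudo-Anosov flow $\phi^\flat$ on a closed manifold $M^\flat$, and set $\kappa^\flat$ to be the image of $\kappa$ together with the regular orbits of $\phi^\flat$ that are images of the boundary components of $M$, exactly as in \Cref{sec:pfits}.

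The key structural input is the identification of fully-punctured manifolds $M \ssm \kappa_s = M^\flat \ssm \kappa^\flat_s$ recorded in \Cref{sec:pfits}: since $\kappa$ consists of regular orbits, the blowdown $\pi$ restricts to a homeomorphism on the complement of $\kappa_s$, carrying $\kappa_s$ onto $\kappa^\flat_s$. Because $\kappa$ kills the perfect fits of $\phi$ if and only if $\kappa^\flat$ kills the perfect fits of $\phi^\flat$ (again \Cref{sec:pfits}), we may apply the closed-manifold Agol--Gueritaud construction to the pair $(\phi^\flat, \kappa^\flat)$ to obtain a veering triangulation $\tau$ on $M^\flat \ssm \kappa^\flat_s$. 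By \cite{LMT21}, the $2$-skeleton $\bs$ of $\tau$ is positively transverse to $\phi^\flat$. Transporting $\tau$ through the identification $M^\flat \ssm \kappa^\flat_s = M \ssm \kappa_s$ gives a veering triangulation on $M \ssm \kappa_s$, which is the triangulation claimed in the statement.

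It remains to check positive transversality to $\phi$ itself, and this is where the only real work lies. Away from a small neighborhood of the blowup locus, $\phi$ and $\phi^\flat$ agree (via the orbit equivalence $g$ of \Cref{rmk:blowup_on_M}, which is the identity there), so positive transversality of $\bs$ to $\phi$ is immediate outside that neighborhood; the point is that $\bs$ lives in $M \ssm \kappa_s$, hence entirely \emph{outside} neighborhoods of the singular orbits and blown annular complexes of $\phi$ that constitute $\kappa_s$. Thus $\bs$ meets only the region where the two flows coincide, and positive transversality is inherited directly. I expect the main obstacle to be bookkeeping rather than a genuine difficulty: one must verify that ``transverse'' and ``positively transverse'' are preserved under the orbit equivalence $g$ (which need not be smooth, so one must use the topological formulation of transversality to a flow from \Cref{sec:flows}, or quote the relevant statement from \cite{LMT21} that transversality is a combinatorial/topological condition stable under orbit equivalence), and that the veering triangulation on $M \ssm \kappa_s$ obtained this way does not depend on the choice of blowdown $\pi$ (which follows from the canonicity of the Agol--Gueritaud construction in terms of the completed flow space $\mc P$, as in \Cref{sec:pfits}). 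Once these compatibility points are in place, the theorem follows.
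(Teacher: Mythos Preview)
Your proposal is correct and follows essentially the same approach as the paper: blow down to a closed pseudo-Anosov flow $\phi^\flat$ on $M^\flat$, apply the Agol--Gu\'eritaud/\cite{LMT21} construction there using that $\kappa^\flat$ kills perfect fits, and transport $\tau$ back through the identification $M\ssm\kappa_s \cong M^\flat\ssm\kappa^\flat_s$. The worries in your last paragraph are overcautious: since the blowup locus of $\pi$ is contained in $\kappa_s$, the restriction of $\pi$ to $M\ssm\kappa_s$ is a genuine flow-preserving homeomorphism (a conjugacy, not merely an orbit equivalence), so positive transversality transfers immediately with no bookkeeping about smoothness; also, in context $\kappa$ is already assumed to kill perfect fits, so your enlargement step is unnecessary.
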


\begin{proof}
We use the notation from \Cref{sec:flowspace}.

The construction in \cite{LMT21} produces a veering triangulation $\tau$ on $M^\flat \ssm \kappa^\flat_s$ whose $2$--skeleton is transverse to the flow $\phi^\flat$ under the assumption that its completed flow space $\mc P$ has no perfect fits. But we saw in \Cref{sec:pfits}, $\mc P$ has no perfect fits if and only if $\kappa$ kills the perfect fits of $\phi$. This, together with the fact that $\pi$ restricts to a flow preserving homeomorphism $M \ssm \kappa_s \to M^\flat \ssm \kappa^\flat_s$, completes the proof.
\end{proof}

We call $\tau$ the veering triangulation \define{associated to} the pair $(\phi, \kappa)$. When $\phi$ has no perfect fits, the veering triangulation associated to $\phi$ is the one associated to $(\phi, \emptyset)$. When $\phi$ has perfect fits, there is significant flexibility in the choice of $\kappa$; we will call $\tau$ \define{an associated} veering triangulation if it is associated to some pair $(\phi, \kappa)$ for some choice of $\kappa$. 

\medskip

Now fix once and for all a \define{standard neighborhood} $U$ of $\kappa_s$. More precisely, if $\pi \colon M \to M^\flat$ is a pseudo-Anosov blowdown with $M^\flat$ closed, we let $U^\flat$ be a standard neighborhood of the closed orbits in $\kappa^\flat_s$ (see \Cref{global blowups}) and take $U = \pi^{-1}(U^\flat)$. Then, as in \Cref{sec:reltube}, $\tau$ is a relative veering triangulation of $M$ with tube system $U$. 
In particular, we have that
\[
\tau_U = \tau^{(2)} \cap M \cut\ U
\]
is a branched surface on $M_U = M \cut U$ that is positively transverse to $\phi$.

\begin{remark}
\label{rmk:same_veer}
The blowdown $\pi\colon M \to M^\flat$ restricts to a homeomorphism $M \cut U \to M^\flat  \cut U^\flat$ sending $\tau_U$ to $\tau_{U^\flat}$. In this sense, the veering triangulation associated to an almost pseudo-Anosov flow is essentially the same as the one associated to any pseudo-Anosov blowdown. 
\end{remark}

\smallskip

Let $U_i$ be a component of $U$, which we recall is a neighborhood in $M$ of either a closed orbit or blown complex (possible containing a single boundary component of $M$). Then the inner component of $\del U_i$ (i.e. the one not meeting $\partial M$) is a torus endowed with some combinatorial data: there are collections of stable and unstable prong curves on $\del U_i$ as well as a tessellation of $\del U_i$ coming from $\del \tau_U$, which divides $\del U_i$ into upward and downward ladders.
 
\begin{lemma}\label{lem:ladderpoleprong}
Let $U_i$ be a component of $U$. Each upward ladder contains exactly one stable prong curve. Each downward ladder contains exactly one unstable prong curve. 
\end{lemma}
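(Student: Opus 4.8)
\textbf{Proof proposal for \Cref{lem:ladderpoleprong}.}

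The plan is to identify the combinatorial structure on the inner torus $\del U_i$ coming from $\del \tau_U$ with the structure coming from the stable and unstable branched surfaces $B^s$ and $B^u$, and then read off the correspondence between ladders and prong curves using the discussion in \Cref{sec:bs} together with the known dynamics on $\del M$. More precisely, recall from \Cref{sec:bs} that $M \cut B^s$ is a union of cusped solid tori and cusped torus shells, one containing each tube of $U$; let $T_i^s$ be the cusped torus of $B^s$ containing $U_i$, and similarly $T_i^u$. The key input is the cited fact from \cite[Section 5.1.1]{LMT20}: the cusp curves of $T_i^s$ are in canonical bijection with the cores of the \emph{upward} ladders of the tessellation $t_i$ on $\del U_i$, and the cusp curves of $T_i^u$ are in canonical bijection with the cores of the \emph{downward} ladders. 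So the statement will follow once we match the cusp curves of $T_i^s$ with the stable prong curves on $\del U_i$, and symmetrically for $T_i^u$ and the unstable prong curves.

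First I would recall why there is exactly one stable prong curve per upward ladder and one unstable prong curve per downward ladder on purely topological grounds, i.e. why the counts match. By \Cref{th:AG+LMT} and \Cref{rmk:same_veer} we may work with the pseudo-Anosov blowdown $(M^\flat, \phi^\flat)$ and the veering triangulation there; a component $U_i$ of $U$ is (the preimage of) a standard neighborhood of a closed orbit $\gamma$ of $\phi^\flat$ in $\kappa^\flat_s$, and the number of prongs of $\gamma$ equals the number of stable prongs, which equals the number of unstable prongs, and equals the number of ladderpole curves on $\del U_i$ — this is the classical picture of \cite{futer2013explicit} relating the ladder structure to the singularity data. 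Since the number of upward ladders equals the number of downward ladders equals the number of ladderpole curves equals $\prongs(\gamma)$, and likewise $\gamma$ has $\prongs(\gamma)$ stable prong curves and $\prongs(\gamma)$ unstable prong curves counted appropriately (accounting for the rotation permutation as in \Cref{global blowups} and the cusped-torus discussion), the counts agree, so it suffices to produce the bijection, or equivalently to show each upward ladder meets \emph{at most} one stable prong curve and each downward ladder meets at most one unstable prong curve.

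To produce the bijection, I would use positive transversality of $\tau_U$ to $\phi$ (from \Cref{th:AG+LMT}) to pin down the geometry on $\del U_i$. The stable branched surface $B^s$ is, near $\del U_i$, carried by the stable foliation $W^s$ of $\phi$, so the boundary of a cusped solid torus/shell of $B^s$ runs along stable leaves; the cusp curves of $T_i^s$, being the loci where sectors of $B^s$ pinch together, lie along the singular stable leaves through $\gamma$, whose traces on $\del U_i$ are exactly the stable prong curves. Dually, the unstable prong curves are the traces of singular unstable leaves and correspond to cusp curves of $T_i^u$. Now an upward ladder is, by definition, a union of upward tips — truncated-tetrahedron tips with two sides cooriented outward — and the coorientations come from the face coorientations of $\tau$, which by construction point in the flow-positive direction; a short local analysis at an edge of $\del \tau_U$ (using \Cref{fig:trunc} and \Cref{fig:edgepinch}) shows that the ladderpole curves bounding an upward ladder are the curves where the branched surface $B^s$ has its cusp, i.e. the stable prong curves, and that each upward ladder is the region of $\del U_i$ between two consecutive stable prong curves — hence contains exactly one. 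The symmetric statement for downward ladders and unstable prong curves follows by replacing $B^s$, $W^s$, ``upward'', ``stable'' with $B^u$, $W^u$, ``downward'', ``unstable'' throughout.

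The main obstacle I anticipate is the last step: carefully checking, via the local combinatorics of the truncated veering tetrahedra along $\del U_i$, that an ``upward ladder'' in the sense of tip-coorientations coincides with the region between consecutive \emph{stable} (as opposed to unstable) prong curves, i.e. getting the stable/unstable bookkeeping and the red/blue veer conventions aligned with the flow-positive coorientation of $\tau^{(2)}$. This is exactly the kind of orientation-convention matching that is easy to get backwards, so I would anchor it to a single explicit local model (one edge of the veering triangulation, as in \Cref{fig:edgepinch}, together with the standard veering tetrahedron of \Cref{fig:veertet}) and propagate from there, and I would cross-check the global count against $\prongs(\gamma)$ as above so that an off-by-a-swap error would be detected.
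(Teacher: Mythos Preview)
Your approach differs from the paper's, which simply cites \cite[Lemma 2.8]{landry2019stable} and notes that the proof there goes through verbatim because the triangulation is still built via the Agol--Gu\'eritaud construction. That argument works directly with maximal rectangles in the flow space and how veering edges approach the singularities; it does not route through the branched surfaces $B^s$, $B^u$.

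Your alternative strategy via the branched surfaces is a reasonable idea in principle, but the third paragraph contains a genuine confusion. You correctly recall from \cite[Section 5.1.1]{LMT20} that the cusp curves of $T_i^s$ are in bijection with the \emph{cores} of upward ladders. A few lines later, however, you assert that ``the ladderpole curves bounding an upward ladder are the curves where the branched surface $B^s$ has its cusp, i.e.\ the stable prong curves.'' This contradicts what you just cited: the $B^s$-cusp curves correspond to ladder \emph{cores}, not to ladderpole curves. Your conclusion that ``each upward ladder is the region of $\del U_i$ between two consecutive stable prong curves --- hence contains exactly one'' is accordingly wrong on both counts: per \Cref{fig:ladders} the stable prong curves \emph{bisect} the upward ladders, so the region between two consecutive stable prong curves consists of two half upward ladders and one full downward ladder, and in any case ``contains exactly one'' would not follow from the premise that the stable prong curves are the bounding ladderpoles.

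What your outline actually needs is step~2: an identification of the (combinatorial) cusp curves of $T_i^s$ with the (dynamical) stable prong curves on $\del U_i$. The justification you offer---that ``$B^s$ is, near $\del U_i$, carried by the stable foliation $W^s$''---is not established in this paper and is essentially the content of the lemma itself. The clean way to get it is exactly the one the paper invokes: return to the Agol--Gu\'eritaud construction and track how the edges of maximal rectangles limit onto the stable and unstable prongs near an ideal vertex, which is what \cite{landry2019stable} does and what makes that proof transplant verbatim.
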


See \Cref{fig:ladders} for an illustration in the case where $U_i$ is a neighborhood of a singular orbit.

\begin{proof}
Lemma 2.8 of \cite{landry2019stable} treats the case where $\phi$ is the suspension flow of a pseudo-Anosov map on a compact surface with boundary. However, the proof there applies verbatim since our triangulation $\tau$ is still obtained from the Agol--Gu\'eritaud construction as in \Cref{th:AG+LMT}.
\end{proof}

 \begin{figure}
    \centering
    \includegraphics[height=2in]{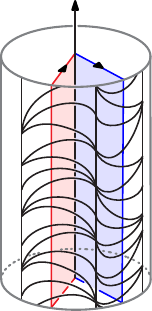}
    \caption{Stable (red) prong curves bisect upward ladders, while unstable (blue) prong curves bisect downward ladders.}
    \label{fig:ladders}
\end{figure}

Given \Cref{lem:ladderpoleprong}, it is now clear from the definitions that the veering triangulation $\tau_U$ on $M$ relative to $U$ is strict if and only if $\kappa =  \emptyset$, whence the flow $\phi$ on $M$ has no perfect fits.

\subsection{Flows represent faces}\label{sec:flowsrepresentfaces}
We have now developed enough combinatorics to easily prove Mosher's beautiful theorem ``Flows Represent Faces" \cite{mosher1992dynamical}, which connects the Thurston norm to dynamics, in the broader setting of pseudo-Anosov flows on manifolds with boundary. 

Let $\phi$ be a pseudo-Anosov flow with no perfect fits on a manifold $M$ with its associated strict relative veering triangulation $\tau$, furnished by \Cref{th:AG+LMT}.
Let $\gamma$ be an orbit of $\phi$. We define the index of $\gamma$ to be 2 minus its number of prongs; in particular periodic orbits in $\del M$ have index $-1$. Define $e_\phi\in H^2(M,\del M)$ by
\[
2e_\phi=\sum_i \langle \ind(\gamma_i)[\gamma_i],\cdot\rangle, 
\]
where the sum is over all singular orbits in $\intr(M)$ and all \emph{stable} periodic orbits in $\del M$ (i.e. the boundaries of stable half-leaves meeting $\del M$). By \Cref{lem:partialeulerclassformula} and \Cref{lem:ladderpoleprong}, we have $e_\tau=e_\phi$. 

Let $\cone_1(\phi)$ denote the cone in $H_1(M)$ generated by the periodic orbits of $\phi$. Its dual, $\cone_1^\vee(\phi) \subset H^1(M) = H_2(M, \partial M)$, is the cone of classes that are nonnegative on the periodic orbits of $\phi$.
It follows from \cite{LMT21} that the collection of periodic orbits of $\phi$ and the collection of directed cycles in the dual graph $\Gamma$ of $\tau$ generate the same cone in $H_1(M)$, so $\cone_1(\phi)=\cone_1(\tau)$. See e.g. \cite[Corollary 6.15]{LMT21}.

Hence the following follows immediately from \Cref{cor:combinatorialflowsrepresentfaces}:

\begin{theorem}[Flows Represent Faces (with boundary)]
\label{th:flows_rep_faces}
Let $\phi$ be a pseudo-Anosov flow with no perfect fits on a compact oriented 3-manifold $M$. Then $\cone_1^\vee(\phi)$ is equal to the cone over a face of the Thurston norm ball whose codimension is equal to the dimension of the largest linear subspace contained in $\cone_1(\phi)$. 

Moreover, this cone is the maximal domain on which $x$ and $-e_\phi$ agree as functions $H^2(M)\to \R$.
\end{theorem}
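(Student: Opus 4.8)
The plan is to reduce the statement entirely to the combinatorial machinery already developed, using the veering triangulation $\tau$ associated to $\phi$ via \Cref{th:AG+LMT}. First I would invoke the fact (from \Cref{sec:AG}) that since $\phi$ has no perfect fits, the relative veering triangulation $\tau$ on $M$ with its tube system $U$ is \emph{strict}; this is exactly the hypothesis needed to apply \Cref{cor:combinatorialflowsrepresentfaces}. That corollary already asserts that $\cone_1^\vee(\Gamma)$ is the cone over a face of the Thurston norm ball, of codimension equal to the dimension of the largest linear subspace contained in $\cone_1(\Gamma)$, and that this cone is the maximal domain on which $x$ and $-e_\tau$ agree. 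So the entire content of the theorem is the translation of the three $\tau$-side objects $\cone_1(\Gamma)$, $\cone_1^\vee(\Gamma)$, $e_\tau$ into the flow-side objects $\cone_1(\phi)$, $\cone_1^\vee(\phi)$, $e_\phi$.

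The key steps, in order, are: (1) Identify $e_\tau = e_\phi$. This is already asserted in the paragraph preceding the theorem as a consequence of \Cref{lem:partialeulerclassformula} together with \Cref{lem:ladderpoleprong}; I would spell out that \Cref{lem:ladderpoleprong} gives a bijection between upward ladders of each $\partial U_i$ and stable prong curves (hence singular orbits and stable boundary orbits), so that the $\core(T_i)$-terms and cusp-curve terms appearing in \Cref{lem:partialeulerclassformula} match termwise, up to sign, the $\ind(\gamma_i)[\gamma_i]$ terms in the definition of $e_\phi$. (2) Identify $\cone_1(\phi) = \cone_1(\Gamma) \;(=\cone_1(\tau))$. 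This is exactly the cited consequence of \cite{LMT21} (e.g. \cite[Corollary 6.15]{LMT21}): the periodic orbits of $\phi$ and the directed cycles in the dual graph $\Gamma$ generate the same cone in $H_1(M)$. (3) Conclude $\cone_1^\vee(\phi) = \cone_1^\vee(\Gamma)$ by taking dual cones of equal cones. Then (4) feed these three identifications into \Cref{cor:combinatorialflowsrepresentfaces} and read off the conclusion verbatim, including the ``maximal domain'' clause, which transports because $e_\tau = e_\phi$ and $x$ is unchanged.

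I would present this essentially as a two-line deduction: ``By \Cref{th:AG+LMT} and the discussion of \Cref{sec:AG}, $\phi$ having no perfect fits means $\tau$ is strict; by the remarks above $e_\tau = e_\phi$ and $\cone_1(\phi) = \cone_1(\Gamma)$, hence also $\cone_1^\vee(\phi) = \cone_1^\vee(\Gamma)$; now apply \Cref{cor:combinatorialflowsrepresentfaces}.'' The main obstacle, such as it is, is not logical but expository: making sure the sign conventions line up in step~(1), i.e. that the index $\ind(\gamma) = 2 - \#\mathrm{prongs}(\gamma)$ used for the flow (with boundary orbits having index $-1$) matches the index $\ind(T_i) = 2 - \prongs(T_i)$ of cusped solid tori and the $-c$ contribution of cusped torus shells in \Cref{lem:partialeulerclassformula}. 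Since \Cref{lem:ladderpoleprong} pins down precisely which prong curves are stable (upward ladders) versus unstable (downward ladders), and the definition of $e_\phi$ sums only over singular interior orbits and \emph{stable} boundary orbits, the bookkeeping closes; but this is the one place a careful reader would want the identification made explicit, so I would state it as a displayed equation rather than leaving it to ``it follows that.''
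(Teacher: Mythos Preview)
Your proposal is correct and matches the paper's own argument essentially line for line: the paper's proof is precisely the reduction to \Cref{cor:combinatorialflowsrepresentfaces} via the three identifications $e_\tau=e_\phi$ (from \Cref{lem:partialeulerclassformula} and \Cref{lem:ladderpoleprong}), $\cone_1(\phi)=\cone_1(\Gamma)$ (from \cite[Corollary 6.15]{LMT21}), and strictness of $\tau$ when $\phi$ has no perfect fits. If anything, your step~(1) bookkeeping is more explicit than what the paper writes.
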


We remark that the hypothesis that $\phi$ has no perfect fits allows us to choose $\tau$ to be strict, so $M$ admits no essential surfaces of nonnegative Euler characteristic by \Cref{prop:top}. Hence in the above theorem the Thurston norm for $M$ is a norm and not merely a pseudonorm.

We also remark that, as in \cite{mosher1992dynamical}, the hypothesis that $\varphi$ has no perfect fits cannot be removed. See also the example in \Cref{sec:example}.

Finally, in \cite{mosher1992dynamical} Mosher included the requirement that $\phi$ be quasigeodesic in the statement of his theorem, and used the quasigeodesic property in his proof. He wrote ``It would also be nice to find a proof of {Flows Represent Faces} which makes no
use of the quasigeodesic hypothesis, for the reason that hyperbolic geometry seems completely extraneous to the purpose of the theorem." In the time since, Fenley has shown that pseudo-Anosov flows with no perfect fits are quasigeodesic (see \cite{Fen16} for the strongest statement). Note, however, that the proof we give above in the more general setting does not use quasigeodesity.

\section{Relatively carried surfaces are almost transverse}
As before, let $\phi$ be a transitive pseudo-Anosov flow on the compact manifold $M$.  We say that a properly embedded surface $S$ is \define{almost transverse} to $\phi$ if it is transverse to a (generalized) dynamic blowup $\phi^\sharp$ of $\phi$ on $M$ (see \Cref{rmk:blowup_on_M}).

The goal of this section is to prove the following:

\begin{theorem}\label{th:dynamic blowup for carried}
Let $\phi$ be a transitive pseudo-Anosov flow and let $\tau$ be any associated veering triangulation.
If $S$ is relatively carried by $\tau$ then it is almost transverse to $\phi$.
\end{theorem}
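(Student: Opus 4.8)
The plan is to start from a surface $S$ relatively carried by $\tau$ and produce a dynamic blowup $\phi^\sharp$ of $\phi$ on $M$ together with an isotopy of $S$ making it transverse to $\phi^\sharp$. The key input is \Cref{th:AG+LMT}: the branched surface $\tau_U = \tau^{(2)} \cap M_U$ is positively transverse to $\phi$ on $M_U = M \cut U$. Since $S$ is relatively carried, $S \cap M_U$ is carried by $\tau_U$ in the usual branched-surface sense, so after collapsing it lies inside the standard neighborhood $N(\tau_U)$ transverse to the vertical foliation. Because $\tau_U$ is transverse to $\phi$ and $N(\tau_U)$ is a thin neighborhood, we may push $S \cap M_U$ to be positively transverse to $\phi$ on all of $M_U$; this part is essentially formal once one knows carried surfaces sit transversely to the vertical foliation of a neighborhood of a flow-transverse branched surface. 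So the entire difficulty is concentrated in the tube system $U$, where $\phi$ has its singular/blown-up behavior and where $S \cap U$ consists of meridional disks in solid tubes and annuli.

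Next I would analyze each component $U_i$ of $U$ separately. By the construction of $U$ (the preimage under a Fried blowdown $\pi\colon M \to M^\flat$ of a standard neighborhood $U^\flat$ of a closed orbit of $\phi^\flat$), the flow $\phi$ restricted to $U_i$ is a suspension of a (blown-up) quadrant map, and the inner boundary torus $\partial U_i$ carries the ladder structure from $\partial\tau_U$ together with the stable/unstable prong curves, matched up by \Cref{lem:ladderpoleprong}. The surface $S$ enters $U_i$ through a train track on $\partial U_i$ — the intersection pattern $S \cap \partial U_i$ — which, because $S$ is carried, runs monotonically across ladders (each intersection arc crosses ladders in the coorientation direction). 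The heart of the argument is: given this boundary data, build a dynamic blowup $\phi_i^\sharp$ of $\phi$ inside $U_i$ (i.e., choose which pairs of quadrants of the local return map to connect by a blowup graph, as in \Cref{local blowups} and \Cref{global blowups}) so that the disks/annuli of $S \cap U_i$ can be isotoped, rel $\partial$, to be transverse to the blown-up flow. The natural recipe is: the ladders that $S$ passes through on $\partial U_i$ dictate which sectors of the local picture $S$ wants to run parallel to, and one blows up precisely enough to split the singular orbit into a collection of non-singular (or lower-order) orbits and blown annuli so that $S \cap U_i$ becomes a union of subsurfaces each transverse to the flow, spiralling correctly toward the blown annuli. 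This should be modeled directly on Mosher's original local blowup construction and on the local pictures in \Cref{fig:arcs-internal} and \Cref{fig:arcs-boundary}; the efficiency hypothesis (or rather \Cref{lem:noladders}, letting us assume $S$ is efficiently relatively carried and so has no removable annuli) is what guarantees the blowup one needs is finite and that $S$ does not wrap redundantly.

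After handling each $U_i$, I would assemble: the global dynamic blowup $\phi^\sharp$ is obtained by performing the local blowup in each $U_i$ and leaving $\phi$ unchanged outside $\bigcup U_i$ (using the blowdown/blowup formalism of \Cref{rmk:blowup_on_M} to keep everything on the same manifold $M$). The isotopy of $S$ is the concatenation of the push on $M_U$ with the local isotopies in each $U_i$, arranged to agree on the $\partial U_i$, so that the result is globally transverse to $\phi^\sharp$. The main obstacle — the step I expect to be genuinely delicate rather than routine — is the local matching inside each tube: proving that the ladder/prong combinatorics on $\partial U_i$ inherited from a carried surface is exactly the combinatorial data needed to specify a legitimate (consistently oriented) blowup quadrant picture, and that the annuli and meridional disks can then be isotoped rel boundary to run transversely to the suspension flow of that blown-up quadrant map. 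This is where the correspondence "relatively carried $\leftrightarrow$ almost transverse" is really being forged, and it requires a careful hands-on examination of the model neighborhoods; everything on $M_U$ is comparatively soft.
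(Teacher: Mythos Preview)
Your proposal is correct and follows essentially the same approach as the paper: transversality on $M_U$ comes for free from \Cref{th:AG+LMT}, and the work is a local construction inside each tube, using the boundary combinatorics of the carried surface to specify a consistently oriented blowup. The paper's concrete implementation of the step you flag as delicate is to intersect $S$ with a meridional disk or annulus $D$ of $U_i$ and observe (via \Cref{lem:ladderpoleprong}) that $S\cap D$ is a \emph{coherently cooriented arc system} symmetric under the twisting (\Cref{prop:arc system}); the blowup tree is then simply the dual tree to the parallel classes of arcs (excluding those joining adjacent quadrants, which need no blowup), and one checks directly that meridional disks and non-ladderpole annuli require no blowup at all, so efficiency is not actually needed for the theorem itself (it is used only for the sharper \Cref{prop:efficientblowup}).
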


In fact, by assuming that $S$ is efficiently carried, we can be more precise about which dynamic blowups are needed:

\begin{proposition}\label{prop:efficientblowup}
Assume in addition to the hypotheses of \Cref{th:dynamic blowup for carried}, that $S$ is carried efficiently by $\tau$. Then the blowup locus consists of the singular orbits and boundary components whose tubes are met by $S$ in ladderpole annuli.
\end{proposition}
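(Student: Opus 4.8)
\textbf{Proof proposal for \Cref{prop:efficientblowup}.}

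The plan is to start from the relatively carried position of $S$ and build a dynamic blowup of $\phi$ transverse to $S$ by a purely local analysis in each tube of the tube system $U$. Recall from \Cref{th:AG+LMT} and the subsequent discussion that away from $U$ the branched surface $\tau_U = \tau^{(2)}\cap M_U$ is positively transverse to $\phi$; since $S\cap M_U$ is carried by $\tau_U$, it inherits positive transversality to $\phi$ there, after the usual splitting of $S$ off $\tau_U$ inside a standard neighborhood $N(\tau_U)$. So the entire problem is concentrated in $U$: for each component $U_i$ of $U$ (a neighborhood of a singular orbit, regular orbit of $\kappa$, or boundary component), we must produce a local dynamic blowup of $\phi|_{U_i}$ to which the pieces of $S\cap U_i$ — meridional disks in solid tubes, annuli meeting $\partial M$, and ladderpole annuli — are transverse, in a way that matches the already-transverse behavior on $\partial U_i$.

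The key steps, in order, are as follows. First, analyze $S\cap\partial U_i$ using \Cref{lem:ladderpoleprong}: the trace of $S$ on $\partial U_i$ is a collection of curves carried by $\partial\tau_U$, and because $S$ is carried, these curves cross the ladders coherently; in particular a ladderpole annulus of $S$ has both boundary curves of ladderpole slope, i.e. parallel to the prong curves. Second, recall that a blown annular complex collapsing $U_i$ to an orbit $\gamma_i$ determines, via the Fried/blowdown picture of \Cref{global blowups} and \Cref{remark:FGsurg}, exactly which quadrants of the local return map around $\gamma_i$ are joined; the upward/downward ladder structure on $\partial U_i$ records precisely this combinatorial data (one stable prong curve per upward ladder, one unstable per downward). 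Third, for each $U_i$ met by $S$ in a ladderpole annulus, choose the local blowup graph $T^\sharp$ whose blown annuli separate the ladders in the pattern dictated by how $S$ cuts across $\partial U_i$; then the suspension flow on the blown-up $U_i$ can be arranged so that $S\cap U_i$ sits in the complement of the annular complex and is transverse to the flowlines, which run monotonically from one side of each blown annulus to the other. Fourth, for the remaining pieces — meridional disks in solid tubes not met by ladderpole annuli, and annuli with a boundary component on $\partial M$ — check that no blowup is needed: a meridional disk in a solid tube is automatically transverse to $\phi$ near the core orbit once it has positive intersection with that orbit (which efficiency and carrying guarantee, as in \Cref{lem:carriedtaut}), and an annulus meeting $\partial M$ is transverse to $\phi$ using the product structure near $\partial M$ together with the fact that its other boundary is carried. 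Fifth, assemble these local blowups into a single (generalized) dynamic blowup $\phi^\sharp$ on $M$, using the identification $M^\sharp\cong M$ of \Cref{rmk:blowup_on_M}, and verify that $S$ is globally transverse to $\phi^\sharp$ by patching the transverse-in-$M_U$ and transverse-in-$U$ descriptions across $\partial U$.

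The main obstacle I expect is the third step: showing that the correct blowup graph exists and that $S\cap U_i$ can be isotoped (rel $\partial U_i$, matching the carried position) to be genuinely transverse to the blown-up flow, rather than merely disjoint from the annular complex. One must check that the cyclic pattern in which $S$'s annuli separate the ladders is realizable by some consistently-oriented blowup graph $T^\sharp$ in the sense of \Cref{local blowups} — this uses that $S$ is \emph{efficiently} carried, so there are no removable annuli (\Cref{lem:noladders}) forcing redundant or inconsistent separations — and then that the flow near each blown annulus, being asymptotic to the two boundary circles in forward/backward time, meets the annular piece of $S$ transversely with the correct coorientation inherited from $\tau$. The bookkeeping here, matching ladderpole slopes to prong slopes and the carried coorientation to the flow direction, is where the argument has real content; the rest is assembly. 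One should also note that if $S$ meets $U_i$ only in meridional disks or boundary-parallel annuli (no ladderpole annulus), the local flow is left unchanged, which is exactly the assertion that the blowup locus is as claimed.
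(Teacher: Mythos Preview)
Your outline matches the paper's approach closely: the argument is local to each tube, transversality outside $U$ comes for free from carrying, and the only real work is at the ladderpole tubes. You have correctly identified where the content lies.

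However, your third step is where the paper does something concrete that you only gesture at. Rather than working with ``the cyclic pattern in which $S$'s annuli separate the ladders,'' the paper encodes $S\cap U_i$ as a \emph{coherently cooriented arc system} $\mc A$ on a meridional disk or annulus $D$ (\Cref{prop:arc system}): after an isotopy of $S$ inside $U_i$, the intersection $S\cap D$ is a collection of properly embedded arcs whose coorientations agree with the singular $\phi$-orientation of $\partial D$ at every endpoint. The blowup graph $G$ is then simply the \emph{dual tree} of the parallel classes of these arcs, with arcs joining adjacent quadrants discarded. The consistency of the induced orientations on $G$ (which you worry about) is automatic from the coherent coorientation of $\mc A$, and the suspension step then produces transverse annuli in a standard way. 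Without the arc-system device, your ``choose the local blowup graph $T^\sharp$ whose blown annuli separate the ladders in the pattern dictated by $S$'' remains a wish rather than a construction.

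One correction: you attribute the consistency of the blowup graph to efficiency, but this is not right. Consistency follows from the coherent coorientation of the arc system, which holds for \emph{any} relatively carried surface (this is exactly \Cref{prop:arc system}). Efficiency enters only in pinning down the blowup locus: a removable ladderpole annulus corresponds to an arc with endpoints in adjacent quadrants, which is excluded from $A'$ and contributes nothing to the blowup; so without efficiency a singular-orbit tube could carry only removable annuli and require no blowup, falsifying the ``exactly'' in the statement. At regular-orbit tubes in $\kappa$, a short parity check shows that coherent coorientation forces every arc to join adjacent quadrants regardless of efficiency, so no blowup is ever needed there.
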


Note that if $S$ can be isotoped to be transverse to a dynamic blowup $\phi^\sharp$, then its intersection with each blown annulus can only be along core curves and essential arcs.
We say that the dynamic blowup is \define{minimal} with respect to $S$ if (after isotopy) it intersects every nonboundary blown annulus in core curves. 

\medskip
For the remainder of the section, we fix the surface $S$ and the pseudo-Anosov flow $\phi$ as in \Cref{th:dynamic blowup for carried}. By assumption, there is a collection $\kappa$ of closed orbits that kills the perfect fits of $\phi$ so that the veering triangulation $\tau$ on $M \ssm \kappa_s$ furnished by \Cref{th:AG+LMT} has the following property:
There is a standard neighborhood $U$ of $\kappa_s$ in $M$ so that $S$ is relatively carried by $\tau$. See \Cref{sec:reltube} and \Cref{sec:AG} for definitions. We recall here that since $\phi$ is pseudo-Anosov, $\kappa_s = \kappa \cup \Pi_s \cup \{P_i \}$, where $\partial M = \bigcup P_i$ is the union of boundary blown annuli. 

Since $S$ is relatively carried by $\tau$,  $S \cap M \cut U$ is carried by $\tau_U$ and so
at each component $U_i$ of $U$, the intersection $S \cap \partial U_i$ is a union of parallel curves carried by $\partial \tau_U$. If $S \cap U_i$ contains a ladderpole annulus, then we say that $S$ is \define{ladderpole} at $U_i$. If $S$ meets $U_i$ but is not ladderpole there, then either $S\cap U_i$ is a collection of meridional disks or annuli with one boundary component on $\partial M$.

\subsection{Coherent arc systems and meridional disks and annuli}
\label{sec:arc_sys}
We begin with a general definiton:

Let $D$ be a closed disk or annulus, with points $p_1,\dots, p_n\in \del D$. 
Suppose that $\del D\ssm(p_1\cup\cdots\cup p_n)$ is oriented so that each $p_i$ is either a source or a sink. A \textbf{coherently cooriented arc system} is a collection of cooriented closed arcs properly embedded in $D$ such that for each arc $\alpha$,
\begin{itemize}
\item the interior of $\alpha$ lies in $\intr D$
\item the endpoints of each arc lie in $\del D\ssm(p_1\cup\cdots\cup p_n)$
\item $\alpha\pitchfork \del D$, and the coorientation of $\alpha$ is compatible with the orientation of $\del D\ssm(p_1\cup\cdots\cup p_n)$.
\end{itemize}
See \Cref{fig:coordisk} for an example in a disk. Let $\theta$ denote a permutation
of the $\{p_i\}$ preserving their order around the circle.  We say a coherently
cooriented arc system is \textbf{symmetric under  $\theta$} if $\theta$ preserves the
relation defined by the arcs, as well as their coorientations.

\begin{figure}[h]
    \centering
    \includegraphics[]{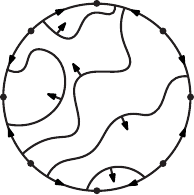}
    \caption{A coherently cooriented arc system.}
    \label{fig:coordisk}
\end{figure}

Coherently cooriented arc systems arise naturally when considering veering triangulations dual to flows; see \Cref{prop:arc system}.

Let $U_i$ be a $T^2\times I$ component of $U$ containing a boundary component $P_i$
of $\del M$. A \textbf{meridional annulus} of $U_i$ is an essential, properly embedded
annulus joining distinct boundary components of $U_i$ whose boundary component on $\del M$
essentially intersects the prong curves. 
In what follows, a meridional disk or annulus will always be taken to be transverse to the flow. That such disks/annuli exist is immediate from the local description of the flow in a neighborhood of its closed orbits and boundary. 

Let $U_i$ be a component of $U$. Let $D$ be a meridional disk or annulus for $U_i$, depending on whether $U_i$ is a solid torus or $T^2\times I$. In either case, we orient the open segments of $\del D\ssm(\text{prongs of $\phi$})$ so that each point of $\del D\cap (\text{stable prongs})$ is a source and each point of $\del D\cap (\text{unstable prongs})$ is a sink. We call this the \textbf{singular $\phi$-orientation} of $\del D$. See \Cref{fig:coorprongs}.

Following the flow around $U_i$ we obtain a permutation 
of the stable and unstable
prongs and their intersection points with $\del D$ preserving the cyclic order. We call this 
permutation the {\em twisting} at $U_i$.

\begin{figure}[h]
    \centering
    \includegraphics[height=1.2in]{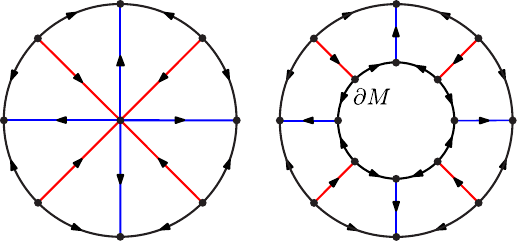}
    \caption{Illustrating the singular $\phi$-orientation on $\del D$ when $D$ is transverse to $\phi$ and either (left) a meridional disk of a solid torus component of $U$ or (right) a meridional annulus in a $T^2\times I$ component of $U$. Recall that red and blue denote stable and unstable, respectively.}
    \label{fig:coorprongs}
\end{figure}

\begin{proposition}
\label{prop:arc system}
Let $U_i$ be a component of $U$, and let $S$ be a surface relatively
carried by $\tau$ which is ladderpole at $U_i$. Then up to an isotopy of $S$ supported in
$U_i$, there exists a meridional disk or annulus $D$ for $U_i$ so that $\mc A = S\cap D$
is a coherently oriented arc system that is symmetric under
$\theta$, where $\theta$ is the twisting at $U_i$. 
Moreover, the
intersection of a ladderpole annulus of $S \cap U_i$ with $D$ corresponds to the orbit of
an arc of $\mc A$ under $\theta$.
\end{proposition}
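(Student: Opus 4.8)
The plan is to analyze the intersection $S \cap U_i$ locally, using the fact that $S$ is carried by $\tau_U$ near $\partial U_i$, and then build the meridional disk or annulus $D$ so that $D$ meets $S$ transversely and efficiently.

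\medskip

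First I would recall that since $S$ is relatively carried by $\tau$ and ladderpole at $U_i$, the trace $S \cap \partial U_i$ is a nonempty union of curves carried by the train track $\partial \tau_U$, hence homotopic to multiples of ladderpole curves on the inner boundary torus; by \Cref{lem:ladderpoleprong} these ladderpole curves are precisely where the stable prong curves sit (for upward ladders) and the unstable prong curves (for downward ladders). Meanwhile the components of $S \cap U_i$ are, by the definition of ``relatively carried'', meridional disks (when $U_i$ is a solid tube) or annuli with at most one boundary on $\partial M$; the ladderpole annuli are exactly those annuli with both boundary components on $\partial U_i$ having ladderpole slope. The second step is to choose the meridional disk/annulus $D$ transverse to $\phi$ — such $D$ exists by the explicit local model for $\phi$ near a singular orbit or boundary component described in \Cref{sec:flows} — and arrange by an isotopy of $S$ supported in $U_i$ that $S$ meets $D$ transversely with no trivial (inessential) intersection arcs or circles removable by isotopy. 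This uses that $U_i$ is a solid torus or $T^2 \times I$, so $D$ is $\pi_1$-injective and incompressible, and $S \cap U_i$ is incompressible there as well (it is carried, so each component $\pi_1$-injects or is a disk); standard innermost-disk/outermost-arc surgery removes trivial intersections.

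\medskip

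The third step is to check the coherent coorientation condition. The surface $S$ is transverse to $\phi$ away from $U_i$ (since $\tau_U$ is positively transverse to $\phi$, by \Cref{th:AG+LMT}), and in particular $S \cap \partial U_i$ inherits a coorientation from the flow crossing $S$. On $\partial D$, the open segments of $\partial D \setminus (\text{prongs})$ carry the singular $\phi$-orientation of \Cref{fig:coorprongs}: stable prongs are sources, unstable prongs are sinks. Since $S$ is positively transverse to $\phi$ on a neighborhood of $\partial U_i$ and the prong curves lie at the cores of the ladders, the endpoints of the arcs of $\mc A = S \cap D$ land in these open segments with coorientation matching the flow direction — this is precisely the compatibility condition in the definition of a coherently cooriented arc system. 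The interiors of the arcs lie in $\intr D$ after the surgery in step two. The $\theta$-symmetry is then forced: following the flow once around $U_i$ carries $D$ to an isotopic (transverse) meridional disk/annulus $D'$, inducing the permutation $\theta$ of the prong intersection points; since $S$ is flow-generic near $\partial U_i$ and genuinely meets $U_i$ in a flow-consistent way, the arc relation defined by $S \cap D$ must be carried to that defined by $S \cap D'$, i.e. $\theta$ preserves the relation and coorientations. Finally, each ladderpole annulus of $S \cap U_i$ is a flow-invariant-up-to-isotopy annulus, so its intersection with $D$ is a $\theta$-orbit of a single arc of $\mc A$, giving the ``moreover'' clause.

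\medskip

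The main obstacle I expect is step two combined with the $\theta$-symmetry in step three: one must simultaneously make $D$ transverse to $S$ with no trivial intersections \emph{and} respect the flow symmetry, so that the permutation induced by flowing around $U_i$ really does preserve the arc system. Naively isotoping $S$ to clean up $S \cap D$ could destroy the relationship between $D$ and its flow-image $D'$. The resolution is to first fix $D$ transverse to $\phi$, then perform the cleanup isotopy of $S$ supported in a still smaller sub-tube and equivariantly with respect to the return map's combinatorics — or, more simply, to observe that the ladderpole annuli of $S \cap U_i$ are already in ``flow position'' (up to isotopy in $U_i$ their cores are flow-invariant), so that the arc system they cut on $D$ is automatically $\theta$-symmetric, and the remaining (non-ladderpole) disk/annulus components of $S \cap U_i$ contribute $\theta$-symmetric arcs by the same local-model argument. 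Carefully matching the carried position of $S$ near $\partial U_i$ to the prong data via \Cref{lem:ladderpoleprong} is what makes this go through.
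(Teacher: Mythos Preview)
Your first three steps --- choosing a $\phi$-transverse meridional disk or annulus $D$, cleaning up $S\cap D$ by an innermost-disk argument, and verifying the coherent coorientation via positive transversality of $S$ to $\phi$ near $\partial U_i$ together with \Cref{lem:ladderpoleprong} --- are exactly the paper's approach and are fine. One minor omission: when $D$ is an annulus you should handle separately those arcs with one endpoint on $\partial M$; the paper first isotopes the annuli of $S\cap U_i$ meeting $\partial M$ off the boundary prong curves (again using \Cref{lem:ladderpoleprong}), so that the coorientation check on the $\partial M$ side goes through as well.

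The genuine gap is in your $\theta$-symmetry argument. The flow-equivariance you invoke does not work: $S$ is \emph{not} transverse to $\phi$ inside $U_i$ (making it so after a blowup is precisely the goal of the next section), so flowing $D$ to $D'$ does not carry the arc system $S\cap D$ to $S\cap D'$. Your proposed repair, that the ladderpole annuli have ``flow-invariant cores'', is also not correct --- there is no reason the core of such an annulus is a closed orbit. The paper's argument is purely topological and much shorter: each ladderpole annulus of $S\cap U_i$ has both boundary components parallel to the prong curves, and the prong curves are \emph{by definition} permuted by the twisting $\theta$; hence the intersection of any single ladderpole annulus with $D$ is a $\theta$-orbit of arcs, and the full arc system is $\theta$-symmetric. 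No equivariant isotopy is needed.
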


\begin{proof}
Since the curves $S \cap \partial U_i$ are ladderpole, we can choose a ($\phi$-transverse) meridional disk or annulus $D$ such that $\partial D$ intersects theses curves minimally. Then by an innermost disk argument, we can isotope $S$ in $U_i$ so that $S\cap D$ is a collection of properly embedded arcs. We can further isotope $S$ so that each annulus of $S$ which meets $\del M$ is disjoint from the prongs of $U_i$ (see \Cref{lem:ladderpoleprong}).

Let $c$ be the boundary component of $D$ which does not lie on $\del M$ (if $D$ is a disk this is the only boundary component). It follows from \Cref{lem:ladderpoleprong} and the definition of the singular $\phi$-orientation that the coorientation of $S\cap D$ is compatible with the singular $\phi$-orientation of $\del D$ at each point of $S\cap c$.

Further, for each component $\alpha$ of $S\cap D$ touching both components of $\del D$, $\alpha$ is disjoint from the prongs of $M$. This implies that the coorientation of $\alpha$ is compatible with the singular $\phi$-orientation of $\del D$ at the boundary point of $\alpha$ on $\del M$.

The statement about $\theta$-symmetry follows from the fact that each annulus of $S\cap U_i$ has boundary components parallel to the prongs of $U_i$, which themselves are $\theta$-symmetric.
\end{proof}

We say a meridional disk/annulus satisfying the conclusion of \Cref{prop:arc system} is \define{adapted to} $S$.

\subsection{Blowups adapted to a surface} \label{sec:surfaceblowups}
Let $S$ be a relatively carried surface and fix a collection of adapted meridional disks/annuli (from \Cref{prop:arc system}) for each component of $U$ that $S$ meets in ladderpole annuli. 
Here we construct a (generalized) dynamic blowup of $\phi$ on $M$ that is transverse to $S$ by modifying the flow only in these ladderpole components of $U$.
We begin by describing the necessary blowups at each orbit in $U$ and then describe the blowups along the boundary components in $U$.

\subsubsection*{Blowup at a closed orbit}
First consider 
a closed orbit $\gamma$ of $\kappa_s$ contained in a component $U_\gamma$ of $U$ so that $S$ is ladderpole on $U_\gamma$,
let $D$ be a meridional disk of $U_\gamma$ adapted to $S$, 
and $f$ the return map of the flow. Near $p=D\intersect c$, $f$ has the standard form described in \Cref{local blowups}. 
Note that we allow for the possibility that $\gamma$ is a regular orbit; in this case no blowup will be performed (see \Cref{rmk:no_blow}), but we must still position $S$ in $U_\gamma$ so that it is transverse to the flow.

Recall that $T$ in $D$ is the union of prongs
(see \Cref{fig:arcs-internal} where they are
indicated by blue and red radial arcs), permuted by $f$ and cutting $D$ into quadrants.
The boundary of $D$, divided into quadrant arcs, receives a singular $f$-orientation as in
the previous section (see \Cref{fig:coorprongs}). Let $A$ be the coherently cooriented arc
system properly embedded in $D$, obtained from $S$ as in
\Cref{prop:arc system}.

\begin{figure}[htbp]
\begin{center}
  \includegraphics[height=1.5in]{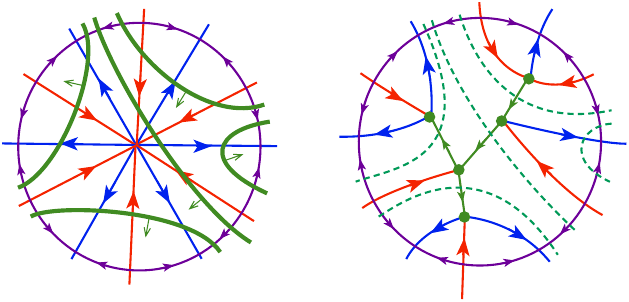}
\caption{A coherently co-oriented arc system (left side, in green) determines a blowup
  tree. On the right, the arc system (dotted) is realized transversely to the tree.}
\label{fig:arcs-internal}
\end{center}
\end{figure}

Given this, we construct the blow up tree $G$ as follows: Let $A'$ be the set of parallel classes
of $A$ (that is, one arc in $A'$ for each set of arcs in $A$ that begin and end in the
same quadrants), excluding the arcs with endpoints in adjacent quadrants. 
Let $G$ be the dual tree of $A'$ in $D$:
it collapses each complementary component to a vertex and has an edge crossing each
component of $A'$. Note that $G$ comes with an embedding into $D$ and the $\theta$--action
of $f$ on the disk and the arcs (where $\theta$ is the twisting around $U_\gamma$) gives
rise to an automorphism of $G$, so far defined
not pointwise but with respect to its action on vertices and edges. We orient each edge
of $G$ consistently with the co-orientation of the arc to which it is dual. 

We may identify $D\ssm p$ with the complement of $G$ in a disk $D^\sharp$,
in such a way that each
prong of $T$ coming into $D$ terminates at the vertex of $G$ associated to the region
where that prong enters $D$.
The union of $G$ with the prongs of $T$ gives the full
blowup tree $T^\sharp$, which we observe has no vertices of valence two. 
See \Cref{fig:arcs-internal}.
We now need to extend the dynamics
of $f$ on $D\ssm p$ to a continuous map  on $D^\sharp$ for which the vertices of $G$ are
periodic, and the return map to each edge has the dynamics (up to topological conjugacy)
prescribed by the orientations. This is done following the discussion in \Cref{local blowups}.

Each quadrant $Q$ of the original picture embeds now with a compactification arc that lies
along the tree and whose edges are consistently oriented.
To see this consistency, denote
by $Q^\sharp$  the compactified quadrant, and consider the arcs of $A'$ that cross the arc
of $\del D$ subtended by $Q$. Their dual orientations are consistent with the
$f$-orientation of that arc, and this implies the corresponding edges of $G$, which lie in
the boundary of $Q^\sharp$, have consistent orientations. The prong arcs adjacent to $Q$
have consistent orientations by definition. See \Cref{fig:quadrant-consistent} for a
typical picture of this.

\begin{figure}[htbp]
\begin{center}
  \includegraphics[height=1.5in]{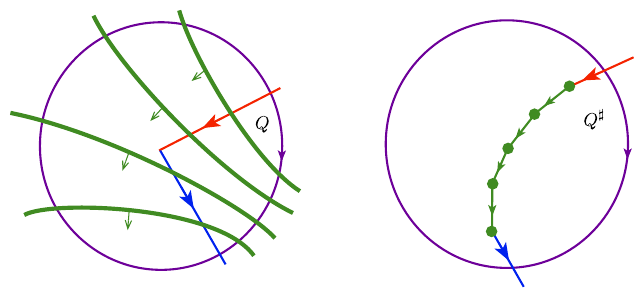}
\caption{Consistency of orientations induced on the boundary of a blown-up quadrant}
\label{fig:quadrant-consistent}
\end{center}
\end{figure}

Thus we may choose the dynamics
of the return map to be given by a standard repeated blowup on each quadrant (\Cref{local blowups}), and match them along the two
sides of each edge to obtain a globally continuous map. 
(The choice for these matchings are one source of the nonuniqueness mentioned in \Cref{global blowups}.).

The arc system now embeds in the new picture so that each arc crosses the tree at the edge
dual to it, and its coorientation agrees with the orientation of the edge.  The arcs from $A$ which were excluded when constructing $A'$ (i.e. those that meet adjacent quadrants) were transverse in the original picture and so remain transverse here (see \Cref{fig:arcs-internal}).

\subsubsection*{Blowup at a boundary component}
Next let $P$ be a boundary torus contained in the component $U_P$ of $U$. The flow near $P$ in $U_P$ can be taken to be the suspension flow of a
map on a meridional annulus $E$, which is a Fried blowup as in
\Cref{local blowups}. Let the inner
boundary of $E$ correspond to its intersection with $P$, and again consider the sectors
into which $E$ is divided: this time each sector already contains a ``blowup arc'' along
the inner boundary.

\begin{figure}[htbp]
\begin{center}
  \includegraphics[height=1.5in]{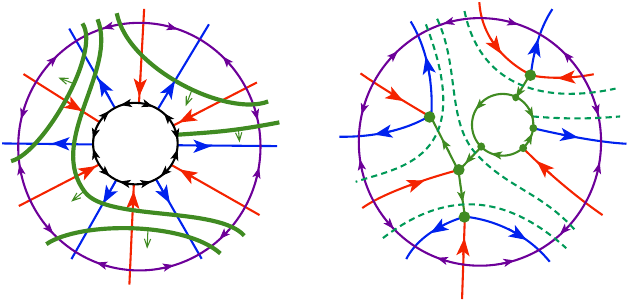}
\caption{A coherently co-oriented arc system near a boundary component determines a blowup
  graph. On the right, the arc system (dotted) is realized transversely to the graph.}
\label{fig:arcs-boundary}
\end{center}
\end{figure}

Again let $A$ be an invariant embedded collection of arcs with endpoints on the 
boundary of $E$ (each arc has at least one endpoint on the outer boundary), and again we require that $A$ inherits a coorientation from the  boundary
arcs (see \Cref{fig:arcs-boundary}). Components of $A$ with an endpoint on the inner
boundary must be contained in a single sector and their coorientation is consistent with
the orientation on both boundaries. 

Let $A'$ contain one arc for each
parallel class of arcs of $A$, 
excluding (as before)  arcs which have endpoints in adjacent quadrants, but now also 
excluding arcs with an endpoint on the inner boundary circle. 

We will now construct a graph $G$ embedded in $E$ which acts as the
``dual'' to $A'$. The graph $G$ contains a circle which we identify with
the inner boundary of $E$ and associate to the annulus complementary region of $E\ssm A'$. 
In each disk complementary region of $E\ssm A'$ we place a vertex of $G$,
and on the circle we place one vertex for each arc of $A'$ adjacent to the annulus region,
and one vertex for each landing point of an arc of $T$ that enters $E$ directly in the
annulus region (these vertices should be arranged in the same circular order as the arcs
and prongs that they correspond to). 
We place an edge in $G$
crossing each arc of $A'$, connecting the 
two vertices on either side if they are both disks, or a vertex in a disk to the
associated vertex on the circle if one of
the regions is the annulus. 
Each prong has a ``landing point'' in $G$: if it enters $E$ in a disk
region of $E\ssm A$ then we attach it to the corresponding vertex, and if it enters $E$ in
the annulus component we attach it to the associated vertex on the circle of $G$. 
We identify $E$ minus its inner boundary with $E\ssm G$ in such a way that the prongs
terminate at their landing points.

Let $T^\sharp$ denote $G$ union the prongs. Its edges inherit an orientation:
Each dual edge is oriented by the coorientation of the 
arc of $A'$ dual to it. 
Each segment $e$ on the circle is contained on the boundary of a component of $E\ssm
T^\sharp$ that meets the outer circle in a unique quadrant (because the arcs of $A'$ and/or
the prongs defining $e$ prevent any prongs from entering through this region). 
We give $e$ an orientation inherited from the boundary arc of this quadrant; see \Cref{fig:arcs-boundary} for examples of this. 

The components of $E\ssm G'$ are again compactifications of the original quadrants, with
multiple fixed points on the compactification arcs and dynamics prescribed by the
orientation. We again use multiple blowups to obtain a self-map which has the prescribed
dynamics on the graph and restricts on the quadrants to be conjugate to the original map. 
The arcs embed in this picture, again with consistent coorientations, by a similar
argument. 

The arc data, and hence the construction, are invariant under the twisting permutation $\theta$, so $f$
can be extended to the blowup graph as before.

\subsection{Suspending the blowups}
After the dynamic blowup associated to a system of arcs $A$, we obtain a system of
positively transverse annuli for the suspension flow as follows.

Consider first the case around a closed orbit.
In each quadrant $Q$  let $Q_0$ be the region bounded by an $f$-invariant hyperbola. A
radial arc $a$ in $Q_0$ connecting the origin to the hyperbola $\boundary Q_0$ is mapped
by $f$ to a disjoint radial arc, on the side of $a$ pointed to by its coorientation. In
$Q\times [0,1]$ we may connect $a\times\{1\}$ to $f(a)\times\{0\}$ by an disk which
(except over the origin) is transverse to the vertical flow. Gluing $Q_0\times\{1\}$ to
$Q_0\times\{0\}$ via $f$ produces an annulus transverse to the flow.

Now in the blown up quadrant $Q^\sharp$, we can adjust $a$ so that it meets one of the new
blowup intervals, and now $f(a)$ lies strictly to one side of $a$ and the same
construction yields an annulus which is also transverse to the flow on its boundary.

To produce a complete picture, we do this for all the arcs of $A$. That is, each arc of
$A$ can first be realized as a pair of radial arcs (in their respective quadrants) passing
through the center of $D$ and terminating in $Q_0$ for each $Q$ (we can choose $Q_0$ small
enough and trim $D$ so that
its boundary contains a large enough arc of $\boundary Q_0$). We blow up each
quadrant and attach the blowup arcs along the tree, in a way that respects the dynamics as
above, and the arcs of $A$ may be perturbed near
the blowup point so that they become embedded again and each one passes through its dual
tree edge. The map $f^\sharp$ created in this way takes each arc to the side pointed to by
its coorientation, and so as before we can create annuli through the solid torus that are
transverse to the flow.

There is one case that is slightly different: If an arc of $A$ has endpoints on adjacent
quadrants, then we can push it away from the singularity so that it still satisfies the
condition that its $f$-image is disjoint from it and on the side determined by its
coorientation. That is, the corresponding annulus can be made transverse to the flow
without performing any blowup. (This is the reason that arcs like this were excluded from
$A'$.) When there are only four quadrants (e.g. the periodic orbit
is in fact regular) this is all that can happen, so no blowup is necessary.

This construction can be made to match the original $S$ along the boundary of the
neighborhood, because $S$ meets the neighborhood in annuli that are transverse to the flow
near its boundary, in the same pattern as what we have created by \Cref{prop:arc system}.
The resulting surface $S^\sharp$ is transverse to the blowup flow $\phi^\sharp$ obtained by suspending the local blowups $f^\sharp$ obtained above.
For a blowup near a boundary component, the argument is similar, except that some of the
arcs terminate in the boundary.

With these constructions in hand, we can complete the proof of \Cref{th:dynamic blowup for carried}.
  
 \begin{proof}[Proof of  \Cref{th:dynamic blowup for carried}]
Suppose that $S$ is relatively carried by the dual veering triangulation $\tau$ relative to $U$, as in our setup. The surface $S$ is transverse to $\phi$ outside of $U$ (\Cref{th:AG+LMT}) and meets each component of $U$ in either a collection of disks, annuli meeting $\partial M$, 
or ladderpole annuli (or the empty set). Each disk intersection can be isotoped in $U$ to be transverse to $\phi$ since $S$ is transverse near the boundary of $U$. The same is true for any annulus meeting $\partial M$ that is not of ladderpole slope since its boundary can be made transverse to the flow in $\partial M$. 

Finally, the discussion above gives a recipe to dynamically blow up $\phi$ within the remaining components of $U$ to produce a flow $\phi^\sharp$ on $M$ that is transverse to the surface $S^\sharp$ so obtained. By construction, $S^\sharp$ and $S$ agree outside of the components of $U$ where $S$ is ladderpole. However, within such components, the description above (relying on \Cref{prop:arc system}) implies that they are isotopic by an isotopy supported in $U$.  Hence, after an isotopy $S$ is  transverse to the (generalized) dynamic blowup $\phi^\sharp$ and this completes the proof.
 \end{proof}

\section{Almost transverse surfaces are relatively carried}

In this section, we prove the converse to \Cref{th:dynamic blowup for carried}: an almost transverse surface to a pseudo-Anosov flows is relatively carried by any associated veering triangulation. 
Together with \Cref{th:dynamic blowup for carried}, this establishes:

\begin{theorem}[Almost transverse$\iff$relatively carried]\label{thm:ATtocarried}
Let $\phi$ be a transitive pseudo-Anosov flow on $M$, possibly with boundary, and let $\tau$ be any associated veering triangulation.
Then, up to isotopy, a surface $S$ is almost transverse to $\phi$ if and only if it is relatively carried by $\tau$.
\end{theorem}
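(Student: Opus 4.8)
The forward implication is exactly \Cref{th:dynamic blowup for carried}, so the plan is to establish the converse: a surface $S$ transverse to a generalized dynamic blowup $\phi^\sharp$ of $\phi$ is, up to isotopy, relatively carried by $\tau$. The proof I have in mind reduces this in two moves to the combinatorial input already in hand, namely \Cref{combiSTST}.

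First I would \textbf{reduce to a strict triangulation}. Since the blowup locus of $\phi^\sharp$ lies in $\kappa_s$ (\Cref{rmk:no_blow}) and the orbits of $\kappa$ are regular, one may isotope $S$ so that it meets a standard neighborhood $U$ of $\kappa_s$ in meridional disks around $\kappa$, (boundary-)annuli, and arbitrary transverse pieces around $\Pi_s\cup\partial M$, and then perform a Fried blowup of $\phi^\sharp$ along $\kappa$ (\Cref{remark:FGsurg}). This replaces $M$ by a manifold $\mr M$, the flow by a dynamic blowup $\mr\phi{}^\sharp$ of the perfect-fit-free flow $\mr\phi$ (\Cref{rmk:blow_to_no}), and $S$ by a surface $\mr S$ obtained by turning the meridional disks around $\kappa$ into meridional annuli running to the new boundary tori. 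Crucially, the fully-punctured manifold $M\ssm\kappa_s$ and its veering triangulation $\tau$ are unchanged (cf. \Cref{rmk:same_veer}), but now $\tau$ is a \emph{strict} relative veering triangulation of $\mr M$ (its only solid tubes surround the $\ge 6$-pronged singular orbits), and $\mr S$ is transverse to $\mr\phi{}^\sharp$. Because a relatively-carried position for $S$ with respect to $\tau$ in $M$ translates directly into one for $\mr S$ in $\mr M$ (solid tubes around $\kappa$ becoming hollow, meridional disks becoming boundary annuli), it suffices to prove the statement when $\tau$ is strict; I will assume this and drop the decorations, so now $\phi$ has no perfect fits, $\kappa=\varnothing$, and $\kappa_s=\Pi_s\cup\partial M$.

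Next I would \textbf{feed $S$ into \Cref{combiSTST}}. Choosing $U$ large enough, the flows $\phi^\sharp$ and $\phi$ are identified on $M\cut U$ (\Cref{rmk:blowup_on_M}), where $\tau_U=\tau^{(2)}\cap(M\cut U)$ is positively transverse to them (\Cref{th:AG+LMT}). A surface transverse to $\phi^\sharp$ is taut (this is \Cref{lem:taut_from_flow}: it admits no compressions, and each component pairs positively with some closed orbit, so no subcollection is nullhomologous). Transversality to $\phi^\sharp$ also forces $[S]$ to be nonnegative on every closed orbit of $\phi^\sharp$, hence on every closed orbit of $\phi$: away from the blowup locus the two flows agree, and at a blown singular orbit $\gamma$ the blown annular complex $\pi^{-1}(\gamma)$ is a tree-suspension whose first homology is generated by a loop mapping to a positive multiple of $[\gamma]$, so a class nonnegative on the orbits inside $\pi^{-1}(\gamma)$ is nonnegative on $[\gamma]$. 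Thus $[S]\in\cone_1^\vee(\phi)$, and since the closed orbits of $\phi$ and the directed cycles of the dual graph $\Gamma$ span the same cone in $H_1(M)$ (the Agol--Gu\'eritaud dictionary; see \Cref{sec:flowsrepresentfaces} and \cite{LMT21}), we obtain $[S]\in\cone_1^\vee(\Gamma)$. Now \Cref{combiSTST}, applied to the strict relative veering triangulation $\tau$ and the taut surface $S$, yields that $S$ is isotopic (rel $\partial S$) to a surface relatively carried by $\tau$. Combined with \Cref{th:dynamic blowup for carried}, this gives the equivalence.

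The step I expect to be the \textbf{main obstacle} is making the reduction to the strict case airtight: one must verify carefully that $S\mapsto\mr S$ and its inverse (collapsing each $P_k$ back to the orbit $k$) set up an honest bijection between relatively-carried positions in $M$ and in $\mr M$, that $\mr S$ genuinely remains transverse to a dynamic blowup of $\mr\phi$, and that the relative veering triangulation $\mr\tau$ of $\mr M$ really is $\tau$. A secondary delicate point is the homological comparison $\cone_1^\vee(\phi^\sharp)=\cone_1^\vee(\phi)$ at the blown orbits. An alternative that keeps everything on $M$ would argue directly via \Cref{lem:export}: a surface transverse to $\phi^\sharp$ is transverse to the stable and unstable foliations $W^s$ and $W^u$ (which are tangent to the flow), and one would then show this forces $S$ to meet $B^s$ and $B^u$ with no positive-index patches and with negative switches only in bigons; but that route still needs $\tau$ strict and merely relocates the difficulty into a local analysis of flow-transverse surfaces near the singular orbits, so I would prefer the \Cref{combiSTST} route above.
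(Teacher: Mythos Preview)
Your proposal is correct and follows essentially the same route as the paper: Fried blow up along $\kappa$ to land in the strict setting, invoke \Cref{lem:taut_from_flow} for tautness, and then apply \Cref{combiSTST}. The only cosmetic difference is that you verify hypothesis (a) of \Cref{combiSTST} (showing $[S]\in\cone_1^\vee(\Gamma)$ via the comparison $\cone_1(\phi^\sharp)=\cone_1(\phi)=\cone_1(\Gamma)$), whereas the paper verifies hypothesis (b) directly, since \Cref{lem:taut_from_flow} already yields $x([S^*])=-e_{\tau^*}([S^*])$; this makes the paper's version marginally shorter but the two are equivalent.
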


First we need a lemma extending \cite[Section 2]{mosher1992dynamical}
to the case of manifolds with boundary. We remark that our generalization is necessary even in the case where the manifold $M$ from \Cref{thm:ATtocarried} is closed. This is because our strategy for the proof is to first perform a Fried blowup on the original flow and then apply \Cref{lem:taut_from_flow}. Recall the Euler class $e_\tau$ as defined in \Cref{sec:eulerclass}.

\begin{lemma}
\label{lem:taut_from_flow}
Any surface $S$ almost transverse to a transitive pseudo-Anosov flow is taut. If $\tau$ is any associated veering triangulation, 
then $x([S])=-e_\tau([S])$.
\end{lemma}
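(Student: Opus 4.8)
The plan is to reduce the statement to the combinatorial results of Section 2, in particular \Cref{combiSTST} and \Cref{lem:carriedtaut}, after converting ``almost transverse'' into ``relatively carried.'' The key tool is \Cref{th:dynamic blowup for carried}'s converse, but since that converse is exactly \Cref{thm:ATtocarried} and is not yet proved at this point in the paper, I instead want an argument that works directly. The cleanest route is: if $S$ is almost transverse to $\phi$, then $S$ is transverse to a dynamic blowup $\phi^\sharp$ on $M$. First I would Fried-blow up $\phi$ along a collection $\kappa$ of regular orbits that kills its perfect fits (such $\kappa$ exists by Fried--Brunella--Tsang, as recorded after \Cref{rmk:blow_to_no}), obtaining a pseudo-Anosov flow on $\mr M$ with no perfect fits whose associated veering triangulation is $\tau$ (via \Cref{th:AG+LMT} and \Cref{rmk:same_veer}), and note that $S$ remains (almost) transverse to the blown-up flow since the blowup is supported near $\kappa_s$ and $S$ meets those tubes efficiently.

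The heart of the argument is then the following: a surface transverse to an (almost) pseudo-Anosov flow on a manifold $N$ with no perfect fits is taut. Here I would follow the strategy of \cite[Section 2]{mosher1992dynamical}: a surface $S$ transverse to $\phi^\sharp$ is carried by a branched surface built from the flow (equivalently, one shows $e_{\phi^\sharp}([S]) = \chi(S)$ directly, since $S$ being transverse to the flow means every orbit crosses $S$ positively and one can compute the Euler class of the flow against $[S]$ by an index count on $S$, exactly as in \Cref{euler_patch} but with the prong structure of $\phi^\sharp$ in place of $\tau$). Since $e_{\phi^\sharp} = e_\tau$ (the identification of the flow Euler class with the combinatorial one, as in \Cref{sec:flowsrepresentfaces}, extended over the blowup) and $e_\tau$ has dual Thurston norm $\le 1$ by \Cref{lem:dualnorm}, we get $x([S]) \le \chi_-(S) \le -\chi(S) = -e_\tau([S]) \le x([S])$, forcing equality throughout. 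The no-nullhomologous-subcollection condition follows because $\Gamma$ is strongly connected (\cite[Lemma 5.8]{LMT20}), so each component of $S$ has positive intersection with a directed cycle, exactly as in the proof of \Cref{lem:carriedtaut}. This gives both tautness and $x([S]) = -e_\tau([S])$.

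The main obstacle I anticipate is making the index computation $e_{\phi^\sharp}([S]) = \chi(S)$ rigorous for a surface merely transverse to the flow (rather than carried by a triangulation), and ensuring the identification $e_{\phi^\sharp} = e_\tau$ genuinely holds after a general — not just Fried — dynamic blowup. Mosher handles the analogous closed-manifold statement by an explicit decomposition of $S$ into pieces meeting the singular-orbit neighborhoods and the regular part; I would import that argument, checking that the boundary tori of $M$ contribute the ``stable prong orbit, index $-1$'' terms matching the definition of $e_\phi$ in \Cref{sec:flowsrepresentfaces}, and that blown annuli, being simultaneously in stable and unstable leaves, contribute index $0$ and hence do not affect the count. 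Once that bookkeeping is in place the rest is formal, and the final sentence $x([S]) = -e_\tau([S])$ is immediate from \Cref{lem:dualnorm}(a) applied to $Y = S$.
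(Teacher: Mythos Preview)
Your core approach --- computing $\chi(S)$ via the index sum for the singular foliation $S \cap W^s$ induced on $S$, matching it against the explicit formula for $e_\tau$, and concluding via \Cref{lem:dualnorm} --- is exactly what the paper does. The paper, however, works directly in $M$ without your Fried-blowup detour: after arranging that $S$ is minimally transverse to $\psi = \phi^\sharp$ (blow down any blown annuli $S$ meets only in arcs), it separates the tubes into $U_m$ (those where $S$ crosses the core in a meridional disk or annulus) and the rest, and observes that only $U_m$ contributes to either side of $2\chi(S) = 2e_\tau([S])$. On the index side, the singularities of $S \cap W^s$ are precisely the positive transverse intersections of $S$ with singular orbits of $\psi$, all lying in $U_m$; on the Euler-class side, $S$ pairs trivially with the terms coming from $U \setminus U_m$. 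The identification of the cusp curves $c$ with the stable prong curves on $\partial M$ uses \Cref{lem:ladderpoleprong}. This directly resolves both of your anticipated obstacles: the index computation is just Poincar\'e--Hopf on $S$ for the foliation $S \cap W^s$, and you never need a global identity $e_{\phi^\sharp} = e_\tau$, only agreement of their pairings with $[S]$, which the tube-by-tube bookkeeping gives.

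Your Fried blowup to $\mr M$ would work but adds the burden of transferring tautness and the norm equality back from $\mr M$ to $M$; the paper instead reserves the Fried-blowup trick for the proof of \Cref{thm:ATtocarried}, where it applies the present lemma \emph{after} blowing up. For the no-nullhomologous-components condition the paper uses transitivity of $\psi$ directly (a dense orbit yields a closed curve pairing positively with any union of components) rather than strong connectivity of $\Gamma$; the two arguments are equivalent in spirit.
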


\begin{proof}
To begin, let $\psi$ be the almost pseudo-Anosov flow transverse to $S$ and
let $\tau$ be a veering triangulation on $M \ssm \kappa_s$ where $\kappa$ is any collection of orbits that kills the perfect fits of $\psi$ (\Cref{th:AG+LMT}). By blowing down blowup annuli of $\psi$ that $S$ meets in proper arcs, we can assume that $S$ is minimally transverse to $\psi$.

Let $U$ be the associated tube system where each component of $U$ is a standard neighborhood of a component of $\kappa_s$, and let $U_m$ be the subcollection of components that $S$ meets in meridional annuli (\Cref{sec:arc_sys}) or disks.  Hence, the components of $\kappa_s$ that are contained in components of $U_m$ are lone
orbits and boundary components of $M$. That is, all blown annuli of $\kappa_s \cap U_m$ are contained in $\partial M$.

Recall from \Cref{lem:partialeulerclassformula} that 
\[
e_\tau=\frac{1}{2}\langle\left( \sum \ind(T_i)\cdot\core(T_i)\right)-c,\cdot\rangle,
\]
where $c$ is the union of the cusp curves of all torus shells of $M\cut B^s$ (corresponding to the boundary components of $M$) and the sum is over solid torus components of $U$ (corresponding to components of $\kappa_s$). 
Because of \Cref{lem:ladderpoleprong}, in each $T^2 \times I$ component of $U_m$, we can isotope $c$, preserving orientation, to the union of stable singular orbits of $\psi$ on $\del M$ (i.e. those associated to stable prong curves). We replace $c$ by its image under this isotopy. Similarly on each solid torus component of $U_m$, we can assume that $\core(T_i)$ agrees on the nose with the associated closed orbit of $\psi$. Further, observe that $S$ meets no singular orbits contained in $U \ssm U_m$. Hence, when computing the pairing $e_\tau([S])$ we can restrict to the summation terms associated to $U_m$ and consider only the curves of $c$ contained in $U_m$.

Next, recall that if $x$ is a $p$-pronged singularity of a singular foliation on a surface with boundary $S$, its index is defined to be $\ind(x)=2-p$. This includes the case when $p$ lies on $\del S$; see \Cref{fig:indices}. With this definition, given a foliation of $S$ with only prong singularities
and leaves tangent to $\partial S$,
we have $\sum \ind(x)=2\chi(S)$ where the sum is over all singularities of the foliation. 

\begin{figure}[htbp]
\begin{center}
\includegraphics[]{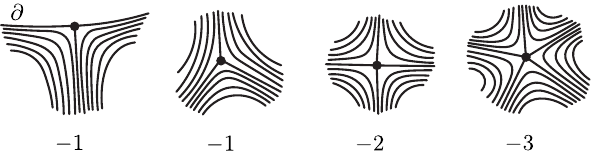}
\caption{Some singularities of singular foliations on surfaces with boundary, and their respective indices.}
\label{fig:indices}
\end{center}
\end{figure}

Now recall that $W^s$ is the stable foliation of $\psi$. Since $S$ is transverse to $\psi$, $S\cap W^s$ defines a singular foliation of $S$, the singularities of which are the points of intersection between $S$ and the singular orbits of $\psi$ that are contained in $U_m$. All these intersections are positive and so comparing formulas we see that $\chi_-(S)=-e_\tau([S])$. 
Since $\psi$ is transitive, we can find a closed curve pairing positively with any union of components of $S$. Hence each such union is homologically nontrivial. By \Cref{lem:dualnorm}, $S$ is taut.
\end{proof}

With this lemma in hand, we turn to the proof of \Cref{thm:ATtocarried}. The proof strategy of first performing a Fried blowup to obtain a pseudo-Anosov flow {without perfect fits} on a manifold with boundary will also be key in the next section.

\begin{proof}[Proof of \Cref{thm:ATtocarried}]
Recall that $\tau$ is a veering triangulation on $M \ssm \kappa_s$ where $\kappa$ is any collection of orbits that kills the perfect fits of $\phi$.
If $S$ is relatively carried by $\tau$, then $S$ is almost transverse to $\phi$ up to isotopy by \Cref{th:dynamic blowup for carried}.

Conversely, suppose that $S$ is a surface almost transverse to $\phi$. 
This means $S$ is transverse to an almost pseudo-Anosov flow $\phi^\sharp$ on $M$ of $\phi$ obtained by (generalized) dynamic blowup. Note that we can naturally think of $\kappa$ as a collection of closed regular orbits of $\phi^\sharp$.

Since $\kappa$ consists of regular orbits, the relative veering triangulation $\tau$ on $M$ is strict if  and only if $\kappa =\emptyset$. To remedy this, 
perform a Fried blowup of $\phi^\sharp$ on each orbit of $\kappa$, obtaining an almost pseudo-Anosov flow $\phi^*$ on a manifold $M^*$ which has a boundary component for each orbit in $\kappa$ (in addition to any boundary coming from $M$). Since each intersection of $S$ with $\kappa$ is transverse and positive, we obtain a surface  $S^*$ properly embedded in $M^*$ that is transverse to $\phi^*$ and maps to $S$ under the blowdown $M^* \to M$ from $\phi^*$ to $\phi^\sharp$.

By construction, the flow $\phi^*$ has no perfect fits (see \Cref{rmk:blow_to_no}). Moreover, its associated veering triangulation $\tau^*$ agrees with $\tau$ under the blowdown $M^* \to M$; said differently, they are the same veering triangulation of $M \ssm \kappa_s$; see \Cref{rmk:same_veer}.

Since $S^*$ is transverse to $\phi^*$ and $\tau^*$ is associated to $\phi^*$, \Cref{lem:taut_from_flow} gives that $S^*$ is taut and $x([S^*])=-e_{\tau^*}([S])$. By \Cref{combiSTST}, we may isotope $S^*$ to be relatively carried by $\tau^*$, fixing $\del S^*$. 
By postcomposing this isotopy with the blowdown $M^*\to M$, we see that $S$ is relatively carried by $\hbs$ up to isotopy.
\end{proof}

\begin{remark}
An alternative approach would be to note that the almost transversality condition implies that no complementary region of $S\cap B^s$ or $S\cap B^u$ can be a bigon with both cusps corresponding to negative intersections of $S$ with $\kappa$. This fact allows one to apply the steps of the algorithm from \cite{Landry_norm}. This algorithm is fairly complex. The approach above is much simper, highlighting the utility of working with pseudo-Anosov flows on manifolds with boundary.
\end{remark}

\section{Transverse surface theorems}
In this section, we prove several transverse surface theorems that realize surfaces as (almost) transverse to pseudo-Anosov flows. We then also use our Fried blowup technique to realize relative homology classes as Birkhoff surfaces (\Cref{th:Birk}).

 In \Cref{sec:example}, we illustrate the sharpness of our theorems with an example.
 
\medskip

Essentially the same technique from the proof of \Cref{thm:ATtocarried} (performing a Fried blowup to remove perfect fits) can be used to extend Mosher's transverse surface theorem (for general pseudo-Anosov flows) to manifolds with boundary.

\begin{theorem}[Transverse surface theorem with boundary]
\label{th:mosher_with_boundary}
Given a transitive pseudo-Anosov flow $\phi$ on $M$ (possibly with boundary), any integral first cohomology class pairing nonnegatively with all closed orbits of $\phi$ is Poincar\'e dual to a surface almost transverse to $\phi$.
\end{theorem}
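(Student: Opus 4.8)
The plan is to follow the template of the proof of \Cref{thm:ATtocarried}: reduce to a flow without perfect fits by a Fried blowup, run the combinatorial machinery of Section~2 there, and then push the resulting surface back down. So let $u\in H^1(M;\mathbb{Z})$ pair nonnegatively with every closed orbit of $\phi$. First I would choose, using the results of Fried and Brunella (and Tsang) recalled at the end of \Cref{sec:pfits}, a finite collection $\kappa$ of closed regular orbits of $\phi$ in $\intr M$ that kills its perfect fits, and Fried-blow up $\phi$ along each orbit of $\kappa$ (\Cref{remark:FGsurg}) to obtain a transitive pseudo-Anosov flow $\phi^*$ on a compact manifold $M^*$ with $\partial M^* = \partial M \sqcup \bigsqcup_{\gamma\in\kappa}P_\gamma$. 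By \Cref{rmk:blow_to_no} the flow $\phi^*$ has no perfect fits. The blowdown $\pi\colon M^*\to M$ is degree one and $\pi_1$-surjective, collapses each torus $P_\gamma$ onto $\gamma$, and is an orbit conjugacy on the complement of $\bigsqcup P_\gamma$. Set $u^* = \pi^* u \in H^1(M^*;\mathbb{Z})$.

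Next I would verify that $u^*\in\cone_1^\vee(\phi^*)$. A closed orbit of $\phi^*$ disjoint from $\bigsqcup P_\gamma$ is carried isomorphically by $\pi$ onto a closed orbit of $\phi$, so $u^*$ pairs nonnegatively with it by hypothesis; a closed orbit lying on some $P_\gamma$ is a prong curve, and $\pi$ carries it onto a positive multiple $m[\gamma]$ of $\gamma$, so the pairing is $m\langle u,[\gamma]\rangle\ge 0$. Let $\tau^*$ be the veering triangulation associated to $\phi^*$ (with $\kappa=\emptyset$) from \Cref{th:AG+LMT}; since $\phi^*$ has no perfect fits, $\tau^*$ is a strict relative veering triangulation of $M^*$, and by \Cref{sec:flowsrepresentfaces} its dual graph $\Gamma^*$ satisfies $\cone_1(\Gamma^*)=\cone_1(\phi^*)$. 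Hence $u^*\in\cone_1^\vee(\Gamma^*)$, and the Combinatorial TST, applied to the integral class $u^*$, produces a (necessarily taut) properly embedded surface $S^*\subset M^*$ that is relatively carried by $\tau^*$ with $[S^*]$ Poincaré dual to $u^*$. By \Cref{thm:ATtocarried}, $S^*$ is almost transverse to $\phi^*$.

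Finally I would descend: after isotoping $S^*$ into relatively carried position, the blowdown $\pi$ restricts to a homeomorphism $M^*\cut U^*\to M\cut U$ identifying $\tau^*_{U^*}$ with $\tau_U$ (\Cref{rmk:same_veer}) and carries each component of $S^*\cap U^*$ to an admissible component of $S\cap U$: meridional disks, boundary-parallel annuli, and ladderpole annuli are preserved, while an annulus meeting $P_\gamma$ in a meridian of $\gamma$ collapses to a meridional disk of the solid tube about $\gamma$ — exactly as in the last paragraph of the proof of \Cref{thm:ATtocarried}. Thus $S:=\pi(S^*)$ is relatively carried by $\tau$ in $M$, hence almost transverse to $\phi$ by \Cref{thm:ATtocarried} again, and $[S]=\pi_*[S^*]$ is Poincaré dual to $u$ because $\pi$ has degree one.

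I expect the only real bookkeeping to be at the Fried blowup, at both ends: checking that the hypothesis is not lost — namely that the new prong curves on $\partial M^*$, being positive multiples of the blown-up orbits, keep $u^*$ nonnegative on the closed orbits of $\phi^*$ — and that the conclusion survives the blowdown, i.e. that a relatively carried (equivalently, almost transverse) surface in $M^*$ descends to one in $M$. Neither point is deep; both reduce to the local model of the Fried blowup and to the identification of the veering triangulation under blowdown from \Cref{rmk:same_veer}. Everything else is a direct assembly of the Combinatorial TST, \Cref{thm:ATtocarried}, and the ``flows represent faces'' identification $\cone_1(\Gamma^*)=\cone_1(\phi^*)$.
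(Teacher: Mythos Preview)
Your proof is correct and follows essentially the same route as the paper: Fried-blow up along $\kappa$ to kill perfect fits, pull back $u$ to $u^*$, use the Combinatorial TST (the paper cites \cite[Proposition~5.11]{LMT20} directly, which is what the Combinatorial TST wraps) together with $\cone_1(\Gamma^*)=\cone_1(\phi^*)$ to realize $u^*$ by a relatively carried surface, then push down via \Cref{rmk:same_veer} and apply \Cref{th:dynamic blowup for carried}. Your intermediate appeal to \Cref{thm:ATtocarried} to conclude $S^*$ is almost transverse to $\phi^*$ is harmless but unnecessary---you only need $S^*$ relatively carried, which you already have.
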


\begin{proof}
Let $\kappa$ be any collection of regular orbits of $\phi$ that kill perfect fits; this exists by \Cref{prop:tsang}.
Following the approach of \Cref{thm:ATtocarried}, perform a Fried blowup on all orbits of $\kappa$ to obtain $M^*$, $\phi^*$, and $\tau^*$. In this case, $\tau^*$ is the associated veering triangulation of $M^*$ since $\phi^*$ has no perfect fits. 

Any class $\eta \in H^1(M)$ that pairs nonnegatively with all closed orbits of $\phi$ pulls back to a class $\eta^* \in H^1(M^*)$ with the same property for $\phi^*$. 
Hence, $\eta^*$ can be realized by a surface $S^*$ in $M^*$ that is carried by $\tau^*$ by \cite[Proposition 5.11]{LMT20}. Since $\eta^*$ is the pullback of the class $\eta$ in $M$, $S^*$ can be extends across $U$ to a surface $S$ representing $\eta$ and relatively carried by $\tau$. Hence, after an isotopy $S$ is almost transverse to $\phi$ by \Cref{th:dynamic blowup for carried} and the proof is complete.
\end{proof}

The point of our \emph{strong} transverse surface theorems is to realize a surface in (almost) transverse position, up to isotopy (rather than just homology).
When the flow is without perfect fits, the strongest possible result is the following:

\begin{theorem}[Strong transverse surface theorem]
\label{th:stst}
Let $\phi$ be a pseudo-Anosov flow with no perfect fits on $M$, possibly with boundary. Let $S$ be a taut surface pairing nonnegatively with all closed orbits of $\phi$. Then $S$ is almost transverse to $\phi$ up to isotopy.
\end{theorem}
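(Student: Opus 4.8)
The plan is to reduce Theorem~\ref{th:stst} to the combinatorial characterization already established, namely Theorem~\ref{thm:ATtocarried} (almost transverse $\iff$ relatively carried) together with the Combinatorial Strong TST (\Cref{combiSTST}). Since $\phi$ has no perfect fits, by \Cref{th:AG+LMT} there is an associated veering triangulation $\tau$ on $M \ssm \kappa_s$ with $\kappa = \emptyset$, so $\kappa_s$ consists only of the singular orbits and boundary components of $M$; as noted at the end of \Cref{sec:AG}, this $\tau$ is a \emph{strict} relative veering triangulation of $M$. By \Cref{thm:ATtocarried}, it suffices to show that $S$ is relatively carried by $\tau$ up to isotopy, and by \Cref{combiSTST} this holds provided $S$ is taut and $x([S]) = -e_\tau([S])$. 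The taut hypothesis is given, so the entire content of the proof is the Euler-class equality $x([S]) = -e_\tau([S])$.

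First I would identify $e_\tau$ with the Euler class $e_\phi$ of the flow: this is exactly the content of the computation in \Cref{sec:flowsrepresentfaces}, where \Cref{lem:partialeulerclassformula} combined with \Cref{lem:ladderpoleprong} gives $e_\tau = e_\phi$, with
\[
2e_\phi = \sum_i \langle \ind(\gamma_i)[\gamma_i], \cdot \rangle,
\]
the sum being over singular orbits in $\intr(M)$ and stable periodic orbits in $\partial M$. Next, the hypothesis that $S$ pairs nonnegatively with all closed orbits of $\phi$ means precisely that $[S] \in \cone_1^\vee(\phi)$, and since $\cone_1(\phi) = \cone_1(\tau)$ (again by \cite{LMT21}, cf.\ \Cref{sec:flowsrepresentfaces}), we get $[S] \in \cone_1^\vee(\Gamma)$. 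Now \Cref{cor:easycontainment2} applies directly: any taut surface whose class lies in $\cone_1^\vee(\Gamma)$ satisfies $x([S]) = \chi_-(S) = -e_\tau([S])$. This is the desired equality.

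With that equality established, \Cref{combiSTST} (applicability requires only strictness of $\tau$, which holds, and tautness of $S$, which is given) yields that $S$ is relatively carried by $\tau$ up to an isotopy that can be taken to fix $\partial S$. Finally, \Cref{thm:ATtocarried} converts ``relatively carried by $\tau$'' back into ``almost transverse to $\phi$ up to isotopy,'' completing the proof. I expect the main obstacle to be purely bookkeeping: making sure that the no-perfect-fits hypothesis is correctly translated into strictness of the associated $\tau$ and that the Euler classes $e_\tau$, $e_\phi$, and the cones $\cone_1(\phi)$, $\cone_1(\Gamma)$ are matched up under the correct identifications. All of these facts are already assembled in the excerpt, so the argument is a short chain of citations rather than new work; the proof should be only a few lines.

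\begin{proof}
Since $\phi$ has no perfect fits, the empty collection $\kappa = \emptyset$ kills its perfect fits, so by \Cref{th:AG+LMT} there is an associated veering triangulation $\tau$ on $M \ssm \kappa_s$, where $\kappa_s$ is the union of the singular orbits and boundary components of $M$. As noted at the end of \Cref{sec:AG}, this $\tau$ is a strict relative veering triangulation of $M$ with tube system $U = N(\kappa_s)$.

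By \Cref{sec:flowsrepresentfaces} we have $e_\tau = e_\phi$, and the periodic orbits of $\phi$ and the directed cycles of the dual graph $\Gamma$ generate the same cone in $H_1(M)$, so $\cone_1^\vee(\Gamma) = \cone_1^\vee(\phi)$. The hypothesis that $S$ pairs nonnegatively with all closed orbits of $\phi$ says precisely that $[S] \in \cone_1^\vee(\phi) = \cone_1^\vee(\Gamma)$. Since $S$ is also taut, \Cref{cor:easycontainment2} gives
\[
x([S]) = \chi_-(S) = -e_\tau([S]).
\]

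Now $\tau$ is strict and $S$ is taut with $x([S]) = -e_\tau([S])$, so \Cref{combiSTST} (the implication (b)$\Rightarrow$(c)) shows that $S$ is relatively carried by $\tau$ after an isotopy. Finally, by \Cref{thm:ATtocarried}, a surface relatively carried by the associated veering triangulation $\tau$ is almost transverse to $\phi$ up to isotopy. Hence $S$ is almost transverse to $\phi$ up to isotopy, as claimed.
\end{proof}
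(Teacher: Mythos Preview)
Your proof is correct and follows essentially the same route as the paper's: translate the nonnegative-pairing hypothesis into $[S]\in\cone_1^\vee(\Gamma)$ via $\cone_1(\phi)=\cone_1(\Gamma)$, apply \Cref{combiSTST} to get relative carrying, and then invoke the carried $\Rightarrow$ almost transverse direction. The only difference is that you detour through the Euler-class equality (using \Cref{cor:easycontainment2} and the implication (b)$\Rightarrow$(c) of \Cref{combiSTST}), whereas the paper simply uses (a)$\Rightarrow$(c) directly---your extra step is valid but unnecessary.
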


\begin{proof}
Since $\phi$ has no perfect fits, there is an associated veering triangulation $\tau$ in $M \ssm \Pi_s$.
By the results of \cite{LMT21}, the cone in $H_1(M)$ generated by closed orbits is equal to $\cone_1(\Gamma)$, so $[S]\in \cone_1^\vee(\Gamma)$. By \Cref{combiSTST}, $S$ is carried by $\hbs$ up to isotopy, so is almost transverse to $\phi$ up to isotopy by \Cref{th:dynamic blowup for carried}. 
\end{proof}

Combining the above arguments give the following extension to the general case.

\begin{theorem}
\label{th:general_stst}
Let $\phi$ be a transitive pseudo-Anosov flow on $M$, and let $\kappa$ be any finite collection of closed regular orbits of $\phi$ that kills perfect fits. Then an oriented surface $S$ in $M$ is isotopic to a surface that is almost transverse to $\phi$ if and only if
\begin{enumerate}
\item $S$ is taut,
\item $S$ has nonnegative (homological) intersection with each orbit of $\phi$,
and 
\item  the algebraic and geometric intersection numbers of $S$ with $\kappa$ are equal.
\end{enumerate}
\end{theorem}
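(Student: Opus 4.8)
The plan is to deduce \Cref{th:general_stst} by combining \Cref{th:mosher_with_boundary}, \Cref{lem:taut_from_flow}, and \Cref{th:stst} via the Fried blowup technique already used to prove \Cref{thm:ATtocarried}. First I would dispose of the easy direction: if $S$ is isotopic to a surface almost transverse to $\phi$, then $S$ is taut and pairs nonnegatively with all closed orbits by \Cref{lem:taut_from_flow} (the tautness) together with the fact that a transverse surface has only positive intersections with orbits (giving (2) and, since $\kappa$ consists of orbits, also (3): all intersections with $\kappa$ are positive so algebraic equals geometric). One small point to be careful about here is that almost transversality is with respect to a \emph{blowup} $\phi^\sharp$ on $M$, so I would note that $\kappa$, consisting of regular orbits, is unaffected by the blowup and remains a collection of regular orbits of $\phi^\sharp$, and that the intersection of $S$ with $\kappa$ is the same computed in $M$ with either flow.

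The substance is the converse. Assume (1)--(3). Following the proof of \Cref{thm:ATtocarried}, perform a Fried blowup of $\phi$ along each orbit of $\kappa$ to obtain an almost pseudo-Anosov flow $\phi^*$ on a manifold $M^*$ with one new toral boundary component per orbit of $\kappa$; by \Cref{rmk:blow_to_no}, $\phi^*$ has no perfect fits. Since $\phi$ is transitive, so is $\phi^*$ (a blowdown is a flow-equivalence away from the blowup locus), so \Cref{th:stst} applies to $\phi^*$. The goal is to produce a taut surface $S^*$ in $M^*$ that pairs nonnegatively with all closed orbits of $\phi^*$ and blows down to $S$. Condition (3) is exactly what lets us do this: after an isotopy of $S$ realizing the geometric intersection number with $\kappa$, every intersection point of $S$ with $\kappa$ is positive, so the transverse preimage of $S$ under the blowdown $M^* \to M$ is a properly embedded surface $S^*$ in $M^*$ (each positive intersection with an orbit lifts to an arc of intersection with the corresponding boundary torus of $M^*$, and no negative intersections means no sphere/annulus debris is forced). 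Then $S^*$ represents the pullback class $[S]^*$, which pairs nonnegatively with the closed orbits of $\phi^*$: closed orbits of $\phi^*$ are (up to the flow-equivalence away from the blowup locus) closed orbits of $\phi$ away from $\kappa$ together with the prong curves on the new boundary tori, and nonnegativity on the former follows from (2) while nonnegativity on the latter follows because $S^*$ meets each new boundary component in a coherently cooriented arc system, hence has nonnegative pairing with any boundary curve. Tautness of $S^*$ should follow from tautness of $S$ together with \Cref{lem:dualnorm}: one checks $\chi_-(S^*) = -e_{\tau^*}([S^*])$ using that the blowdown does not change Euler characteristic of the relevant part and that $[S^*]$ maps to $[S]$, or more directly one invokes $x([S^*]) = -e_{\tau^*}([S^*])$ as in \Cref{lem:taut_from_flow}.

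With $S^*$ taut and nonnegative on closed orbits of the perfect-fit-free flow $\phi^*$, \Cref{th:stst} gives that $S^*$ is almost transverse to $\phi^*$ up to isotopy. Finally, I would push this back down to $M$: isotope $S^*$ to be transverse to a dynamic blowup $(\phi^*)^\sharp$ of $\phi^*$, and observe that $(\phi^*)^\sharp$ is itself a dynamic blowup of $\phi$ (composing the generalized blowup with the Fried blowup along $\kappa$ is again a generalized blowup in the sense of \Cref{rmk:blowup_on_M}), and that the image of $S^*$ under the composite blowdown to $M$ is isotopic to $S$. Hence $S$ is almost transverse to $\phi$ up to isotopy, completing the proof.

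I expect the main obstacle to be the bookkeeping around how $S^*$ is constructed from $S$ via condition (3), and in particular verifying rigorously that (a) the preimage surface $S^*$ is genuinely properly embedded and transverse along the new boundary (this uses the coherently cooriented arc system language of \Cref{sec:arc_sys} applied to the Fried blowup, exactly as in the \Cref{thm:ATtocarried} argument), and (b) nonnegativity on the prong curves of the new boundary tori is automatic. Both are essentially contained in the earlier arguments, so the proof should be short, but I would state it carefully to make sure condition (3) is used at precisely the right point and nowhere is a hidden negative intersection introduced.
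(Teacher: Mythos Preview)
Your approach is essentially the same as the paper's: Fried blow up along $\kappa$ to obtain a flow $\phi^*$ on $M^*$ without perfect fits, use condition (3) to lift $S$ to a properly embedded $S^*$, verify the hypotheses of \Cref{th:stst} for $S^*$, and blow back down to $M$.

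The one place you should tighten the argument is the tautness of $S^*$. Your second suggestion---invoking \Cref{lem:taut_from_flow}---would be circular, since that lemma takes almost transversality as a hypothesis, which is exactly what you are trying to establish. Your first suggestion (checking $\chi_-(S^*)=-e_{\tau^*}([S^*])$ directly and appealing to \Cref{lem:dualnorm}) can be made to work, but requires tracking how the Euler class formula changes when solid tubes around $\kappa$ become hollow. The paper instead gives a short direct argument: because condition (3) forces the boundary curves of $S^*$ on each new torus to be coherently oriented, any surface in $M^*$ homologous to $S^*$ must have at least as many boundary components on those tori; since blowing down (capping each such boundary circle with a disk) produces a surface in $M$ homologous to $S$, tautness of $S$ in $M$ forces $S^*$ to realize the Thurston norm in $M^*$. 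This is the precise mechanism by which condition (3) enters, and it is worth isolating.
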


Note that if $\varphi$ has no perfect fits (e.g. when $\phi$ is circular), then we can take $\kappa = \emptyset$, and in this case the third condition is vacuous. Hence \Cref{th:stst} is a special case of \Cref{th:general_stst}.

\begin{proof}
First, isotope $S$ so that condition $3$ holds. Using the previous method, we can Fried blowup $\phi$  at $\kappa$, just as in the proof of \Cref{thm:ATtocarried}, to obtain a pseudo-Anosov flow $\phi^\sharp$ without perfect fits on $M^\sharp$ and a properly embedded surface $S^\sharp$ that blows down to $S$. From condition $(3)$, it follows that $S^\sharp$ is still taut in $M^\sharp$. Indeed, the condition implies that the blowup boundary components of $S^\sharp$ are oriented coherently on each blowup component of $\partial M^\sharp$. In particular, any homologous surface needs to have at least that number of boundary components along $\partial M^\sharp$, and so tautness of $S$ implies tautness of $S^\sharp$.

Moreover, as in \Cref{th:mosher_with_boundary}, $S^\sharp$ has nonnegative intersection number with each orbit of $\phi^\sharp$. Hence we can apply \Cref{th:stst} to conclude that $S^\sharp$ is almost transverse to $\phi^\sharp$ after an isotopy. By blowing back down (so that each component of $\partial S^\sharp$ on a blowdown torus is blown down to a point), this implies that $S$ is almost transverse to $\phi$, as required. 
\end{proof}

\subsection{Realizing Birkhoff surfaces}

Our technique of blowing up to obtain flows on manifolds with boundary has applications to
Birkhoff surfaces for transitive pseudo-Anosov flows. Recall from the introduction that a \define{Birkhoff surface}
 of $\phi$ is an immersed surface in $M$ whose boundary
components cover closed orbits of $\phi$ and
whose interior is embedded transversely to $\phi$. 
We call the closed orbits of $\phi$ that are covered by the boundary components of a Birkhoff surface its \define{boundary orbits}. More generally, a Birkhoff surface of a dynamic blowup of $\phi$ will be called an \define{almost Birkhoff surface} of $\phi$.
Finally, a \define{Birkhoff section} is a Birkhoff surface that meets every flow segment of some definite (uniform) length. 

Let $\kappa$ be a collection of closed orbits, possibly containing singular orbits. Let $U_\kappa$ be a standard neighborhood of $\kappa$. Then using excision and Poincar\'e duality we make the identification
\[
H_2(M, \kappa) 
= H^1(M \ssm \kappa). 
\]
Meanwhile, if $\pi \colon M^\sharp \to M$ is the Fried blowup of $\phi$ to $\phi^\sharp$ at $\kappa$, then $\pi$ induces a homeomorphism from $M^\sharp \ssm U^\sharp \to M \ssm U_\kappa$, where $U^\sharp = \pi^{-1}(U_\kappa)$ is a standard neighborhood of the components of $\partial M^\sharp$ that blow down to orbits in $\kappa$.  

The next theorem generalizes Fried's result on Birkhoff sections \cite[Theorem N]{fried1982geometry} and seems new even for geodesic flow on hyperbolic surfaces.

\begin{theorem}[Representing relative classes with Birkhoff surfaces]
\label{th:Birk}
Let $\phi$ be a transitive pseudo-Anosov flow and let $\kappa$ be any collection of closed orbits. Then a class $\eta \in H_2(M, \kappa) = H^1(M \ssm \kappa)$ is represented by an almost Birkhoff surface in $M$ if and only if it is nonnegative on closed orbits of $M \ssm \kappa$.

Moreover, if each singular orbit of $\phi$ is either in $\kappa$ or positive under $\eta$, then $\eta$ is represented by a (honest) Birkhoff surface.
\end{theorem}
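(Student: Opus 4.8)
The plan is to reduce the statement to the transverse surface theorem with boundary (\Cref{th:mosher_with_boundary}) by performing a Fried blowup along $\kappa$, as in the proof of \Cref{thm:ATtocarried}. First I would Fried blow up $\phi$ at every orbit of $\kappa$, choosing boundary slopes (as in \Cref{remark:FGsurg}) that kill perfect fits, obtaining a transitive pseudo-Anosov flow $\phi^\sharp$ with no perfect fits on a manifold $M^\sharp$ with toral boundary, a blowdown $\pi\colon M^\sharp\to M$ collapsing the new boundary tori onto the orbits of $\kappa$, and the associated (strict) veering triangulation $\tau^\sharp$ of $M^\sharp$. Since $\pi$ restricts to a homeomorphism $M^\sharp\ssm U^\sharp\to M\ssm U_\kappa$ and $M^\sharp\ssm U^\sharp\hookrightarrow M^\sharp$ is a homotopy equivalence (collapse the boundary collar), the chain
\[
H_2(M,\kappa)=H^1(M\ssm\kappa)=H^1(M^\sharp\ssm U^\sharp)=H^1(M^\sharp)
\]
carries $\eta$ to a class $\eta^\sharp\in H^1(M^\sharp)$.

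I would then record the dictionary between the two sides. If $S^\sharp$ is a properly embedded surface in $M^\sharp$ transverse to a dynamic blowup of $\phi^\sharp$, its boundary on each new torus is automatically transverse to the restricted flow, so $\pi(S^\sharp)$ is an immersed surface $\Sigma$ in $M$ whose interior is embedded and transverse to the corresponding dynamic blowup of $\phi$ and whose boundary covers orbits of $\kappa$; that is, $\Sigma$ is an almost Birkhoff surface of $\phi$ (honest if no blowup of $\phi^\sharp$ was used) representing $\eta$. Necessity is then easy: if $\Sigma$ is an almost Birkhoff surface representing $\eta$, transverse to a dynamic blowup $\phi'$ of $\phi$, then every closed orbit $o$ of $\phi$ disjoint from $\kappa$ either persists as a closed orbit of $\phi'$ or (when $o$ is a blown-up singular orbit) is freely homotopic to a boundary circle of the resulting annular complex, itself a closed orbit of $\phi'$; either way $\Sigma$ meets it only positively, so $\langle\eta,[o]\rangle\ge0$.

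For sufficiency, assume $\eta$ is nonnegative on the closed orbits of $M\ssm\kappa$. These are exactly the interior closed orbits of $\phi^\sharp$, so $\eta^\sharp$ is nonnegative on all of them. The main obstacle is that \Cref{th:mosher_with_boundary}, applied to $\phi^\sharp$, needs nonnegativity on \emph{every} closed orbit of $\phi^\sharp$ --- in particular on the prong curves of $\partial M^\sharp$ (the new tori and any original boundary of $M$) --- while $\eta$ places no a priori constraint on $\kappa$. I would clear this by showing each boundary prong curve $c$ lies in the closed cone of $H_1(M^\sharp)$ generated by the interior closed orbits: by transitivity and density of closed orbits one produces interior closed orbits $\gamma_n$ shadowing the relevant orbit $k$ of $\kappa$ for longer and longer, winding $d_n\to\infty$ times around $\partial N(k)$ before closing up along a uniformly bounded return path, whence $[\gamma_n]/d_n\to[c]$ in $H_1(M^\sharp)$ (alternatively invoke \cite{LMT21} to identify $\cone_1(\phi^\sharp)$ with the cone on directed cycles of the dual graph of $\tau^\sharp$ and note that the boundary prong curves add no extreme rays). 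Granting this, $\eta^\sharp\ge0$ on all of $\cone_1(\phi^\sharp)$, so \Cref{th:mosher_with_boundary} gives a properly embedded $S^\sharp$ Poincar\'e dual to $\eta^\sharp$ and almost transverse to $\phi^\sharp$; pushing it down through $\pi$ yields an almost Birkhoff surface of $\phi$ representing $\eta$ with boundary on $\kappa$.

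For the ``moreover'' statement I would upgrade $S^\sharp$ to an honestly transverse surface. Under the extra hypothesis every interior singular orbit of $\phi^\sharp$ is \emph{positive} under $\eta^\sharp$ (those singular orbits of $\phi$ that lie in $\kappa$ have become boundary tori of $M^\sharp$). Building $S^\sharp$ as in the proof of \Cref{th:mosher_with_boundary} --- carried by $\tau^\sharp$ on $M^\sharp\ssm U^\sharp$ and extended across the hollow tubes by product annuli --- introduces no ladderpole (``complementary'') annuli at any hollow tube; and after making $S^\sharp$ efficiently relatively carried (\Cref{lem:noladders}), positivity of $\eta^\sharp$ on each interior singular orbit forces, via \Cref{prop:efficientblowup}, that $S^\sharp$ meets each solid tube only in positively oriented meridional disks, hence in no ladderpole annuli there. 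Thus $S^\sharp$ is relatively carried by $\tau^\sharp$ without complementary annuli, so \Cref{E} makes it honestly transverse to $\phi^\sharp$, and $\pi(S^\sharp)$ is an honest Birkhoff surface of $\phi$ representing $\eta$. The delicate step throughout is the cone statement for the boundary prong curves; the remainder is an assembly of \Cref{th:mosher_with_boundary}, \Cref{E}, \Cref{prop:efficientblowup}, \Cref{lem:noladders}, and the Fried blowup formalism of \Cref{remark:FGsurg}.
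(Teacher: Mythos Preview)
Your overall approach matches the paper's: Fried blowup along $\kappa$, show $\eta^\sharp$ is nonnegative on all closed orbits of $\phi^\sharp$ (including the boundary prong curves, via approximation by interior periodic orbits---your shadowing argument is exactly what the paper does, and your alternative via \cite{LMT21} is also valid), apply \Cref{th:mosher_with_boundary}, and push down.

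There are, however, two gaps. First, your claim that one can ``choose boundary slopes that kill perfect fits'' misreads \Cref{remark:FGsurg}: a Fried blowup involves no choice of slope, and $\phi^\sharp$ is without perfect fits only when $\kappa$ itself already kills the perfect fits of $\phi$ (\Cref{rmk:blow_to_no}). This confusion is harmless for the main statement, since \Cref{th:mosher_with_boundary} needs no such hypothesis, but it undermines your ``moreover'' argument, which invokes strictness of $\tau^\sharp$. The paper's argument is more direct: once every interior singular orbit of $\phi^\sharp$ is $\eta^\sharp$-positive, the surface produced in \Cref{th:mosher_with_boundary} can be taken honestly transverse to $\phi^\sharp$, without needing $\tau^\sharp$ to be strict. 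Second, the blowdown step requires the care the paper gives it rather than happening ``automatically'': one must position each boundary component of $\Sigma^\sharp$ on a $\kappa$-torus $P$ so that it is either a fiber circle of $\pi|_P\colon P\to k$ (blowing down to a point of transversality with $\phi$) or transverse to that circle fibration (so it blows down to a cover of $k$). The paper points this out explicitly, referring to Fried's local model.
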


Note that the `moreover' statement is automatic when $\phi$ is Anosov.

\begin{proof}
Note that when $\kappa =\emptyset$, the result reduces to \Cref{th:mosher_with_boundary}. We show that the general case also reduces to what we have already shown as follows: 

As above, let $\pi \colon M^\sharp \to M$ with $\phi^\sharp$ be the Fried blowup of $\phi$ with respect to $\kappa$. 
The class $\eta$ pulls back to $\eta^\sharp = \pi^*\eta \in H^1(M^\sharp)$, and we claim
that $\eta^\sharp$ is nonnegative on the closed orbits of $\phi^\sharp$. This is obvious
for the closed orbits that are not contained in blown boundary components, i.e. do not
blow down to components of $\kappa$. If $\gamma$ is a closed orbit in a blown boundary component, then we can approximate arbitrarily high multiples $\gamma^k$ with a closed orbit $c_k$ that is \emph{not} contained a boundary component. More precisely, we can choose a periodic orbit $d$ in $\intr (M)$ such that $[c_k] = [\gamma^k] \cdot [d]$, where $d$ is a fixed orbit. This can be accomplished using a Markov partition for the flow, or the flow graph of any associated veering triangulation (e.g. \cite{LMT21}). Since $\eta^\sharp(c_k) = \eta(\pi(c_k)) \ge 0$, we must have $\eta^\sharp(\gamma) \ge 0$ as claimed.

Hence, we may apply \Cref{th:mosher_with_boundary} to produce a surface $\Sigma^\sharp$ representing $\eta^\sharp$ that is almost transverse to $\phi^\sharp$. Of course, if $\phi^\sharp$ has no interior singularities that are $\eta^\sharp$--null, then $\Sigma^\sharp$ can be chosen to be honestly transverse to $\phi^\sharp$. It only remains to position $\Sigma^\sharp$ so that it blows down to a Birkhoff surface $\Sigma$ in $M$. That is, each blowup boundary component $P$ of $M^\sharp$ is foliated by the fiber circles of $\pi_P \colon P \to k$ for $k \in \kappa$, and $\Sigma^\sharp$ needs to be arranged so that each of its boundary components is either one of these circles (in which case that component blows down to a point of transversality with $\phi$) or is transverse to this foliation (in which case that component covers the closed orbit of $\phi$). Given our local model for the blowup, it is not hard to see that this is possible. See \cite[Theorem M]{fried1982geometry} where this is done very carefully in the case of Birkhoff sections using essentially the same local model.
\end{proof}

\begin{corollary}
Let $\phi$ be a transitive pseudo-Anosov flow on a compact manifold $M$, and $\kappa$ a
finite collection of closed orbits.  Then there is an almost Birkhoff surface whose 
boundary orbits are contained in $\kappa$
 if and only if the closed orbits of $\phi$ are contained in a closed half-space of $H_1(M \ssm \kappa ; \mathbb{R})$. 
\end{corollary}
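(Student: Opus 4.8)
The plan is to derive the Corollary formally from \Cref{th:Birk} once we observe that the condition ``the closed orbits of $\phi$ lie in a closed half-space of $H_1(M\ssm\kappa;\mathbb{R})$'' is simply a restatement of the dual-cone condition appearing there. Identifying $H^1(M\ssm\kappa;\mathbb{R})$ with the dual of $H_1(M\ssm\kappa;\mathbb{R})$, a closed half-space is precisely a set $\{v : \langle v,\eta\rangle \ge 0\}$ for a nonzero $\eta \in H^1(M\ssm\kappa;\mathbb{R})$. So the right-hand condition of the Corollary says exactly that the cone $\cone_1^\vee$ of classes nonnegative on the closed orbits of $M\ssm\kappa$ is nonzero, and (granting integrality, discussed below) that it contains a nonzero integral class. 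Given such an integral $\eta$, the ``if'' direction of \Cref{th:Birk} produces an almost Birkhoff surface $\Sigma$ with $\partial\Sigma\subset\kappa$ and $[\Sigma]=\eta$, which is nonempty since $\eta\ne 0$; this is the ``if'' direction of the Corollary. Conversely, a nonempty almost Birkhoff surface $\Sigma$ with $\partial\Sigma\subset\kappa$ represents a class $\eta=[\Sigma]\in H^1(M\ssm\kappa)$ that is nonnegative on the closed orbits of $M\ssm\kappa$ by the ``only if'' direction of \Cref{th:Birk}.

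The one point not handed to us by \Cref{th:Birk} is that this $\eta$ is \emph{nonzero}, so that the closed orbits genuinely lie in a half-space rather than fill $H_1(M\ssm\kappa;\mathbb{R})$. Here I would use transitivity: periodic orbits of $\phi$ are dense (via a Markov partition, or the flow graph of an associated veering triangulation, as in the proof of \Cref{th:Birk}), so one can choose a periodic orbit $c$ of $\phi$ that lies in $M\ssm\kappa$, is disjoint from every orbit and boundary component blown up in the dynamic blowup $\phi^\sharp$ transverse to $\Sigma$, and passes through the embedded interior of $\Sigma$. There $\phi^\sharp$ agrees with $\phi$ and $\Sigma$ is transverse to it, so every intersection of $c$ with $\Sigma$ is positive; hence $\langle \eta,[c]\rangle = \#(c\cap\Sigma) > 0$ and $\eta\ne 0$.

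The substantive point — and the step I expect to require the most care — is the rationality: passing from a real functional nonnegative on the orbit classes to an \emph{integral} class, which is what \Cref{th:Birk} actually consumes. This reduces to showing $\cone_1^\vee$ is a rational polyhedral cone, equivalently that the cone in $H_1(M\ssm\kappa;\mathbb{R})$ generated by the closed orbits of $M\ssm\kappa$ is finitely generated by integral classes. I would obtain this with the Fried-blowup device used throughout the paper: blow $\phi$ up along $\kappa$, and if necessary along a finite collection of regular orbits killing perfect fits, to a pseudo-Anosov flow without perfect fits carrying an associated veering triangulation (\Cref{th:AG+LMT}, \Cref{rmk:blow_to_no}); by \cite[Corollary 6.15]{LMT21} its closed orbits generate the same cone as the finitely many simple directed cycles of the dual graph $\Gamma$, and by \Cref{th:flows_rep_faces} the corresponding $\cone_1^\vee(\Gamma)$ is the cone over a face of a Thurston norm ball, hence rational polyhedral. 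Transporting this across the homotopy equivalence between $M\ssm\kappa$ and the blown-up manifold — and checking, via the approximation of $\gamma^{k}\cdot d$ by nearby orbits as in the proof of \Cref{th:Birk}, that reintroducing orbits near the auxiliary blowup locus does not enlarge the cone — shows $\cone_1^\vee$ is rational polyhedral, so it is nonzero exactly when it contains a nonzero integral class. With this in hand the three conditions above are equivalent and the Corollary follows.
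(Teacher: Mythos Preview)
Your proof is correct and matches the paper's intended (but unwritten) argument; the paper states the Corollary immediately after \Cref{th:Birk} with no proof, treating it as a direct consequence. You have correctly identified and filled in the two points that are not literally contained in \Cref{th:Birk}: that $[\Sigma]\ne 0$ for a nonempty almost Birkhoff surface (handled via transitivity), and that the dual cone $\cone_1^\vee$ in $H^1(M\ssm\kappa)$ contains a nonzero \emph{integral} class whenever it is nonzero. The second point genuinely requires the rationality of the orbit cone---integral generators alone do not force this---and your route via the Fried blowup and the identity $\cone_1(\phi^\sharp)=\cone_1(\Gamma)$ (from \cite{LMT21}, as in \Cref{sec:flowsrepresentfaces}) is exactly the machinery the paper has set up for this purpose.

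One small simplification: you do not need to blow up along the auxiliary orbits $\kappa'$ and then argue about ``reintroducing'' them. It suffices to Fried blow up along $\kappa$ alone to obtain $\phi^\sharp$ on $M^\sharp$, identify $H_1(M\ssm\kappa)\cong H_1(M^\sharp)$, observe (via the $[\gamma^k]\cdot[d]$ approximation already used in the proof of \Cref{th:Birk}) that the cone generated by closed orbits of $\phi|_{M\ssm\kappa}$ equals $\cone_1(\phi^\sharp)$, and then invoke $\cone_1(\phi^\sharp)=\cone_1(\Gamma)$ for \emph{any} associated veering triangulation (this equality holds in $H_1(M^\sharp)$ regardless of whether $\phi^\sharp$ has perfect fits). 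The auxiliary $\kappa'$ lives only inside the construction of $\tau$ and never needs to appear at the level of $H_1(M^\sharp)$.
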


\subsection{An example}\label{sec:example}
The purpose of this subsection is to show by example 
that \Cref{th:stst} (\Cref{A}) does not hold without the `no perfect fits' assumption. That is, we will construct a manifold $M$, flow $\phi$, and a taut surface $Z$ that pairs nonnegatively with all closed orbits but cannot be made almost transverse to $\phi$ by isotopy. This will also show condition $(3)$ in \Cref{th:general_stst} (\Cref{B}) cannot be removed. 
Our example is based on the final example from \cite[Section 4]{mosher1992dynamical}, and assumes the reader is familiar with the basic language of sutured manifolds (see \cite{Gab83}).

\begin{figure}
    \centering
    \includegraphics[height=2.2in]{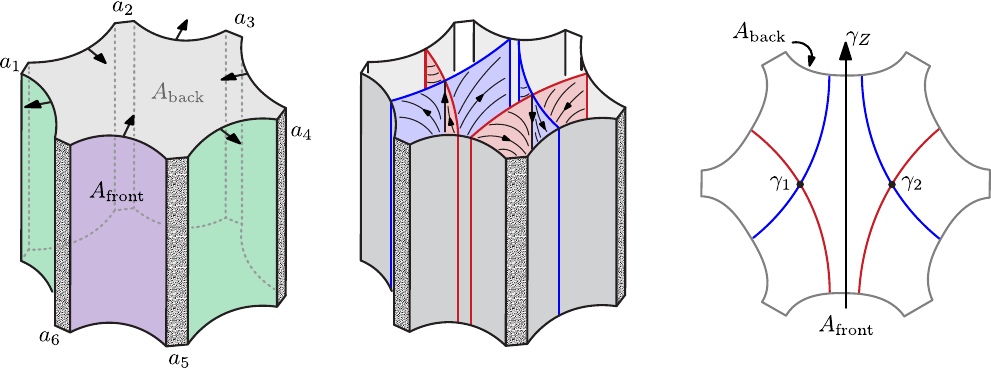}
    \caption{Left: a picture of the sutured solid torus $H$. Top and bottom are identified. Center: the flow $\phi$ in \Cref{lem:mosher} induces a semiflow in $H$ with precisely two periodic orbits, which are anti-homotopic. Right: projecting the orbit $\gamma_Z$ to a meridional disk of $H$.}
    \label{fig:roundhandle}
\end{figure}

\medskip

Let $H$ be a sutured solid torus with six longitudinal sutures labeled $a_1,\dots, a_6$ in cyclic order as shown in the left image in \Cref{fig:roundhandle}. Let $A_\back$ and $A_\front$ be the annuli in $\del H$ between $a_2$ and $a_3$ and between $a_5$ and $a_6$, respectively.
Let $S$ be a connected compact orientable surface with two boundary components and $\chi(S)<0$, and let $T$ be another such surface but with four boundary components. The products $(S\times I, \del S\times I)$ and $(T\times I,\del T\times I)$ are product sutured manifolds. Let $N$ be the sutured manifold obtained by gluing $\del S\times I$ to $a_1\cup a_4$ by a homeomorphism, and gluing $\del T\times I$ to $a_2\cup a_3\cup a_5\cup a_6$ by a homeomorphism. This book of $I$-bundles is shown schematically in \Cref{fig:mosherschematic}.

\begin{figure}
    \centering
    \includegraphics[]{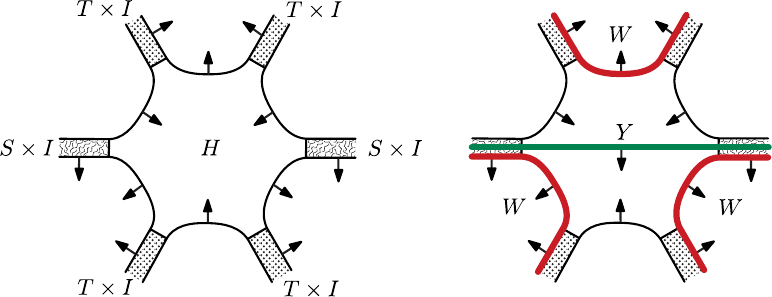}
    \caption{Left: a schematic picture of the gluing of $S\times I$, $T\times I$, and $H$ that yields $N$. Right: the surfaces $W$ (red) and $Y$ (green).}
    \label{fig:mosherschematic}
\end{figure}

When talking about $N$, we continue to refer to $S\times I$, $T\times I$, $a_1,\dots, a_6$ et cetera, and by these we mean the images of those objects under the gluings used to build $N$.
Let $Y$ be the surface in $N$ which is obtained by taking the union of $S\times \{\frac{1}{2}\}$ with an annulus in $H$ connecting $a_1$ and $a_4$. The orientation on $Y$ is chosen so that for each $s\in S$, the oriented segment $s\times I$ intersects $Y$ positively. Let $W=\del_+ N$, oriented compatibly with the coorientation of $\del_+N$. This is shown on the right side of  \Cref{fig:mosherschematic}.

Note that $\del_-N$ is homeomorphic to $\del_+N$. If we choose a homeomorphism between the two, we can glue to obtain a closed manifold. In such a situation we will continue to use the notations $W$ and $Y$ to denote the images of $W$ and $Y$ in the closed manifold.

We will take the following facts as given: 
\begin{lemma}[Mosher, Section 4 \cite{mosher1992dynamical}]\label{lem:mosher}
Let $N$ be as above.
\begin{enumerate}[label=(\alph*)]
\item There exists a hyperbolic 3-manifold $M$ obtained by gluing $\del_-N$ to $\del_+N$, and a pseudo-Anosov flow $\phi$ on $M$ positively transverse to $W$.
\item The surfaces $W,Y\subset M$ are taut and lie over the face of the Thurston norm ball determined by the Euler class of $\phi$.
\item Further, the pullback semiflow $\phi_N$ on $N$ has the following properties:
\begin{enumerate}[label=(\roman*)]
\item $\phi_N$ is compatible with the sutured structure of $N$ in that it points out of $N$ along $\del_+N$ and into $N$ along $\del_-N$
\item ``The semiflow is a product away from $H$": the flowlines of $\phi_N$ lying in $S\times I$ and $T\times I$ are all of the form $x\times I$ for some $x\in S\cup T$.
\item There are exactly 2 closed orbits of $\phi_N$ lying in $H$, $\gamma_1$ and $\gamma_2$. They are both freely homotopic to the core of $H$ and also have the property that $\gamma_1$ is freely homotopic to $\gamma_2$ with the orientation reversed.
\item Every other flowline  of $\phi_N$ in $H$ is either a properly embedded interval or has one end on $\del_\pm N$ and accumulates on $\gamma_1$ or $\gamma_2$ in the forward or backward direction.
\item There exists a periodic orbit $\gamma_Z$ of $\phi$ which pulls back to a single oriented flowline of $\phi_N$ in $H$ from $A_\front$ to $A_\back$.

\end{enumerate}
\end{enumerate}
\end{lemma}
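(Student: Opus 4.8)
The plan is to reconstruct Mosher's construction from Section~4 of \cite{mosher1992dynamical}; once the correct semiflow on $N$ and the correct identification of $\partial_- N$ with $\partial_+ N$ are in hand, the statement becomes a sequence of verifications. First I would build the semiflow $\phi_N$ on $N$: on $S\times I$ and $T\times I$ it is the product semiflow $x\times I$, and on the solid torus $H\cong D^2\times S^1$ it is a \emph{round-handle semiflow} whose chain-recurrent set is exactly two closed orbits $\gamma_1,\gamma_2$, obtained (as in the center picture of \Cref{fig:roundhandle}) by splitting the core of $H$ into a pair of periodic orbits carrying \emph{opposite} orientations, so that $\gamma_1$ is freely homotopic to $-\gamma_2$. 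The round-handle model must be arranged to point out of the three $R_+$-annuli and into the three $R_-$-annuli of $\partial H$ and to match the product semiflows along the sutures; it is exactly this matching requirement that forces the prescribed cyclic order of $a_1,\dots,a_6$ and the gluings of $\partial S\times I$ to $a_1\cup a_4$ and of $\partial T\times I$ to $a_2\cup a_3\cup a_5\cup a_6$. Properties (c)(i)--(iii) then follow from the local models, (c)(iv) because every non-closed flowline in $H$ is either a compact interval or spirals forward or backward onto $\gamma_1$ or $\gamma_2$, and (c)(v) by choosing the round-handle dynamics so that some flowline crosses $H$ directly from $A_\front$ to $A_\back$ while avoiding $\gamma_1$ and $\gamma_2$.

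Next I would glue $\partial_- N$ to $\partial_+ N$ by a homeomorphism $h$ chosen so that $\phi_N$ --- which exits $N$ along $\partial_+ N$ and enters along $\partial_- N$ --- closes up to a genuine flow $\phi$ on the resulting closed manifold $M$, with $W = \partial_+ N = \partial_- N$ positively transverse to $\phi$ and with the crossing arc of (c)(v) closing up to a periodic orbit $\gamma_Z$. The heart of the argument is to choose $h$ so that $\phi$ is actually \emph{pseudo-Anosov}: here one invokes Mosher's machinery for producing pseudo-Anosov flows from appropriately Markov-structured semiflows on sutured manifolds (cf.\ \cite{mosher1992dynamical}, in the spirit of the sutured hierarchy theory of \cite{Gab83}), extracting the invariant pair of transverse singular foliations by suspending the stable/unstable laminations of the first-return dynamics of $\phi$ to $W$ (which are defined away from the flowlines accumulating on $\gamma_1,\gamma_2$). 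I expect this to be the main obstacle: it is the one step where one must control the \emph{global} return dynamics rather than the easy local models in $S\times I$, $T\times I$, and $H$. Hyperbolicity of $M$ is secured alongside it: $M$ is closed, irreducible (e.g.\ by \Cref{prop:top} applied to an associated veering triangulation) and Haken (with $W$ an incompressible surface, being taut), and $h$ is taken so that $M$ contains no essential torus and is not Seifert-fibered, whence $M$ is hyperbolic by Thurston's hyperbolization theorem.

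Finally, for (b): since $\phi$ is positively transverse to $W$, the surface $W$ is taut, and the index count for the singular foliation $W\cap W^s$ (as in the proof of \Cref{lem:taut_from_flow}) shows that the pairing of $[W]$ with the Euler class of $\phi$ equals $\chi(W)=-x([W])$, so $[W]$ lies over the corresponding face $F$ of the Thurston norm ball. It remains to check that $Y$ is also taut --- it is the ``horizontal'' surface of the sutured manifold $N$, hence visibly norm-minimizing in its homology class --- and that $[Y]$ lies over $F$ as well, which comes down to verifying that $[Y]$ pairs nonnegatively with every closed orbit of $\phi$ (positively with $\gamma_1$, $\gamma_2$, and $\gamma_Z$), as one reads off from the annulus in $H$ joining $a_1$ to $a_4$ together with the dynamics there. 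These verifications, and the bookkeeping in (a), are precisely what is carried out in \cite[Section~4]{mosher1992dynamical}; for the application in \Cref{sec:example} only the structural features (c) of $\phi_N$ and the tautness/face statements in (b) will actually be needed.
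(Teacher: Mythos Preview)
First, note that the paper does not actually prove this lemma: it is introduced with ``We will take the following facts as given'' and followed by a remark that the underlying Gabai--Mosher construction of pseudo-Anosov flows transverse to sutured hierarchies is unpublished (with foundations in \cite{Mos96} and an exposition in progress). So there is no ``paper's own proof'' to compare against; your proposal is a reconstruction sketch where the paper simply cites.

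That said, your outline of (a) and (c) is in the right spirit, and you correctly identify the pseudo-Anosov step as the real obstacle. However, your argument for part (b) concerning $Y$ contains a genuine error. You write that $[Y]$ lying over the face $F$ ``comes down to verifying that $[Y]$ pairs nonnegatively with every closed orbit of $\phi$ (positively with $\gamma_1$, $\gamma_2$, and $\gamma_Z$)''. Both assertions are false. The orbits $\gamma_1,\gamma_2$ are freely homotopic to the core of $H$, which can be homotoped into $Y$, so they pair with $Y$ trivially, not positively. More seriously, $\gamma_Z$ runs from $A_{\front}$ to $A_{\back}$ and crosses the annulus $Y\cap H$ (joining $a_1$ to $a_4$) once \emph{negatively}: this is precisely the fact invoked immediately afterward in the proof of \Cref{thm:example}, where it is noted that $Y$ has pairing $-1$ with $\gamma_Z$ and hence is not almost transverse to $\phi$. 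The flow $\phi$ here has perfect fits (witnessed by the anti-homotopic pair $\gamma_1,\gamma_2$), so \Cref{th:flows_rep_faces} does not apply and ``lies over the face determined by $e_\phi$'' is \emph{not} equivalent to nonnegative pairing with all closed orbits; it means $-e_\phi([Y]) = x([Y])$, which is strictly weaker. Your route to (b) for $Y$ therefore does not go through, and indeed the whole point of the example is that $Y$ satisfies the Euler-class condition while failing the orbit-positivity condition.
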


The orbits $\gamma_1$ and $\gamma_2$, as well as parts of their stable and unstable leaves, are shown in the center image of \Cref{fig:roundhandle}; the orbit $\gamma_Z$ is shown in the righthand image.

We remark that the flow $\phi$ in the above lemma is constructed using the Gabai--Mosher construction of pseudo-Anosov flows transverse to sutured hierarchies in closed oriented atoroidal 3-manifolds, which is unpublished. However, Mosher laid much of the foundation in \cite{Mos96}, and Landry and Tsang are expositing the rest in detail (\cite{LandryTsangStep1} and work in progress).

Building on \Cref{lem:mosher}, we have:

\begin{theorem}\label{thm:example}
There exists a closed 3-manifold $M$, a transitive pseudo-Anosov flow $\phi$ on $M$, and a taut surface $Z\subset M$ such that $[Z]$ pairs nonnegatively with each closed orbit of $\phi$ but is not almost transverse to $\phi$ up to isotopy.
\end{theorem}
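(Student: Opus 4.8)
The plan is to build $Z$ from the surface $Y$ of \Cref{lem:mosher} by a surgery supported inside the round handle $H$, keeping $[Z]$ nonnegative on every closed orbit, and then to obstruct almost transverse position using \Cref{thm:ATtocarried} together with the anti-homotopic orbits $\gamma_1,\gamma_2$ and the orbit $\gamma_Z$. First I would set $Z$ equal to $Y$ outside $H$ --- that is, $S\times\{\tfrac12\}$ carried through the product pieces $S\times I$ and $T\times I$ --- and inside $H$ replace the annulus $Y\cap H$ joining $a_1$ and $a_4$ by a properly embedded surface with the same boundary and the same (vanishing) algebraic intersection with the core of $H$, but arranged, following the final example of \cite[Section 4]{mosher1992dynamical} and the right-hand picture of \Cref{fig:roundhandle}, to stay disjoint from $\gamma_1$ and $\gamma_2$ while crossing the flowline arc $\gamma_Z\cap H$ transversely in a pair of oppositely signed points that is ``pinned'' by $\gamma_1$ and $\gamma_2$. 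Taking this surgery to preserve Euler characteristic gives $\chi(Z)=\chi(Y)=\chi(S)<0$ and keeps $[Z]$ in the cone over the face of the Thurston norm ball appearing in \Cref{lem:mosher}(b); transitivity of $\phi$ then rules out nullhomologous subcollections of components of $Z$, so \Cref{lem:dualnorm} shows $Z$ is taut, and membership in that cone makes $[Z]$ pair nonnegatively with every closed orbit of $\phi$. In particular $Z$ satisfies hypotheses (1) and (2) of \Cref{th:general_stst}.

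Next I would fix a finite collection $\kappa$ of closed regular orbits killing the perfect fits of $\phi$ (which exists by \Cref{prop:tsang}), enlarged so that $\gamma_Z\in\kappa$. Suppose toward a contradiction that $Z$ is almost transverse to $\phi$ up to isotopy. Then \Cref{thm:ATtocarried} lets us isotope $Z$ to be relatively carried by the veering triangulation associated to $(\phi,\kappa)$, and inside the solid tube around $\gamma_Z$ such a surface meets the tube only in meridional disks (all crossing $\gamma_Z$ in the flow direction, hence coherently) and ladderpole annuli (disjoint from $\gamma_Z$); hence after this isotopy the algebraic and geometric intersection numbers of $Z$ with $\gamma_Z$ agree. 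Equivalently, condition (3) of \Cref{th:general_stst} holds for $Z$.

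The main obstacle --- and the technical heart of the argument --- is to show that this cannot happen: that no surface isotopic to $Z$ has matching algebraic and geometric intersection with $\gamma_Z$. Concretely one must show that the pair of opposite-sign intersections of $Z$ with $\gamma_Z$ coming from the surgery inside $H$ cannot be removed by an ambient isotopy, because any cancelling bigon would have to be pushed across one of $\gamma_1$ or $\gamma_2$; since $\gamma_1$ is freely homotopic to $\gamma_2$ with reversed orientation, an analysis of the semiflow $\phi_N$ on $H$ (\Cref{lem:mosher}(c)), combined with the product structure of $\phi_N$ on $S\times I$ and $T\times I$, rules this out. This un-cancellability statement --- a careful tracking of $\gamma_Z$, $\gamma_1$, $\gamma_2$, and $\partial(Z\cap H)$ inside $H$, exploiting the anti-homotopy --- is exactly the obstruction isolated in Mosher's original example, and is the step I expect to demand the most care. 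Once it is established, condition (3) of \Cref{th:general_stst} fails for every isotopic copy of $Z$, so $Z$ is not almost transverse to $\phi$; together with tautness and nonnegativity on closed orbits this is the desired example, with the failure pinned exactly on condition (3), equivalently on the presence of perfect fits.
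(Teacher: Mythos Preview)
There is a genuine gap. Your surgery keeps $Z$ homologous to $Y$: since $Z$ and $Y$ agree outside $H$ and have the same boundary inside the solid torus $H$, their difference is a closed $2$-cycle in $H$, hence nullhomologous. But $[Y]\cdot[\gamma_Z]=-1$ by \Cref{lem:mosher}(c)(v), so $[Z]\cdot[\gamma_Z]=-1$ as well. Thus $Z$ does \emph{not} pair nonnegatively with every closed orbit, and your claimed ``pair of oppositely signed'' intersections with $\gamma_Z$ is impossible: the algebraic count is forced to be $-1$, not $0$.

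The conceptual error is the assertion that ``membership in that cone makes $[Z]$ pair nonnegatively with every closed orbit of $\phi$.'' The cone in \Cref{lem:mosher}(b) is the cone over the Thurston norm face determined by the Euler class $e_\phi$; when $\phi$ has perfect fits (as it does here, witnessed by the anti-homotopic pair $\gamma_1,\gamma_2$) this cone can be strictly larger than $\cone_1^\vee(\phi)$. Indeed $[Y]$ lies over that face yet is negative on $\gamma_Z$---this is exactly Mosher's point, and it is why no surgery on $Y$ alone can produce the example. The paper instead takes $Z=W\cup Y$, a \emph{disconnected} surface: now $[W]\cdot[\gamma_Z]=+1$ offsets $[Y]\cdot[\gamma_Z]=-1$, and a short segment-by-segment count shows $[Z]$ is nonnegative on every closed orbit; but almost transversality of $Z$ would force almost transversality of its component $Y$, which is ruled out immediately by the negative pairing.
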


\begin{proof}
Let $M$ be as in \Cref{lem:mosher}; we will use the same notation as in that lemma. Note that since $W$ and $Y$ are taut and lie over the same face of the Thurston norm ball, the surface $Z=W\cup Y$ is taut. Note that $Z$ is not almost transverse to $\phi$ up to isotopy because this would imply the same for $Y$, which is impossible: by \Cref{lem:mosher}(c)(v), $Y$ has pairing $-1$ with the orbit $\gamma_Z$.

We now claim that for every closed orbit $\gamma$ of $\phi$, $[Z]$ pairs nonnegatively with $[\gamma]$; this will complete the proof. 

If $\gamma$ is equal to either $\gamma_1$ or $\gamma_2$ then $\gamma$ has null pairing with $Y$, since the $\gamma_i$'s are homotopic to the core of $H$, which can be homotoped into $Y$. Hence we may assume that $\gamma$ is any other closed orbit of $\phi$. In this case, $\gamma$ pulls back to finitely many oriented segments $\gamma_1,\dots, \gamma_n$ in $N$ beginning on $\del_-N$ and ending on $\del_+N$. Fix $1\le i\le n$ and consider $\gamma_i$. By \Cref{lem:mosher}(c)(ii) $\gamma_i$ either lies in $(S\cup T)\times I$ or in $H$. 

Suppose $\gamma_i\subset (S\cup T)\times I$. Then $\gamma_i$ contributes only positive intersections to the algebraic intersection number of $Z$ and $\gamma$ by \Cref{lem:mosher}(c)(ii).

Next, suppose $\gamma_i\subset H$. The absolute value of the algebraic intersection of $\gamma_i$ and $Y$ is at most 1 since $Y\cap H$ separates $H$, while the algebraic intersection of $\gamma_i$ with $W$ is equal to $1$. It follows that $\gamma_i$ contributes nonnegatively to the algebraic intersection of $Z$ and $\gamma$.
Summing over $i=1,\dots, n$, we see that $[Z]$ and $[\gamma]$ pair nonnegatively as claimed.
\end{proof}

One can modify the example above to give a manifold $M$, flow $\phi$, and surface $Z$ as in the statement of the theorem with $Z$ connected. In the construction, replace the surface $S\cup T$ with a connected surface $R$ with 6 boundary components as indicated on the left of \Cref{fig:connectedexample}. The same methods that Mosher uses to justify \Cref{lem:mosher} can be used to glue the inward and outward pointing boundaries of $N=H\cup (R\times I)$ to produce a hyperbolic manifold together with a pseudo-Anosov flow $\phi$ satisfying all the properties in \Cref{lem:mosher} (c).

Now cap off $R\times\{\frac{1}{2}\}$ inside $H$ as shown on the right of \Cref{fig:connectedexample} to form a connected surface $Z$; Mosher's arguments apply equally well to show that $Z$ is taut. The orbit $\gamma_Z$ furnished by \Cref{lem:mosher}(c) passes through the annulus in $Z$ running between $a_1$ and $a_4$ negatively and exactly once, up to an isotopy of this annulus supported in $H$.

It remains to show that the negative intersection point of $\gamma_Z$ with $Z$ cannot be removed by isotopy. To see this, let $\Sigma$ be the image in $M$ of the outward pointing boundary of $N$. 
Let $\wt \gamma_Z$ be a lift of $\gamma_Z$ to the universal cover $\wt M$ of $M$. Then $\gamma_Z$ passes through a bi-infinite sequence $(\wt \Sigma_n)_{n\in \Z}$ of distinct lifts of $\Sigma$ to $\wt M$.

In the region of $\wt M$ between $\wt \Sigma_0$ and $\wt \Sigma_1$, $\wt \gamma_Z$ has a negative point of intersection with a lift $\wt Z$ of $Z$. 
Let $\wt N$ be the universal cover of $N$ obtained as the component of the preimage of $N \subset M$ in $\wt M$ containing $\wt \Sigma_0$ and $\wt \Sigma_1$ in its boundary. Since $\wt N$ is assembled by gluing together copies of the universal covers of $H$ and $R \times I$, we can see that $\wt Z$ separates $\wt \Sigma_0$ and $\wt \Sigma_1$ by observing that $Z$ separates the positive and negative boundaries of $N$ within each of $H$ and $R \times I$.
Moreover, the intersection of $\wt Z$ with any copy of the universal cover of $H$ or $R \times I$ is connected; together, this implies that $\wt Z$ meets $\wt \gamma_Z$ in a single negative point of intersection.
We conclude that the negative intersection between $Z$ and $\gamma_Z$ cannot be removed by an isotopy and the example is complete. Note that in this example we could have replaced $\gamma_Z$ by any periodic orbit whose pullback to $N$ contains a segment from $A_\front$ to $A_\back$; there are many of these, since the periodic orbits of $\phi$ are dense.

\begin{figure}
\begin{center}
\includegraphics[]{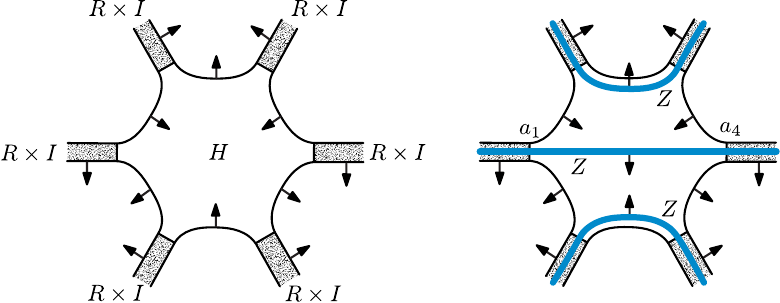}
\caption{Modifying the example of \Cref{thm:example} so that the surface $Z$ in the statement is connected.}
\label{fig:connectedexample}
\end{center}
\end{figure}

\section{Shadows of transverse surfaces}
\label{sec:generalshadows}

In \Cref{sec:flowequiv} we will show that almost transverse position is essentially unique. The key to  this is understanding the projections to the orbit space of lifts of compact connected surfaces transverse to an almost pseudo-Anosov flow. This is the goal of the present section. In the case where the ambient manifold has no boundary, this was essentially done by Fenley in \cite[Section 4]{Fen09}.

First we set some notations and terminology. 

\subsection{Notation and conventions}

Let $\orb^s$ and $\orb^u$ denote the stable and unstable foliations, respectively, of the orbit space $\orb$ of an almost pseudo-Anosov flow $\psi$. 
For $p\in \orb$, let $\orb^s(p)$ denote the leaf of $\orb^s$ through $p$, and similarly for $\orb^u(p)$. Recall that $\Theta \colon \wt M \to \orb$ is the projection to the flow space from the universal cover of $M$. If $p\in \orb$, we set $\gamma(p):=\Theta^{-1}(p)$ to be the orbit of $\wt \psi$ corresponding to $p$.

A \textbf{quadrant} of $p\in \orb$ is a component of $\orb\cut(\orb^s(p)\cup \orb^u(p))$. Points in $\del\orb$, as well as those connected to $\del \orb$ by
singular leaves of either  $\orb^s$ or $\orb^u$,
have infinitely many quadrants; all other points have $2n$ quadrants for some $n\ge 2$. Two quadrants are \textbf{adjacent} if they meet along a noncompact half-leaf.

A \textbf{stable} or \textbf{unstable slice leaf} is a properly embedded copy of $\R$
contained in a leaf of $\orb^s$ or $\orb^u$ respectively. A \textbf{stable} or
\textbf{unstable boundary slice ray} is a ray embedded in $\orb$ meeting $\del\orb$
exactly at its basepoint, obtained as the union of finitely many blown segments with a noncompact half-leaf of $\orb^s$ or $\orb^u$, respectively.

We observe that each component of $\del\orb$ is a slice leaf of both $\orb^s$ and
$\orb^u$. Each slice leaf which is not an entire component of $\del\orb$ separates $\orb$
into either $2$ or $3$ components. The $3$ component case arises when a slice leaf meets $\del\orb$ in finitely many, necessarily contiguous, blown segments.
Similarly, a boundary slice ray separates $\orb$ into $2$
components.

Now suppose $S$ is a properly embedded compact surface transverse to $\psi$. Lift $S$ to $\wt S\subset \wt M$, and let $\Omega=\Theta(\wt S)\subset \orb$. (Here $\Omega$ stands for \emph{ombre}, or shadow.)
Note that the restriction of $\Theta$ to $\wt S$ is a homeomorphism onto $\Omega$ since $\wt S$ separates $\wt M$ and orbits intersect $\wt S$ positively.
If $a$ is a blown annulus of $\psi$ and $S\cap a$ is nonempty, then it is either
$(1)$ a collection of nonseparating arcs or $(2)$ a collection of circles homotopic to the core of $a$.  If $\wt a$ is a lift of $a$ to $\wt M$ that intersects $\wt S$, we see that $\Theta(\wt a) \cap \Omega$ is  equal to the entire blown segment $\Theta(\wt a)$ in case $(1)$, and equal to $\Theta(\wt a)$ minus its endpoints in case $(2)$. If $a$ is not contained in the boundary of $M$ and its intersection with $S$ is a (possibly empty) collection of arcs, then $a$ can be collapsed to obtain a new almost pseudo-Anosov flow transverse to $A$ with fewer blown annuli.

Hence for simplicity we assume that if $S$ intersects a blown annulus in a collection of arcs,
then that annulus lives in $\del M$; we say $S$ has the \textbf{clean annulus property} with respect to $\psi$. Note that assuming $S$ has the clean annulus property is slightly weaker than $\psi$ being a minimal blowup with respect to $S$, because we are allowing the existence of blown annuli not contained in $\del M$ which do not meet $S$. The advantage of this relaxation is that it allows us to individually consider the components of a disconnected surface that is minimally transverse to $\psi$. Such a surface may not be componentwise minimally transverse; however, each component has the clean annulus property.

Without the clean annulus property, the entire discussion in this section goes through with minor modifications.

\subsection{Executive summary}

The main results of this section are fairly easily stated and we do so now. Their proofs appear in \Cref{sec:shadowarg}.

As before, let $\psi$ be an almost pseudo-Anosov flow in a compact 3-manifold $M$ which is transverse to a compact, connected, properly embedded surface $S$. Let $\wt S$ be a lift of $S$ to the universal cover $\wt M$ of $M$. Let $\Omega=\Theta(\wt S)\subset \orb$; this set is connected and disjoint from its frontier $\fr(\Omega)$ because it is open. 

\begin{proposition}[Shadows of transverse surfaces]
\label{prop:shadow_front}
The frontier of $\Omega= \Theta(\wt S)$ is a disjoint union of stable/unstable slice leaves and boundary slice rays, each isolated from the others and bounding $\Omega$ to one side.
\end{proposition}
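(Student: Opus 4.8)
The plan is to analyze a point $q$ in the frontier $\fr(\Omega)$ and show that a whole slice leaf (or boundary slice ray) through $q$ lies in $\fr(\Omega)$, is isolated, and bounds $\Omega$ to one side. First I would fix $q \in \fr(\Omega)$ and a sequence $q_n \in \Omega$ with $q_n \to q$, pulling back to points $x_n = (\Theta|_{\wt S})^{-1}(q_n) \in \wt S$. Since $\wt S$ is a lift of a \emph{compact} surface, one can use the deck group action to extract convergence: after applying a sequence of deck transformations $g_n$, the surfaces $g_n \wt S$ converge (in a suitable sense — say geometrically on compact sets, or via a combinatorial argument using the veering triangulation's $2$-skeleton to which $S$ is relatively carried after isotopy) to a limiting transverse surface, and correspondingly $g_n \Omega$ converge. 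The key structural input is \Cref{th:flow_space}: $\orb$ is a (possibly-bounded) simply connected bifoliated surface, so local product structure of $\orb^s, \orb^u$ away from singularities and blown segments gives that $\Omega$, being open and connected, has frontier made of leaf-segments; the content is that these segments are in fact \emph{complete} slice leaves or boundary slice rays.

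The heart of the argument is a ``no endpoint'' lemma: a frontier arc of $\Omega$ contained in, say, a stable leaf cannot terminate in the interior of $\orb$. Suppose it did, terminating at a point $p$. Near $p$, $\Omega$ must then wrap around $p$ through more than two quadrants; but then using \Cref{lem:proximal} — orbits in a common stable half-leaf of $\wt\psi$ are asynchronously proximal — together with the fact that $\wt S$ is properly embedded and separates $\wt M$ (so orbits cross it coherently, positively), one derives a contradiction: flowing the nearby orbits forward forces the shadow to behave inconsistently, or forces $\wt S$ to be non-properly-embedded or non-compact-quotient. Concretely, an orbit just inside $\Omega$ near the endpoint $p$ and one just outside would be asynchronously proximal, so their flowlines come arbitrarily close; but one meets $\wt S$ and the other (being in $\fr\Omega$, hence a limit of boundary behavior) is constrained in a way incompatible with $S$ being embedded and transverse. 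This is essentially Fenley's argument in \cite[Section 4]{Fen09} for the closed case; the new work is handling $\del\orb \ne \emptyset$ and blown segments, where a frontier arc may instead run into $\del\orb$ — in that case it continues as (finitely many) blown segments followed by a half-leaf, i.e. a boundary slice ray, by the clean annulus property and the description of $\Omega \cap \Theta(\wt a)$ for blown annuli $a$ given just above the statement.

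Next I would establish that distinct frontier components are \emph{isolated} from one another. If two frontier slice leaves accumulated on each other, one could find points of $\Omega$ trapped between them arbitrarily close to both, and take a limit to produce a frontier point with $\Omega$ on neither side locally, contradicting the one-sidedness established pointwise; alternatively, accumulation of stable slice leaves would, via transversality of $\orb^s$ and $\orb^u$ and properness of $\wt S$, force a non-Hausdorff or non-properly-embedded phenomenon. The one-sidedness claim — each frontier leaf bounds $\Omega$ on exactly one side — follows because $\Omega$ is a domain whose frontier is a $1$-manifold (by the completeness just shown); locally across such a leaf, $\Omega$ lies entirely on one side since if it met both sides near a leaf point $p$, the leaf through $p$ would be in the interior of $\ol\Omega$, not the frontier. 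Finally I would assemble these: $\fr(\Omega)$ is a disjoint union, each piece a complete stable/unstable slice leaf or boundary slice ray, mutually isolated, each bounding $\Omega$ to one side.

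\textbf{Main obstacle.} The hard part will be the ``no interior endpoint'' step in the presence of singular points of $\orb$ and blown segments: one must rule out a frontier arc spiraling into or terminating at a singularity, and correctly handle a frontier arc that approaches a blown segment (it should pass through and continue, rather than stop). Here the clean annulus property and \Cref{lem:proximal} do the work, but matching Fenley's closed-case contradiction to the bordered bifoliated plane $\orb$ requires care — in particular, verifying that the deck-transformation limiting argument still produces a genuine transverse surface limit when $\del M \ne \emptyset$, and that asynchronous proximality (weaker than asymptoticity) still suffices to force the needed collision of flowlines relative to $\wt S$.
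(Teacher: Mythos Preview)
Your outline has roughly the right shape but departs from the paper's argument in a way that hides a genuine gap.

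The deck-transformation limit step is unnecessary and the paper does not do it. The paper never takes geometric limits of translates $g_n\wt S$; it works directly with the single lift $\wt S$ and uses one organizing observation you never state: since $\wt S$ separates $\wt M$, every point of $\orb\ssm\Omega$ (hence every frontier point) lies either \emph{above} or \emph{below} $\wt S$. This dichotomy drives everything: frontier points below $\wt S$ lie on stable slices, those above on unstable slices. The bridge from proximality to this picture is a short lemma you omit: if two lifted orbits are asynchronously proximal, then for all large $t$ they lie on the \emph{same side} of $\wt S$ (use an $\epsilon$-tube about $S$).

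Your ``no endpoint'' argument is where the gap is. You claim an orbit just inside $\Omega$ and one just outside near the putative endpoint would be asynchronously proximal --- but \Cref{lem:proximal} only gives proximality for orbits in the \emph{same} stable (or unstable) half-leaf, and generic nearby points are in different leaves, so this does not follow. The paper instead argues locally: for $x\in\fr(\Omega)$ below $\wt S$, every nearby point of $\orb^s(x)$ is proximal to $x$, hence (by the bridge lemma) eventually below $\wt S$, hence not in $\Omega$; meanwhile for $x_i\in\Omega$ approaching $x$, the same reasoning applied to points of $\orb^s(x_i)$ shows the local segment $\orb^s(x_i)\cap N(x)$ lies entirely in $\Omega$, and letting $i\to\infty$ puts an interval of $\orb^s(x)$ in $\fr(\Omega)$. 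An open-closed argument then promotes this local interval to a full slice leaf or boundary slice ray. Disjointness, isolation, and one-sidedness are then deduced from connectedness of $\Omega$ rather than from any limiting argument: if two frontier slices met, they would cut $\orb$ into regions none of which has both in its frontier, contradicting that $\Omega$ accumulates on both.
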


This is an immediate consequence of the more technical \Cref{lem:front_prop}.

A \textbf{blowup tree} is defined to be either a maximal connected union of blown segments or a singular point not contained in a blown segment.

\begin{proposition}\label{prop:localshadows}
Let $\psi$ be an almost pseudo-Anosov flow on a compact manifold $M$ and let $S$ be a 
properly embedded
compact surface transverse to $\psi$. Let $\wt S$ be a lift of $S$ to $\wt M$ and let $\Omega= \Theta(\wt S)$ be its image in the flow space. Let $T$ be a blowup tree that intersects $\cl(\Omega) \subset \orb$. 

If $T \cap\Omega$ meets the interior of a blown segment, then either:
\begin{enumerate}[label=(\alph*)]
\item $T\cap \Omega=\intr(s)$ for a blown segment $s$ that meets two nonadjacent quadrants,
\item $T\cap \Omega=\intr(s)$ for a blown segment $s\subset\del \orb$, or
\item $T$ is an entire component of $\del \orb$ contained in $\Omega$.
\end{enumerate}

If $T$ is disjoint from $\Omega$, then:
\begin{enumerate}[resume, label=(\alph*)]
\item $T\cap \fr(\Omega)$ is either a single point or an arc comprised of finitely many blown segments. Moreover, $\Omega$ meets precisely two quadrants of $T$, and these quadrants are adjacent.
\end{enumerate}
\end{proposition}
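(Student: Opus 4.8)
The result is local around the blowup tree $T$, and the plan is to combine three things: the exact description of $\Theta^{-1}(T)$, the behavior of $\wt S$ on this preimage forced by the clean annulus property, and the structure of $\fr(\Omega)$ supplied by \Cref{prop:shadow_front}. The basic dictionary is as follows. If $T$ is a single singular point then $\Theta^{-1}(T)$ is a single orbit of $\wt\psi$, so $T\cap\Omega$ is either empty or a single interior point of $\Omega$ and there is nothing to prove; otherwise $T$ is a maximal chain of blown segments and $T=\Theta(\wt{\mc A})$ for $\wt{\mc A}$ a lift of a blown annular complex. Since blown annuli are flow-invariant, the $\Theta$-preimage of a blown segment $s\subset T$ is the corresponding lifted blown strip $\wt a$, and the preimage of a vertex of $T$ is a lift of a boundary periodic orbit of $\wt{\mc A}$; hence $T\cap\Omega$ is completely determined by which of these meet $\wt S$. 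From the setup, $\Theta(\wt S\cap\wt a)$ equals $\intr(s)$ when $S\cap a$ is a union of core circles and equals all of $s$ when $S\cap a$ is a union of arcs (which forces $a\subset\del M$ by the clean annulus property). Finally, since $\Omega$ is open, if a vertex of $T$ lies in $\Omega$ then the interior of every blown segment incident to it meets $\Omega$; so the only situation with $T\cap\Omega\neq\emptyset$ not covered by the two hypotheses is the trivial single-point case.

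Suppose first that $\intr(s)\cap\Omega\neq\emptyset$, and let $\wt a$ be the lifted strip with $\Theta(\wt a)=s$, so $\wt S$ crosses $\intr(\wt a)$. If $a\not\subset\del M$, the clean annulus property gives that $S\cap a$ is core circles, so $\intr(s)\subseteq\Omega$, and I claim $T\cap\Omega=\intr(s)$. Otherwise, by the observation above $\Omega$ contains a full neighborhood of a vertex $v$ of $T$ and, continuing along incident edges, $\Omega$ surrounds $v$; but then $v\in\Omega$, since $v\notin\Omega$ would put $v\in\fr(\Omega)$, while a single interior point is neither a stable/unstable slice leaf nor a boundary slice ray, contradicting \Cref{prop:shadow_front}. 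Propagating along the finite tree, $\Omega$ would then contain all of $T$ together with its vertices, and one rules this out from the facts that $\wt{\mc A}$ collapses to a single orbit under a pseudo-Anosov blowdown and $\Omega\cong\wt S$ is a properly embedded surface transverse to $\psi$ separating $\wt M$: $\wt S$ cannot meet every orbit of the flow-invariant complex $\wt{\mc A}$ without producing, after blowdown, a neighborhood of the collapsed singular orbit that $\wt S^{\flat}$ meets incompatibly with transversality. Hence $T\cap\Omega=\intr(s)$; since every interior blown segment separates two non-adjacent quadrants of $\orb$ (this is how blown segments arise; see \Cref{fig:arcs-internal} and the surrounding discussion), this is conclusion (a). If instead $a\subset\del M$, then $T=\Theta(\wt P)$ for the boundary torus $P\supseteq a$, and $\del S\cap P$ is a nonempty union of essential curves of $P$: if they are isotopic to the prong curves then $S\cap a$ is core circles and we are in case (b); if they meet the prong curves essentially then they cross every blown annulus of $P$, so $\wt S$ meets every lifted blown strip and every intervening lifted periodic orbit of $\wt P$, whence $T=\Theta(\wt P)\subseteq\Omega$ is an entire component of $\del\orb$, which is case (c).

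Now suppose $T$ is disjoint from $\Omega$ but $T\cap\cl(\Omega)\neq\emptyset$, so $T$ meets $\fr(\Omega)$. The first lemma to establish is that a single stable/unstable slice leaf or boundary slice ray meets $T$ in either a single point — a vertex of $T$ reached along a prong of $\orb^s$ or $\orb^u$ — or an arc comprised of finitely many consecutive blown segments of $T$ (those whose ambient stable, resp. unstable, leaf carries the slice leaf) terminating in a prong off each end; this follows by tracing the slice leaf through the local model of the blowup tree, using that along the chain a blown segment is stable-type at one endpoint and unstable-type at the other. Second, $T$ meets only one frontier component: two disjoint isolated frontier components each bounding the connected open set $\Omega$ to one side and both meeting $T$ are incompatible — a short argument with connectedness of $\Omega$ shows $\Omega$ would then approach $T$ from quadrants separated by the rest of $T$, impossible for a lift of the separating surface $S$ against the flow-invariant $\wt{\mc A}$. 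So $T\cap\fr(\Omega)=\ell\cap T$ for one slice leaf or boundary slice ray $\ell$, and $\Omega$ lies to one side of $\ell$; writing $\ell=\ell_-\cup(\ell\cap T)\cup\ell_+$ with $\ell_\pm$ the terminal prongs, the local model shows every quadrant of $T$ except the two abutting $\ell_-$ and $\ell_+$ from the $\Omega$-side is cut off from $\Omega$ by $\ell$, and those two quadrants are adjacent across $\ell\cap T$ (or across the common vertex when $\ell\cap T$ is a point). This is conclusion (d).

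The step I expect to be hardest, in both hypotheses, is controlling exactly how many quadrants of $T$ meet $\Omega$ — ruling out that $\Omega$ swallows a chain longer than a single blown segment (or an entire interior $T$), and pinning the count down to precisely two adjacent quadrants in (d). This demands careful bookkeeping of the blowup-tree combinatorics — the cyclic interleaving of prongs and blown segments around $T$ and the stable/unstable alternation along the chain — together with a genuinely three-dimensional argument that a separating transverse surface cannot straddle the flow-invariant complex $\wt{\mc A}$; this is exactly where the analysis departs from the closed pseudo-Anosov case of Fenley \cite[Section 4]{Fen09} and must be redone in the presence of blown annuli and toral boundary. The remaining steps are a fairly direct adaptation of that analysis combined with \Cref{prop:shadow_front} (equivalently \Cref{lem:front_prop}).
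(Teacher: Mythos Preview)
Your overall strategy and the dictionary you set up in the first paragraph are correct, and essentially reproduce \Cref{lem:blownsegs}. Your treatment of cases (b) and (c) is fine. The genuine gap is that you never actually use part (c) of \Cref{lem:front_prop}---you cite it parenthetically as if equivalent to \Cref{prop:shadow_front}, but (c) contains strictly more information: each frontier component $\ell$ contains a \emph{unique} periodic point $p$ with a \emph{unique} half-leaf at $p$ that meets $\Omega$, locally expanding or contracting according to whether $\ell$ lies below or above $\wt S$. This is precisely the tool that finishes both the first case and case (d) cleanly, and without it your substitute arguments do not close.

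For the first case, once you have $\intr(s)\subset\Omega$ and $\del s\subset\fr(\Omega)$, part (c) applied to the frontier component through each endpoint of $s$ shows that $s$ is the only half-leaf at that endpoint meeting $\Omega$; this immediately rules out any other edge of $T$ touching $\Omega$ and gives $T\cap\Omega=\intr(s)$. Your propagation argument instead assumes that if $T\cap\Omega\supsetneq\intr(s)$ then $\Omega$ contains a full neighborhood of some vertex of $T$, but this does not follow---$T\cap\Omega$ could equally well be $\intr(s)\cup\intr(s')$ with all vertices still in $\fr(\Omega)$. Even granting a vertex $v$ in $\Omega$, the step ``$\Omega$ surrounds the next vertex $w$, so $\{w\}$ would be an isolated frontier point'' fails: knowing one incident edge has interior in $\Omega$ says nothing about the other edges or quadrants at $w$. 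The fallback blowdown argument (``$\wt S^\flat$ meets incompatibly with transversality'') is not a proof either, since the blowdown of $S$ is not a surface once $S$ meets the blown complex nontrivially.

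For case (d), the paper again just invokes (c): the frontier component $\ell$ through any $p\in T\cap\fr(\Omega)$ has a unique half-leaf $h$ into $\Omega$, and since $T\cap\Omega=\emptyset$ this $h$ is not a blown segment; hence $T\cap\fr(\Omega)$ is exactly the chain of blown segments of $T$ lying in $\ell$, and the two quadrants of $T$ meeting $\Omega$ share the non-blown half-leaf $h$ and are therefore adjacent. Your two sublemmas here are true statements, but your justification of the second (``a short argument with connectedness\dots impossible for a lift of the separating surface'') is exactly the hard step you flag at the end, and you have not actually supplied it; part (c) of \Cref{lem:front_prop} is what supplies it.
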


\begin{figure}
\centering
\includegraphics[width=6in]{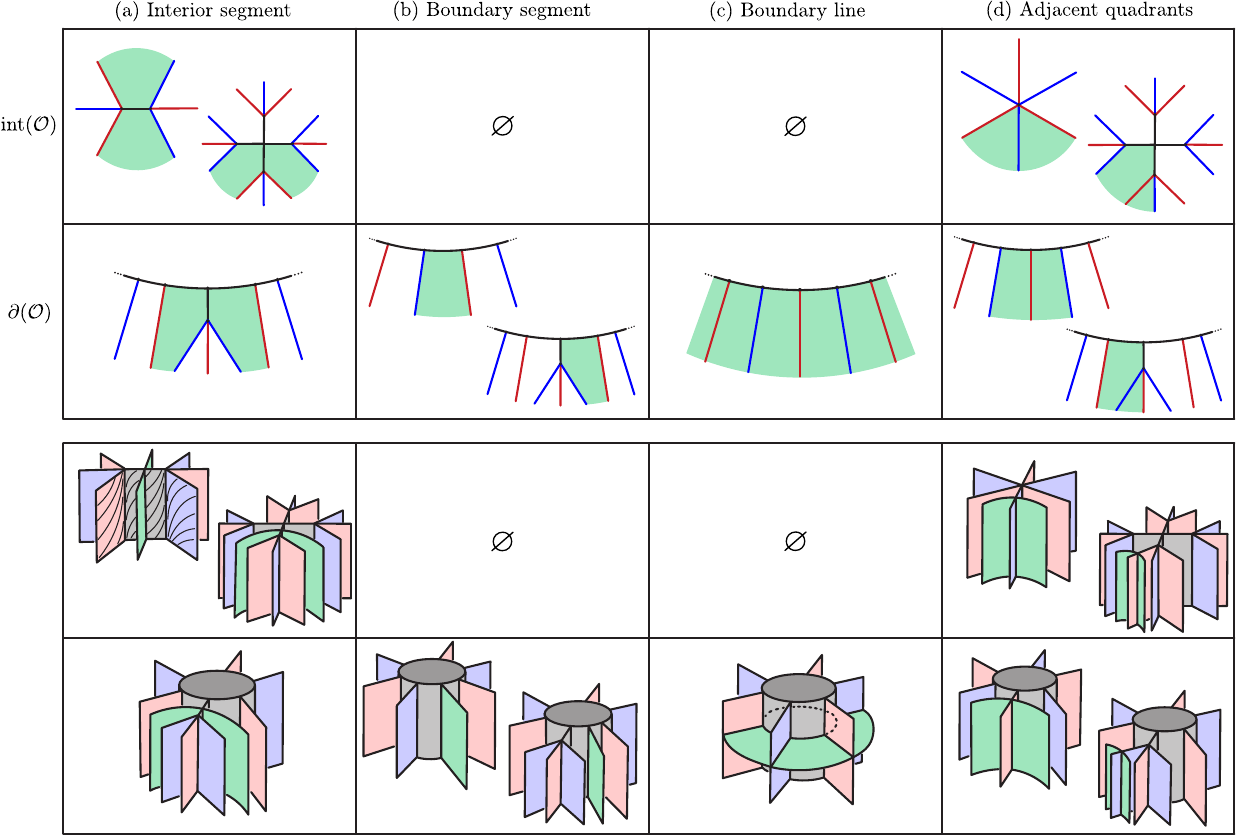}
\caption{\emph{Upper chart:} simple examples of the local pictures described in cases (a)-(d) of \Cref{prop:localshadows}, with examples showing possible interaction with $\del\orb$ in the second row. The set $\Omega$ is indicated in green. Cases (b) and (c) only concern boundary segments, which is why two spaces are empty. \emph{Lower chart}: pictures of the corresponding pieces of the surface (green) in $M$.}
\label{fig:shadows}
\end{figure}

See the top chart in \Cref{fig:shadows} for a visual companion to \Cref{prop:localshadows}.

\begin{remark}\label{remark:3Dinterpretation}
The situations described \Cref{prop:localshadows} translate to straightforward statements about $S$ in $M$.
Recall that $S$ has the clean annulus property. 
Thus $S$ intersects each  blown annulus in a collection of closed curves or not at all, unless the blown annulus lies in $\del M$, in which case $S$ may intersect a blown annulus $a$ in a family of arcs connecting the two boundary components of $a$. From this one sees that cases (a)-(c) in \Cref{prop:localshadows} correspond to the following situations in $M$:
\begin{enumerate}[label=(\alph*)]
\item $S$ passes through a blown annulus that lies in the interior of $M$.

\item A boundary component of $S$ lies interior to a blown annulus $a\subset \del M$.

\item A boundary component of $S$ meets each blown annulus that lies on  the boundary component of $M$ corresponding to the given component of $\del\orb$.
\end{enumerate}
For the final case, note that the two quadrants in (d) of \Cref{prop:localshadows} share a single periodic half-leaf of $\orb^u$ or $\orb^s$, corresponding to a half-leaf $\wt \lambda$ of $\wt W^u$ or $\wt W^s$. By analyzing the intersection of $\langle g\rangle\cdot \wt S$ with $\wt \lambda$, where $\langle g\rangle\le \pi_1(M)$ is the stabilizer of $\wt \lambda$, 
one can show without too much work that $S$ intersects the projection $\lambda$ of $\wt \lambda$ to $M$ in a circle. 
(This also follows from the proof of \Cref{lem:bdyperiodic}). Thus we have:
\begin{enumerate}[resume, label=(\alph*)]
\item There is a periodic half-leaf $\lambda$ of $W^u$ or $W^s$ such that $S \cap \lambda$
contains a circle. 
\end{enumerate}
The corresponding pictures are shown in the lower chart of \Cref{fig:shadows}.
\end{remark}

\subsection{Shadows of transverse surfaces: details}\label{sec:shadowarg}

Note that $\wt S$ separates $\wt M$.
If $B$ is a subset of $\orb$, we say $B$ \textbf{lies above} or \textbf{lies below} $\wt S$ if $\Theta^{-1}(B)$ lies entirely above or below $\wt S$ in $\wt M$, respectively (with respect to the coorientation of $\wt S$).

\smallskip

The following is a basic observation about asynchronously proximal orbits.

\begin{lemma}\label{lem:proximal2}
Let $S$ be a compact, connected surface in an irreducible 3-manifold $M$ which is transverse to a flow $\phi$, and let $\wt S$ be a lift of $S$ to $\wt M$. Suppose that the orbits passing through $a,b\in \wt M$ are asynchronously proximal. Then for all $t\gg 0$, $\wt \phi_t(a)$ and $\wt \phi_t(b)$ lie on the same side of $\wt S$. The same is true for all $t\ll 0$ if $a$ and $b$ are backward asynchronously proximal.
\end{lemma}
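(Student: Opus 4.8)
The key input is \Cref{lem:proximal}, which tells us that asynchronously proximal orbits become arbitrarily close, and the geometric fact that $\wt S$ is properly embedded in $\wt M$ with orbits crossing it positively and exactly once locally (since $\wt S$ separates and $S$ is transverse). The plan is to argue by contradiction: suppose there is an unbounded sequence of times $t_n \to +\infty$ at which $\wt\phi_{t_n}(a)$ and $\wt\phi_{t_n}(b)$ lie on opposite sides of $\wt S$. By continuity of the flow and of the coorientation, at intermediate times the orbit segment joining $\wt\phi_{t_n}(a)$ to $\wt\phi_{t_n}(b)$ inside $\wt M$ must cross $\wt S$; more usefully, for each such $n$ one of the two points, say $\wt\phi_{t_n}(b)$, can be connected to the other side, so there must be a sequence of points on one of the two orbits that lie within a fixed distance of $\wt S$ infinitely often while the other orbit's corresponding points also approach $\wt S$. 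The point is to produce a sequence of intersection points of the orbit of $b$ (or a short transversal joining the two orbits) with $\wt S$ that escapes to infinity but stays in a region of bounded geometry — contradicting proper embeddedness together with cocompactness of the $\pi_1(M)$-action.

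More precisely, first I would fix $\epsilon > 0$ small enough that any ball of radius $\epsilon$ in $\wt M$ meets $\wt S$ in at most a controlled (e.g. connected, ``disk-like'') way — this uses that $S$ is embedded and $\wt M \to M$ is a local isometry, so the local geometry of $\wt S$ is uniformly controlled. By \Cref{lem:proximal} (for forward asynchronous proximality; the backward case is symmetric), choose $t, t'$ with $d(\wt\phi_t(a), \wt\phi_{t'}(b)) < \epsilon$; reparametrizing, compare $\wt\phi_t(a)$ and $\wt\phi_t(b)$ up to a bounded flow-time error. The crux is then: once two points on the two orbits are within $\epsilon$ and both orbits are crossing copies of $\wt S$ (the $\pi_1$-translates) positively, they cannot be separated by $\wt S$ for all large $t$ without one of the orbits crossing $\wt S$ near the other — and a single positive crossing near a point forces the two points onto the \emph{same} side thereafter for all subsequent time until the next crossing; but crossings happen along a discrete set, and using that the two points stay (asynchronously) close one shows no further separating crossing can occur. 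I would make this rigorous by working in the quotient $M$: project everything down, observe that the orbit segments from time $t$ onward project to segments of $\phi$ that repeatedly return close to each other, and lift back carefully, tracking which lift of $S$ each orbit crosses.

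The main obstacle I anticipate is the bookkeeping of \emph{which} lift $\wt S_g = g\cdot \wt S$ each orbit crosses, and ruling out the scenario where $\wt\phi_t(a)$ and $\wt\phi_t(b)$ are close in $\wt M$ but lie on opposite sides of $\wt S$ because they straddle it — i.e. excluding that $\wt S$ ``weaves between'' the two asymptotically-close orbit segments infinitely often. This is precisely where properness of $\wt S$ and the uniform local structure of $\wt S$ (bounded geometry, no accumulation) are essential: a sequence of separations along the two orbits would force $\wt S$ to have pieces arbitrarily close to a fixed compact picture yet topologically nontrivially nested, contradicting that $\wt S$ is an embedded plane (or surface) properly embedded in $\wt M$. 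I would phrase the final contradiction as: the orbit of $b$ meets $\wt S$ in infinitely many points lying in an $\epsilon$-neighborhood of the forward orbit ray of $a$, hence (after projecting to $M$) infinitely many intersection points of a single flow line with $S$ accumulating in $M$, which is absurd since $S$ is transverse to $\phi$ and compact. The backward-in-time statement for backward asynchronously proximal $a,b$ follows by applying the forward argument to the time-reversed flow $\phi_{-t}$, noting $\wt S$ is still transverse (now with reversed coorientation), so nothing new is needed.
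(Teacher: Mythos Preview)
Your proposal is substantially more complicated than needed and contains a genuine error in the final contradiction. The paper's proof is a three-line argument: take an $\epsilon$-tubular neighborhood $N \cong S \times I$ of $S$ whose $I$-fibers are orbit segments of $\phi$, and let $\wt N$ be the lift containing $\wt S$. Since $\wt S$ separates $\wt M$ and orbits cross it positively, each orbit crosses $\wt S$ at most once and hence lies in $\wt N$ for at most one bounded time interval. Thus for all sufficiently large $t$ (and $t'$), the points $\wt\phi_t(a)$ and $\wt\phi_{t'}(b)$ lie outside $\wt N$, hence at distance $\ge \epsilon$ from $\wt S$; if they are on opposite sides, they are $\ge 2\epsilon$ apart, contradicting asynchronous proximality.

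You have the right ingredients (the $\epsilon$-neighborhood, the separating property) but you never use the decisive consequence: each orbit in $\wt M$ crosses $\wt S$ \emph{at most once}, so it is eventually on a definite side and eventually outside $\wt N$. Instead you speak of ``the next crossing'' and tracking which translate $g\cdot\wt S$ each orbit meets; none of this is relevant, since only the single fixed lift $\wt S$ is at issue. More seriously, your proposed contradiction---``the orbit of $b$ meets $\wt S$ in infinitely many points'' and hence ``infinitely many intersection points of a single flow line with $S$ accumulating in $M$''---fails on both counts. The first claim cannot be derived (and is false: at most one crossing), and the second is not absurd: a flow line in $M$ can certainly meet a compact transverse surface infinitely many times (think of a periodic orbit in a suspension flow). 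The actual contradiction is purely metric, not about counting crossings.
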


\begin{proof}
We shall prove the forward statement; the backward one is similar. 
Let $N$ be an $\epsilon$-tubular neighborhood of $S$ for some $\epsilon>0$ such that $N$ can be identified with $S\times I$ where the $I$-fibers are orbit segments of $\phi$. Let $\wt N$ be the lift of $N$ containing $\wt S$. 
If $\wt \phi_t(a)$ and $\wt \phi_t(b)$ lie on different sides of $\wt S$ for large $t$, then they will be at least  $2\epsilon$ apart for large $t$, hence are not asynchronously proximal.
\end{proof}

Let $\ell$ be a periodic half-leaf based at the periodic point $x \in \orb$.  We say that $\ell$ is \define{locally expanding at} $x$ if the local first return map to a transverse disk in $M$ through a point along the closed orbit corresponding to $x$ expands along the periodic leaf corresponding to $\ell$. Note that any blown segment is locally expanding at one of its endpoints and locally contracting (similarly defined) at its other. We also observe that a periodic half-leaf which is not a blown segment is locally expanding/contracting depending on whether it belongs to $\orb^u$ or $\orb^s$, respectively, and that the half-leaves terminating at a periodic point alternate between locally expanding and locally contracting in the circular order at that point. Each periodic half-leaf has at least one endpoint and we endow it with a unique \textbf{orientation} defined by requiring half-leaves to be oriented toward locally contracting endpoints and away from locally expanding endpoints.

\begin{lemma}\label{lem:blownsegs}
Let $\psi$ be an almost pseudo-Anosov flow on a compact manifold $M$ and let $S$ be a properly embedded compact surface transverse to $\psi$. Let $\wt S$ be a lift of $S$ to $\wt M$ and let $\Omega= \Theta(\wt S)$ be its image in the flow space. Let $s$ be a blown segment that intersects $\Omega$. 

If $s$ is not a subset of $\del\orb$, then:
\begin{enumerate}[label=(\alph*)]
\item $\intr(s)\subset \Omega$ and $\del s\subset \fr(\Omega)$.
\end{enumerate}

If $s\subset \del\orb$ then either:
\begin{enumerate}[resume, label=(\alph*)]
\item $\intr(s)\subset \Omega$ and $\del s\subset\fr(\Omega)$, or
\item the component of $\del \orb$ containing $s$ lies entirely in $\Omega$.
\end{enumerate} 

In cases (a) and (b), the locally expanding and contracting endpoints of $s$ lie under and over $\wt S$, respectively.
\end{lemma}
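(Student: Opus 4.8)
\textbf{Proof proposal for \Cref{lem:blownsegs}.}

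The plan is to analyze the blown segment $s$ directly in $\wt M$, using the fact that $s = \Theta(\wt a)$ for some lift $\wt a$ of a blown annulus $a$ of $\psi$, and exploiting the dynamics of $\wt\psi$ along the blown annulus together with \Cref{lem:proximal2}. First I would set up the picture: the blown annulus $a$ is flow-invariant, and the flowlines inside it are asymptotic to one boundary orbit (the locally expanding endpoint, which I'll call $x_-$) in backward time and to the other boundary orbit (the locally contracting endpoint, $x_+$) in forward time. Lifting to $\wt M$, the blown annular complex lifts to a strip, and $\wt a$ is a flow-invariant strip whose image under $\Theta$ is exactly $s$, with $\Theta$ restricted to the two boundary orbits giving $\del s = \{x_-, x_+\}$ and the interior flowlines giving $\intr(s)$. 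Since $s$ meets $\Omega = \Theta(\wt S)$, there is some interior point $p \in \intr(s)\cap\Omega$, so the flowline $\gamma(p) = \Theta^{-1}(p)$ crosses $\wt S$; because $\wt S$ separates $\wt M$ and all intersections with orbits are positive, it crosses exactly once, entering the positive side.

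The key step is then to propagate this crossing to neighboring flowlines in the strip $\wt a$. Given any other interior point $q \in \intr(s)$, the orbits through $p$ and $q$ both lie in the strip $\wt a$ and are forward asymptotic to the lift of $x_+$ and backward asymptotic to the lift of $x_-$; in particular they are asynchronously proximal in both the forward and backward directions. By \Cref{lem:proximal2}, for $t \gg 0$ the points $\wt\psi_t(p)$ and $\wt\psi_t(q)$ lie on the same side of $\wt S$, and since $\gamma(p)$ ends up on the positive side, so does $\gamma(q)$; symmetrically, for $t \ll 0$ both lie on the negative side. Hence $\gamma(q)$ also crosses $\wt S$, so $q \in \Omega$. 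This shows $\intr(s)\subset\Omega$ in all cases. Moreover this argument, applied with $t\gg0$ versus $t\ll0$, simultaneously establishes the last sentence of the lemma: the locally expanding endpoint $x_-$ (where flowlines limit in backward time, where $\wt S$ has not yet been crossed) lies under $\wt S$, and the locally contracting endpoint $x_+$ lies over $\wt S$. A similar proximality argument (flowlines through points near $x_\pm$ are asynchronously proximal to the boundary orbit) shows the endpoints cannot themselves lie in the open set $\Omega$ unless forced by a separate mechanism, which only happens in the boundary case.

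It remains to pin down the frontier/dichotomy statements. When $s \not\subset \del\orb$: the endpoints $x_\pm$ are honest singular points of $\orb$ in the interior, and I claim $\del s \subset \fr(\Omega)$. Since $\intr(s)\subset\Omega$, each endpoint lies in $\cl(\Omega)$; it lies in $\fr(\Omega)$ precisely when it is not in $\Omega$. If, say, $x_+ \in \Omega$, then $\gamma(x_+)$ crosses $\wt S$, but $x_+$ is also the forward limit of interior flowlines of $s$ which cross $\wt S$ on the positive side for large $t$ — a short argument using the product neighborhood $\wt N \cong \wt S\times I$ and the fact that $\gamma(x_+)$ must exit this neighborhood shows the crossing point of $\gamma(x_+)$ would have to be approached from the positive side only, contradicting transversality of the nearby interior orbits (which pass from negative to positive). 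When $s \subset \del\orb$: the component $P$ of $\del\orb$ containing $s$ is itself a slice leaf of both foliations, tiled by blown segments; the analysis above applied to each blown segment of $P$ that meets $\Omega$ shows that either the picture stays ``local'' around $s$ — giving case (a), $\intr(s)\subset\Omega$, $\del s\subset\fr(\Omega)$ — or the crossing propagates all the way around $P$, forcing every blown segment of $P$ into $\cl(\Omega)$ and hence $P\subset\Omega$ (case (c)). The main obstacle I anticipate is this last propagation-vs-localization argument in the boundary case: I will need to use \Cref{th:flow_space} (the structure of $\orb$ as a disk minus a boundary subset) and the combinatorics of how blown segments in $\del\orb$ fit together with the stable/unstable half-leaves emanating from their endpoints, to show that the frontier of $\Omega$ along $P$ is either empty (case (c)) or consists of exactly the two endpoints of a sub-arc (case (b)) — ruling out pathological partial coverings. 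The proximality lemma does the dynamical heavy lifting; the remaining care is purely in the one-dimensional topology of $\del\orb$.
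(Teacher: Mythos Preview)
Your propagation argument for $\intr(s)\subset\Omega$ via proximality is fine, but the step where you try to exclude the endpoints from $\Omega$ has a real gap. You write that if $x_+\in\Omega$ then ``the crossing point of $\gamma(x_+)$ would have to be approached from the positive side only, contradicting transversality of the nearby interior orbits.'' But there is no contradiction here: if $\gamma(x_+)$ crosses $\wt S$ it does so from negative to positive, and the interior orbits of the strip, which are forward asymptotic to $\gamma(x_+)$, also cross from negative to positive and then stay on the positive side. Proximality is perfectly consistent with \emph{all} of these orbits, including the boundary one, crossing $\wt S$. So your dynamical argument does not rule out $x_\pm\in\Omega$, and the same issue undermines your boundary dichotomy.

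What you are missing is the standing \emph{clean annulus property} assumed at the start of \S\ref{sec:generalshadows}: $S$ meets each blown annulus $A$ either in core circles, or (only when $A\subset\del M$) in arcs joining the two boundary components. The paper's proof simply reads off the lemma from this. If $S\cap A$ consists of core circles, then $\wt S\cap\wt A$ is a single line in the interior of the strip $\wt A$, cooriented toward the attracting side; this immediately gives $\intr(s)\subset\Omega$, $\del s\cap\Omega=\varnothing$ (since the line misses the boundary orbits of $\wt A$, which \emph{are} the orbits $\gamma(x_\pm)$), and the under/over statement for the endpoints. If $A\subset\del M$ and $S\cap A$ consists of arcs, then these arcs continue into every blown annulus on that boundary torus, and the entire component of $\del\orb$ lies in $\Omega$. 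No proximality, no propagation-vs-localization analysis is needed; the clean annulus property does all the work.
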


\begin{proof}

Let $A$ be a blown annulus. If $S\cap A$ is a family of circles, then each circle must be cooriented toward the attracting boundary component of $\del A$. Choosing a lift $\wt A$ of $A$ that intersects $\wt S$, we see that $\wt S\cap \wt A$ is a line intersecting every orbit in $\wt A$, cooriented toward the attracting side of $\wt A$. Letting $s=\Theta(\wt A)$, we see that $\intr(s)\subset \Omega$ and $\del s \subset \fr(\Omega)$, with the locally expanding and locally contracting endpoints of $s$ lying below and above $\wt S$, respectively. This describes situations (a) and (b).
Now suppose that $A$ lies in a component $T$ of $\del M$ and $S\cap A$ is a family of intervals. Then each component of $S\cap T$ intersects each blown annulus in $T$ in a family of intervals. Lifting to $A$ to $\wt A\subset \wt M$, we see the entire component of $\del \orb$ containing $\wt A$ is contained in $\Omega$. This is case (c).

Since $S$ has the clean annulus property there are no other types of nontrivial intersections between $S$ and blown annuli, so the proof is complete.
\end{proof}

\begin{lemma}[Local frontier]\label{lem:localboundary}
Let $x\in \fr (\Omega)$ lie below $\wt S$. If $x\notin \del \orb$, then there is an open interval $I\subset \orb^s(x)$ such that $x\in I$ and $I\subset \fr(\Omega)$.
If $x\in \del \orb$, then the same is true unless $x$ is a limit of points in $\Omega\cap \del \orb$, in which case $I$ can be taken to be a half-closed interval with boundary point $x$ such that $I-\{x\}\subset \orb \ssm \partial \orb$.

If $x$ lies above $\wt S$, the same is true after replacing below with above and $\orb^s$ with $\orb^u$.
\end{lemma}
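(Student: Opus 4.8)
\textbf{Proof proposal for the Local Frontier Lemma (\Cref{lem:localboundary}).}

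The plan is to analyze what happens along the stable leaf $\orb^s(x)$ near a frontier point $x$ lying below $\wt S$, using the basic fact (\Cref{prop:shadow_front}, or rather its more technical ancestor which we are in the process of proving) that $\Omega$ is open, together with \Cref{lem:proximal} and \Cref{lem:proximal2} which say that points in a common stable half-leaf are asynchronously proximal and hence must eventually lie on the same side of $\wt S$. First I would fix $x \in \fr(\Omega)$ lying below $\wt S$ and, assuming $x \notin \del\orb$, take a small flow-box (transverse disk) neighborhood $\Delta$ in $\wt M$ of a point on the orbit $\gamma(x) = \Theta^{-1}(x)$, chosen far enough downstream that the whole orbit segment of $\gamma(x)$ from $\Delta$ onward stays below $\wt S$ (possible since $x$ lies below $\wt S$, meaning $\Theta^{-1}(x)$ does). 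Because $\wt S$ is compact with compact projection $\Omega' = \overline\Omega$ and transverse to $\wt\psi$, there is a tubular neighborhood $\wt N \cong \wt S \times I$ with $I$-fibers flow segments; the argument of \Cref{lem:proximal2} then shows that any orbit asynchronously proximal to $\gamma(x)$ in the forward direction eventually lies on the \emph{same} side of $\wt S$ as $\gamma(x)$, i.e. below.

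Next I would translate this into the orbit space. Points of $\orb^s(x)$ in the same stable half-leaf as $x$ correspond to orbits forward-asynchronously-proximal to $\gamma(x)$ (\Cref{lem:proximal}), so by the previous paragraph each such point lies below $\wt S$ — in particular, \emph{none} of them lies in $\Omega$ (points of $\Omega$ project from $\wt S$ itself, which is disjoint from its frontier and from the region strictly below). Now I claim an entire open subinterval $I$ of this half-leaf around $x$ lies in $\fr(\Omega)$. Certainly $I \cap \Omega = \emptyset$ by the above. To see that points of $I$ near $x$ are in $\overline\Omega$, I would use that $x \in \fr(\Omega)$: there is a sequence $y_n \in \Omega$ with $y_n \to x$. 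Using the local product structure of the bifoliation near the regular point $x$ (leaves of $\orb^s$ and $\orb^u$ are transverse there, giving local coordinates), one slides each $y_n$ along unstable leaves to produce points of $\overline\Omega$ arbitrarily close to any chosen point of a small stable interval through $x$ — here the key input is that $\Omega$ is a union of $\Theta$-images of $\wt S$ which is ``saturated enough'' transverse to the foliations near regular points, combined with the fact from \Cref{lem:blownsegs} that the blown-segment phenomena don't occur at regular points. This yields the open interval $I \subset \orb^s(x)$ with $x \in I \subset \fr(\Omega)$. The statement for $x$ above $\wt S$ is symmetric, interchanging forward/backward proximality, $\orb^s \leftrightarrow \orb^u$, and below $\leftrightarrow$ above.

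Finally I would handle the boundary case $x \in \del\orb$. Here $\orb^s(x)$ may run into $\del\orb$, and the relevant half-leaf structure is governed by blown segments (\Cref{lem:blownsegs}): if $x$ is not a limit of points of $\Omega \cap \del\orb$, then a neighborhood of $x$ in $\del\orb$ lies in $\fr(\Omega)$ on one side and the interior argument above applies to give a two-sided open interval (possibly a mix of a boundary arc and an interior stable interval, but by \Cref{lem:blownsegs}(a)-(b) the local picture is still an interval in a single slice leaf or boundary slice ray through $x$, and the same proximality reasoning shows it avoids $\Omega$ and lies in $\overline\Omega$). If on the other hand $x$ \emph{is} a limit of points of $\Omega \cap \del\orb$, then on one side $x$ is approached from within $\Omega \cap \del\orb$ (so not all of a boundary neighborhood is frontier), but on the interior side the stable half-leaf emanating from $x$ into $\orb \ssm \del\orb$ still consists of frontier points by the proximality argument, giving a half-closed interval $I$ with boundary point $x$ and $I - \{x\} \subset \orb\ssm\del\orb$, exactly as stated.

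The main obstacle I anticipate is the density/approximation step: showing that frontier points come in whole intervals rather than isolated points. The proximality lemmas cleanly show stable-leaf points near $x$ are \emph{not} in $\Omega$, but showing they lie in the \emph{closure} of $\Omega$ requires pushing the approximating sequence $y_n \to x$ across the local product neighborhood; this uses compactness of $S$ (so $\wt S$ is a properly embedded plane or half-plane separating $\wt M$, and $\Omega$ is an open set whose frontier is ``leafwise'') and the structure of $\orb$ near $x$ — genuinely a local two-dimensional argument, which is presumably why the lemma is phrased locally and fed into the global \Cref{lem:front_prop}.
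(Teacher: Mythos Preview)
Your proposal correctly identifies the two halves of the argument: (i) use forward proximality (\Cref{lem:proximal}, \Cref{lem:proximal2}) to show that points of $\orb^s(x)$ near $x$ lie below $\wt S$ and hence outside $\Omega$; (ii) show these same points lie in $\cl(\Omega)$. Part (i) is essentially the paper's argument. The gap, which you yourself flag as the ``main obstacle,'' is in part (ii), and your proposed resolution does not work.

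Your mechanism for (ii) --- ``slide each $y_n$ along unstable leaves'' to approximate a chosen point $x'$ of the stable interval through $x$ --- gives no control over membership in $\Omega$ or $\overline\Omega$: points on a common unstable leaf are only \emph{backward} proximal, and you know nothing about the backward behaviour of $\gamma(y_n)$ relative to $\wt S$. The phrase ``$\Omega$ is saturated enough transverse to the foliations'' is not an argument; nothing so far forces $\Omega$ to contain any unstable-leaf neighbours of $y_n$.

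The paper's idea is to use forward proximality \emph{again}, applied to the approximating points $y_n\in\Omega$ rather than to $x$. Fix a small transverse disk $D$ through $\gamma(x)$ lying entirely below $\wt S$, set $N(x)=\Theta(D)$, and let $d_n = \gamma(y_n)\cap D$. Since $y_n\in\Omega$, the orbit through $d_n$ crosses $\wt S$ in forward time. Every point of $W^s(d_n)\cap D$ is forward asynchronously proximal to $d_n$, hence by \Cref{lem:proximal2} also crosses $\wt S$ in forward time. Projecting, the whole stable segment $\orb^s(y_n)\cap N(x)$ lies in $\Omega$. As $n\to\infty$ these stable segments accumulate on $\orb^s(x)\cap N(x)$ from the side containing the $y_n$, putting that interval in $\cl(\Omega)$. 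In short: slide along \emph{stable} leaves, not unstable ones; the reason this stays inside $\Omega$ is the same proximality used in (i). The boundary/blown-segment cases are then organised around whether $x$ is an endpoint of a blown segment meeting $\Omega$ (using \Cref{lem:blownsegs} to supply the sequence $y_n$ inside $\intr(s)$), rather than the dichotomy you sketch.

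Two small side corrections: $\wt S$ is not compact (it is a lift of the compact $S$), though \Cref{lem:proximal2} is already stated to handle this; and there is no need to place your disk ``far downstream,'' since the entire orbit $\gamma(x)$ is disjoint from $\wt S$.
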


\begin{figure}
\begin{center}
\includegraphics[height=1in]{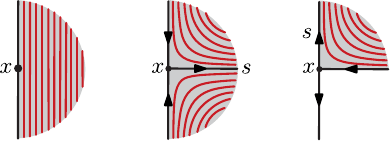}
\caption{Left: the local picture around a point $x$ in $\fr(\Omega)$ in the setting case 1 of the proof of \Cref{lem:localboundary}. Center and right: the local pictures around $x$ in cases 2 and 3 of the same proof, respectively. In the righthand picture the vertical (in the page) line through $x$ lies in $\del \orb$, while in the center and left pictures it may or may not lie in $\del \orb$.}
\label{fig:boundarylocal}
\end{center}
\end{figure}

\begin{proof}
We will assume that $x$ lies below $\wt S$; the proof in the other case is symmetric.

Let $D$ be a disk in $\wt M$ transverse to $\wt \psi$ intersecting the lifted orbit $\gamma(x) = \Theta^{-1}(x)$ 
 in a point. Because $\gamma(x)$ is disjoint from $\wt S$, we can choose $D$ small enough so that it is also disjoint from $\wt S$.

Let $N(x)$ be a neighborhood of $x$ contained in $\Theta(D)$ and small enough so that it contains no singular points or endpoints of blown segments, except possibly $x$. 

\medskip

\noindent\textbf{Case 1:} $x$ is not an endpoint of a blown segment that meets $\Omega$. 

Let $x'$ be any point in $N(x)\cap \orb^s(x)$. Since $x$ and $x'$ are not separated by a singular point by the definition of $N(x)$,
$\gamma(x')$ is asynchronously proximal to $\gamma(x)$ (\Cref{lem:proximal}). Since $\gamma(x)$ lies below $\Omega$, \Cref{lem:proximal2}
implies that $\gamma(x')$ lies below $\Omega$ so $x'\notin \Omega$. This shows that $N(x)\cap \orb^s(x)$ is disjoint from $\Omega$ and lies below $\wt S$.

Now take a sequence of points $(x_n)$ in $N(x)\cap \Omega$ converging to $x$ such that each $x_i$ lies in a single complementary component $Q$ of $\orb^s(x)$. 
Let $d_i$ be the intersection of $\gamma(x_i)$ with $D$. Since $D$ lies entirely below $\wt S$ and all the points in $W^s(d_i)\cap D$ are asynchronously proximal, \Cref{lem:proximal2} gives that all points in $W^s(d_i)\cap D$ intersect $\wt S$ in forward time. Hence $\orb^s(x_i)\cap N(x)\subset \Omega$. Letting $i\to \infty$ these segments limit on  $\del Q\cap N(x)$, so $\del Q\cap N(x)\subset \cl (\Omega)$. 
Since $\del Q\subset \orb^s(x)$ , $\del Q\cap N(x)\subset \fr (\Omega)$ and $\del Q\cap N(x)$ lies below $\wt S$. The local picture is shown on the left of \Cref{fig:boundarylocal}.

\medskip
\noindent\textbf{Case 2:} $x$ is an endpoint of a blown segment $s$ meeting $\Omega$, and $\intr s\cap \del \orb=\varnothing$.

By \Cref{lem:blownsegs}, $\intr(s)\subset \Omega$ and $s$ is locally expanding at $x$ since $x$ lies below $\wt S$.

Let $Q_1$ and $Q_2$ be the two complementary components of $\orb^s(x)$ which meet along
$s$. Let $\ell_1$ and $\ell_2$ be the two periodic locally contracting half-leaves at $x$
adjacent to $s$, indexed so that $\ell_i\subset \del Q_i$. As in case 1,
\Cref{lem:proximal2} implies that $\Omega$ is disjoint from both $\ell_1\cap N(x)$ and $\ell_2\cap N(x)$, and that $\gamma(x')$ lies below $\wt S$ for all $x'\in \ell_1\cup \ell_2$.  

Since $\intr(s)\subset \Omega$ and $\Omega$ is open we can choose a sequence of points $(x_n)$ in $\Omega$ converging to $x$ so that each $x_i$ lies in, say, $Q_1\cap N(x)$. 
As in case 1, $\orb^s(x_i)\cap N(x)\subset \Omega$; these segments accumulate on $(\ell_1\cup s)\cap N(x)$ so $\ell_1\cap N(x)\subset \fr(S)$.
We can argue the same way in $Q_2$ to complete the proof of this case. The local picture is as shown in the center of \Cref{fig:boundarylocal}.

\medskip
\noindent \textbf{Case 3:} $x$ is in the boundary of a blown segment $s\subset\del \orb$ intersecting $\Omega$. 

The argument for this case is the same as that of case 2, except now $s$ is adjacent to only a single complementary component of $\orb^s(p)$ since it lies in $\del \orb$. This produces a half-closed interval $I$ in $\orb^s(x)\cap \fr(\Omega)$, with boundary point $x$. Every point in $I-\{x\}$ must lie in $\intr(\orb)$ because $x$ is a 3-pronged singularity and $s\subset\del \orb$. The local picture is shown on the right of \Cref{fig:boundarylocal}.
\end{proof}

The next lemma turns the previous local statement into a global one.

\begin{lemma}\label{lem:localtoglobal}
Let $x\in \fr(\Omega)$. If $x$ lies below $\wt S$, then $x$ is contained in a unique slice leaf or boundary slice ray contained in $\fr(\Omega)\cap \orb^s(x)$, which lies below $\wt S$. The symmetric statement is true if $x$ lies above $\wt S$, with $\orb^u$ replacing $\orb^s$.
\end{lemma}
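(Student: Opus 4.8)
\textbf{Proof plan for Lemma \ref{lem:localtoglobal}.}

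The plan is to bootstrap from the local statement (Lemma \ref{lem:localboundary}) by a connectedness/maximality argument along the leaf $\orb^s(x)$. Assume $x$ lies below $\wt S$; the case above is symmetric with $\orb^u$ in place of $\orb^s$. Let $\lambda$ be the leaf $\orb^s(x)$, which is homeomorphic to $\R$ (away from singular points) or more generally a tree of half-leaves; for the purpose of tracing out a slice leaf or boundary slice ray we will follow a properly embedded line or ray inside $\lambda$ starting at $x$. First I would define $J$ to be the connected component of $\fr(\Omega)\cap\lambda$ containing $x$, where ``connected'' is taken inside the slice-leaf/boundary-slice-ray structure — more precisely, take the union of all slice leaves and boundary slice rays through $x$ inside $\lambda$ that are contained in $\fr(\Omega)$, which by Lemma \ref{lem:localboundary} is nonempty (it contains the little interval $I$). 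I want to show $J$ is itself a slice leaf or a boundary slice ray contained in $\fr(\Omega)$ and lying below $\wt S$, and that it is unique.

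The key steps: (1) \emph{Openness of the "frontier part" within the leaf.} Lemma \ref{lem:localboundary} says every point $y\in\fr(\Omega)$ lying below $\wt S$ has a neighborhood in $\orb^s(y)$ contained in $\fr(\Omega)$; hence the set of points of $\lambda$ that lie in $\fr(\Omega)$ and below $\wt S$ is open in $\lambda$ (or in the relevant one-dimensional locus of $\lambda$), with the mild caveat near $\del\orb$ that one gets a half-interval, which is exactly what is needed to start a boundary slice ray. (2) \emph{Closedness / the limit points stay in the frontier and stay below.} If $y_n\in J$ converges to $y\in\lambda$, then $y\in\cl(\Omega)$ since $\fr(\Omega)\subset\cl(\Omega)$ and $\cl(\Omega)$ is closed; and $y\notin\Omega$ because $\Omega$ is open and disjoint from its frontier (stated in the Executive Summary), combined with the observation that $\Omega\cap\orb^s(x')=\varnothing$ for $x'$ below $\wt S$ in the relevant leaf — this is the content of the first paragraph of Case 1 of the proof of Lemma \ref{lem:localboundary}, using asynchronous proximality (Lemma \ref{lem:proximal}) and Lemma \ref{lem:proximal2}. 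So $y\in\fr(\Omega)$ and, again by Lemma \ref{lem:proximal2}, $y$ lies below $\wt S$. Thus $J$ is also closed in $\lambda$. (3) Since $\lambda$ minus its singular/branch structure is connected (as a line, or a ray when it hits $\del\orb$), openness plus closedness forces $J$ to be all of the ambient slice leaf or boundary slice ray — i.e. $J$ is a stable slice leaf, or, when $x$ is joined to $\del\orb$ by finitely many blown segments inside $\lambda$ (the half-interval case of Lemma \ref{lem:localboundary}), a stable boundary slice ray, exactly as in the definitions in Section \ref{sec:generalshadows}. One must check here that $J$ cannot ``bend'' through a singular point of $\orb^s(x)$ into a second half-leaf and back: at a singular point $p\in\lambda$, the half-leaves alternate between locally expanding and locally contracting, and the shadow $\Omega$ together with Lemma \ref{lem:blownsegs} and Case 2 of Lemma \ref{lem:localboundary} pin down which half-leaves of $\orb^s(p)$ can be in $\fr(\Omega)$; this is where one verifies that exactly two of them continue $J$, giving a bona fide embedded line (or ray). (4) \emph{Uniqueness.} Any slice leaf or boundary slice ray in $\fr(\Omega)\cap\orb^s(x)$ through $x$ is contained in $J$ by construction of $J$ as the union of all of them; conversely $J$ is shown to be a single such object, so it is the unique one.

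The main obstacle I anticipate is Step (3), specifically handling the interaction with singular points and with $\del\orb$ so that $J$ is genuinely a \emph{slice leaf} (an embedded $\R$) or a genuine \emph{boundary slice ray}, rather than some branched or self-accumulating subset of the singular leaf $\orb^s(x)$. This requires carefully invoking the local structure from Lemma \ref{lem:blownsegs} (which tells us how blown segments meeting $\Omega$ sit, and which endpoints lie below $\wt S$) and the three cases of Lemma \ref{lem:localboundary} at every singular point and every point of $\del\orb$ that $J$ passes through, to see that the frontier continues in exactly one direction past each such point. The rest — openness, closedness, and the abstract connectedness argument — is routine given the local lemma and the basic properties of $\Omega$ recorded at the start of the section.
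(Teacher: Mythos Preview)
Your proposal is correct and follows essentially the same approach as the paper: define the connected component $J$ of $\fr(\Omega)\cap\orb^s(x)$ through $x$, use Lemma~\ref{lem:localboundary} for openness in the leaf and closedness of $\fr(\Omega)$ for closedness, conclude $J$ is a properly embedded subtree lying below $\wt S$, and then argue it is a line or ray. The one place where the paper is sharper is your Step~(3) ``no bending'' check: rather than invoking the local expanding/contracting structure and Lemma~\ref{lem:blownsegs} at each singular point, the paper observes directly that if $J$ had a vertex $v$ of degree $\ge 3$, then since $\Omega$ is connected it lies in a single complementary region of $J$ at $v$, so the edges of $J$ not bounding that region could not lie in $\fr(\Omega)$---a contradiction. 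This replaces your case analysis with a one-line topological observation.
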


\begin{proof}
Let $\ell$ be the component of $\orb^s(x) \ssm (\Omega \cap \partial \orb)$ containing $x$ and let $\lambda$ be the component of $\ell \cap \fr(\Omega)$ that contains $x$. We will show that $\lambda$ lies below $\wt S$ and is a slice leaf if $\ell = \orb^s(x)$ and a boundary slice ray otherwise.

Observe that $\ell$ is a closed, connected subspace of $\orb^s(x)$ and hence a properly embedded tree in $\orb$ with at most one degree $1$ vertex on $\partial \orb$ (since a leaf meets at most one component of $\partial \orb$). By \Cref{lem:localboundary}, $\lambda$ contains an interval around each of its points; this interval is open for points in $\intr(\orb)$ and half open otherwise.
We also have that $\lambda$ is closed as a subspace of $\ell$ since $\fr(\Omega)$ is closed.
Hence, $\lambda$ is a properly embedded subtree of $\orb^s(x)$ with at most one degree $1$ vertex on $\partial \orb$ (exactly in the case when $\ell \subsetneq\orb^s(x)$). 
Moreover, $\lambda$ lies below $\wt S$: it is connected and disjoint from $\Omega$, contains $x$ which lies below $\wt S$, and points lying below $\wt S$ cannot be accumulated by points lying above $\wt S$ because ``not lying above $\wt S$" is an open condition.

It remains to show that $\lambda$ is a line or half line. 
Otherwise, it has a vertex $v$ of degree $n\ge 3$, but only one of the complementary regions of $\lambda$ incident to $v$ contains $\Omega$, contradicting the fact that $\lambda\subset \fr(\Omega)$.
\end{proof}

The following proposition establishes \Cref{prop:shadow_front} and more.
\begin{proposition}
\label{lem:front_prop}
Let $\ell$ be a component of $\fr(\Omega)$. Then the following hold:
\begin{enumerate}[label=(\alph*)]
\item $\ell$ is a slice leaf or boundary slice ray of $\orb^s$ or $\orb^u$, and $\ell$ is stable (unstable) if and only if $\ell$ lies below (above) $\wt S$.
\item $\ell$ has a neighborhood disjoint from all other components of $\fr(\Omega)$.
\item if $\ell$ contains periodic points, then there is a unique one $p$ with the property that there is a half-leaf $\lambda$ terminating at $p$ that intersects $\Omega$. Moreover, $\lambda$ is the unique such half-leaf, and $\lambda$ is locally expanding or contracting at $p$ depending on whether $\ell$ lies below or above $\wt S$, respectively.
\end{enumerate}
\end{proposition}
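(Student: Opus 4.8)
The plan is to deduce all three parts from the local structure of $\fr(\Omega)$ recorded in Lemmas \ref{lem:localboundary} and \ref{lem:localtoglobal}, together with the elementary fact stated just before the proposition that every slice leaf and every boundary slice ray separates $\orb$. I would begin by assembling the global shape of $\fr(\Omega)$. If $x\notin\Omega$ then $\gamma(x)$ is disjoint from $\wt S$, so a small flow box around $\gamma(x)$ is disjoint from $\wt S$ hence lies on one side of it, and every nearby orbit missing $\wt S$ lies on the same side; thus ``lies below $\wt S$'' and ``lies above $\wt S$'' are complementary open conditions on $\orb\ssm\Omega$, and a connected subset of $\fr(\Omega)$ lies entirely below or entirely above. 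By \ref{lem:localtoglobal}, each $x\in\fr(\Omega)$ lies in a unique slice leaf or boundary slice ray $L_x\subseteq\fr(\Omega)$, stable and contained in $\orb^s(x)$ if $x$ is below $\wt S$, and unstable and contained in $\orb^u(x)$ if $x$ is above; the uniqueness clause of \ref{lem:localtoglobal} makes distinct $L_x$ disjoint, so $\{L_x\}$ is a partition of $\fr(\Omega)$ into slice leaves and boundary slice rays, each stable and below or unstable and above.

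Parts (a) and (b) together say precisely that each $L_x$ is a connected component of $\fr(\Omega)$. Since $L_x$ is a properly embedded line or ray in $\orb$ it is closed, so it suffices to prove (b): $L_x$ has a neighborhood $V$ in $\orb$ with $V\cap\fr(\Omega)=L_x$. Here I would use that $L_x$ separates $\orb$ into two or three regions; as $\Omega$ is connected and disjoint from $L_x$ it lies in a single complementary region $Q$, while every other complementary region is open and disjoint from $\cl(\Omega)$, hence contributes nothing to $\fr(\Omega)$ near $L_x$. It remains to see that a one--sided collar of $L_x$ on the $Q$--side consists of points of $\Omega$, so that locally $\fr(\Omega)$ is exactly $L_x$. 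This is the content of the proof of \ref{lem:localboundary}: at any $q\in L_x$ one fixes a transverse disk $D$ through $\gamma(q)$ disjoint from $\wt S$ and shows that the local stable leaves (resp. local unstable leaves, resp. blown segments, according to which case of that proof applies) through $\Omega$--points near $q$ are themselves contained in $\Omega$ and accumulate onto $L_x$ from the $Q$--side; taking the union over $q\in L_x$ of the resulting neighborhoods produces $V$. Granting (b), (a) follows, and ``$\ell$ is stable $\iff$ $\ell$ lies below $\wt S$'' is read off from the classification above.

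For (c), let $\ell$ be a component of $\fr(\Omega)$ containing periodic points; say $\ell$ is stable and lies below $\wt S$, the other case being symmetric, and write $\ell\subseteq\orb^s(x)$. The first observation is that $\orb^s(x)$ is disjoint from $\Omega$ near every periodic point of $\ell$: a point $y\in\orb^s(x)\cap\Omega$ on the same stable half-leaf as $x$ would, by \ref{lem:proximal} and \ref{lem:proximal2}, give forward--asynchronously--proximal orbits $\gamma(x),\gamma(y)$ on opposite sides of $\wt S$ for all large $t$ (since $\gamma(x)$ never crosses $\wt S$ while $\gamma(y)$ does), a contradiction; the half-leaves of $\orb^s(x)$ emanating from $\ell$ are handled similarly using \ref{lem:blownsegs}. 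Hence $\Omega$ lies in a single complementary region $Q$ of $\orb^s(x)$, and any half-leaf $\lambda$ terminating at a periodic point $p\in\ell$ and meeting $\Omega$ must be an unstable half-leaf pointing into $Q$, or (as in case (a) of \ref{lem:blownsegs}) a blown segment with expanding endpoint $p$ whose interior lies in $\Omega$. In either description $Q$ contains a single unstable prong of $p$, by the alternation of stable and unstable prongs at $p$, which forces $\lambda$ to be unique at $p$; and $\lambda$ is locally expanding at $p$ exactly because $\ell$ lies below $\wt S$. Uniqueness of $p$ itself requires tracking how $\Omega$ meets $\ell$: using the consistent orientation of the blown segments comprising $\ell$ (from the alternation of expanding/contracting prongs around vertices of the blowup graph) together with the structure of the stabilizer of $\orb^s(x)$ when $\ell$ is a slice leaf, one sees that at most one periodic point of $\ell$ can have a prong meeting $Q\cap\Omega$; the boundary-slice-ray case runs the same bookkeeping along $\del\orb$. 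This is the boundary analogue of the argument in \cite[Section 4]{Fen09}.

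I expect the main obstacle to be making the one--sided--collar step of (b) rigorous at the exceptional points --- singularities, endpoints of blown segments, and points of $\del\orb$ --- where one must both identify which local leaves of $\orb^s$ or $\orb^u$ accumulate onto $L_x$ from the $\Omega$--side and rule out a second frontier component $L_z$ squeezing $\Omega$ into an arbitrarily thin region near a point of $L_x$; both are handled by the same case analysis as in \ref{lem:localboundary}, now carried out in the presence of boundary. The periodic-point bookkeeping in (c), especially for boundary slice rays, is a secondary difficulty of the same flavor.
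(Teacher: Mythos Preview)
Your treatment of (a) and (b) is essentially the paper's proof, repackaged via the partition $\{L_x\}$. In particular, deducing disjointness of the $L_x$'s from the uniqueness clause of \Cref{lem:localtoglobal} together with the observation that ``above/below $\wt S$'' are clopen on $\orb\ssm\Omega$ is clean and equivalent to the paper's explicit separation argument (``if $\ell_1\cap\ell_2\ne\varnothing$, then $\Omega$ cannot accumulate on all of $\ell_1\cup\ell_2$''). The one-sided collar step is exactly what the paper uses to get (b).

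Part (c) has a genuine gap. Your sentence ``Hence $\Omega$ lies in a single complementary region $Q$ of $\orb^s(x)$'' is not justified and is in fact false in the blown case you yourself flag: if a blown segment $s$ has expanding endpoint $p\in\ell$ and $\intr(s)\subset\Omega$, then $s\subset\orb^s(x)$, so $\orb^s(x)\cap\Omega\ne\varnothing$. The paper avoids this by working \emph{locally}: near each periodic $p\in\ell$ one has an interval $I\subset\ell$ with $\Omega$ on one side, and this is enough to see there is at most one half-leaf at $p$ (necessarily locally expanding) pointing into $\Omega$. No global containment of $\Omega$ in a region of $\orb\ssm\orb^s(x)$ is needed or available.

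Your uniqueness-of-$p$ argument is also too vague, and the appeal to ``the structure of the stabilizer of $\orb^s(x)$'' is a red herring: the paper uses no group action here. The actual mechanism is the consistent orientation of quadrant boundaries from \Cref{local blowups}. Concretely: if two adjacent periodic points $p,p'\in\ell$ each emit a locally expanding half-leaf $r,r'$ into $\Omega$, then $r$, $r'$, and the segment of $\ell$ between $p$ and $p'$ bound a quadrant whose boundary is \emph{not} consistently oriented, a contradiction. You also omit existence: the paper shows that a slice leaf $\ell$ must contain a ``sink'' (a periodic point where both adjacent half-leaves in $\ell$ are contracting), and the locally expanding prong there necessarily enters $\Omega$; the boundary slice ray case is handled by a similar orientation chase terminating on $\partial\orb$.
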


\begin{proof}
Let $x\in \fr(\Omega)$ and suppose without loss of generality that $x$ lies below $\wt S$. Using
\Cref{lem:localtoglobal} we can find a boundary slice leaf in $\orb^s(x)\cap \fr(\Omega)$, or a
slice ray in $\orb^s(x)\cap \fr (\Omega)$ terminating in a point accumulated by points in $\del \orb\cap \Omega$. 

Let $\lambda$ be this slice leaf or boundary
slice ray. \Cref{lem:localtoglobal} also tells us that $\lambda$ lies below $\wt S$. Note
that $\Omega$ is connected, so it only accumulates on $\lambda$ from one side of $\lambda$. This
implies that $\lambda$ is a maximal connected subset of $\orb^s(x)$ contained in $\fr(\Omega)$.

Let $\ell_1$ and $\ell_2$ be two slice leaves or boundary slice rays in $\fr(\Omega)$. We claim $\ell_1\cap \ell_2=\varnothing$. If $\ell_1\cap \ell_2$ is nonempty, it could consist of a single point or some union of half-leaves. In either case, $\ell_1\cup \ell_2$ separates $\orb$ into a number of components, none of which contains all of $\ell_1\cup \ell_2$ in its frontier. Since $\Omega$ lies in a single component of $\ell_1\cup\ell_2$, this is a contradiction.

It follows that $\fr(\Omega)$ is a disjoint union of slice leaves or boundary slice rays. Let $C$ be a component of $\fr( \Omega)$.
If $C$ consists of more than one slice leaves or boundary slice rays then it would have nonempty interior, a contradiction.
We conclude that $C$
is a single slice leaf or boundary slice ray. If the component contains any point lying under $\wt S$, we have seen that it is stable and lies entirely under $\wt S$. Conversely suppose there is a stable component $\ell^s$ containing a point $p$ lying over $\wt S$. Then by \Cref{lem:localtoglobal} there is an unstable slice leaf $\ell^u$ passing through $x$ contained in $\fr (\Omega)$, contradicting that $\fr(\Omega)$ is a disjoint union of slice leaves or boundary slice rays. This proves (a). 

As noted above, $\Omega$ must lie to one side of each slice or slice ray in $\fr(\Omega)$, and we know now these are precisely the components of $\fr(\Omega)$. This implies that boundary components do not accumulate on each other, proving (b).

Turning our attention to (c), suppose that $p$ is a periodic point in $\ell$ lying below $\Omega$. If $p$ is the boundary point of blown segment intersecting $\Omega$, we have already seen in the proof of \Cref{lem:localboundary} (cases 2 and 3) that this blown segment is locally expanding at $p$ and is the only half-leaf terminating at $p$ that intersects $\Omega$. Otherwise we are in case 1 of the proof of \Cref{lem:localboundary}, and there is an open interval $I\subset \fr(\Omega)$ containing $x$ that is the limit of segments of leaves of $\orb^s$, accumulating on $I$ from the $\Omega$ side. In this situation there can be at most one half-leaf terminating at $p$ lying on the $\Omega$ side of $I$. If there is such a half-leaf it is transverse to $\orb^s$, and hence is an unstable half-leaf which is not a blown segment. In particular it is locally expanding at $p$. 

Next we prove there is at most one periodic point $p$ in $\ell$ which is an endpoint of a periodic half-leaf intersecting $\Omega$. If $p\in \ell$ is a periodic point incident to two half-leaves in $\ell$ which are locally contracting at $p$, then there must be at least one (and hence only one by above) locally expanding half-leaf emanating from $p$ into $\Omega$. This is because the half-leaves meeting $p$ alternate between locally expanding and contracting. Call such a point $p$ a \emph{sink} of $\ell$. If $\ell$ is a slice leaf then the two noncompact half-leaves in $\ell$ are stable, and hence oriented toward their endpoints. This forces the existence of at least one sink in $\ell$. If $\ell$ is a boundary slice ray containing no sinks, then all its half-leaves must be consistently oriented, so the half-leaf of $\ell$ meeting $\del \orb$ is locally contracting at its endpoint $q$. There is then a blown segment in $\del\orb\cap \Omega$ meeting $q$ which is locally expanding at $q$. 

Now suppose that $\ell$ contains at least two periodic points which are endpoints of periodic half-leaves intersecting $\Omega$. Choose two such points $p$ and $p'$ which are adjacent along $\ell$ and are the endpoints of two such half-leaves, $r$ and $r'$ respectively. By above, $r$ and $r'$ are both locally expanding. This means that the union of $r$, $r'$, and the segment of $\ell$ between $p$ and $p'$ is not consistently oriented. Since this is part of the boundary of a quadrant, we have a contradiction (see \Cref{fig:repeated-blowup} and related discussion in \Cref{sec:surfaceblowups}). This proves (c) when $\ell$ lies below $\wt S$, and the other case is symmetric.
\end{proof}

\begin{proof}[Proof of \Cref{prop:localshadows}]
Suppose $T$ is a blowup tree containing a blown segment $s$ that intersects $\Omega$. By
\Cref{lem:blownsegs}, it is either the case that (1) $\intr(s)\subset\Omega$ and $\del
s\subset \fr(\Omega)$, or (2) $s$ is contained in a component $\ell$ of $\del \orb$
that lies entirely in $\Omega$. In case (1), \Cref{lem:front_prop} implies that $T\cap
\Omega$ equals $\intr(s)$. In case (2) we need only check that there are no blown
segments in $\Omega$
incident to $\ell$, but this is true since $S$ has the clean annulus property. Hence $T\cap \Omega=\ell$, completing the analysis when $T$ intersects $\Omega$.

To finish the proof, suppose $T$ is disjoint from $\Omega$ and suppose $p\in T\cap \fr(\Omega)$. Suppose without loss of generality that $p$ lies below $\wt S$. By \Cref{lem:front_prop}, $p$ is contained in $\ell$ where $\ell$ is a slice leaf or boundary slice ray of $\orb^s$, and there is a single periodic point in $\ell$ from which a half-leaf $h$ (necessarily locally expanding) emanates into $\Omega$. Since $T$ and $\Omega$ are disjoint, $h$ is not a blown segment. Hence $T\cap \fr(\Omega)$ consists precisely of the blown segments contained in $\ell$. Since the two quadrants of $T$ intersecting $\Omega$ meet along $h$, which is not a blown segment, they are adjacent.
\end{proof}

The following corollary will be used in the next section. A \emph{regular open set} is one that is the interior of its closure.
\begin{corollary}\label{cor:int_cls}
$\Omega$ is regular.
\end{corollary}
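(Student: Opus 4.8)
The plan is to show both inclusions $\Omega \subseteq \intr(\cl(\Omega))$ and $\intr(\cl(\Omega)) \subseteq \Omega$. The first inclusion is immediate since $\Omega$ is open (being the image of the open set $\wt S$ under the homeomorphism $\Theta|_{\wt S}$, as recorded in the setup), so $\Omega \subseteq \intr(\Omega) \subseteq \intr(\cl(\Omega))$. The content is the reverse inclusion: no point of $\fr(\Omega)$ can lie in the interior of $\cl(\Omega)$. Equivalently, every point of $\fr(\Omega)$ is accumulated by points of the complement $\orb \ssm \cl(\Omega)$.

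First I would invoke \Cref{lem:front_prop} (equivalently \Cref{prop:shadow_front}): every component $\ell$ of $\fr(\Omega)$ is a slice leaf or boundary slice ray of $\orb^s$ or $\orb^u$, each isolated from the other components, and $\Omega$ lies entirely to one side of $\ell$. Fix $x \in \fr(\Omega)$ and let $\ell$ be its component; say $\ell$ is stable (the unstable case is symmetric). Because $\ell$ is isolated from the other components of $\fr(\Omega)$ and $\Omega$ accumulates on $\ell$ from only one side, there is a neighborhood $V$ of $x$ in $\orb$ such that $V \cap \cl(\Omega)$ lies on the closed side of $\ell$ determined by $\Omega$, and $V \cap \fr(\Omega) \subseteq \ell$. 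Now I would argue that the side of $\ell$ \emph{opposite} to $\Omega$ meets $V$ in points not in $\cl(\Omega)$: such points are separated from $\Omega$ by the properly embedded line (or ray) $\ell$, since $\ell$ separates $\orb$ into components (this uses \Cref{th:flow_space}, that $\orb$ is a disk minus a boundary subset, together with the separation statements about slice leaves and boundary slice rays from the Notation subsection), and $\Omega$ lies in a single complementary component. Hence these opposite-side points lie in $\orb \ssm \cl(\Omega)$ and accumulate on $x$, so $x \notin \intr(\cl(\Omega))$.

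The one subtlety to handle carefully is the case where $x \in \del\orb$ and $x$ is a limit of points of $\Omega \cap \del\orb$ --- here the local frontier near $x$ is only a half-open interval (case 3 of \Cref{lem:localboundary}), and one must check there are still complement points accumulating on $x$. But in that situation $\ell$ is a boundary slice ray and $x$ is its endpoint on $\del\orb$; the blown segments of $\del\orb$ adjacent to $x$ on the far side from $\Omega$'s accumulation, together with the interior side across $\ell$, supply complement points limiting to $x$, again because $\ell$ together with the relevant boundary arc separates $\Omega$ from that region. This is the step I expect to require the most care, since one must track which blown segments of $\del\orb$ belong to $\Omega$ and which do not, using the clean annulus property and \Cref{lem:blownsegs}; but it is a local bookkeeping matter, not a conceptual obstacle. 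Once both inclusions are established, $\Omega = \intr(\cl(\Omega))$, i.e. $\Omega$ is regular.
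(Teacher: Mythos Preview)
Your proposal is correct and follows essentially the same approach as the paper: both arguments use \Cref{lem:front_prop} to show that each point of $\fr(\Omega)$ is accumulated by points of $\orb\ssm\cl(\Omega)$ (equivalently, $\fr(\Omega)\subset\fr(\cl(\Omega))$), which combined with the easy inclusion gives regularity. The paper packages this more compactly via the identity chain $\Omega=\intr(\Omega)=\cl(\Omega)-\fr(\Omega)=\cl(\Omega)-\fr(\cl(\Omega))=\intr(\cl(\Omega))$, and your separate treatment of the $x\in\del\orb$ case is unnecessary since the one-sidedness and isolation from \Cref{lem:front_prop} already handle it uniformly.
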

\begin{proof}
First note that $\fr( \Omega)= \fr (\closure(\Omega))$. Indeed, the fact that each boundary component bounds $\Omega$ to only one side gives that $\fr( \Omega)\subset \fr (\closure(\Omega))$, and the reverse inclusion follows from the openness of $\Omega$. Now $\Omega=\intr (\Omega)=\cl(\Omega)-\fr(\Omega)=\cl(\Omega)-\fr(\cl(\Omega))=\intr(\cl(\Omega))$.
\end{proof}

\section{Uniqueness of almost transverse position}\label{sec:flowequiv} 

The following results, which are the goals of this section, comprise our uniqueness statement from \Cref{D}. 

\begin{theorem}\label{thm:combinatorial equivalence}
Let $\phi$ be a pseudo-Anosov flow on $M$ and let $S_1$ and $S_2$ be isotopic properly embedded surfaces which are minimally transverse to generalized dynamic blow ups $\phi_1^\sharp$ and $\phi_2^\sharp$, respectively. Then $\phi_1^\sharp$ and $\phi_2^\sharp$ are combinatorially equivalent.
\end{theorem}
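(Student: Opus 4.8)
\textbf{Proof proposal for \Cref{thm:combinatorial equivalence}.}

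The plan is to show that the combinatorial type of each blowup $\phi_i^\sharp$ is completely determined by the isotopy class of $S_i$, via the flow-space shadow machinery of \Cref{sec:generalshadows}. Recall (\Cref{global blowups}) that the combinatorial type of a generalized dynamic blowup is specified by the blowup graph $T^\sharp$ at each blown orbit, together with its embedding into the disk/annulus $V^\sharp$, which in turn is determined by which pairs of quadrants of the original return map get connected along $T^\sharp$. So the goal is to recover, from the surface alone, exactly which quadrants are joined at each blown orbit. The key idea is that \Cref{prop:efficientblowup} (equivalently, the construction in \Cref{sec:surfaceblowups}) shows that when $S_i$ is efficiently/minimally transverse to $\phi_i^\sharp$, the blowup graph at a blown orbit $\gamma$ is precisely the dual tree $G$ of the (parallel classes of the) coherently cooriented arc system $\mc A = S_i \cap D$ cut out on a meridional disk/annulus $D$ adapted to $S_i$. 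Thus the combinatorial type is read off from the arc system, and the arc system in turn is read off from the flow-space shadow.

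First I would reduce both flows to a common \enqu\enquote{honest} situation: by blowing down any blown annuli that $S_i$ meets only in arcs (as in the clean annulus discussion at the start of \Cref{sec:generalshadows}), I may assume each $S_i$ has the clean annulus property, and since the surfaces are minimally transverse, each non-boundary blown annulus is met by $S_i$ in core curves. Next, fix a common lift: choosing isotopic $S_1, S_2$ and compatible lifts $\wt S_1, \wt S_2$ in $\wt M$, the isotopy gives an ambient isotopy of $\wt M$ commuting with the $\pi_1(M)$-action, so $\wt S_1$ and $\wt S_2$ are $\pi_1$-equivariantly isotopic in $\wt M$. The crucial point is that the shadow $\Omega_i = \Theta_i(\wt S_i) \subset \orb_i$ is determined up to $\pi_1$-equivariant homeomorphism by the isotopy class of $S_i$, because $\Theta_i|_{\wt S_i}$ is a homeomorphism onto $\Omega_i$ (each orbit meets $\wt S_i$ at most once, positively) and because collapsing flowlines is an isotopy invariant at the level of the (completed) orbit space. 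Then \Cref{prop:localshadows} and \Cref{lem:front_prop} tell us that near each blowup tree $T$, the local picture of $\Omega_i$ records exactly which pairs of quadrants of $T$ are joined (cases (a)--(d)): a blown segment is in $\Omega_i$ iff $S_i$ passes through that blown annulus, and the sink/half-leaf data in \Cref{lem:front_prop}(c) pins down the coorientations. So the combinatorial type of $\phi_i^\sharp$ at each blown orbit is a function of $\Omega_i$ alone.

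Then I would assemble these local determinations into a global identification. Since the blowdown maps $\pi_i \colon (M,\phi_i^\sharp) \to (M^\flat, \phi^\flat)$ to a common pseudo-Anosov model (\Cref{rmk:blowup_on_M}, diagram~\eqref{eqn:blowupdown}) identify the orbit spaces away from the blown trees, and since $\wt S_1, \wt S_2$ are equivariantly isotopic, the shadows $\Omega_1, \Omega_2$ correspond under the induced identification $\orb_1 \ssm (\text{blowup trees}) \cong \orb^\flat \ssm (\text{sing.}) \cong \orb_2 \ssm (\text{blowup trees})$. Hence at each singular orbit $\gamma$ of $\phi^\flat$ that is blown up, the arc systems on an adapted meridional disk/annulus agree up to the twisting symmetry $\theta$, so the dual trees $G$ agree as $\theta$-equivariant embedded graphs, i.e. the blowups connect the same quadrant pairs. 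This is exactly combinatorial equivalence (which, as the paper notes, is weaker than orbit equivalence and ignores the choice of gluing maps). The main obstacle I anticipate is the bookkeeping needed to make \enquote{the shadow is an isotopy invariant} precise in the presence of boundary and blown annuli --- in particular ensuring that the clean-annulus reduction does not change the combinatorial type, and that the identifications of the three orbit spaces $\orb_1, \orb^\flat, \orb_2$ over the complement of the blowup loci are compatible with the equivariant isotopy class of the lifted surface. This should follow from \Cref{prop:localshadows}, \Cref{cor:int_cls}, and the uniqueness-up-to-homeomorphism statements in \Cref{nearby homeo}, but it requires care.
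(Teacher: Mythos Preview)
Your overall architecture matches the paper's: read the combinatorial type of each blowup off from the flow-space shadow of a lift of the surface, and then argue that the two shadows agree once pushed down to the common collapsed orbit space $\orb^*$. The part you handle well is the first half: \Cref{prop:localshadows} and \Cref{remark:3Dinterpretation} do show that, at each blowup tree, ``which nonadjacent quadrants of the collapsed singular point are met by $p_i(\Omega_i)$'' recovers exactly which quadrants are joined by a blown segment, hence the combinatorial type. (The paper argues this directly from the quadrant picture rather than through arc systems and \Cref{prop:efficientblowup}, but the two viewpoints are equivalent.)

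The genuine gap is your justification that $\Omega_1$ and $\Omega_2$ correspond under the identification of $\orb_1$ and $\orb_2$ away from the blowup trees. You write that this holds ``since $\wt S_1,\wt S_2$ are equivariantly isotopic,'' and earlier that ``collapsing flowlines is an isotopy invariant at the level of the (completed) orbit space.'' Neither of these is a proof: the shadows live in \emph{different} orbit spaces for \emph{different} flows, and an ambient isotopy has no reason, a priori, to respect which flow lines a surface meets. This is precisely the content of the paper's \Cref{lem:puncture_images_equal}, which you are effectively asserting without argument. The paper's mechanism is concrete: for a regular periodic point $y_1\in\Omega_1$, the corresponding closed $\phi_1^\sharp$-orbit $\gamma_1$ is carried by $g_2^{-1}\circ g_1$ to a homotopic closed $\phi_2^\sharp$-orbit $\gamma_2$; a bounded-tracks argument (properness of the distance from $\wt\gamma_1$ to $\wt S_1$, plus bounded tracks for both the $\gamma_1\!\to\!\gamma_2$ homotopy and the $S_1\!\to\!S_2$ isotopy) forces $\wt\gamma_2$ to cross $\wt S_2$. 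This gives $p_1(P_1)=p_2(P_2)$ on regular periodic points; density of periodic points and regularity of $\Omega_i$ (\Cref{cor:int_cls}) then upgrade this to $\mr p_1(\mr\Omega_1)=\mr p_2(\mr\Omega_2)$, and a final lemma pins down the singular points. You correctly flagged this step as ``the main obstacle,'' but it is not just bookkeeping---it is the substance of the proof, and your proposal does not supply it.
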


Recall the minimally transverse condition means each blown annulus of $\phi_i^\sharp$ not contained in $\del M$ is intersected by $S_i$ in a positive number of core curves.

\begin{theorem}\label{thm:same shadow}
Let $S_1$ and $S_2$ be two isotopic properly embedded surfaces minimally transverse to an
almost pseudo-Anosov flow $\phi^\sharp$. Then any two compatible lifts of isotopic components of $S_1$ and $S_2$ to $\wt
M$ have the same projection to the orbit space. Further, $S_1$ and $S_2$ are isotopic along flowlines. 
\end{theorem}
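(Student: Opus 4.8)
The plan is to prove \Cref{thm:same shadow} in two stages: first the coincidence of shadows, then the promotion to a flowline isotopy. For the shadow statement, let $S_1, S_2$ be isotopic and minimally transverse to $\phi^\sharp$, and fix an ambient isotopy carrying $S_1$ to $S_2$. This lifts to an isotopy of $\wt M$ carrying a chosen lift $\wt S_1$ to a lift $\wt S_2$ (the ``compatible'' lift), and since each component separates $\wt M$ and is positively transverse to $\wt\phi^\sharp$, the shadows $\Omega_i = \Theta(\wt S_i)$ are open sets whose frontiers are governed by \Cref{lem:front_prop}. The key point is that $\Omega_1$ and $\Omega_2$ have the same closure in $\orb$: an isotopy of $M$ moves each orbit a bounded amount and, more importantly, cannot change which orbits of $\wt\phi^\sharp$ meet $\wt S_i$ up to the asymptotic behavior — concretely, a point $p \in \orb$ lies in $\cl(\Omega_i)$ if and only if the orbit $\gamma(p)$ is ``trapped'' on the positive side in the sense controlled by the asynchronously-proximal structure (\Cref{lem:proximal}, \Cref{lem:proximal2}), and this trapping condition is invariant under compactly-supported isotopy. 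Once $\cl(\Omega_1) = \cl(\Omega_2)$, \Cref{cor:int_cls} (both $\Omega_i$ are regular open sets) forces $\Omega_1 = \Omega_2$.

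A cleaner way to organize the shadow argument, which I would actually carry out, is to reduce to uniqueness of frontier: the frontier components of $\Omega_i$ are, by \Cref{lem:front_prop}, precisely the stable/unstable slice leaves and boundary slice rays that ``bound $\wt S_i$'', i.e. correspond to the half-leaves of $\wt W^{s/u}$ along which $\wt S_i$ has a limiting behavior. One shows that a slice leaf $\lambda$ belongs to $\fr(\Omega_1)$ iff it belongs to $\fr(\Omega_2)$: if $\lambda \subset \fr(\Omega_1)$ then $\lambda$ lies (say) below $\wt S_1$ and is stable, with $\Omega_1$ accumulating from one side; applying the isotopy and using that stable leaves through points below a transverse surface stay below under forward flow (\Cref{lem:proximal2}), the same $\lambda$ lies below $\wt S_2$ and is a frontier component of $\Omega_2$ on the same side. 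Since both $\Omega_i$ are connected, regular, and have the same frontier with the same side data, $\Omega_1 = \Omega_2$. This also handles the blowup-tree interactions via \Cref{prop:localshadows}: the clean annulus property (automatic for minimally transverse surfaces, or their components) ensures that the cases (a)--(d) recorded there are determined by $\Omega_i$ alone.

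For the second half — that $S_1$ and $S_2$ are isotopic \emph{along flowlines} — I would argue downstairs using $\Omega_1 = \Omega_2$ for every pair of compatible lifts (equivalently, every $\pi_1(M)$-translate), which shows the union $\Theta^{-1}(\Omega_i) \cup$(endpoints) is the same subset $\wt X \subseteq \wt M$ and that $\wt S_1, \wt S_2$ are two sections of the flow over the same saturated region $\wt X$. Because $\wt X$ is a union of orbit segments (open or closed, per \Cref{lem:blownsegs}), and each such segment meets each of $\wt S_1, \wt S_2$ exactly once (positive transversality plus separation), one defines the straight-line isotopy: slide each point of $\wt S_1$ along its flowline until it hits $\wt S_2$. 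This is equivariant by construction, hence descends to an isotopy of $S_1$ to $S_2$ in $M$ whose tracks are contained in flowlines of $\phi^\sharp$. One must check this is continuous and an ambient isotopy; continuity follows since the ``first hitting time of $\wt S_2$'' is a continuous function on $\wt S_1$ (both surfaces are embedded, properly, transverse to the flow), and properness/compactness of $S_i$ prevents escape.

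The main obstacle I anticipate is the frontier-matching step in the presence of boundary and blown annuli: one has to rule out the possibility that the isotopy from $S_1$ to $S_2$ creates or destroys a frontier slice ray or swaps a case-(a) blown segment for a case-(d) adjacent-quadrant configuration. This is where the hypothesis that both surfaces are \emph{minimally transverse} (not merely almost transverse) is essential — it pins down, via \Cref{prop:localshadows} and \Cref{rmk:3Dinterpretation}, exactly which blown annuli are met in core curves versus arcs, and this data is read off from $\cl(\Omega_i)$. Making precise that a compactly-supported isotopy of $M$ cannot alter this combinatorial data, and in particular cannot move a frontier leaf from ``below $\wt S$'' to ``above $\wt S$'', requires the quantitative input of \Cref{lem:proximal2} (uniform $2\epsilon$-separation of forward-asymptotic-under-isotopy orbits on opposite sides of a transverse surface) applied to the isotoped surfaces uniformly in the isotopy parameter. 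The rest — regularity, connectedness, equivariance of the flowline isotopy — is then formal.
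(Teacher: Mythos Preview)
Your shadow-coincidence argument has a genuine gap. You propose to show $\cl(\Omega_1)=\cl(\Omega_2)$ (or equivalently $\fr(\Omega_1)=\fr(\Omega_2)$) directly, asserting that the condition ``$\gamma(p)$ is trapped on the positive side'' or ``$\lambda$ lies below $\wt S_i$'' is invariant under a bounded-track isotopy. But this is exactly what is not clear for an arbitrary orbit: if $p\in\Omega_1$, the orbit $\gamma(p)$ crosses $\wt S_1$ once and thereafter stays on the positive side, but it need not \emph{diverge} from $\wt S_1$ --- for instance, $\gamma(p)$ could lie in the stable leaf of a frontier point and forward-accumulate on $\wt S_1$ from above. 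In that case a bounded-track isotopy to $\wt S_2$ tells you nothing about whether $\gamma(p)$ crosses $\wt S_2$. Invoking \Cref{lem:proximal2} does not help: that lemma compares two orbits relative to a \emph{single} surface, not a single orbit relative to two isotopic surfaces.

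The paper's route around this is to use \emph{periodic} orbits. If $\gamma$ is a closed regular orbit intersecting $S_1$ positively in $M$, then any lift $\wt\gamma$ meets the full preimage $q^{-1}(S_1)$ in an infinite bi-directional sequence, so the distance from $\wt\gamma$ to the single lift $\wt S_1$ is a proper function. Now bounded tracks do transfer: $\wt\gamma$ goes from far below $\wt S_2$ to far above, hence crosses it. Thus the regular periodic points in $\Omega_1$ and $\Omega_2$ coincide; density of periodic points plus regularity of $\Omega_i$ (\Cref{cor:int_cls}) gives $\mr\Omega_1=\mr\Omega_2$, and the blowup-tree analysis of \Cref{prop:localshadows} recovers the singular points. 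This periodic-orbit step is the missing idea in your proposal.

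Your flowline-isotopy sketch is closer to the paper's, but you should not stop at ``equivariant, hence descends'': that gives a homotopy, not an isotopy. The issue is that different $\pi_1(M)$-translates of the interpolated surface could collide in $\wt M$. The paper resolves this by observing that along each lifted flowline the intersection points with $q^{-1}(S_1)$ and with $q^{-1}(S_2)$ appear in the same order (labelled by the corresponding shadows, which you have just shown agree), so linear interpolation along flowlines preserves the order and hence embeddedness.
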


The main tool for proving \Cref{thm:combinatorial equivalence} and \Cref{thm:same shadow} is the structure theory of shadows described in \Cref{sec:generalshadows}.

\subsection{Shadows of isotopic surfaces are unique}\label{sec:proj_unq}

To begin, we need to recall and extend the
notation for flow spaces from \Cref{sec:flowspace}. We let $\orb$ denote the flow space of
the original flow $\phi$, lifted to the universal cover $\wt M$, and let $\Theta:\wt
M\to \orb$ denote the quotient map.  

In the case where $M$ has boundary, $\orb$ will have lines that are the images of the planar components of $\boundary \wt M$. We let $\orb^*$ denote the image of $\orb$ under the quotient map $p$ that collapses each blown annulus to a point, which simply collapses all boundary lines
The resulting space is not locally compact at the images of the lines. If $M$ has no boundary, then $\orb^* = \orb$.

Suppose, as in the statement of \Cref{thm:combinatorial equivalence}, that $S_1,S_2\subset M$ are isotopic surfaces which are respectively minimally transverse to (generalized) dynamic blowups $\phi_1^\sharp$ and $\phi_2^\sharp$ of $\phi$. We denote by $\orb_i$ the flow space of $\phi_i^\sharp$, and by $p_i\colon \orb_i\to \orb^*$ the map collapsing all blown segments to points (note we are identifying the result of this collapsing with $\orb^*$ in the obvious way).

Since $\phi_1^\sharp$ and $\phi_2^\sharp$ are dynamic blowups of $\phi$, there are homeomorphisms
\[
g_i \colon M \ssm ( \text{blowup locus of } \phi_i^\sharp  ) \to M \ssm (\text{blowup locus of }\phi )
\]
which are orbit equivalences from the restriction of $\phi^\sharp_i$ to that of $\phi$, 
and homotopic to the identity through maps $M \ssm ( \text{blowup locus of } \phi_i^\sharp  ) \to M$. The following diagram describes some of the  spaces and maps at play, but need not commute, because $\phi_1^\sharp$ and $\phi_2^\sharp$ are distinct flows.
\begin{equation} \label{arrows}
\begin{tikzcd}
&\wt M\arrow{ld}[swap]{\Theta_1}\arrow{d}{\Theta}\arrow{rd}{\Theta_2}&\\
\orb_1\arrow{rd}[swap]{p_1}&\orb\arrow{d}{p}&\orb_2\arrow{ld}{p_2}\\
&\orb^*&
\end{tikzcd}
\end{equation}

\medskip

Now, we choose compatible lifts $\wt S_1$ and $\wt S_2$ of components of $S_1$ and $S_2$.
For brevity we denote $\Theta_i(\wt S_i)$ by $\Omega_i$. 
Let $\mr\orb$, $\mr\orb_i$, $\mr\orb^*$ respectively denote the spaces $\orb$, $\orb_i$, $\orb^*$ minus all blown segments and/or singular points (these are all canonically homeomorphic via the maps in \Cref{arrows}). Applied to a set or map, the hovering circle notation denotes restriction to the corresponding set. Here are some salient properties of these objects:
\begin{itemize}
\item Each $\mr p_i\colon \mr\orb_i\to \mr \orb^*$ is a homeomorphism, so it respects interiors and closures in its domain and range.
\item Regularity of open sets (recall this means $A=\intr(\cl(A))$) is preserved upon passage to open subspaces. Each $\Omega_i$ is regular in $\orb_i$ by \Cref{cor:int_cls}, so $\mr\Omega_i$ is regular in $\mr\orb_i$.
\end{itemize}

\begin{lemma} \label{lem:puncture_images_equal}
With the above notation, $p_1(\Omega_1) = p_2(\Omega_2)$.
\end{lemma}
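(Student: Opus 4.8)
The plan is to show that $p_1(\Omega_1)$ and $p_2(\Omega_2)$ have the same interior and the same closure inside $\orb^*$, and then invoke regularity to conclude they are equal. First, I would restrict attention to the ``regular'' parts $\mr\orb, \mr\orb_i, \mr\orb^*$, where the collapsing maps are homeomorphisms identifying everything canonically. On $\mr\orb^*$, the sets $\mr p_1(\mr\Omega_1)$ and $\mr p_2(\mr\Omega_2)$ are literally the images of the shadows of $\wt S_1$ and $\wt S_2$ under the orbit equivalences $g_1, g_2$ (suitably interpreted on the punctured flow spaces); since $g_1$ and $g_2$ are both homotopic to the identity through maps into $M$, a compatible-lifts argument shows these two subsets of $\mr\orb^*$ actually coincide. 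Concretely: a point of $\mr\orb^*$ lies in $\mr p_i(\mr\Omega_i)$ iff the corresponding orbit of $\phi$ (away from the blowup locus) crosses $g_i(\wt S_i)$; because $S_1$ and $S_2$ are isotopic and the isotopy can be pushed off the blowup loci, and because $g_i$ is homotopic to the identity, $g_1(\wt S_1)$ and $g_2(\wt S_2)$ project to the same shadow in $\mr\orb$. Hence $\mr p_1(\mr\Omega_1) = \mr p_2(\mr\Omega_2)$ as subsets of $\mr\orb^*$.

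Next I would upgrade this from the punctured spaces to all of $\orb^*$. Since $\mr\orb^*$ is dense in $\orb^*$ (the removed set is a discrete/tame collection of points and collapsed segments), and $\mr\orb_i$ is dense in $\orb_i$, we get $\cl_{\orb^*}(p_i(\Omega_i)) = \cl_{\orb^*}(\mr p_i(\mr\Omega_i))$. Therefore the two closures agree: $\cl(p_1(\Omega_1)) = \cl(p_2(\Omega_2))$. For the interiors, I would argue that $p_i$ maps $\Omega_i$ onto a set whose interior in $\orb^*$ is recovered from $\mr\Omega_i$: each $\Omega_i$ is open in $\orb_i$ and regular by \Cref{cor:int_cls}, so $\mr\Omega_i$ is regular in $\mr\orb_i$ (as noted in the bulleted list preceding the lemma), hence $\mr p_i(\mr\Omega_i)$ is regular in $\mr\orb^*$, hence $\intr_{\orb^*}(p_i(\Omega_i)) \subseteq \cl(\mr p_i(\mr\Omega_i)) = $ a set independent of $i$. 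Then
\[
p_1(\Omega_1) = \intr\bigl(\cl(p_1(\Omega_1))\bigr) = \intr\bigl(\cl(p_2(\Omega_2))\bigr) = p_2(\Omega_2),
\]
where the first and last equalities use that $p_i(\Omega_i)$ is regular in $\orb^*$ (which itself follows because $\Omega_i$ is regular and $p_i$ collapses only the tame blown-segment locus, so passing to $\orb^*$ cannot create non-regularity away from that locus—one checks a point of $\fr(p_i(\Omega_i))$ still bounds $p_i(\Omega_i)$ to one side, using \Cref{prop:shadow_front}).

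The main obstacle I anticipate is the bookkeeping around the collapsed loci: $\orb^*$ fails to be locally compact at the images of boundary lines and collapsed segments, so I need to be careful that taking closures/interiors in $\orb^*$ (rather than in $\orb_i$) does not lose or gain points precisely at those bad points. The cleanest way around this is to prove the statement first on $\mr\orb^*$ — where everything is a homeomorphism and the argument is essentially formal — and then show that both $p_1(\Omega_1)$ and $p_2(\Omega_2)$ are determined by their restrictions to $\mr\orb^*$ together with the structural description of frontiers from \Cref{prop:shadow_front} and \Cref{prop:localshadows}: a blown segment or boundary line of $\orb^*$ is in $p_i(\Omega_i)$ iff $\Omega_i$ meets the interior of the corresponding segment/line in $\orb_i$, and this is a condition on $\mr\Omega_i$ and the local shadow structure, which is the same for $i=1$ and $i=2$. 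Assembling these pieces gives $p_1(\Omega_1) = p_2(\Omega_2)$.
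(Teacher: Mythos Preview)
Your overall architecture matches the paper's: establish $\mr p_1(\mr\Omega_1) = \mr p_2(\mr\Omega_2)$ on the punctured space, then use the local shadow structure at singular points to upgrade to all of $\orb^*$. But the core step has a genuine gap.

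You assert that $g_1(\wt S_1)$ and $g_2(\wt S_2)$ project to the same shadow in $\mr\orb$ ``because $S_1$ and $S_2$ are isotopic \ldots\ and $g_i$ is homotopic to the identity.'' That is not an argument; it is essentially the statement being proved (indeed it is the content of \Cref{thm:same shadow}, which the paper deduces \emph{from} this lemma). For an arbitrary $\wt\phi$-orbit $\ell$ that crosses $g_1(\wt S_1)$, nothing prevents $\ell$ from remaining within bounded distance of $g_1(\wt S_1)$ in forward and backward time --- say by spiraling toward periodic orbits in the shadow's frontier --- and then a bounded homotopy to $g_2(\wt S_2)$ yields no information about crossing. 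The paper's missing idea is to restrict to \emph{periodic} orbits: if $\wt\gamma_1$ is a lift of a periodic orbit crossing $\wt S_1$, then $\gamma_1$ has positive intersection with $S_1$ downstairs, so the distance from $\wt\gamma_1$ to $\wt S_1$ is a \emph{proper} function along $\wt\gamma_1$; now bounded-tracks arguments do force the corresponding lift $\wt\gamma_2$ to cross $\wt S_2$. Density of regular periodic points in $\mr\Omega_i$, together with regularity of $\mr\Omega_i$ in $\mr\orb_i$ from \Cref{cor:int_cls}, then gives equality on $\mr\orb^*$.

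A smaller point: your displayed chain $p_i(\Omega_i) = \intr(\cl(p_i(\Omega_i)))$ presumes $p_i(\Omega_i)$ is open and regular in $\orb^*$, but $\orb^*$ fails local compactness at collapsed boundary lines and $p_i$ is only a quotient map, so this needs care you do not supply. The paper avoids regularity in $\orb^*$ altogether: it shows directly via \Cref{prop:localshadows} that a singular point $q\in\orb^*$ lies in $p_i(\Omega_i)$ if and only if $\mr p_i(\mr\Omega_i)$ does \emph{not} meet exactly two adjacent quadrants at $q$ --- a criterion depending only on $\mr p_i(\mr\Omega_i)$, which has already been shown independent of $i$. Your final paragraph gestures at this, but the regularity-in-$\orb^*$ route preceding it is not a valid shortcut.
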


\begin{proof}
We first consider the sets of {\em regular} periodic points $P_1$ in $\Omega_1$ and $P_2$ in $\Omega_2$, that is
periodic points which correspond to regular orbits of the flows. We will show
that $p_1(P_1)=p_2(P_2)$ in  $\orb^*$.

Let $y_1\in P_1$ be a regular point. 
Its preimage $\wt \gamma_1=\Theta_1^{-1}(y_1)$ projects to a periodic
$\phi^\sharp_1$-orbit
$\gamma_1$ in $M$.  Let $\gamma_2=g_2^{-1}(g_1(\gamma_1))$. This is a periodic orbit of
$\phi^\sharp_2$, which is homotopic to $\gamma_1$ because $g_1$ and $g_2$ are homotopic to
the identity.

We can lift a homotopy from $\gamma_1$ to $\gamma_2$ to produce $\wt \gamma_2$, a lift of $\gamma_2$ to $\wt M$.
We know that $\wt\gamma_1$ passes from the negative side of $\wt S_1$ to the positive side, intersecting it once. 
We claim that $\wt \gamma_2$ intersects $\wt S_1$ finitely many times, starting on the
negative side and ending on the positive side.
This follows from the fact that, because $\gamma_1$ intersects $ S_1$ positively, the
distance from $\wt\gamma_1$ to $\wt S_1$ is a proper function, and on the other hand the
homotopy from $\wt\gamma_1$ to $\wt\gamma_2$ has bounded tracks.

Similarly, since the lifted homotopy between $\wt S_1$ and $\wt S_2$ has bounded tracks,
we then have that $\wt \gamma_2$ also passes from the negative side of $\wt S_2$ to the
positive side (note there can be at most one point of intersection). Thus
$\Theta_2(\wt\gamma_2)$ is in $P_2$.

Now $\Theta_1(\wt\gamma_1)$ and $\Theta_2(\wt\gamma_2)$ will have the same image in
$\orb^*$ by construction.
Because the roles of $S_1$ and $S_2$ are symmetric, we conclude that $P_1$ and $P_2$
 have the same images in $\orb^*$. Note that $P_i\subset \mr\orb_i$, and $P_i$ is dense in $\mr\Omega_i$. Using this, and the properties itemized before the lemma statement, we have (where interiors and closures are taken in the \emph{punctured} spaces $\mr\orb_i$ and $\mr\orb^*$):

\begin{align*}
\mr p_1(\mr\Omega_1)&= \mr p_1(\intr(\cl(\mr\Omega_1)))=\mr p_1(\intr(\cl(P_1)))=\intr(\cl(\mr p_1(P_1)))\\
&=\intr(\cl(\mr p_2(P_2)))=\mr p_2(\intr(\cl(P_2)))=\mr p_2(\intr(\cl(\mr\Omega_1)))\\
&=\mr p_2(\mr\Omega_2).
\end{align*}
The following lemma implies that $p_1(\Omega_1)=p_2(\Omega_2)$, completing the proof.
\end{proof}

\begin{lemma}
The set $p_i(\Omega_i)$ is uniquely determined by $\mr p_i(\mr \Omega_i)$ in the following sense: 

Let $q$ be a singular point of $\orb^*$ that is contained in the frontier of $\mr p_i(\mr
\Omega_i)\subset \orb^*$. Then $q\in p_i(\Omega_i)$ unless $\mr p_i(\mr\Omega_i)$ meets
exactly two adjacent quadrants of $q$. 
\end{lemma}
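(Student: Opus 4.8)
The plan is to read the lemma off from \Cref{prop:localshadows} after translating between the flow space $\orb_i$ of $\phi_i^\sharp$ and its collapse $\orb^*$. First I would set $T=p_i^{-1}(q)$. Since $p_i$ is injective away from the blown segments and singular points and collapses each blowup tree to a point, $T$ is a blowup tree of $\orb_i$; when $q$ is an interior singular point of $\orb^*$ (the case of interest --- the collapsed boundary lines are discussed at the end) $T$ is a finite tree disjoint from $\del\orb_i$. The key bookkeeping is: (i) $q\in p_i(\Omega_i)$ if and only if $T\cap\Omega_i\neq\varnothing$; and (ii) the homeomorphism $\mr p_i\colon \mr\orb_i\to \mr\orb^*$ carries the quadrants of $T$ bijectively onto the quadrants of $q$, preserving cyclic order and adjacency, and --- since $\Omega_i$ is open (\Cref{cor:int_cls}) --- $\Omega_i$ meets a quadrant $Q$ of $T$ exactly when $\mr p_i(\mr\Omega_i)$ meets the corresponding quadrant of $q$. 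I would also check that $T$ meets $\cl(\Omega_i)$: because $p_i^{-1}(\mr p_i(\mr\Omega_i))=\mr\Omega_i$, the hypothesis $q\in\fr(\mr p_i(\mr\Omega_i))$ produces a sequence in $\mr\Omega_i\subseteq\Omega_i$ whose $p_i$-images converge to $q$, and properness of $p_i$ near the interior singular points of $\orb^*$ then forces an accumulation point of this sequence to lie on $T$. With (i) and (ii) in hand, the lemma is equivalent to the assertion that $T\cap\Omega_i=\varnothing$ if and only if $\Omega_i$ meets exactly two, necessarily adjacent, quadrants of $T$ --- and this is precisely the dichotomy recorded in \Cref{prop:localshadows}.

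Concretely, if $T\cap\Omega_i=\varnothing$ I would invoke part (d) of \Cref{prop:localshadows}: $\Omega_i$ meets exactly two quadrants of $T$, and they are adjacent, so $\mr p_i(\mr\Omega_i)$ meets exactly two adjacent quadrants of $q$ while $q\notin p_i(\Omega_i)$ --- which is the ``unless'' branch. If instead $T\cap\Omega_i\neq\varnothing$, then $q\in p_i(\Omega_i)$ and it remains to see that $\mr p_i(\mr\Omega_i)$ is not confined to two adjacent quadrants of $q$. If $T$ is a lone singular point, openness of $\Omega_i$ (together with $q\in\Omega_i$) forces $\Omega_i$ to meet all $2n\ge 4$ of its quadrants. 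If $T$ contains a blown segment, openness of $\Omega_i$ guarantees that $T\cap\Omega_i$ contains the interior of some blown segment $s$, so we are in case (a) of \Cref{prop:localshadows} (cases (b) and (c) cannot occur for interior $T$); there $s$ has its two sides in non-adjacent quadrants of $T$, and $\Omega_i\supseteq\intr(s)$, being open, meets both, so $\mr p_i(\mr\Omega_i)$ meets two non-adjacent quadrants of $q$. In every case we conclude $q\in p_i(\Omega_i)$ if and only if $\mr p_i(\mr\Omega_i)$ does not meet exactly two adjacent quadrants of $q$, which is the lemma, and which is exactly what is needed to complete the proof of \Cref{lem:puncture_images_equal}.

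The step I expect to require the most care --- and the natural candidate for the main obstacle --- is the treatment of the collapsed boundary lines of $\orb^*$, which are non-locally-compact points rather than honest cone points. For such a $q$, $T=p_i^{-1}(q)$ is an entire boundary component of $\orb_i$, and \Cref{prop:localshadows} offers two new possibilities: $T\subseteq\Omega_i$ (case (c)), where $\Omega_i$ visibly meets every quadrant incident to $T$ and the argument above goes through unchanged; and $T\cap\Omega_i=\intr(s)$ for a single boundary blown segment $s$ (case (b)), where a priori $\Omega_i$ need only meet the unique interior quadrant flanking $s$. Ruling out that $\Omega_i$ lands in exactly two adjacent quadrants in case (b) requires examining the frontier near the endpoints of $s$: by \Cref{lem:blownsegs} both endpoints lie in $\fr(\Omega_i)$; by \Cref{lem:front_prop} the frontier components through them are slice leaves or boundary slice rays bounding $\Omega_i$ to one side, each carrying a unique expanding half-leaf into $\Omega_i$; and chasing these orientations (as in the proof of \Cref{lem:front_prop}(c)) shows $\Omega_i$ can reach a second quadrant of $q$ only across a non-blown half-leaf, a configuration already outside the ``two adjacent quadrants'' case. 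Alternatively --- and this is probably the cleanest route --- one observes that in the application \Cref{lem:puncture_images_equal} the boundary behaviour of $\phi_1^\sharp$ and $\phi_2^\sharp$ is identical, since $\del M$ is untouched by the blowups, so the collapsed boundary lines may be matched up directly and the lemma only needs to be invoked at the interior singular points, where the clean argument of the previous two paragraphs suffices.
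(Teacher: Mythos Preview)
Your proposal is correct and follows essentially the same route as the paper's proof: you set $T=p_i^{-1}(q)$, observe that $q\in p_i(\Omega_i)$ if and only if $\Omega_i$ meets $T$, and then read off the quadrant dichotomy from \Cref{prop:localshadows}; the paper's own proof is the two-sentence version of exactly this argument, pointing to column (d) of \Cref{fig:shadows} as the excluded case. Your third paragraph on collapsed boundary lines is extra care that the paper does not spell out (it treats all singular points uniformly via \Cref{prop:localshadows}), and your worry there is slightly overcautious: in case (b) the shadow meets exactly \emph{one} quadrant, which already fails the ``exactly two adjacent'' test without further frontier analysis.
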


\begin{proof}
We have $q\in p_i(\Omega_i)$ if and only if $\Omega_i$ intersects the tree of blown
segments $p_i^{-1}(q)$, so this follows immediately from the local structure described by
\Cref{prop:localshadows}. The excluded case is column (d) of \Cref{fig:shadows}.
\end{proof}

\subsection{Uniqueness of combinatorial type}

We can now prove that the isotopy class of a surface almost transverse to a pseudo-Anosov flow determines a unique combinatorial type of minimal dynamic blowup transverse to the surface. 

\begin{proof}[Proof of \Cref{thm:combinatorial equivalence}]
Let $A$ be a blown annulus complex of $\phi_1^\sharp$, and let $U$ be a standard neighborhood for $A$. Let $D$ be a meridional disk or annulus of $U$, depending on whether $U$ is solid or hollow. Then there are singular stable and unstable foliations of $D$; as described in \Cref{global blowups}, the combinatorial type of $\phi_1^\sharp$ is determined by which quadrants of this bifoliation meet along blown segments (the intersections of blown annuli with $D$), as $A$ varies over all blown annulus complexes.

Suppose $Q_1$ and $Q_2$ are two quadrants meeting along a blown segment $s$ corresponding to the immersed annulus $A_s\subset A$. By minimality, there is an arc of $S_1\cap D$ lying in $Q_1\cup Q_2$ and intersecting $s$ in a point. This point corresponds to a closed curve $\alpha\subset S_1\cap A_s$. 
Let $S_1^\alpha$ be the component of $S_1$ containing $\alpha$. Choose intersecting lifts $\wt S_1^\alpha$, $\wt A_s$ to $\wt M$. Then $p_1(\Omega_1)$ meets two nonadjacent quadrants of $p_1(\wt A_s)$.

Conversely if $p_1(\Omega_1)$ meets two nonadjacent quadrants of a singular point corresponding to a lift of the complex $A$ to $\wt M$, then $\Omega_1$ meets their preimage quadrants in $\orb_1$, and \Cref{remark:3Dinterpretation} explains that this corresponds to a curve of intersection between $A$ and a blown annulus, meaning that the two corresponding quadrants in $D$ must meet along a blown annulus.

Reasoning in this way for all blown annulus complexes, we see the combinatorial type of $\phi_1^\sharp$ is uniquely determined by $p_1(\Omega_1)$. The same is true for $\phi_2^\sharp$. \Cref{lem:puncture_images_equal} finishes the proof of combinatorial equivalence.
\end{proof}

As a consequence, whether or not a dynamic blowup is necessary to achieve transversality can be read off from the data of a relative veering triangulation and a relatively carried surface:

\begin{corollary}\label{cor:no_lad}
Let $\phi$ be a transitive pseudo-Anosov flow on $M$ and let $\tau$ be any compatible veering triangulation. Then, up to isotopy, $S$ is (honestly) transverse to $\phi$ if and only if $S$ is carried by $\tau$ without ladderpole annuli.
\end{corollary}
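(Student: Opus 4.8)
\textbf{Proof proposal for \Cref{cor:no_lad}.}

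The plan is to combine \Cref{thm:ATtocarried} (almost transverse $\iff$ relatively carried) with \Cref{thm:combinatorial equivalence} (uniqueness of combinatorial type of the minimal blowup), feeding in the refined bookkeeping from \Cref{prop:efficientblowup}. First I would dispense with the easy direction: if $S$ is carried by $\tau$ without ladderpole annuli, then by \Cref{lem:noladders} we may as well assume $S$ is efficiently relatively carried by $\tau$, and then \Cref{prop:efficientblowup} says the blowup locus produced by the construction of \Cref{th:dynamic blowup for carried} consists precisely of the singular orbits and boundary components whose tubes $S$ meets in ladderpole annuli. Since there are none of the latter, the only blowups are along singular orbits and boundary components of $M$, i.e. no blowup of a regular orbit in the interior is performed. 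One must check that this means the resulting $\phi^\sharp$ is honestly a pseudo-Anosov flow on $M$ (not a nontrivial blowup): the ``blowups'' at singular orbits are empty by \Cref{rmk:no_blow} since $\gamma$ regular in the ladderpole sense is not what happens here — more carefully, at a singular orbit with no ladderpole annulus, $S$ meets the tube in meridional disks or boundary-parallel annuli, and the construction in \Cref{sec:surfaceblowups} performs no blowup (``no blowup is necessary'' in the four-quadrant case, and in the singular case one checks directly from the arc system that no blown annulus is created when $S$ misses the tube or meets it only in meridional disks). So $\phi^\sharp = \phi$ and $S$ is honestly transverse to $\phi$.

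For the converse, suppose $S$ is honestly transverse to $\phi$ up to isotopy. Then $\phi$ itself is a (trivial) dynamic blowup of $\phi$ to which $S$ is transverse, and trivially this is the minimal blowup with respect to $S$ since $\phi$ has no blown annuli at all. By \Cref{thm:ATtocarried}, $S$ is relatively carried by $\tau$, so fix such a position. By \Cref{lem:noladders} we may isotope $S$ to be efficiently relatively carried by $\tau$; call the new blowup data from \Cref{prop:efficientblowup} applied to this efficient position $\phi^\sharp$, with blowup locus the singular orbits and boundary components met in ladderpole annuli. The surface $S$ is minimally transverse to $\phi^\sharp$ by the discussion following \Cref{prop:efficientblowup} (the construction intersects every nonboundary blown annulus in core curves). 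Now apply \Cref{thm:combinatorial equivalence} to the two minimally transverse positions: $S$ transverse to $\phi$ and $S$ transverse to $\phi^\sharp$. We conclude $\phi$ and $\phi^\sharp$ are combinatorially equivalent. In particular $\phi^\sharp$ has no nonboundary blown annuli, so the efficient carrying has no ladderpole annuli. Since ladderpole annuli are exactly the complementary annuli of $S \cap U$ not meeting $\partial M$, this says $S$ is carried by $\tau$ without ladderpole annuli, as desired.

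The main obstacle I anticipate is the bookkeeping in the converse direction: matching ``ladderpole annulus of $S \cap U$'' on the veering side with ``nonboundary blown annulus met by $S$'' on the flow side, and making sure the notions of \emph{minimal} transversality (core curves in every nonboundary blown annulus) and \emph{efficient} relative carrying (no removable annuli) line up exactly under the correspondence of \Cref{th:dynamic blowup for carried} and \Cref{prop:efficientblowup}. One subtlety is that \Cref{thm:combinatorial equivalence} only gives combinatorial equivalence, not orbit equivalence — but combinatorial equivalence already records which quadrants are joined along blown segments, hence whether any nonboundary blown annulus exists at all, which is all we need. A second subtlety is the (harmless) gap between ``$\phi^\sharp$ is a minimal blowup with respect to $S$'' and ``$S$ has the clean annulus property,'' but since we are comparing against the genuinely blowup-free flow $\phi$, combinatorial equivalence forces the blowup locus of $\phi^\sharp$ to be empty away from $\partial M$, so this does not arise. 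Finally, one should double-check the degenerate case $\partial M \neq \emptyset$: boundary blown annuli are always present and are not ``ladderpole annuli'' in the sense of \Cref{sec:reltube} (those are the ones with \emph{both} boundary components on $\partial M_U$ — rather, the annuli not meeting $\partial M$), so the statement and argument are unaffected.
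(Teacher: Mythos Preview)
Your proposal is correct and follows essentially the same approach as the paper: the easy direction uses the transversality of $\tau$ to $\phi$ together with the local picture in each tube, and the converse feeds the efficiently carried position through \Cref{prop:efficientblowup} to produce a blowup $\phi^\sharp$ and then invokes \Cref{thm:combinatorial equivalence} to compare it with the trivial blowup $\phi$. The paper's proof of the easy direction is more direct---it simply observes that without ladderpole annuli every component of $S\cap U$ is a meridional disk or a boundary-meeting annulus, which can be made transverse to $\phi$ by inspection, without routing through the blowup construction---whereas you take a slightly circuitous path via \Cref{prop:efficientblowup} and then argue the resulting blowup is trivial; both arrive at the same place. Your anticipated ``obstacles'' are correctly identified as non-issues.
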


\begin{proof}
If $S$ is carried by $\tau$ without ladderpole annuli, then the fact that $\tau$ is positive transverse to $\phi$ and the picture within each tube show that $S$ is indeed transverse to $\varphi$.

Let $S$ be transverse to $\varphi$ and suppose that $S$ is isotopic to a surface $S_1$ that is efficiently carried by a veering triangulation with ladderpole annuli. Then $S_1$ is minimally transverse to a nontrivial dynamic blowup $\phi_1^\sharp$ of $\phi$, contradicting \Cref{thm:combinatorial equivalence}.
\end{proof}

\subsection{Uniqueness of transverse position}

\begin{proof}[Proof of \Cref{thm:same shadow}]
Recall the setting: we have two isotopic surfaces $S_1$ and $S_2$ minimally transverse to a single almost pseudo-Anosov flow $\phi^\sharp$. 

We first wish to show that compatible lifts of any pair of isotopic components of $S_1$ and $S_2$ to $\wt M$ have the same projection to the orbit space $\orb$ of $\phi^\sharp$; for notational simplicity we assume that $S_1$ and $S_2$ are connected. Let $\wt S_1$ and $\wt S_2$ be compatible lifts to $\wt M$. By \Cref{lem:puncture_images_equal}, their images contain the same regular points and intersect exactly the same quadrants at each blowup tree in $\orb$. Hence they are equal.

Now we show that $S_1$ and $S_2$ are isotopic along flow lines. The above implies
that compatible lifts of components of $S_1$ and $S_2$ are sections of the bundle of
lifted flow lines over their common shadow in $\orb$. The difference of these sections is a
$\pi_1(S_i)$--invariant function, 
and by adding $t\in [0,1]$ times this function to one section, and projecting to
$M$ we obtain a homotopy through transverse surfaces between the components. 

In fact, we can see that this homotopy is an isotopy as follows:
The preimage $q^{-1}(S_i)$ (where $q \colon \wt M \to M$ is the covering map) divides
$\wt M$ into regions, and its coorientation induces an orientation on the dual tree $T_i$ of
this subdivision. Because $S_1$ and $S_2$ are isotopic, there is an isomorphism between
these oriented trees that preserves the labeling of the edges by conjugates of
$\pi_1(S_1)=\pi_1(S_2)$ in $\pi_1(M)$. Now a flow line $\ell$ in $\wt M$, because it is
positively transverse to $S_i$, determines a subset in $T_i$ which is either an oriented line, ray, or segment or a single vertex.
 Because compatible
lifts of $S_1$ and $S_2$ have the same shadows, the two lines are the same (up to the
isomorphism). In other words, the intersection points $\ell\intersect q^{-1}(S_1)$
and $\ell\intersect q^{-1}(S_2)$ appear in the same order along $\ell$, as labeled by
their corresponding subgroups (or shadows).
The linear interpolation given above must therefore preserve the embeddedness of the
intersection points, and hence is an isotopy.
\end{proof}

\subsection{An application: boundary leaves are periodic}\label{sec:bdyperiodic}
The goal of this subsection is to prove the following proposition. The case where $M$ is closed and $\varphi$ is pseudo-Anosov was proven by Fenley \cite{fenley1999surfaces}.

\begin{proposition}\label{lem:bdyperiodic}
Let $S$ be a connected surface transverse to a transitive almost pseudo-Anosov flow $\phi$. Let $\wt S$ be a lift of $S$ to $\wt M$, and let $\Omega$ be its projection to the flow space. Then any component of $\fr (\Omega)$ is periodic. 
\end{proposition}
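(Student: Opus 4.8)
The plan is to exploit the structure theory of shadows developed in the preceding section, together with the uniqueness of almost transverse position, to upgrade the geometric statement ``the frontier of $\Omega$ is made of slice leaves and boundary slice rays'' to the dynamical statement ``each such frontier component is periodic.'' By \Cref{lem:front_prop}, a component $\ell$ of $\fr(\Omega)$ is a stable or unstable slice leaf (or boundary slice ray), lying entirely below or above $\wt S$ according to its type; without loss of generality say $\ell$ is a stable slice leaf lying below $\wt S$. The claim is that the leaf $\orb^s(x)$ containing $\ell$ (equivalently, $\ell$ itself, suitably interpreted) is a periodic leaf, i.e. is fixed by a nontrivial element of $\pi_1(M)$.

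First I would set up the group-theoretic picture: the frontier $\fr(\Omega)$ is a closed subset of $\orb$ which, since $\wt S$ is a lift of the compact surface $S$, is invariant under $\mathrm{Stab}(\wt S) \cong \pi_1(S)$ acting on $\orb$. By \Cref{lem:front_prop}(b) the components of $\fr(\Omega)$ are isolated from one another, so $\pi_1(S)$ permutes them with discrete orbits, and each component $\ell$ has a (possibly trivial) stabilizer $H \le \pi_1(S)$ of finite index in its setwise stabilizer within $\pi_1(S)$. The key point, which should come from transitivity of $\phi$ via the structure theory of pseudo-Anosov flow spaces (in the spirit of Fenley's arguments in \cite{fenley1999surfaces} and the no-$(\epsilon,T)$-cycle phenomenon used in the proof of \Cref{th:flow_space}), is that $H$ must be nontrivial: a stable leaf $\ell$ that forms part of the frontier of the shadow of a compact surface cannot ``escape'' — the transverse surface $S$ being compact forces $\ell$ to be recurrent under the $\pi_1(M)$-action, and the only recurrent leaves of $\orb^s$ are the periodic ones. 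Concretely, I expect to argue: if $\ell$ were non-periodic, then translates $g \cdot \ell$ for $g \in \pi_1(S)$ would either all be distinct (giving infinitely many frontier components accumulating somewhere, violating local finiteness or properness of $\wt S$), or would fail to be isolated, contradicting \Cref{lem:front_prop}(b); but compactness of $S$ means $\Omega/\pi_1(S)$ is, in an appropriate sense, finite-type, forcing $\fr(\Omega)/\pi_1(S)$ to be compact, hence $\ell$ has nontrivial stabilizer. A nontrivial element $g$ fixing the slice leaf $\ell$ setwise must fix the full leaf $\orb^s(x) \supseteq \ell$, and an element of $\pi_1(M)$ fixing a leaf of the stable foliation of the flow space acts on that leaf as a translation toward a fixed point, which is precisely a periodic orbit; hence $\ell$ is periodic.

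The technical core — and the main obstacle — is justifying that the stabilizer $H$ is nontrivial. This is where I would lean most heavily on the material of \Cref{sec:generalshadows}: \Cref{lem:front_prop}(c) already tells us that whenever $\ell$ contains periodic points there is a distinguished one $p$ with a half-leaf into $\Omega$, and \Cref{lem:blownsegs} controls the interaction with blown segments, so in the non-closed and almost-pseudo-Anosov settings the boundary slice ray case is covered by the same bookkeeping. The genuinely new input is a finiteness/compactness statement for the quotient of the frontier by $\pi_1(S)$; I expect to prove it by a covering-space argument — pass to the cover $M_S \to M$ corresponding to $\pi_1(S)$, in which $\wt S$ descends to a compact surface, observe that $\Omega$ descends to an open subsurface of the (planar) orbit space of the pulled-back flow whose frontier is $\pi_1(S)$-cocompact because $S$ is compact and properly embedded, and then transport this back to $\orb$.

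Finally I would assemble the pieces: given the nontrivial $g \in \pi_1(S) \le \pi_1(M)$ stabilizing $\ell$, \Cref{lem:front_prop}(a)--(c) identify $\ell$ (or its supporting leaf) as a stable leaf with a $g$-invariant structure; invoking the standard fact that a nontrivial deck transformation preserving a leaf of $\orb^s$ must have a fixed point on it corresponding to a closed orbit of $\phi$ (or of the relevant dynamic blowup), we conclude $\ell$ is periodic. The argument for $\ell$ above $\wt S$ is symmetric with $\orb^u$ replacing $\orb^s$. Transitivity of $\phi$ enters both in ruling out pathological leaves and, via \Cref{th:AG+LMT} and the associated veering triangulation, in guaranteeing the density of periodic orbits used in the recurrence step; this should parallel, in the bounded case, Fenley's treatment in \cite{fenley1999surfaces}.
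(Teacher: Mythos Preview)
Your proposal has a genuine gap at precisely the point you yourself flag as the ``technical core'': showing that the $\pi_1(S)$-stabilizer $H$ of a frontier component $\ell$ is nontrivial. The argument you sketch does not work. You write that ``compactness of $S$ means $\Omega/\pi_1(S)$ is, in an appropriate sense, finite-type, forcing $\fr(\Omega)/\pi_1(S)$ to be compact.'' It is true that $\Omega/\pi_1(S)\cong S$ is compact, but $\fr(\Omega)$ is \emph{disjoint} from $\Omega$, so compactness of $S$ says nothing directly about $\fr(\Omega)/\pi_1(S)$. There is no reason a priori that $\fr(\Omega)$ has only finitely many $\pi_1(S)$-orbits of components, and the isolation in \Cref{lem:front_prop}(b) does not prevent infinitely many components --- in fact a transverse surface that is not a cross-section typically has infinitely many. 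Your alternative suggestion (``if $\ell$ were non-periodic then translates would accumulate, contradicting isolation'') conflates two things: isolation forbids one frontier component from accumulating on another, but it does not forbid infinitely many disjoint components, nor does it force finitely many $\pi_1(S)$-orbits. The covering-space paragraph has the same problem: in the cover $M_S$ the surface is compact, but its shadow's frontier in the orbit space of the lifted flow need not be cocompact.

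The paper's proof is entirely different and sidesteps this obstacle by importing finiteness from combinatorics rather than from topology of $\Omega$. It isotopes $S$ into efficient relatively carried position with respect to an associated veering triangulation $\tau$, then performs upward flip moves until none are available. At that point the induced train track $B^s\cap S$ has no large branches, so there are only \emph{finitely many} stable loops in $S$. Given a frontier component $\lambda$, one takes a stable ray $\ell^s$ heading toward $\lambda$; this ray determines an infinite train route in $B^s\cap S$, which by the finiteness just noted is eventually periodic around some stable loop $\gamma\subset S$. The deck transformation $g\in\pi_1(S)$ corresponding to $\gamma$ then visibly translates the sequence of triangulation edges limiting on $\lambda$, hence preserves $\lambda$. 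The nontrivial stabilizer is thus \emph{produced} by an explicit train-track argument, not deduced from an abstract cocompactness statement.
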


For the almost pseudo-Anosov flow $\varphi$ on $M$, let $\kappa$ be a finite collection of orbits that kills its perfect fits (\Cref{prop:tsang}), and let $\tau$ be the associated veering triangulation on the fully-punctured manifold  $M \ssm \kappa_s$ (\Cref{th:AG+LMT}). Let $U$ be a standard neighborhood of $\kappa_s$, so that $\tau$ is a veering triangulation of $M$ relative to the tube system $U$.

We begin with the following additional setup. 
By \Cref{thm:ATtocarried}, since $S$ is transverse to $\phi$, we can isotope $S$
so that it is relatively carried by $\tau$. Further, we may choose an \emph{efficient} carried position. In such a position, whenever $S$ meets a component of $U$, it intersects the corresponding component of $\kappa_s$. Note that since $S$ is still transverse to $\varphi$, we have not changed its shadow $\Omega$ by \Cref{thm:same shadow}.

Let $\wt S$ be a lift of $S$ to $\wt M$.
The surface $\wt S$ traverses various sectors of the preimage of $\tau_U$ in $\wt M$, and each sector of this branched surface is contained in a face of the preimage $\wt \tau$ of $\tau$ in $\wt M$. We say such a face is \emph{traversed} by $\wt S$ if it contains a sector in the preimage of $\tau_U$ traversed by $\wt S$. Recall the definitions of $\mr \orb$ and $\mr \Omega$ from \Cref{sec:proj_unq} (the circle notation denotes removal of all non-regular points).

\begin{lemma}\label{lem:image_tri}
The punctured image $\mr\Omega=\mr \Theta(\wt S)$ in $\mr \orb$ is ideally triangulated by the images of the faces of $\wt \tau$ that are traversed by $\wt S$.
\end{lemma}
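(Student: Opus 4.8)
The plan is to exploit the correspondence, established in \Cref{thm:ATtocarried} and refined by efficient carrying, between the way $\wt S$ sits over the punctured flow space and the way the $2$--skeleton $\wt\tau$ subdivides $\wt M$. Since $\wt S$ is relatively carried by $\tau$ and efficiently so, its intersection with $M\cut U$ is carried by the branched surface $\tau_U$; lifting, $\wt S\cap q^{-1}(M\cut U)$ is carried by the preimage of $\tau_U$, and the collapse of $\wt S$ is a union of (lifts of) sectors, each lying in a unique face of $\wt\tau$. First I would observe that, because the $2$--skeleton is positively transverse to $\varphi$ (\Cref{th:AG+LMT}), each face of $\wt\tau$ projects under $\wt\Theta$ to a set in $\orb$; away from the non-regular points it projects homeomorphically onto an \emph{ideal triangle} of $\mr\orb$ in the combinatorial sense, since a transverse face meets each flowline at most once and the two ``top'' or ``bottom'' edge-arcs record how stable/unstable leaves cut across it. This is exactly the statement that the veering triangulation $\tau$, pulled up to $\wt M$ and pushed forward to $\orb$, is (away from singularities and completion points) the ``dual'' ideal triangulation of the completed flow space, which is how the Agol--Gu\'eritaud triangulation is built in the first place (cf. the discussion in \Cref{sec:AG}).

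Next I would check the two things an ideal triangulation statement requires: that the projected faces \emph{cover} $\mr\Omega$, and that they \emph{overlap only along edges}. Coverage: since $\wt\Theta|_{\wt S}$ is a homeomorphism onto $\Omega$ (as $\wt S$ separates $\wt M$ and orbits cross it positively, noted in \Cref{sec:generalshadows}), and since the collapse of $\wt S\cap q^{-1}(M\cut U)$ is a union of sectors whose faces traverse $\wt S$, the images of these faces cover $\mr\Theta(\wt S\cap q^{-1}(M\cut U))$; and $\mr\Omega$ differs from this only by the images of the pieces of $\wt S$ lying in $q^{-1}(U)$ and by the removed non-regular points, which contribute nothing to $\mr\orb$ because every component of $U$ met by $S$ efficiently carried contains an orbit of $\kappa_s$, hence maps into the deleted locus. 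Disjointness of interiors: two distinct traversed faces $\wt F_1,\wt F_2$ meet in $\wt M$ at most along an edge of $\wt\tau$ (this is just the definition of an ideal triangulation of $M\ssm\kappa_s$, lifted); since $\wt\Theta$ restricted to $\wt S$ is injective, and each $\wt F_i$ meets $\wt S$ in a disk collapsing onto it, the images $\wt\Theta(\wt F_i\cap\wt S)$ in $\mr\orb$ meet only along the image of that common edge. The remaining point is that the image of each traversed face is genuinely an open triangle and not, say, degenerate or wrapped: this follows from positivity of transversality (the face meets each orbit in a point, so $\wt\Theta$ is injective on it) together with the local model of a veering tetrahedron, which shows each face has exactly two ``ladderpole'' corners mapping to the ideal vertices of $\mr\orb$, i.e. to the deleted points.

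\textbf{Main obstacle.} The delicate part will be bookkeeping at the tube system: I must be sure that discarding the portions of $\wt S$ inside $q^{-1}(U)$ and the non-regular points of $\orb$ does not create gaps in $\mr\Omega$ or spurious overlaps of face-images. This is where the \emph{efficient} carrying hypothesis does the work---it guarantees (as recalled just before the lemma) that $S$ meets a component of $U$ only if it meets the corresponding orbit of $\kappa_s$, so in the punctured flow space $\mr\orb$ the contribution of $\wt S\cap q^{-1}(U)$ collapses onto already-deleted points and the face-images of the traversed faces of $\wt\tau$ tile $\mr\Omega$ with no leftover. I expect to invoke \Cref{prop:localshadows} (and \Cref{lem:blownsegs}) to handle precisely how $\Omega$ approaches each deleted point, ensuring the triangulation is ``ideal'' in the correct sense there. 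Everything else is a direct translation of the definition of the Agol--Gu\'eritaud veering triangulation as the dual of the flow-space decomposition, restricted to the shadow $\Omega$.
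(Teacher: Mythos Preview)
There is a genuine gap in your coverage argument. You assert that the pieces of $\wt S$ lying in $q^{-1}(U)$ ``contribute nothing to $\mr\orb$'' because each such tube contains an orbit of $\kappa_s$ and ``maps into the deleted locus.'' This is false: a tube is a \emph{neighborhood} of an orbit in $\kappa_s$ and contains infinitely many regular orbits. Hence $\wt S\cap q^{-1}(U)$ projects under $\mr\Theta$ to an honest open region of $\mr\orb$ (a punctured neighborhood of the corresponding ideal vertex), not just to deleted points. These points of $\mr\Omega$ must be shown to lie in the image of some traversed face, and this is not automatic: the sectors of $\tau_U$ traversed by $\wt S$ do not reach into the tubes, so you still owe an argument that the ideal ``ends'' of the traversed faces fill in these regions. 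The paper handles this by locating, for each such point $p$, two adjacent quadrants of the relevant ideal vertex $v$ containing $p$, finding a traversed face whose image lies in those quadrants, and then invoking a structural fact from \cite{LMT21} (that the frontier of the triangulated region $\mr\Omega_\triangle$ is a union of stable/unstable leaves) to conclude $p\in\mr\Omega_\triangle$.

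There is a second, mirror-image gap. You implicitly assume that if a face $f$ of $\wt\tau$ is traversed by $\wt S$ then its \emph{entire} image $\wh f\subset\mr\orb$ lies in $\mr\Omega$. But ``traversed'' only means $\wt S$ passes over a sector of $f$ in $M\cut U$; the face $f$ itself extends into the tubes toward its ideal vertices, where $\wt S$ need not meet it directly. The paper rules out $\wh f$ escaping $\mr\Omega$ by contradiction: if it did, some component of $\fr(\Omega)$ (a slice leaf or ray by \Cref{lem:front_prop}) would cross $\wh f$, forcing an ideal vertex of $\wh f$ to correspond to a tube of $\wt U$ disjoint from $\wt S$; efficient carrying then forces $\wt S$ to miss that tube entirely, contradicting that $f$ is traversed. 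Your invocation of \Cref{prop:localshadows} and \Cref{lem:blownsegs} is aimed in the right direction but does not supply this argument.
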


\begin{proof}
We begin by noting that the faces of $\wt \tau$ traversed by $\wt S$ triangulate their image in $\mr \orb$. To see this, note that the union of these faces in $\wt M$ determine a properly embedded surface $\wt S_\triangle$ in $\wt M \ssm \wt \kappa_s$ whose image in $M \ssm \kappa_s$ is a surface carried by $\tau$. It follows that the projection $\mr \Theta$ restricted to $\wt S_\triangle$ is a homeomorphism onto its image in $\mr \orb$. We denote this image by $\mr\Omega_\triangle$. For details, see \cite[Lemma 9.4]{LMT21}) where it is also established that  $\fr(\mr\Omega_\triangle)$ is a disjoint union of vertical/horizontal leaves \underline{in $\mr \orb$}. 

We now show that if a face $f$ of $\wt \tau$ is traversed by $\wt S$ then its image $\wh f$ in $\mr \orb$ is contained in $\mr \Omega$. If 
$\wh f$ meets $\mr \Omega$ but is not contained in $\mr \Omega$, then it is crossed by a component $\ell$ of $\fr (\wt S)$. By \Cref{lem:front_prop}, $\ell$ is stable/unstable slice leaf or slice ray. Hence there is a neighborhood $\wh V$ of an end of $\wh f$ 
(i.e. a deleted neighborhood of a vertex) whose preimage has closure $\wt V$ in $\wt M$ that is a closed neighborhood of a component of $\wt \kappa_s$ that does not meet $\wt S$. But as noted above, efficient position implies that $\wt S$ does not meet the corresponding component of $\wt U$, contradicting that $f$ is traversed by $\wt S$. This gives $\mr\Omega_\triangle\subset \mr \Omega$.

We finish by demonstrating the reverse inclusion. Suppose $p\in \mr \Omega$. The orbit $\mr\Theta^{-1}(p)$ passes through $\wt S$. If it does so outside of $\wt U$ then clearly $p\in \mr\Omega_\triangle$, so suppose $\gamma_p$ passes through $\wt S$ inside a component of $\wt U$ corresponding to an ideal vertex $v$ of $\mr \orb$.
There are two adjacent quadrants of $v$ such that $p$ lies in the interior of their union. Moreover, since $\wt S$ intersects the corresponding component of $U$, it traverses a face $\wt f$ of $\wt \tau$ whose projection $\wh f$ lies in the interior of the two quadrants. By definition, $\wh f$
lies in $\mr\Omega_\triangle$. Since $\fr(\mr\Omega_\triangle)$ is a union of vertical and horizontal leaves of $\mr \orb$, we have $p\in \mr\Omega_\triangle$.
\end{proof}
 
We now turn to the proof of the proposition.
\begin{proof}[Proof of \Cref{lem:bdyperiodic}]
Continuing with the discussion above, let $\lambda$ be a component of $\fr( \Omega)$. Since every singular leaf is periodic, we may assume that $\lambda$ is a regular stable or unstable leaf and contains no points of $\wh \kappa$. Without loss of generality, we assume $\lambda$ is unstable.

We recall the notion of an \emph{upward flip move} on a surface relatively carried by $\hbs_U$. These moves were essentially first considered in \cite{Minsky2017fibered} but the version we use here is from \cite{Landry_norm}. If $S$ is carried with positive weights on the two bottom faces of a tetrahedron $\tet$, there is an isotopy sweeping the corresponding portion of $S$ across $\tet$ into a neighborhood of the two top faces of $\tet$. See \Cref{fig:flipmove}.

\begin{figure}
    \centering
    \includegraphics[height=1in]{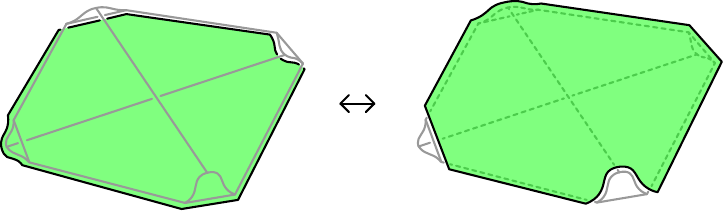}
    \caption{A picture of a flip move.}
    \label{fig:flipmove}
\end{figure}

Note that $S$ is not a cross section to $\phi$, since in that case we would have $\Omega=\orb$. By \cite[Proposition 4.5]{landry2019stable}, we may perform only finitely many upward flips before arriving at a relatively carried position for $S$ in which no upward flips are possible. We fix this position of $S$ and consider the associated triangulation of $\mr \Omega$ in $\mr \orb$ given by \Cref{lem:image_tri}.
By the definition of $B^s$, the train track $B^s\cap S$ has no large branches at this point. As a consequence there is a  finite collection of simple closed curves in $S$ carried by $B^s\cap S$ which we will call \emph{stable loops}. We observe that any infinite train route in $B^s\cap S$ must eventually circle around one of these stable loops forever.

Let $t_1$ be an ideal triangle of the ideal triangulation of $\mr \Omega$ which is near enough to $\lambda$ so that there exists a leaf $\ell^s$ of the stable foliation passing through the interior of $t_1$ and also intersecting $\lambda$. If we orient $\ell^s$ from $t_1$ toward $\lambda$, it determines an infinite path of ideal triangles $(t_1,t_2,t_3,\dots)$ in $\mr\Omega$. 
If $e_i$ is the edge shared by $t_i$ and $t_{i+1}$, we see that the sequence $(e_n)$ limits on $\lambda$. Indeed, this follows from the proof of \cite[Lemma 9.4]{LMT21}. A picture of this situation is shown in \Cref{fig:shadowseq}.

\begin{figure}[h]
    \centering
    \includegraphics[height=1.6in]{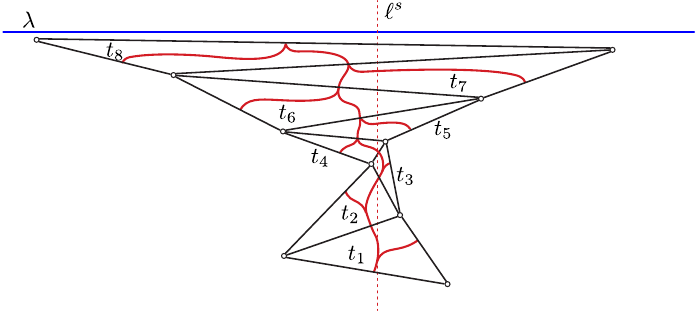}
    \caption{Notation from the proof of \Cref{lem:bdyperiodic}.}
    \label{fig:shadowseq}
\end{figure}

Each $t_i$ has a ``widest" edge, by which we mean an edge that intersects every leaf of $\orb^s$ that intersects $t_i$. 
The projection of $\wt S\cap \wt B^s$ to $\orb$ gives a train track in $\Omega$ which is dual to our ideal triangulation, and endows each ideal triangle in $t_i$ with a switch of a train track that points toward the widest edge of $t_i$. In particular there is a train route from the widest edge of $t_i$ to each of the other edges of $t_i$. Note that since $\ell^s$ is stable, it passes through the widest edge of each $t_i$. Therefore there is an infinite train route $\wh \rho$ in the train track $\wt S\cap \wt B^s$ passing through the triangles $t_1, t_2, t_3,\dots$ in that order. Let $\wt \rho$ be the lift of $\wh\rho$ to $\wt S\subset \wt M$, and let $\rho$ be the projection of $\wt \rho$ to $S\subset M$. As observed above, the route $\rho$ is eventually periodic, winding around a stable loop $\gamma\subset S$. By truncating an initial subsequence of $(t_n)$ and relabeling, we can assume that $\rho$ is actually periodic and that $\wt \rho$ is contained in a lift $\wt \gamma$ of $\gamma$. We orient $\gamma$ so that the orientations of $\wt\rho$ and $\wt \gamma$ are compatible.

Let $g$ be a deck transformation of $\wt M$ which corresponds to $\gamma\in \pi_1(S) \le \pi_1(M)$ and preserves $\wt \gamma$, translating it in the positive direction. The action of $g$ on $\wh S$ is cellular and in particular acts as translation on the (truncated) sequence $(e_n)$, sending each edge to one which is closer to $\lambda$. Since $(e_n)$ limits on $\lambda$ we conclude that $\lambda$ is preserved by the action of $g$, hence is periodic.
\end{proof}

\bibliography{vp3.1_strongtst.bbl}
\bibliographystyle{amsalpha}

\end{document}